\newtheorem{definition}{Definition}[section]
\newtheorem{theorem}{Theorem}[section]
\newtheorem{lemma}[theorem]{Lemma}
\newtheorem{proposition}[theorem]{Proposition}
\theoremstyle{remark}
\newtheorem{remark}[theorem]{Remark}
\newcommand{\ZZ}{\mathbb{Z}}
\newcommand{\NN}{\mathbb{N}}
\newcommand{\PP}{\mathcal{P}}
\newcommand{\RR}{\mathbb{R}}
\newcommand{\dd}{\mathrm{d}}
\newcommand{\Id}{\mathrm{Id}}
\DeclareMathOperator{\Div}{div}
\numberwithin{equation}{section}
\begin{document}
\title[Global well-posedness for 2D fractional INS equations]{Global well-posedness for 2D fractional inhomogeneous Navier-Stokes equations with rough density}

\author{Yatao Li}
\address{Laboratory of Mathematics and Complex Systems (MOE), School of Mathematical Sciences, Beijing Normal University, Beijing 100875, P.R. China}
\email{liyatao\_maths@bnu.edu.cn}
\author{Qianyun Miao}
\address{School of Mathematics and Statistics, Beijing Institute of Technology, Beijing 100081, P. R. China}
\email{qianyunm@bit.edu.cn}
\author[Liutang Xue]{Liutang Xue}
\address{Laboratory of Mathematics and Complex Systems (MOE), School of Mathematical Sciences, Beijing Normal University, Beijing 100875, P.R. China}
\email{xuelt@bnu.edu.cn}

\subjclass[2010]{Primary 35Q30, 76D05, 35B40}
\keywords{fractional inhomogeneous Navier-Stokes equations; maximal regularity; Lagrangian coordinates method; density patch.}
\date{}

\begin{abstract}
The paper concerns with the global well-posedness issue of the 2D incompressible inhomogeneous Navier-Stokes (INS) equations with fractional dissipation and rough density.
We first establish the $L^q_t(L^p)$-maximal regularity estimate for the generalized Stokes system with fractional dissipation,
and then we employ it to obtain the global existence of solution for the 2D fractional INS equations with large velocity field,
provided that the $L^2\cap L^\infty$-norm of density minus constant 1 is small enough.
Moreover, by additionally assuming that the density minus 1 is sufficiently small in the norm of some multiplier spaces,
we prove the uniqueness of the constructed solution by using the Lagrangian coordinates approach.
We also consider the density patch problem for the 2D fractional INS equations,
and show the global persistence of $C^{1,\gamma}$-regularity of the density patch boundary when the piecewise jump of density is small enough.
\end{abstract}
\maketitle


\section{Introduction}\label{INTR}
\setcounter{section}{1}\setcounter{equation}{0}

We consider the 2D incompressible fractional inhomogeneous Navier-Stokes (abbr. INS) equations:
\begin{equation}\label{eq.frc-INNS}
  \left\{\begin{array}{ll}
  \partial_t \rho+\Div(\rho u)=0,\\
  \rho(\partial_t u+u\cdot\nabla u)+\nu\Lambda^{2\alpha} u+\nabla \pi=0,\\
  \Div u=0,\\
  (\rho,u)|_{t=0}(x)=(\rho_0(x),u_0(x)),
\end{array}\right.
\end{equation}
where $x\in \RR^2$, $\nu>0$ is the kinematic viscosity coefficient, the scalar $\rho$ is the density, $u=(u^1,u^2)$ represents the velocity field,
and $\pi$ stands for the pressure of the fluid. The fractional Laplacian operator $\Lambda^{2\alpha}:=(-\Delta)^\alpha$ for $\alpha\in (0,1)$ is defined by the Fourier transform via
\begin{align*}
  \widehat{\Lambda^{2\alpha} f}(\xi) = |\xi|^{2\alpha} \widehat{f}(\xi),
\end{align*}
where $ \widehat{f}$ is the Fourier transform of $f$. From the stochastic process point of view, the fractional Laplacian $\Lambda^{2\alpha}$
is an infinitesimal generator of the symmetric $2\alpha$-stable L$\acute{\text e}$vy process (e.g. see \cite{Apple09}).
When $\rho\equiv 1$ and $\nu=1$, \eqref{eq.frc-INNS} reduces to the 2D incompressible fractional (homogeneous) Navier-Stokes equations
\begin{equation}\label{eq.2dNS}
\left\{\begin{array}{ll}
  \partial_t u+u\cdot\nabla u+\Lambda^{2\alpha} u +\nabla \pi=0,\\
  \Div u=0,\quad
  u|_{t=0}(x)=u_0(x),
\end{array}\right.
\end{equation}
where $x\in \RR^2$, $\alpha\in(0,1)$. The system \eqref{eq.2dNS} was used in modeling a fluid motion with internal friction interaction (\cite{MGSIG}).
Compared with \eqref{eq.2dNS}, the density-dependent system \eqref{eq.frc-INNS} can describe the dynamics of flows with variable densities.

Recently, there have been many works studying on the fractionally dissipative systems arising from many physical applications.
The fractional Laplacian operators describe various phenomena in hydrodynamics \cite{CV10},
fractional quantum mechanics \cite{Laskin00}, anomalous diffusion in semiconductor growth \cite{Wo01},
physics and chemistry \cite{MetKla00,SZF95} and so on.
We also mention a quite related fractionally dissipative model, known as the Euler-alignment system,
\begin{equation}\label{eq.EA}
  \left\{\begin{array}{ll}
  \partial_t \rho+\Div(\rho u)=0,\\
  \rho(\partial_t u + u\cdot\nabla u) + \mathcal{D}(u,\rho) + \nabla p(\rho) = 0, \\
  \mathcal{D}(u,\rho) = \rho \big( u \Lambda^{2\alpha} \rho - \Lambda^{2\alpha} (u\rho)  \big)
  =c_\alpha\, \rho \int_{\RR^d} \frac{u(x)-u(y)}{|x-y|^{d+2\alpha}} \rho(y) \dd y, \\
  (\rho,u)|_{t=0}(x)=(\rho_0(x),u_0(x)),
\end{array}\right.
\end{equation}
where $x\in \RR^d$, $d\geq 1$, $\rho$ is the density, $\alpha\in (0,1)$, $u$ is the velocity field, and $p(\rho)=\rho^\gamma$,
$\gamma\geq 1$ is the pressure. The system \eqref{eq.EA} is the hydrodynamic limit model (\cite{KMT15})
of the Cucker-Smale kinetic model which describes the flocking phenomenon for animal groups,
and one can see \cite{CDR20,CTT20} for the recent mathematical studies.
The model \eqref{eq.EA} can be viewed as a compressible Euler system with fractional dissipation, 
and thus one may formally view system \eqref{eq.frc-INNS} as an intermediate model between fractional Navier-Stokes \eqref{eq.2dNS}
and the Euler-alignment system \eqref{eq.EA}.

When $\alpha=1$ and $\nu=1$, the system \eqref{eq.frc-INNS} corresponds to the classical incompressible inhomogeneous Navier-Stokes equations:
\begin{equation}\label{eq.INNS}
  \left\{\begin{array}{ll}
  \partial_t \rho+\Div(\rho u)=0,\\
  \rho(\partial_t u+u\cdot\nabla u)-\Delta u+\nabla \pi=0,\\
  \Div u=0,\qquad  (\rho,u)|_{t=0}=(\rho_0,u_0),
\end{array}\right.
\end{equation}
where $x\in \RR^d$, $d=2,3$. The INS system $\eqref{eq.INNS}$ originates in describing the dynamics of geophysical flows which are incompressible and also have variable densities (\cite{PLions96}).
When $\rho\equiv 1$, the system $\eqref{eq.INNS}$ becomes exactly the classical incompressible Navier-Stokes equations.
System $\eqref{eq.INNS}$ has been extensively investigated in recent decades.
When the density is bounded and $(\rho_0,u_0)$ is of finite energy,
the global existence of weak solutions with finite energy for INS system \eqref{eq.INNS} was obtained in
\cite{Simon90,PLions96}. If the density is bounded and smooth enough
(at least continuous, and have some fractional derivatives in Lebesgue spaces),
the global existence and uniqueness results can be obtained for the INS system \eqref{eq.INNS}
in dimension two with large initial data,
and in dimension three under a smallness condition of the velocity (e.g. see \cite{Lady75,Danch03,CK03,HuangPaicuZhang13}).
The case of the rough density admitting piecewise constant densities is of much interest,
which can be used in modeling a mixture of two fluids.
Danchin and Mucha \cite{DanchMucha12} developed a novel Lagrangian coordinates approach to address the uniqueness in the rough density case,
and proved the global existence and uniqueness of solution to INS system \eqref{eq.INNS}
in a critical regularity framework, 
provided that a smallness condition over the initial velocity and the jumps of initial density was employed.
Huang, Paicu and Zhang \cite{HuangPZh13} in the 2D case removed the smallness condition on initial velocity in \cite{DanchMucha12}
and got the global well-poseness of solution by assuming that the jumps of initial density is sufficiently small
(depending on the size of the velocity); Danchin and Mucha \cite{DanM13} proved the local-in-time existence and uniqueness result without the smallness condition on the jumps of initial density as well as the global-in-time result similar to \cite{HuangPZh13}.
Paicu, Zhang and Zhang \cite{PaicuZhZh13} moreover showed the global well-posedness of solution to the INS system \eqref{eq.INNS}
with initial density only being bounded from above and below by some positive constants. 
For the bounded initial density admitting vacuum states, Danchin and Mucha \cite{DanchMuch19} obtained the global existence and uniqueness result.
We mention that Mucha, the third author and Zheng \cite{MXZ19}
obtained the global existence and uniqueness of strong solution to the 3D INS system \eqref{eq.INNS} associated with a type of large (2D-like) initial velocity field and
the initial density sufficiently close to constant 1 which admits discontinuous data.
Note that all the above works dealing with the rough density case essentially apply the Lagrangian coordinates method to show the uniqueness
(one can also see Constantin et al \cite{Con15,CJ19} for such a method applied to the related hydrodynamic models).

Another interesting and closely related results on the discontinuous density for INS system \eqref{eq.INNS}
is the study of the so-called \emph{density patch problem}, which was firstly raised by P.-L. Lions in \cite{PLions96}
and was concerned with the density patch $\rho_0 = 1_{\Omega_0}$ with $\Omega_0\subset \RR^d$ a smooth simple-connected domain.
Since the density solves the transport equation, it formally yields $\rho(t,x) = \rho_0(X_t^{-1}(x))$
with $X_t^{-1}$ the inverse of $X_t$ and $X_t(\cdot)$ the particle-trajectory satisfying
\begin{align}\label{eq:flow0}
  \frac{\dd}{\dd t} X_t(x) = u(t, X_t(x)),\qquad X_t(x)|_{t=0} = x,
\end{align}
thus one has that $\rho(t,x)= 1_{\Omega(t)}(x)$ with $\Omega(t):= X_t(\Omega_0)$.
The \emph{density patch problem} asks that whether or not the initial smoothness of density patch boundary can be globally persisted by the evolution?
The aforementioned works \cite{DanchMucha12,DanM13,HuangPZh13,MXZ19,PaicuZhZh13} ensured the global well-posedness of solution for INS system \eqref{eq.INNS} associated with the density patch initial data in various situations,
and showed the global persistence of either $C^1$- or $C^{1,\gamma}$-regularity of the evolutionary patch boundary.
One can also refer to \cite{DanchZhxin17,Gancedo18,LiaoZhang18,LiaoZhang19,LiaoLiu12,ChenLi20}
concerning the global persistence of higher boundary regularity of the density patch.

When $\nu=0$, the system \eqref{eq.frc-INNS} becomes the density-dependent incompressible Euler equations, which contains the classical incompressible Euler equations as a special case.
If the initial data has high enough regularity (at least the initial velocity is Lipschitz continuous and the gradient of initial density is continuous and bounded),
the local existence and uniqueness results for the density-dependent Euler equations can be obtained in many kinds of functional spaces,
and one can refer to \cite{BVV80,Dan10,DanFra11} and references therein.

For the fractional Navier-Stokes equations \eqref{eq.2dNS},
the global well-posedness results in dimension two as well as the local well-posedness results in dimension three have been obtained by Wu \cite{Wu05} in the framework of Besov spaces and by Zhang \cite{Zhang12} using the probabilistic method.
One can also see \cite{CDLM20,LMZ19} and references therein for other interesting progress.

As for the fractional INS equations \eqref{eq.frc-INNS} with $\nu>0$, the existing literatures \cite{FangZi13,WangYe18} only consider the hyper-dissipative case,
i.e. $\alpha \geq \frac{5}{4}$ in dimension three, and the global well-posedness results have been established in such a case.

Although the theoretical study of system \eqref{eq.frc-INNS} with $\nu>0$ and $0<\alpha<1$ are few so far,
it can be viewed as an interesting intermediate model between INS equations \eqref{eq.INNS} and density-dependent Euler equations,
as well as between fractional Navier-Stokes \eqref{eq.2dNS} and Euler-alignment system \eqref{eq.EA}, thus it deserves more mathematical concerns.
Our main goal in this paper is to show the global-in-time existence and uniqueness result of 2D fractional INS equations \eqref{eq.frc-INNS}
with large initial velocity field and rough initial density which admits jump discontinuity.
We restrict on the case $\frac{1}{2}<\alpha<1$, which is reasonable from the maximal regularity of fractional Laplacian operator $\Lambda^{2\alpha}$
and the needing that the velocity field should be Lipschitz continuous at least.

We also remark that it seems very hard to generalize the global results of \cite{PaicuZhZh13,DanchMuch19} to the 2D fractional INS equations \eqref{eq.frc-INNS}
with $\frac{1}{2}<\alpha <1$ and the density being merely bounded.
The partial reason lies on the internal difference between the $\alpha=1$ case and the $\alpha<1$ case\footnote{This is analogous to the difference of the 2D Navier-Stokes equations \eqref{eq.2dNS} between $\alpha=1$ case and $\alpha<1$ case;
however, the system \eqref{eq.2dNS} has additional uniform bounded quantity, that is, the vorticity $\omega= \mathrm{curl}\,u$ is uniformly bounded, which makes the $L^2$-energy supercritical $\alpha<1$ case be globally well-posed.}:
noting that the 2D fractional INS equations \eqref{eq.frc-INNS} is scale-invariant under the following transformation
\begin{align*}
  \rho(x,t)\mapsto \rho(\lambda x,\lambda^{2\alpha} t),\quad u(x,t)\mapsto \lambda^{2\alpha-1} u(\lambda x, \lambda^{2\alpha} t),\quad
  \pi(x,t) \mapsto \lambda^{4\alpha -2} \pi(\lambda x, \lambda^{2\alpha} t),
\end{align*}
for every $\lambda>0$, and in combination with the classical $L^2$-energy estimate,
one can view the $\alpha=1$ case as the energy critical case while the $\alpha <1$ case as the energy supercritical case;
thus the weighted energy estimates (with some time function as weights) used in \cite{PaicuZhZh13,DanchMuch19} just work for the critical $\alpha=1$ case,
but will not directly extend to the supercritical $\alpha<1$ case.

Inspired by \cite{HuangPZh13,Mucha01,Mucha08,MXZ19}, we here compare the solution of the 2D fractional INS equations \eqref{eq.frc-INNS}
with the large solution of the 2D system \eqref{eq.2dNS} and study the stability issue. We assume $\nu=1$ for brevity.
More precisely, let $ \bar{u}(x,t)=(\bar{u}^{1}, \bar{u}^2 )(x,t)$ be a two-dimensional vector field solving the 2D fractional Navier-Stokes equations \eqref{eq.2dNS} with initial data $u_0$,
and denote by
\begin{align}
  a := \rho-1,\quad w:=u-\bar{u},\quad p := \pi-\bar{\pi}.
\end{align}
Then we mainly investigate the following perturbed system
\begin{equation}\label{eq.INS-2dNS}
\left\{\begin{array}{ll}
  \partial_t a + u\cdot\nabla a=0, \\
  \partial_t w + u\cdot \nabla w + \Lambda^{2\alpha} w + \nabla p = F,\\
  \Div w=0,\\
   (a,w)|_{t=0}=(\rho_0-1, 0),
\end{array}\right.
\end{equation}
where
\begin{equation}\label{eq.F}
\begin{split}
  F := -a \partial_t w -a \partial_t \bar{u}  - a ( u \cdot\nabla w) - a(\bar{u}\cdot\nabla \bar{u}) - \rho ( w\cdot\nabla \bar{u}). 
\end{split}
\end{equation}

Our main result reads as follows.
\begin{theorem}\label{Main.thm}
Let $\frac 12<\alpha<1,\, p>\frac 2{2\alpha-1}$ and $u_0\in H^1\cap\dot B^{\alpha}_{p,2}(\RR^2)$, $\rho_0-1\in L^2\cap L^\infty(\RR^2)$.
There exists a generic constant $c_0\in (0,1)$ depending only on $\alpha,p$ such that if
\begin{equation}\label{con.Max.B0}
\begin{split}
  \|\rho_0-1\|_{L^2 \cap L^\infty}  \,
  \leq c_0 \exp\Big\{- c_0^{-1}\| u_0\|_{H^1\cap \dot B^\alpha_{p,2}}^2 \Big\} ,
\end{split}
\end{equation}
then the 2D fractional INS system \eqref{eq.frc-INNS} has a global-in-time strong  solution
$(\rho, u,\nabla \pi)$  fulfilling the following estimates that
\begin{equation}\label{est.Max.w3-0}
  \|\rho-1\|_{L^\infty(\RR_+;L^2\cap L^\infty(\RR^2))}\leq \|\rho_0-1\|_{L^2\cap L^\infty(\RR^2)},\quad \textrm{and}
\end{equation}
\begin{equation}\label{est.Max.w3}
  \|u\|_{L^\infty(\RR_+;L^2\cap\dot B^\alpha _{p,2})} + \|\big(\partial_t u , \Lambda^{2\alpha} u ,\nabla \pi\big)\|_{L^2(\RR_+;L^p)} + \|u\|_{L^2(\RR_+;\dot H^\alpha)}
  \leq C \big(1 + \|u_0\|_{H^1\cap \dot B^\alpha_{p,2}}^{\frac{4\alpha-1}{2\alpha-1}}\big),
\end{equation}
with $C>0$ a constant depending only on $\alpha,p$.

If we assume, in addition,  $u_0\in \dot B^{\alpha+s}_{p,2}(\RR^2)$ with $s\in (0,1)$ and
$\rho_0-1\in{ {\mathcal{M}(\dot B^s_{p,2})}\cap {\mathcal{M}(\dot B^{\frac{2}{p}+1-2\alpha}_{p,1})} }$ satisfying that for a sufficiently small generic constant $c_*>0$ (depending only on $\alpha,p,s$),
\begin{equation}\label{eq:rho-cd}
  \|\rho_0-1\|_{ {\mathcal{M}(\dot B^s_{p,2})}\cap {\mathcal{M}(\dot B^{\frac 2p+1-2\alpha}_{p,1})} } \leq c_* \exp \Big\{-c_*^{-1} \big(1+ \|u_0\|_{H^1\cap \dot B^\alpha_{p,2}}^{\frac{8\alpha-2}{2\alpha-1}} \big) \Big\},
\end{equation}
then the above constructed solution is unique, and $u$ also satisfies that
\begin{equation}\label{est.Max.w4}
\begin{split}
  \|u\|_{L^\infty(\RR_+;\dot B^{\alpha+s}_{p,2})}  + \| u\|_{L^2 (\RR_+;\dot B^{2\alpha}_{p,1}\cap \dot B^{2\alpha+s}_{p,2})} + \|u\|_{L^1(\RR_+; \dot W^{1,\infty})}
  \leq C \Big( 1+ \|u_0\|_{\dot B^{\alpha+s}_{p,2}} + \|u_0\|_{H^1\cap \dot B^\alpha_{p,2}}^{\frac{8(4\alpha-1)}{(2\alpha-1)^2}} \Big),
\end{split}
\end{equation}
with $C>0$ a constant depending only on $\alpha,p,s$.
\end{theorem}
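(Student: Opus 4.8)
The plan is to compare the solution of \eqref{eq.frc-INNS} with the large smooth solution $\bar u$ of the homogeneous fractional Navier--Stokes system \eqref{eq.2dNS}, and to recover $(\rho,u)=(1+a,\bar u+w)$ by solving the coupled perturbation system \eqref{eq.INS-2dNS} for the density perturbation $a=\rho-1$ and the velocity perturbation $w$. First I would invoke the known two-dimensional global well-posedness of \eqref{eq.2dNS} to produce $\bar u$, and extract from the fractional maximal regularity estimate the bounds $\|\bar u\|_{L^\infty(\RR_+;L^2\cap\dot B^\alpha_{p,2})}+\|(\partial_t\bar u,\Lambda^{2\alpha}\bar u,\nabla\bar\pi)\|_{L^2(\RR_+;L^p)}+\|\bar u\|_{L^2(\RR_+;\dot H^\alpha)}\lesssim 1+\|u_0\|_{H^1\cap\dot B^\alpha_{p,2}}^{\frac{4\alpha-1}{2\alpha-1}}$; the exponent $\frac{4\alpha-1}{2\alpha-1}$ in \eqref{est.Max.w3} is exactly the power forced by the scaling of that estimate once the $L^2$-energy and the $\dot B^\alpha_{p,2}$ norm are combined through the nonlinearity $\bar u\cdot\nabla\bar u$.

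For existence I would run a successive-approximation (or Friedrichs) scheme and establish uniform bounds. The transport equation $\partial_t a+u\cdot\nabla a=0$ with $\Div u=0$ immediately gives \eqref{est.Max.w3-0}, i.e. the $L^2\cap L^\infty$ norm of $a$ is nonincreasing. For $w$ I would apply the $L^2_t(L^p)$ maximal regularity estimate for the generalized fractional Stokes operator $\partial_t+u\cdot\nabla+\Lambda^{2\alpha}+\nabla p$ established earlier, and then control the forcing $F$ in \eqref{eq.F} term by term in $L^2(\RR_+;L^p)$. The terms carrying the factor $a$, namely $a\,\partial_t w$ and $a\,(u\cdot\nabla w)$, are bounded by $\|a\|_{L^\infty}$ times the maximal regularity norm of $w$ (using the equation to express $u\cdot\nabla w$ through $\partial_t w,\Lambda^{2\alpha}w,\nabla p$ and $F$), hence absorbed into the left-hand side by the smallness of $\|a\|_{L^\infty}$; the inhomogeneous source terms $a\,\partial_t\bar u$ and $a\,(\bar u\cdot\nabla\bar u)$ are bounded by $\|a\|_{L^\infty}$ times the already-known norms of $\bar u$; and the $w$-linear term $\rho\,(w\cdot\nabla\bar u)$ is closed by a Gr\"onwall argument. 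Here the hypothesis $p>\frac{2}{2\alpha-1}$ guarantees $\dot B^{2\alpha-1}_{p,2}\hookrightarrow L^\infty(\RR^2)$, so the maximal regularity bound for $\bar u$ gives $\nabla\bar u\in L^2(\RR_+;L^\infty)$ and the Gr\"onwall factor $\exp\{c\|\nabla\bar u\|_{L^2_t L^\infty}^2\}$ is controlled by $\exp\{c\|u_0\|_{H^1\cap\dot B^\alpha_{p,2}}^2\}$. This is precisely the origin of the exponential smallness requirement \eqref{con.Max.B0}: choosing $\|a_0\|_{L^2\cap L^\infty}$ below $c_0\exp\{-c_0^{-1}\|u_0\|^2\}$ makes the bootstrap close and keeps $w$ small, whence $u=\bar u+w$ satisfies \eqref{est.Max.w3}. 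Passing to the limit and checking the divergence-free and pressure constraints then produces the global strong solution.

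For uniqueness under the extra hypotheses I would pass to the Lagrangian coordinates associated with the flow $X_t$ of $u$ defined by \eqref{eq:flow0}, which freezes the density: in Lagrangian variables $a$ reduces to the time-independent datum $a_0$, and the momentum equation becomes a fractional Stokes system whose only variable coefficients come from the Jacobian of $X_t$ and from $a_0$ acting as a pointwise multiplier on Besov spaces. Here the multiplier-space smallness \eqref{eq:rho-cd} is essential: it lets me bound products $a_0\,g$ with $g$ ranging over $\dot B^s_{p,2}$ and $\dot B^{\frac{2}{p}+1-2\alpha}_{p,1}$ with no smoothness assumed on the discontinuous density, the threshold $\frac{2}{p}+1-2\alpha$ being tuned so that the corresponding Besov space controls the Lipschitz norm of $w$ and hence the flow map. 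I would first propagate the higher regularity $u_0\in\dot B^{\alpha+s}_{p,2}$ through the same maximal regularity machinery at the shifted smoothness level to obtain \eqref{est.Max.w4}, and then estimate the difference of two solutions in a lower-regularity norm, showing it vanishes by a contraction/Gr\"onwall argument that is again closed by \eqref{eq:rho-cd}.

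The principal difficulty I expect is closing the global nonlinear estimate for $w$ in the supercritical regime $\alpha<1$, where the $L^2$-energy is no longer critical and the large size of $\bar u$ forbids any direct smallness. The whole scheme hinges on the fractional maximal regularity absorbing the drift $u\cdot\nabla w$ and on the Gr\"onwall factor generated by $\rho\,(w\cdot\nabla\bar u)$ being quantifiable purely in terms of $\|u_0\|_{H^1\cap\dot B^\alpha_{p,2}}$; matching this factor to the exponential smallness of the density in \eqref{con.Max.B0}, together with the delicate multiplier-space bounds needed to handle the discontinuous coefficient $a_0$ in the Lagrangian formulation, are the steps where the real work lies.
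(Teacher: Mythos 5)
Your overall architecture (perturbing around the large solution $\bar u$ of \eqref{eq.2dNS}, closing the perturbed system by the $L^2_t(L^p)$ maximal regularity estimate, Lagrangian coordinates for uniqueness) coincides with the paper's, but two of your key steps fail as stated. The first is the quantitative heart of the existence part. You close the term $\rho\,(w\cdot\nabla\bar u)$ by a Gr\"onwall argument weighted by $\|\nabla\bar u\|_{L^\infty}$ and claim that the factor $\exp\{c\|\nabla\bar u\|^2_{L^2_t(L^\infty)}\}$ is controlled by $\exp\{c\|u_0\|^2_{H^1\cap\dot B^\alpha_{p,2}}\}$. It is not: the only global bound on $\|\nabla\bar u\|_{L^2(\RR_+;L^\infty)}$ comes through $\|\bar u\|_{L^2_t(\dot H^\alpha\cap\dot W^{2\alpha,p})}$, and by \eqref{est.u2d} this costs $1+\|u_0\|_{H^1\cap\dot B^\alpha_{p,2}}^{\frac{4\alpha-1}{2\alpha-1}}$; squaring it produces the exponent $\frac{8\alpha-2}{2\alpha-1}>2$. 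Hence your scheme only closes under a smallness hypothesis of the form $\|\rho_0-1\|_{L^2\cap L^\infty}\leq c\exp\{-c^{-1}\|u_0\|^{\frac{8\alpha-2}{2\alpha-1}}\}$, which is strictly stronger than \eqref{con.Max.B0}, so the first part of the theorem is not proved. The paper avoids this loss by running Gr\"onwall at the $L^2$-energy level with the weight $\|\Lambda^\alpha\bar u\|^2_{H^1}$, using the 2D Sobolev embeddings $\|w\|_{L^{2/(1-\alpha)}}\lesssim\|\Lambda^\alpha w\|_{L^2}$ and $\|\nabla\bar u\|_{L^{2/\alpha}}\lesssim\|\Lambda^\alpha\bar u\|_{H^1}$ instead of the Lipschitz norm of $\bar u$; then $\int_0^\infty\|\Lambda^\alpha\bar u\|^2_{H^1}\,\dd t\lesssim\|u_0\|^2_{H^1}$ by the energy and vorticity estimates \eqref{est.u2d-2}, which is exactly what produces the exponent $2$ in \eqref{con.Max.B0}. (The Lipschitz norm of $u$, with its worse power, is only used later for the multiplier-space propagation, which is precisely why \eqref{eq:rho-cd} carries the power $\frac{8\alpha-2}{2\alpha-1}$.)

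The second gap is in the uniqueness part, where your sketch skips the two points at which the fractional case genuinely differs from $\alpha=1$. (i) Estimating the difference of two solutions "in a lower-regularity norm" by contraction fails if that norm is the natural $L^\infty_T(L^2)\cap L^2_T(\dot H^\alpha)$ energy norm: the Lagrangian error terms involve $\nabla\delta v$, which this norm cannot control. The paper must work in $L^\infty_T(\dot H^\alpha)\cap L^2_T(\dot H^{2\alpha})$ and prove the $L^2_T(L^2)$ maximal regularity estimate \eqref{est.sta} for the difference system. (ii) Because $\Lambda^{2\alpha}$ is nonlocal, the momentum equation in Lagrangian coordinates is not simply "a fractional Stokes system with variable coefficients coming from the Jacobian": the twisted operator $\Lambda^{2\alpha}_{v_1}$ cannot be treated as $\Lambda^{2\alpha}$ plus an absorbable perturbation, since the error is only bounded by $\varepsilon\|\nabla\delta v\|_{L^2_T(\dot B^{2\alpha-1}_{2,1})}$, which is not dominated by the dissipation $\varepsilon\|\delta v\|_{L^2_T(\dot H^{2\alpha})}$ (see Remark \ref{rem:uni}); one has to prove maximal regularity for the twisted operator directly, via the finite-difference characterization of $\dot H^\alpha$ and the bi-Lipschitz equivalence \eqref{can.delta.sip} on a short time interval. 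Moreover, closing those estimates requires the bound $\|u\|_{L^2_T(\dot B^{2\alpha}_{p,1})}$, with summation index $1$, rather than $\|u\|_{L^2_T(\dot W^{2\alpha,p})}$; this is the actual reason the extra regularity \eqref{est.Max.w4} and the multiplier hypothesis \eqref{eq:rho-cd} are needed, and your proposal never engages with it.
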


In the above $\mathcal{M}(\dot{B}^s_{p,r})$ denotes the multiplier space defined in Definition \ref{def.mu}.

\begin{remark}\label{rem:uni-reg}
We apply the Lagrangian coordinates method in the uniqueness part of Theorem \ref{Main.thm}, 
and owing to the nonlocal effect of fractional Laplacian operator $\Lambda^{2\alpha}$,
it seems that we need a little bit more regularity of $u$ than the obtained regularity in the existence part.
More precisely, in view of \eqref{est.f24} below, one has to control $\|u\|_{L^2_T(\dot B^{2\alpha}_{p,1})}$ (from which $\|v\|_{L^2_T(\dot B^{2\alpha}_{p,1})}$ is bounded),
which is essentially stronger than the quantity $\|u\|_{L^2_T(\dot W^{2\alpha,p})}$ in \eqref{est.Max.w3}.
So we additionally assume $u_0$ is slightly more regular and
$\rho_0 -1$ is small enough in the norm of some multiplier spaces, and we build the refined estimate \eqref{est.Max.w4}.
It should be emphasized that, thanks to Lemma \ref{lem:mult-sp}, the used multiplier spaces for small $s$ contain the elements with piecewise jump discontinuity.
\end{remark}


Next we consider the density patch problem of the 2D fractional INS equations \eqref{eq.frc-INNS},
and as a direct consequence of Theorem \ref{Main.thm}, we can show the global well-posedness result and the global persistence of $C^{1,\gamma}$-patch boundary,
as long as the density jump across a $C^{1,\gamma}$-interface is small enough.
\begin{proposition}\label{cor:patch}
Let $\frac{1}{2}<\alpha <1$, $p>\frac{2}{2\alpha-1}$, $u_0 \in H^1\cap \dot B^{\alpha +s}_{p,2}(\RR^2)$ with $0< s < \frac{1}{p}$.
Assume that $\Omega_0$ is a bounded simply-connected $C^{1,\gamma}$-domain of $\RR^2$ with $0<\gamma \leq 2\alpha -1 +s - \frac{2}{p}$ and $\rho_0 = (1+\sigma)1_{\Omega_0}+1_{\Omega_0^c}$ for some small constant $\sigma\in \RR$ satisfying
\begin{equation}\label{eq:eta-cd}
  |\sigma| \leq c'\, \exp\Big(-\frac{1}{c'}  \big( 1+ \|u_0\|_{H^1\cap \dot B^\alpha_{p,2}}^{\frac{8\alpha-2}{2\alpha-1}} \big) \Big),
\end{equation}
with a generic small constant $c'>0$ depending only on $\alpha,p,s, \Omega_0$.
Then the 2D fractional INS system \eqref{eq.frc-INNS} has a unique global solution $(\rho, u)$ on $\RR^2\times\RR_+$
fulfilling the estimates \eqref{est.Max.w3}, \eqref{est.Max.w4}, and the density $\rho$ has the following expression
\begin{align}\label{rho-exp}
  \rho(t) = (1+\sigma)1_{\Omega(t)}+1_{\Omega(t)^c}\quad \textrm{with}\quad {\Omega(t)}= X_t(\Omega_0),
\end{align}
satisfying that the associated patch boundary $\partial \Omega(t)\in C^{1,\gamma}(\RR^2)$ for every $t\in \RR_+$.
\end{proposition}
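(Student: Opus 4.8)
The plan is to deduce Proposition \ref{cor:patch} entirely from Theorem \ref{Main.thm}, so the first task is to verify that the patch datum $\rho_0=(1+\sigma)1_{\Omega_0}+1_{\Omega_0^c}$ satisfies all hypotheses of the theorem. Writing $a_0:=\rho_0-1=\sigma\,1_{\Omega_0}$ and using that $\Omega_0$ is bounded, one immediately gets $\|a_0\|_{L^2\cap L^\infty}=|\sigma|\,(1+|\Omega_0|^{1/2})$. For the multiplier norms I would invoke Lemma \ref{lem:mult-sp}: since $\Omega_0$ is a bounded $C^{1,\gamma}$-domain (in particular Lipschitz), and since $0<s<\frac1p$ while $\frac2p+1-2\alpha<0$ (which holds as $p>\frac{2}{2\alpha-1}$), the characteristic function $1_{\Omega_0}$ belongs to $\mathcal{M}(\dot B^s_{p,2})\cap\mathcal{M}(\dot B^{\frac2p+1-2\alpha}_{p,1})$ with norm depending only on $\Omega_0$ and the indices, so that $\|a_0\|_{\mathcal{M}(\dot B^s_{p,2})\cap\mathcal{M}(\dot B^{\frac2p+1-2\alpha}_{p,1})}\leq C(\Omega_0)\,|\sigma|$. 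It then remains to check that, because $\frac{8\alpha-2}{2\alpha-1}\geq 6>2$ on $(\tfrac12,1)$, a sufficiently small choice of $c'$ (in terms of $\alpha,p,s,\Omega_0$) makes the smallness hypothesis \eqref{eq:eta-cd} on $|\sigma|$ imply simultaneously both \eqref{con.Max.B0} and \eqref{eq:rho-cd}. Theorem \ref{Main.thm} then delivers a unique global strong solution $(\rho,u,\nabla\pi)$ obeying \eqref{est.Max.w3} and \eqref{est.Max.w4}.

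Next I would recover the patch structure \eqref{rho-exp}. Since $\Div u=0$, the density equation reads $\partial_t\rho+u\cdot\nabla\rho=0$, so $\rho$ is transported along the flow \eqref{eq:flow0}; moreover the bound $\|u\|_{L^1(\RR_+;\dot W^{1,\infty})}<\infty$ from \eqref{est.Max.w4}, together with $\Div u=0$, guarantees that for each $t\geq0$ the map $X_t$ is a measure-preserving bi-Lipschitz homeomorphism of $\RR^2$. Consequently $\rho(t,\cdot)=\rho_0\circ X_t^{-1}=(1+\sigma)1_{\Omega(t)}+1_{\Omega(t)^c}$ with $\Omega(t)=X_t(\Omega_0)$, which is exactly \eqref{rho-exp}.

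The core of the statement is the persistence of $C^{1,\gamma}$-regularity of $\partial\Omega(t)=X_t(\partial\Omega_0)$. Set $\beta:=2\alpha-1+s-\frac2p$, so that $0<\gamma\leq\beta$. The regularity transfer from \eqref{est.Max.w4} goes as follows: from $u\in\dot B^{2\alpha+s}_{p,2}$ one has $\nabla u\in\dot B^{2\alpha+s-1}_{p,2}$, and the Besov embeddings $\dot B^{2\alpha+s-1}_{p,2}\hookrightarrow\dot B^{\beta}_{\infty,2}\hookrightarrow\dot B^{\beta}_{\infty,\infty}=\dot C^{\beta}$ give $\nabla u\in L^2(\RR_+;\dot C^\beta)$, whence $\int_0^t\|\nabla u(\tau)\|_{\dot C^\beta}\,\dd\tau\leq t^{1/2}\|\nabla u\|_{L^2(\RR_+;\dot C^\beta)}<\infty$ for each finite $t$ by Cauchy--Schwarz. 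I would then analyze the Jacobian $J(t):=\nabla X_t$, which solves $\partial_t J=(\nabla u)(t,X_t)\,J$ with $J(0)=\Id$; a first Gronwall argument yields $\|J(t)\|_{L^\infty}\leq\exp\big(\int_0^t\|\nabla u\|_{L^\infty}\big)$, finite by the $L^1_t\dot W^{1,\infty}$ bound. Differencing the integral form at two points and using the chain-rule Hölder estimate $|(\nabla u)(\tau,X_\tau(x))-(\nabla u)(\tau,X_\tau(y))|\leq[\nabla u(\tau)]_{\dot C^\beta}\|J(\tau)\|_{L^\infty}^{\beta}|x-y|^{\beta}$ together with a product split leads to
\begin{equation*}
  [J(t)]_{\dot C^\beta}\leq\int_0^t[\nabla u(\tau)]_{\dot C^\beta}\,\|J(\tau)\|_{L^\infty}^{1+\beta}\,\dd\tau+\int_0^t\|\nabla u(\tau)\|_{L^\infty}\,[J(\tau)]_{\dot C^\beta}\,\dd\tau .
\end{equation*}
A second Gronwall argument then bounds $[J(t)]_{\dot C^\beta}$ finitely at each $t$, and the same reasoning applied to $X_t^{-1}$ controls $\nabla X_t^{-1}$ in $\dot C^\beta$. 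Thus $X_t$ is a $C^{1,\beta}$-diffeomorphism, and since $\partial\Omega_0\in C^{1,\gamma}$ with $\gamma\leq\beta$, the image $\partial\Omega(t)=X_t(\partial\Omega_0)$ is $C^{1,\gamma}$ for every $t\in\RR_+$, which is the desired global persistence.

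The main obstacle is twofold. First, the verification of the multiplier membership of $1_{\Omega_0}$ at the \emph{negative} index $\frac2p+1-2\alpha$ is delicate and rests essentially on Lemma \ref{lem:mult-sp}; here the geometric constraint $\gamma\leq 2\alpha-1+s-\frac2p$ is precisely what makes the Besov embedding $\dot B^{2\alpha+s-1}_{p,2}\hookrightarrow\dot C^{\beta}$ sharp and hence compatible with the velocity regularity available from \eqref{est.Max.w4}. Second, closing the Hölder propagation of the flow requires the composition and product estimates in $\dot C^\beta$ above, and one must accept bounds that grow in $t$: since \eqref{est.Max.w4} provides only $L^2$-in-time (not $L^1$-in-time) control of $\nabla u$ in $\dot C^\beta$, the $C^{1,\gamma}$-norm of the boundary is finite at each instant but is not claimed uniformly bounded on $\RR_+$, which is exactly the meaning of \emph{global persistence} in this setting.
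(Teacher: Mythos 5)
Your proposal is correct and follows essentially the same route as the paper: verify via Lemma \ref{lem:mult-sp} and the smallness of $|\sigma|$ that $\rho_0-1=\sigma 1_{\Omega_0}$ satisfies \eqref{con.Max.B0} and \eqref{eq:rho-cd}, apply Theorem \ref{Main.thm}, transfer \eqref{est.Max.w4} into H\"older regularity of $\nabla u$ by Besov embedding, and propagate the $\dot C^\gamma$-seminorm of $\nabla X_t^{\pm 1}$ by a Gronwall argument. The only cosmetic differences are that you re-derive the flow estimate that the paper quotes from \cite[Prop. 3.10]{BCD11}, and that you work at the top exponent $\beta=2\alpha-1+s-\tfrac{2}{p}$ and invoke $\gamma\leq\beta$ only at the level of the boundary, whereas the paper uses the embedding $\dot B^{2\alpha+s}_{p,2}\cap\dot H^\alpha\hookrightarrow\dot C^{1,\gamma}$ directly.
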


\begin{proof}[Proof of Proposition \ref{cor:patch}]
Thanks to Lemma \ref{lem:mult-sp} and smallness condition \eqref{eq:eta-cd}, $\rho_0-1 = \sigma 1_{\Omega_0}$ belongs to $L^2\cap L^\infty \cap \mathcal{M}(\dot B^s_{p,2})\cap {\mathcal{M}(\dot B^{\frac 2p+1-2\alpha}_{p,1})}$
and it also fulfills \eqref{con.Max.B0}, \eqref{eq:rho-cd},
then Theorem \ref{Main.thm} guarantees that there is a unique global-in-time regular solution $(\rho,u)$ to the 2D fractional INS system \eqref{eq.frc-INNS}.
The estimates \eqref{est.Max.w3}, \eqref{est.Max.w4} and the continuous embedding  $\dot B^{2\alpha+s}_{p,2}\cap \dot H^\alpha(\RR^2) \hookrightarrow \dot C^{1,\gamma}(\RR^2)$ with $0<\gamma \leq 2\alpha-1 +s -2/p$
imply that $u\in L^1(\RR_+; \dot W^{1,\infty}(\RR^2))\cap  L^1([0,T]; C^{1,\gamma}(\RR^2))$ for any $T>0$.
By the Cauchy-Lipschitz theory, there exists a unique particle-trajectory $X_t(\cdot):\RR^2\rightarrow \RR^2$ for every $t\in \RR_+$
which solves \eqref{eq:flow0} and it is a measure-preserving bi-Lipschitzian homeomorphism with its inverse $X_t^{-1}$.
Besides, owing to \cite[Prop. 3.10]{BCD11} it is direct to see that
\begin{align*}
  \|\nabla X_t^{\pm1}\|_{\dot C^\gamma} \leq \int_0^t \|\nabla X_\tau^{\pm1}\|_{\dot C^\gamma} \|\nabla u(\tau)\|_{L^\infty} \dd \tau
  + \int_0^t \|\nabla X_\tau^{\pm1}\|_{L^\infty}^{1+\gamma} \|\nabla u(\tau)\|_{\dot C^\gamma} \dd \tau,
\end{align*}
and thus 
\begin{align}\label{es.Xt-Cgam}
  \|\nabla X_t^{\pm1}\|_{L^\infty_T(\dot C^\gamma)} \leq \|\nabla X_t^{\pm1}\|_{L^\infty_T(L^\infty)}^{1+\gamma} \|\nabla u\|_{L^1_T(\dot C^\gamma)} e^{\|\nabla u\|_{L^1_T(L^\infty)}} < \infty.
\end{align}
The method of characteristics gives that $\rho(x,t) = \rho_0(X_t^{-1}(x))$, which leads to the relation \eqref{rho-exp}.
Since the initial boundary $\partial \Omega_0\in C^{1,\gamma}$ and $X_t^{\pm1}\in L^\infty_T(C^{1,\gamma})$, we conclude that the evolutionary patch boundary
$\partial \Omega(t)\in L^\infty_T(C^{1,\gamma})$ with $0<\gamma\leq 2\alpha-1+ s -2/p$ and $T>0$ any given, as desired.

\end{proof}

Let us sketch the proof of Theorem \ref{Main.thm}.
By applying the technique of vector-valued Calder\'on-Zygmund operators in Lemari\'e-Rieusset \cite[Chapter 7]{LRie02},
we first establish the $L^q_t(L^p)$ maximal regularity estimates for the generalized Stokes system with fractional dissipation in the whole space $\RR^d$,
which may have its independent interests.
Next, by performing the $L^2$-energy estimate and the $L^2_t(L^p)$ maximal regularity estimate for the equations \eqref{eq.2dNS} and the perturbed system \eqref{eq.INS-2dNS},
we build the needing \textit{a priori} estimates for the 2D fractional INS equations \eqref{eq.frc-INNS}, and then by constructing an approximation process and using the compactness argument we show the existence proof of Theorem \ref{Main.thm}.

As for the uniqueness part, due to the hyperbolicity of density equation $\eqref{eq.frc-INNS}_1$ and the low-regularity assumption of the density, we have to adopt the Lagrangian coordinates approach originated in \cite{DanchMucha12,DanM13}.
Due to the nonlocal effect of fractional Laplacian operator $\Lambda^{2\alpha}$, the process is more complicated than that in the 2D INS equations \eqref{eq.INNS}.
We rewrite the system of the difference of two velocities in Lagrangian coordinates as the twisted fractional Stokes system \eqref{eq.delta.v},
and by making full use of the particle-trajectory technique and the finite-difference characterization of homogeneous Besov spaces,
we establish the crucial $L^2_t(L^2_x)$-maximal regularity estimate \eqref{est.sta} for the system \eqref{eq.delta.v} on a short time interval
(one can see Remark \ref{rem:uni} below for some additional explanation).
Then through carefully estimating the right-hand terms of \eqref{est.sta} and letting time $t$ being small enough,
we can show the uniqueness, and then the iteration argument implies the uniqueness on the whole $\RR_+$.
Note that the uniqueness part needs the stronger regularity of solutions (as explained in Remark \ref{rem:uni-reg}),
which is obtained in Proposition \ref{prop:uni.M.bet} by using the technique of multiplier spaces.
Finally, we remark that to the best of our knowledge, it seems to be the first result on applying the Lagrangian coordinates method to tackle with the uniqueness issue for a fractionally dissipative system.

The rest of this paper is organized as follows. In Section \ref{sec:pre}, we introduce the definitions of some functional spaces and their related estimates, and also compile several useful auxiliary lemmas.
Section \ref{sec:LpLq}  focuses on the establishment of Proposition \ref{pro.Max-Regularity} concerning the $L^q_t(L^p)$-maximal regularity estimate for the generalized Stokes system.
Sections \ref{APR} - \ref{sec:UNI} are devoted to the proof of Theorem \ref{Main.thm}, corresponding to the proof of the \textit{a priori} estimates, the existence, and the uniqueness, respectively.

\section{Preliminaries}\label{sec:pre}

In this section we include some notations, definitions and auxiliary lemmas used in the paper.

Throughout the paper, for every $p, q \in[1,\infty]$, $k\in\NN$, $s\in \RR$, the function spaces $L^p(\RR^d)$, $W^{k,p}(\RR^d)$, $\dot W^{k,p}(\RR^d)$, $\dot W^{s,p}(\RR^d)$
denote the Lebesgue space, the Sobolev space, the homogeneous Sobolev space and the fractional-order Sobolev space, respectively (e.g. see \cite{BCD11}).
Denote by $\mathcal{S}(\RR^d)$ the space of Schwartz functions, and $\mathcal{S}'(\RR^d)$ its dual, the space of tempered distributions (e.g. see \cite{Grafakos14}).
We abbreviate $L^q(0,T;X)$ as $L^q_T(X)$, with $X=X(\RR^d)$ a spacial function space.
We abbreviate $\|(f_1,\cdots,f_k)\|_X$ as $\|f_1\|_X +\cdots \|f_k\|_{X}$ for $k\in \ZZ_+$.
For two matrixes $A=(a_{ij})_{d\times d}$ and $B=(b_{ij})_{d\times d}$, denote by $A:B$ as the quantity $\sum_{1\leq i,j\leq d} a_{ij} b_{ji}$.
The notation $a\lesssim b$ means $a\leq Cb$, where the constant $C$  may be different from line to line.

\subsection{Functional spaces and related estimates}
We first recall some basic knowledge of the Littlewood-Paley theory.
One can choose two nonnegative radial functions $\chi, \varphi\in \mathcal{S}(\mathbb{R}^d)$ (see \cite{BCD11}) be
supported respectively in the ball $\{\xi\in \mathbb{R}^d:|\xi|\leq \frac{4}{3} \}$ and the annulus $\{\xi\in
\mathbb{R}^d: \frac{3}{4}\leq |\xi|\leq  \frac{8}{3} \}$ such that
\begin{align*}
  \chi(\xi)+\sum\limits_{j\ge0}\varphi(2^{-j}\xi)=1,\quad \textrm{for}\;\;\xi\in\RR^d; \quad\textrm{and}\quad
  \sum\limits_{j\in\ZZ}\varphi(2^{-j}\xi)=1,\quad \textrm{for}\;\; \xi\in\RR^d \setminus \{0\}.
\end{align*}
The homogeneous dyadic operators $\dot{\Delta}_j$ and the homogeneous low-frequency cut-off operators $ \dot{S}_j$ are defined for all $j\in\ZZ$ by
$$
\dot{\Delta}_ju=\varphi(2^{-j}D)u=2^{jd} h(2^j\cdot)*u,\quad  \dot{S}_j u= \chi(2^{-j}D)u =2^{jd} \widetilde h(2^j\cdot)*u,
$$
with $h=\mathcal{F}^{-1}{\varphi}$, $\widetilde {h}=\mathcal{F}^{-1}{\chi}$ and $\mathcal{F}^{-1}$ the Fourier inverse transform.

Then we introduce the definitions of the homogeneous Besov space and the related Chemin-Lerner's mixed spacetime space.
\begin{definition}\label{Besovdef}
\noindent (1) Let $s\in \RR$, $1\le p,r \le \infty$. Let $\mathcal S'(\RR^d)$ be the space of tempered distributions and $P(\RR^d)$ is the set of all polynomials.
The homogeneous Besov space $\dot{B}^s_{p,r}=\dot{B}^s_{p,r}(\RR^d)$ is defined as the following quotient space
\begin{align*}
  \dot{B}^{s}_{p,r}(\RR^d)=\Big\{u\in \mathcal{S}'(\RR^d)/P(\RR^d):\|u\|_{\dot{B}^{s}_{p,r}(\RR^d)}:=
  \big\| \big\{2^{js}\|\dot{\Delta}_ju\|^{r}_{L^{p}(\RR^d)}\big\}_{j\in\mathbb{Z}}\big\|_{\ell^r} <\infty\Big\}.
\end{align*}

\noindent (2)
Let $s\in \RR$, $1\le p,q,r \le \infty$, and $T\in(0,\infty]$. The Chemin-Lerner's mixed spacetime homogeneous Besov space $\widetilde{L}^q(0,T;{\dot B}^s_{p,r}(\RR^d))$,
abbreviated as $\widetilde{L}^q_T(\dot B^s_{p,r})$, is defined as the set of all tempered distributions $u$ such that
\begin{equation*}
  \|u\|_{\widetilde {L}^q_T({\dot B}^{s}_{p,r})} :=  \big\| \big\{2^{js}\|\dot\Delta_ju\|_{L^q_T(L^p)} \big\}_{j\in\mathbb{Z}} \big\|_{\ell^r}<\infty.
\end{equation*}
\end{definition}

The following product estimate in homogeneous Besov space is useful.
\begin{lemma}[\cite{BCD11}, Corollary 2.54]\label{Besovpro}
Let $s>0$, $(p,r)\in [1,\infty]^2$. Then there exists a constant $C=C(s,d)>0$ such that
\begin{align}\label{est.prod2}
  \|uv\|_{{\dot B}_{p,r}^{s}(\RR^d)}\leq C \big(\|u\|_{{\dot B}^{s}_{p,r}(\RR^d)}\|v\|_{L^\infty(\RR^d)}+\|v\|_{{\dot B}^s_{p,r}(\RR^d)}\|u\|_{L^\infty(\RR^d)}\big).
\end{align}
\end{lemma}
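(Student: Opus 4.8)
The plan is to use Bony's homogeneous paraproduct decomposition, which splits the product as
\begin{align*}
  uv = \dot T_u v + \dot T_v u + \dot R(u,v),
\end{align*}
where the paraproduct is $\dot T_u v := \sum_{j\in\ZZ} \dot{S}_{j-1}u\,\dot{\Delta}_j v$ and the remainder is $\dot R(u,v):=\sum_{|j-j'|\le 1}\dot{\Delta}_j u\,\dot{\Delta}_{j'}v$. The two tame bounds on the right-hand side of \eqref{est.prod2} will be recovered by estimating these three pieces separately, the regularity gain in each case coming from the spectral localization of the dyadic blocks together with Young's inequality for discrete convolutions.

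I would first treat the paraproduct $\dot T_u v$. Since the spectrum of $\dot{S}_{j-1}u\,\dot{\Delta}_j v$ lies in a dyadic annulus $\{|\xi|\sim 2^j\}$, only finitely many indices $j$ (those with $|j-k|\le N_0$ for an absolute constant $N_0$) contribute to $\dot{\Delta}_k(\dot T_u v)$. Using $\|\dot{S}_{j-1}u\|_{L^\infty}\lesssim\|u\|_{L^\infty}$ and H\"older's inequality,
\begin{align*}
  2^{ks}\|\dot{\Delta}_k(\dot T_u v)\|_{L^p}\lesssim \|u\|_{L^\infty}\sum_{|j-k|\le N_0}2^{(k-j)s}\big(2^{js}\|\dot{\Delta}_j v\|_{L^p}\big).
\end{align*}
The right-hand side is a discrete convolution of the $\ell^r$-sequence $\big(2^{js}\|\dot{\Delta}_j v\|_{L^p}\big)_j$ against a finitely supported kernel, so taking the $\ell^r_k$-norm and applying Young's inequality gives $\|\dot T_u v\|_{\dot B^s_{p,r}}\lesssim\|u\|_{L^\infty}\|v\|_{\dot B^s_{p,r}}$. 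Exchanging the roles of $u$ and $v$ yields $\|\dot T_v u\|_{\dot B^s_{p,r}}\lesssim\|v\|_{L^\infty}\|u\|_{\dot B^s_{p,r}}$. No restriction on $s$ is needed here, because the summation in $j$ runs over a symmetric finite band around $k$.

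The remainder $\dot R(u,v)$ is where the hypothesis $s>0$ enters. The spectrum of a block $\dot{\Delta}_j u\,\dot{\Delta}_{j'}v$ with $|j-j'|\le 1$ sits in a \emph{ball} $\{|\xi|\lesssim 2^j\}$ rather than an annulus, so $\dot{\Delta}_k(\dot R(u,v))$ receives contributions from all $j\ge k-N_1$. Placing one factor in $L^\infty$ and the other in $L^p$, and using $\|\dot{\Delta}_{j'}v\|_{L^\infty}\lesssim\|v\|_{L^\infty}$,
\begin{align*}
  2^{ks}\|\dot{\Delta}_k(\dot R(u,v))\|_{L^p}\lesssim \|v\|_{L^\infty}\sum_{j\ge k-N_1}2^{(k-j)s}\big(2^{js}\|\dot{\Delta}_j u\|_{L^p}\big).
\end{align*}
Since $s>0$, the convolution kernel $\big(2^{-ms}\mathbf{1}_{m\ge -N_1}\big)_m$ lies in $\ell^1$, so Young's inequality again closes the estimate as $\|\dot R(u,v)\|_{\dot B^s_{p,r}}\lesssim\|u\|_{\dot B^s_{p,r}}\|v\|_{L^\infty}$. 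Combining the three bounds gives \eqref{est.prod2}.

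The main obstacle I anticipate is precisely the remainder: unlike the paraproducts, its dyadic pieces are frequency-localized only to balls, so one must sum a one-sided geometric series in the high-frequency direction, and this converges exactly when $s>0$ (for $s\le 0$ the estimate genuinely fails and must be replaced by one pairing different spaces). A secondary technical point is to verify, in the homogeneous setting, that the paraproduct and remainder series converge in $\mathcal{S}'/P$ and that the low-frequency polynomial ambiguity is harmless, which is routine once $s>0$ guarantees absolute convergence of the dyadic sums.
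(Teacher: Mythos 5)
Your proof is correct and takes essentially the same route as the paper's source for this lemma: the paper gives no proof of its own but cites \cite{BCD11}, Corollary 2.54, whose proof is exactly Bony's homogeneous decomposition with the paraproduct bounds $\|\dot T_u v\|_{\dot B^s_{p,r}}\lesssim \|u\|_{L^\infty}\|v\|_{\dot B^s_{p,r}}$ (valid for all $s$) and the remainder bound requiring $s>0$, as you derive. Your dyadic bookkeeping and the identification of $s>0$ as the condition making the one-sided geometric sum (hence Young's inequality in $\ell^r$) work are exactly the standard argument, and your constants indeed depend only on $s$ and $d$.
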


We have the characterization of homogeneous Besov space in terms of the fractional heat semigroup $e^{-t\Lambda^{2\alpha}}$.
\begin{lemma}[\cite{MiaoYZh08}, Proposition 2.1]\label{frc-Gausskehua}
Let $s>0$, $(p,r)\in [1,\infty]^2$.  Then, for any $\varphi\in \dot B^{-s}_{p,r}(\RR^d)$,  there exists a constant $C=C(s,p,r,d)\geq 1$ such that
\begin{align*}
  C^{-1}\|\varphi\|_{\dot B^{-s}_{p,r}(\RR^d)}\leq \big\|t^{s\over {2\alpha}}\|e^{-t \Lambda^{2\alpha}} \varphi\|_{L^p(\RR^d)}\big\|_{L^r(\RR _+;{{\mathrm dt}\over t})}\leq C \|\varphi\|_{\dot B^{-s}_{p,r}(\RR^d)}.
\end{align*}
\end{lemma}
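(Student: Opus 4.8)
The plan is to reduce the two-sided estimate to the Littlewood--Paley blocks $\dot{\Delta}_j\varphi$ and to prove each inequality separately by exploiting the action of $e^{-t\Lambda^{2\alpha}}$ on a single dyadic frequency annulus. Throughout, the essential point is that for a distribution with spectrum in $\{|\xi|\sim 2^j\}$ the Fourier multiplier $e^{-t|\xi|^{2\alpha}}$ behaves like the scalar $e^{-ct2^{2\alpha j}}$, which upon the substitution $\tau=\log t$ turns the sum over $j$ into a semi-discrete convolution. The hypothesis $s>0$ will be used to guarantee both the integrability of $t^{s/(2\alpha)}$ near $t=0$ and the convergence of the resulting convolution kernel.

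\textbf{Upper bound.} First I would prove the frequency-localized smoothing estimate
\[ \|e^{-t\Lambda^{2\alpha}}\dot{\Delta}_j\varphi\|_{L^p(\RR^d)} \le C\, e^{-c t 2^{2\alpha j}}\|\dot{\Delta}_j\varphi\|_{L^p(\RR^d)}, \]
uniformly in $j\in\ZZ$ and $t>0$. This follows by writing $e^{-t\Lambda^{2\alpha}}\dot{\Delta}_j\varphi = h_{t,j}*\dot{\Delta}_j\varphi$ with $h_{t,j}=\mathcal{F}^{-1}\big[e^{-t|\xi|^{2\alpha}}\widetilde\varphi(2^{-j}\xi)\big]$ for a fixed cutoff $\widetilde\varphi$ equal to $1$ on $\mathrm{supp}\,\varphi$, and then applying Young's inequality together with the scaling $\xi=2^j\eta$; since $|\eta|^{2\alpha}$ is bounded below on $\mathrm{supp}\,\widetilde\varphi$, the $L^1$-norm of the rescaled kernel is $\le C e^{-ct2^{2\alpha j}}$. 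Summing over $j$ by the triangle inequality and passing to $\tau=\log t$ (so that $L^r(\RR_+;\dd t/t)=L^r(\RR_{\tau})$), the quantity becomes $\sum_j b_j\,\Psi(\tau+2\alpha j\log 2)$ with $b_j=2^{-js}\|\dot{\Delta}_j\varphi\|_{L^p}$ and $\Psi(\tau)=e^{s\tau/(2\alpha)}e^{-ce^{\tau}}$. A semi-discrete Young's inequality then bounds the $L^r_\tau$-norm by $\|b\|_{\ell^r}\sum_m\sup_{[m,m+1]}|\Psi|$, and this last series converges precisely because $s>0$ forces $\Psi\to 0$ at both ends; this yields the right-hand inequality, i.e. control of the semigroup quantity by $\|\varphi\|_{\dot B^{-s}_{p,r}}$.

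\textbf{Lower bound.} For the reverse estimate I would recover $\dot{\Delta}_j\varphi$ from the semigroup by inverting $e^{-t\Lambda^{2\alpha}}$ on the annulus. Since $\widetilde\varphi(2^{-j}D)$ acts as the identity on $\dot{\Delta}_j\varphi$, one has $\dot{\Delta}_j\varphi = m_{t,j}(D)\,e^{-t\Lambda^{2\alpha}}\varphi$ with $m_{t,j}(\xi)=\varphi(2^{-j}\xi)e^{t|\xi|^{2\alpha}}$; the same scaling $\xi=2^j\eta$ shows $\|\mathcal{F}^{-1}m_{t,j}\|_{L^1}\le C$ uniformly whenever $t2^{2\alpha j}\le 2$, so that $\|\dot{\Delta}_j\varphi\|_{L^p}\le C\|e^{-t\Lambda^{2\alpha}}\varphi\|_{L^p}$ for all $t$ in the dyadic interval $I_j=[2^{-2\alpha j},\,2\cdot 2^{-2\alpha j}]$. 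Raising to the power $r$, multiplying by $t^{rs/(2\alpha)}$ and integrating in $\dd t/t$ over $I_j$ (on which $t^{s/(2\alpha)}\sim 2^{-js}$) gives $2^{-jsr}\|\dot{\Delta}_j\varphi\|_{L^p}^r\lesssim \int_{I_j}\big(t^{s/(2\alpha)}\|e^{-t\Lambda^{2\alpha}}\varphi\|_{L^p}\big)^r\,\dd t/t$; summing over $j$ and using the bounded overlap of the $I_j$ produces the left-hand inequality. The endpoint cases $p=\infty$ or $r=\infty$ are handled by replacing the relevant integrals with suprema.

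I expect the main obstacle to be the two uniform kernel estimates --- the bounds $\|h_{t,j}\|_{L^1}\le Ce^{-ct2^{2\alpha j}}$ and $\|\mathcal{F}^{-1}m_{t,j}\|_{L^1}\le C$ --- whose uniformity in $j$ must be extracted through the scaling argument and the smoothness of $e^{\mp t|\xi|^{2\alpha}}$ restricted to a fixed annulus. Once these are in hand, the remaining work is the bookkeeping of the semi-discrete convolution (upper bound) and the dyadic summation over the intervals $I_j$ (lower bound), both of which hinge on $s>0$.
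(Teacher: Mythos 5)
Your proposal is correct, but note that the paper itself gives no proof of this lemma: it is quoted verbatim as Proposition 2.1 of the cited reference \cite{MiaoYZh08}, so there is no in-paper argument to compare against. Your two-step argument --- the dyadic smoothing bound $\|e^{-t\Lambda^{2\alpha}}\dot\Delta_j\varphi\|_{L^p}\lesssim e^{-ct2^{2\alpha j}}\|\dot\Delta_j\varphi\|_{L^p}$ plus semi-discrete Young's inequality for the upper bound, and inversion of the multiplier on each annulus over the intervals $t\sim 2^{-2\alpha j}$ for the lower bound --- is exactly the standard Littlewood--Paley proof of that cited result (the fractional analogue of the heat-semigroup characterization in \cite{BCD11}), and the uniform kernel estimates you flag as the main technical point do go through by the scaling-plus-integration-by-parts argument you indicate, with $s>0$ entering precisely where you say it does.
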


We also use the following finite-difference characterization of homogeneous Besov space.
\begin{lemma}[\cite{BCD11}, Theorem 2.36]\label{lem:Besov-fd}
  Let $s\in (0,1)$ and $(p,r)\in [1,\infty]^2$. Then there exists a constant $C = C(s,p,r,d)\geq 1$ such that
\begin{align}\label{eq.Besv.fd}
  C^{-1} \|f\|_{\dot B^s_{p,r}(\RR^d)} \leq \Big\|\frac{\|u(y+\cdot)- u(\cdot)\|_{L^p}}{|y|^s} \Big\|_{L^r(\RR^d; \frac{\dd y}{|y|^d})}
  \leq C \|f\|_{\dot B^s_{p,r}(\RR^d)}.
\end{align}
\end{lemma}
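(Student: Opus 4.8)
The plan is to prove the two-sided estimate by comparing both quantities with the Littlewood-Paley decomposition, handling the two inequalities separately and using only the Bernstein inequality together with the fact that $\varphi$ is supported in an annulus, so $\varphi(0)=0$. Throughout I would write $\delta_y f := f(\cdot + y) - f(\cdot)$ and $D(y) := |y|^{-s}\|\delta_y f\|_{L^p}$, so that the middle quantity is exactly $\|D\|_{L^r(\RR^d;\,\dd y/|y|^d)}$. The measure $\dd y/|y|^d$ is the natural (Haar) measure for the dilations $y\mapsto 2^j y$ entering below, and for a radial integrand it reduces, after integrating out the angular variable, to a constant multiple of $\dd t/t$ on $\RR_+$; this reduction is what I would invoke once $D$ has been bounded by a radial quantity.

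For the upper bound $\|D\|_{L^r(\dd y/|y|^d)} \lesssim \|f\|_{\dot B^s_{p,r}}$, I would decompose $\delta_y f = \sum_{j\in\ZZ}\delta_y \dot{\Delta}_j f$ and estimate each block in two ways: trivially $\|\delta_y\dot{\Delta}_j f\|_{L^p}\leq 2\|\dot{\Delta}_j f\|_{L^p}$, and by the mean-value inequality with Bernstein $\|\delta_y \dot{\Delta}_j f\|_{L^p}\leq |y|\,\|\nabla \dot{\Delta}_j f\|_{L^p}\lesssim |y|\,2^j\|\dot{\Delta}_j f\|_{L^p}$. Combining gives $\|\delta_y\dot{\Delta}_j f\|_{L^p}\lesssim \min(1,2^j|y|)\,\|\dot{\Delta}_j f\|_{L^p}$, hence $D(y)\lesssim \sum_j \psi(2^j|y|)\,(2^{js}\|\dot{\Delta}_j f\|_{L^p})$ with $\psi(t):=t^{-s}\min(1,t)$. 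The right-hand side is radial, so taking the $L^r(\dd t/t)$ norm turns this into a convolution of the sequence $c_j:=2^{js}\|\dot{\Delta}_j f\|_{L^p}\in\ell^r$ against $\psi$ on the multiplicative group $(\RR_+,\dd t/t)$, and the generalized Young inequality yields $\|D\|_{L^r}\lesssim \|\psi\|_{L^1(\dd t/t)}\|c\|_{\ell^r}=\|f\|_{\dot B^s_{p,r}}$. Here the restriction $0<s<1$ is exactly what makes $\int_0^\infty \psi(t)\,\dd t/t=\frac1{1-s}+\frac1s$ finite, with $s\to1$ failing at $t\to0$ and $s\to0$ failing at $t\to\infty$.

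For the lower bound I would exploit $\varphi(0)=0$. Writing $h:=\mathcal F^{-1}\varphi$ (a Schwartz function with $\int h=\varphi(0)=0$), the identity $\dot{\Delta}_j f = 2^{jd}\int h(2^j z) f(\cdot - z)\,\dd z$ combined with the vanishing term $2^{jd}\big(\int h(2^j z)\,\dd z\big) f=0$ gives, after the substitution $y=-z$, the representation $\dot{\Delta}_j f = \int h_j(y)\,\delta_y f\,\dd y$ with $h_j(y):=2^{jd}h(-2^j y)$. Hence $2^{js}\|\dot{\Delta}_j f\|_{L^p}\leq \int G_j(y)\,D(y)\,\frac{\dd y}{|y|^d}$ with $G_j(y):=2^{js}|y|^{d+s}|h_j(y)|$, and it remains to bound the operator $D\mapsto \big(\int G_j D\,\dd y/|y|^d\big)_j$ from $L^r(\dd y/|y|^d)$ to $\ell^r$. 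I would close this by a Schur test: a direct rescaling gives $\sup_j\int G_j(y)\,\dd y/|y|^d=\int|u|^s|h(u)|\,\dd u<\infty$ and $\sup_y\sum_j G_j(y)=\sup_y\sum_j|2^jy|^{s+d}|h(-2^jy)|<\infty$, both finite because $h$ is Schwartz and $s+d>0$, which then yields $\|f\|_{\dot B^s_{p,r}}\lesssim \|D\|_{L^r(\dd y/|y|^d)}$.

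The main obstacle, and the step I would be most careful with, is the lower bound: one must encode a single dyadic block $\dot{\Delta}_j f$ purely in terms of first-order differences $\delta_y f$, which is possible only because $\varphi(0)=0$ forces $\int h=0$, and one must then control the discrete-over-scales sum $\sum_j G_j(y)$ uniformly in $y$. This dyadic sampling along rays is the delicate point; it converges precisely because $s+d>0$ (handling $j\to-\infty$) and because $h$ decays rapidly (handling $j\to+\infty$). It is also here that the first-difference characterization is tied to $s<1$: first differences only detect regularity below order one, so for $s\ge1$ the representation would have to be replaced by second or higher differences.
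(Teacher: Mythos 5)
The paper offers no proof of this lemma: it is quoted verbatim from \cite[Theorem 2.36]{BCD11}, so there is no in-paper argument to compare yours against. Your proof is essentially the standard one behind that citation — blockwise mean-value/Bernstein bounds giving $\|\delta_y\dot\Delta_jf\|_{L^p}\lesssim\min(1,2^j|y|)\|\dot\Delta_jf\|_{L^p}$ for the upper bound, and for the lower bound the reproducing identity $\dot\Delta_jf=\int h_j(y)\,\delta_yf\,\dd y$ (valid precisely because $\int h=\varphi(0)=0$) followed by a Schur test. The lower-bound half is correct as written: the row sums collapse by dilation invariance of $\dd y/|y|^d$ to $\int|u|^s|h(u)|\,\dd u<\infty$, and the column sums $\sum_j|2^jy|^{s+d}|h(-2^jy)|$ are uniformly bounded by splitting at $2^j|y|=1$ and using $s+d>0$ together with the rapid decay of the Schwartz function $h$.

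One step in the upper bound deserves a concrete warning. The ``generalized Young inequality'' you invoke, namely $\big\|\sum_j c_j\,\psi(2^j\cdot)\big\|_{L^r(\dd t/t)}\lesssim\|\psi\|_{L^1(\dd t/t)}\|c\|_{\ell^r}$, is \emph{false} for general $\psi\in L^1(\dd t/t)$: after the change of variables $u=\log_2(1/t)$ this is the convolution of the Dirac comb $\sum_j c_j\delta_j$ with an $L^1(\RR)$ kernel, and a kernel consisting of $N$ spikes of width $w$ and total mass $1$ produces, already for $c=(\delta_{j0})_j$, an output of $L^r$-norm of order $(Nw)^{1/r-1}\to\infty$ as $w\to0$. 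What rescues your argument is a property of the specific kernel, not its $L^1$ norm: $\psi(t)=t^{-s}\min(1,t)$ is bounded by $1$ and monotone on each side of $t=1$, so the sequence $a_m:=\sup_{t\in[2^m,2^{m+1})}\psi(t)\approx 2^{-ms}\min(1,2^m)$ lies in $\ell^1$ (this is exactly where $0<s<1$ enters). Restricting $y$ to dyadic annuli $|y|\approx 2^{-k}$, on which the measure $\dd y/|y|^d$ has mass independent of $k$, your pointwise bound becomes $D(y)\lesssim(a*c)_k$, and the discrete Young inequality $\|a*c\|_{\ell^r}\le\|a\|_{\ell^1}\|c\|_{\ell^r}$ finishes the estimate. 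With that one-line repair — replacing the appeal to a general mixed Young inequality by the dyadic discretization plus discrete Young — the proof is complete.
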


The maximal regularity estimate in the framework of Besov space for the nonhomogeneous fractional heat equation is useful in the sequel.
\begin{lemma}[\cite{WuYuan08}, Theorem 3.2]\label{lem.es.Be}
Let $s\in\RR$, $(p,r)\in [1,\infty]^2$, $\alpha\in (0,1)$ and $1\leq \rho_1\leq\rho\leq\infty$.
Assume $f_0\in {\dot B}^s_{p,r}(\RR^d)$, $g\in {{\widetilde L}^{\rho_1}_T({\dot B}^{s-2\alpha+\frac {2\alpha}{\rho_1}}_{p,r}(\RR^d))}$ and $f$ solves the  fractional heat equation
\begin{align*}
  \partial_t f+\Lambda^{2\alpha} f=g,\quad f|_{t=0}=f_0.
\end{align*}
Then there exists a constant $C= C(d,\alpha)>0$ such that for every $T\in (0, \infty]$,
\begin{align}
  \|f\|_{{\widetilde L}^{\rho}_T({\dot B}^{s+\frac {2\alpha}\rho}_{p,r})} \leq C\Big(\|f_0\|_{{\dot B}^{s}_{p,r}}+\|g\|_{{\widetilde L}^{\rho_1}_T({\dot B}^{s-2\alpha+\frac {2\alpha}{\rho_1}}_{p,r})}\Big).
\end{align}
\end{lemma}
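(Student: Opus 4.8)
The plan is to reduce the estimate to a single dyadic block and then exploit the smoothing and decay of the fractional heat semigroup $e^{-t\Lambda^{2\alpha}}$ on frequency-localized data. First I would apply $\dot\Delta_j$ to the equation and set $f_j:=\dot\Delta_j f$, $g_j:=\dot\Delta_j g$, $f_{0,j}:=\dot\Delta_j f_0$, so that each $f_j$ solves the same scalar fractional heat equation with data spectrally supported in $|\xi|\sim 2^j$. Duhamel's formula then gives
\[
  f_j(t) = e^{-t\Lambda^{2\alpha}} f_{0,j} + \int_0^t e^{-(t-\tau)\Lambda^{2\alpha}} g_j(\tau)\,\dd\tau .
\]

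The key ingredient is the frequency-localized decay estimate
\[
  \|e^{-t\Lambda^{2\alpha}}\dot\Delta_j h\|_{L^p(\RR^d)} \le C\, e^{-c t 2^{2\alpha j}}\,\|\dot\Delta_j h\|_{L^p(\RR^d)},
\]
uniform in $j\in\ZZ$, $t\ge 0$ and $p\in[1,\infty]$. I would prove it by writing the operator as convolution with the kernel $\mathcal F^{-1}\big(e^{-t|\xi|^{2\alpha}}\varphi(2^{-j}\xi)\big)$; after the scaling $\xi=2^j\eta$ this kernel becomes $2^{jd}\mathcal K(2^j\cdot)$ with $\mathcal K=\mathcal F^{-1}\big(e^{-t2^{2\alpha j}|\eta|^{2\alpha}}\varphi(\eta)\big)$, and repeated integration by parts (using that $\varphi$ is supported in an annulus bounded away from the origin, where $|\eta|^{2\alpha}$ is smooth) yields $\|\mathcal K\|_{L^1}\le C e^{-c t 2^{2\alpha j}}$; Young's convolution inequality in space then gives the claim. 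This is the fractional analogue of the semigroup bounds underlying Lemma \ref{frc-Gausskehua}.

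Inserting this bound into Duhamel and taking the $L^\rho$-norm in time, the homogeneous part contributes $\|e^{-ct2^{2\alpha j}}\|_{L^\rho(\RR_+)}\|f_{0,j}\|_{L^p}=C\,2^{-2\alpha j/\rho}\|f_{0,j}\|_{L^p}$. For the Duhamel integral I would apply Young's inequality in the time variable against the kernel $t\mapsto e^{-ct2^{2\alpha j}}\mathbf{1}_{t>0}$, choosing the exponent $m$ through $1+\tfrac1\rho=\tfrac1m+\tfrac1{\rho_1}$; this requires precisely $\rho_1\le\rho$ (matching the hypothesis), and produces the bound $C\,2^{-2\alpha j/m}\|g_j\|_{L^{\rho_1}_T(L^p)}$ with $2\alpha/m=2\alpha+2\alpha/\rho-2\alpha/\rho_1$. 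Multiplying the resulting estimate for $\|f_j\|_{L^\rho_T(L^p)}$ by the weight $2^{j(s+2\alpha/\rho)}$, the homogeneous term collapses to $2^{js}\|f_{0,j}\|_{L^p}$, while the forcing term collapses to $2^{j(s-2\alpha+2\alpha/\rho_1)}\|g_j\|_{L^{\rho_1}_T(L^p)}$ — exactly the weights defining the two target norms. Taking the $\ell^r(\ZZ)$-norm in $j$ and using the triangle inequality in $\ell^r$ then yields the stated inequality, uniformly in $T\in(0,\infty]$.

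I expect the main obstacle to be the semigroup decay estimate itself: unlike the classical heat kernel, $e^{-t\Lambda^{2\alpha}}$ admits no closed Gaussian form for $\alpha\in(0,1)$, so the exponential gain $e^{-ct2^{2\alpha j}}$ must be extracted carefully from the oscillatory integral defining $\mathcal K$, controlling every derivative of $e^{-t2^{2\alpha j}|\eta|^{2\alpha}}$ on the annulus uniformly in the large parameter $t2^{2\alpha j}$. The remaining steps are then a bookkeeping of the two Young-type convolution estimates (one in space, one in time) and a verification that the Sobolev/summation exponents balance, which they do exactly because of the gain of $2\alpha/\rho$ derivatives built into the left-hand Chemin--Lerner norm.
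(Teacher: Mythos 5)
The paper does not prove this lemma at all --- it is quoted verbatim from the cited reference \cite{WuYuan08} (Theorem 3.2), so there is no internal proof to compare against. Your argument is correct and is precisely the standard proof used in that reference: dyadic localization $\dot\Delta_j$, Duhamel's formula, the frequency-localized decay bound $\|e^{-t\Lambda^{2\alpha}}\dot\Delta_j h\|_{L^p}\leq C e^{-ct2^{2\alpha j}}\|\dot\Delta_j h\|_{L^p}$ (obtained, as you say, from an $L^1$ kernel bound on a rescaled annulus), Young's inequality in time with $1+\tfrac1\rho=\tfrac1m+\tfrac1{\rho_1}$ (which is exactly where $\rho_1\leq\rho$ enters), and finally the weights $2^{j(s+2\alpha/\rho)}$ and the $\ell^r$ sum balancing exactly as you computed.
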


The (pointwise) multiplier space of homogeneous Besov space is defined as follows.
\begin{definition}\label{def.mu}
Let $1\leq p, r\leq \infty,\,\sigma\in \RR$. The multiplier space $\mathcal{M}(\dot B^\sigma_{p,r}(\RR^d))$ of $\dot B^\sigma_{p,r}(\RR^d)$, abbreviated as $\mathcal{M}(\dot B^\sigma_{p,r})$,
is the set of tempered distribution $f$ such that $f\phi\in \dot B^\sigma_{p,r}(\RR^d)$ for any $\phi\in \dot B^\sigma_{p,r}(\RR^d)$,
with the norm endowed by
\begin{align*}
  \|f\|_{\mathcal{M}(\dot B^\sigma_{p,r})} := \sup_{\|\phi\|_{ \dot B^\sigma_{p,r}(\RR^d)}\leq 1}\|f\phi\|_{\dot B^\sigma_{p,r}(\RR^d)}.
\end{align*}
\end{definition}

The following lemma states that the multiplier space $\mathcal{M}(\dot{B}^\sigma_{p,r})$ can involve the element having the patch structure.
\begin{lemma}[\cite{DanchMucha12}, Lemma A.7]\label{lem:mult-sp}
  Let $\Omega$ be the half-space $\RR^d_+$ or a bounded domain of $\RR^d$ with $C^1$-boundary.
Assume that $s\in \RR$ and $p,r\in [1,\infty]$ are such that $-1+\frac{1}{p}<s < \frac{1}{p}$.
Then the characteristic function $1_\Omega(x)$ of $\Omega$ belongs to the multiplier space $\mathcal{M}(\dot B^s_{p,r}(\RR^d))$.
\end{lemma}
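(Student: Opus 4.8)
The plan is to reduce everything to the model half-space $\Omega=\RR^d_+$ and to control $\|1_\Omega\phi\|_{\dot B^s_{p,r}}$ through the finite-difference description of Lemma~\ref{lem:Besov-fd}, the decisive point being a fractional Hardy inequality that is available exactly when $s<1/p$. By Definition~\ref{def.mu} it suffices to produce a constant $C$ with $\|1_\Omega\phi\|_{\dot B^s_{p,r}}\le C\|\phi\|_{\dot B^s_{p,r}}$ for all $\phi$.

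First I would remove the sign of $s$ by duality. Since $1_\Omega$ is real and bounded, the multiplication operator $M:\phi\mapsto 1_\Omega\phi$ is formally self-adjoint, $\langle 1_\Omega\phi,\psi\rangle=\langle\phi,1_\Omega\psi\rangle$, and $(\dot B^s_{p,r})'=\dot B^{-s}_{p',r'}$; hence $1_\Omega\in\mathcal M(\dot B^s_{p,r})$ if and only if $1_\Omega\in\mathcal M(\dot B^{-s}_{p',r'})$, with comparable norms. As the constraint $-1+\tfrac1p<s<\tfrac1p$ is invariant under $(s,p,r)\mapsto(-s,p',r')$, it is enough to treat $0<s<\tfrac1p$ (the value $s=0$, which occurs only for $1<p<\infty$, then follows by real interpolation between two symmetric exponents $\pm\sigma$ with $\sigma$ small). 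Moreover, writing $\dot B^s_{p,r}=(\dot B^{s_0}_{p,p},\dot B^{s_1}_{p,p})_{\theta,r}$ with $s_0,s_1\in(0,1/p)$, real interpolation of the bounded operator $M$ lets me reduce to the diagonal case $r=p$, i.e. to the Gagliardo seminorm.

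Next comes the core estimate, which I would carry out on the half-space and then transplant. Using Lemma~\ref{lem:Besov-fd} with $r=p$ and decomposing
\begin{align*}
  1_\Omega(x+y)\phi(x+y)-1_\Omega(x)\phi(x)
  =1_\Omega(x+y)\big(\phi(x+y)-\phi(x)\big)+\big(1_\Omega(x+y)-1_\Omega(x)\big)\phi(x),
\end{align*}
the first term has $L^p$-norm at most $\|\phi(\cdot+y)-\phi(\cdot)\|_{L^p}$ and thus contributes $\lesssim\|\phi\|_{\dot B^s_{p,p}}$ after integration against $\dd y/|y|^{sp+d}$. For $\Omega=\RR^d_+$ the second (boundary) term is supported in the slab $\{|x_d|\le|y_d|\}$, so by Fubini
\begin{align*}
  \int_{\RR^d}\frac{1}{|y|^{sp+d}}\int_{\{|x_d|\le|y_d|\}}|\phi(x)|^p\,\dd x\,\dd y
  =\int_{\RR^d}|\phi(x)|^p\Big(\int_{\{|y_d|\ge|x_d|\}}|y|^{-sp-d}\,\dd y\Big)\dd x
  \simeq\int_{\RR^d}\frac{|\phi(x)|^p}{|x_d|^{sp}}\,\dd x,
\end{align*}
where the inner integral is evaluated by integrating first in the tangential variables $y'$ (finite since $sp>-1$) and then in $y_d$ (finite since $sp>0$).

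The hard part is the remaining weighted quantity, and this is exactly where the threshold $s<1/p$ enters. Applying the one-dimensional fractional Hardy inequality $\int_{\RR}|t|^{-sp}|g(t)|^p\,\dd t\lesssim\iint_{\RR^2}\frac{|g(t)-g(\tau)|^p}{|t-\tau|^{1+sp}}\,\dd t\,\dd\tau$, valid precisely because $sp<1$, in the $x_d$ variable for a.e.\ $x'$ and then integrating in $x'$, I would bound the weighted integral by the $e_d$-directional Gagliardo seminorm of $\phi$, which in turn is dominated by $\|\phi\|_{\dot B^s_{p,p}(\RR^d)}^p$ (again via Lemma~\ref{lem:Besov-fd}); this closes the half-space case. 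For a bounded $C^1$-domain the identical splitting applies with the Euclidean distance $\mathrm{dist}(x,\partial\Omega)$ in place of $|x_d|$: the boundary term is supported where $\mathrm{dist}(x,\partial\Omega)\le|y|$ and is therefore controlled by $\int_{\RR^d}\mathrm{dist}(x,\partial\Omega)^{-sp}|\phi(x)|^p\,\dd x$, which is absorbed by the fractional Hardy inequality on $\Omega$ and on $\Omega^c$—here the $C^1$-regularity of $\partial\Omega$ is used—once more under the sole condition $sp<1$. Any $s\ge 1/p$ would render these weighted integrals genuinely divergent, so $s<1/p$ (and, dually, $s>-1+1/p$) is sharp, which is what makes this boundary term the crux of the argument.
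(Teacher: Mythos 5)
First, a point of reference: the paper does not prove this lemma at all — it is quoted verbatim from \cite{DanchMucha12} (Lemma A.7) — so your proposal can only be measured against the literature, not against an argument in this paper. On its own terms, your half-space core is essentially correct: the splitting of $1_\Omega(x+y)\phi(x+y)-1_\Omega(x)\phi(x)$, the observation that the boundary term is supported in $\{|x_d|\le|y_d|\}$, the Fubini computation producing the weight $|x_d|^{-sp}$ (tangential integrability from $sp>-1$, radial from $sp>0$), the one-dimensional Herbst-type fractional Hardy inequality for $sp<1$, and the domination of the $e_d$-directional Gagliardo seminorm by $\|\phi\|_{\dot B^s_{p,p}}$ via Lemma \ref{lem:Besov-fd} all hold. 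The duality and interpolation reductions are also sound in spirit, modulo standard endpoint care: when $r=1$ one has $r'=\infty$ and $\bigl(\dot B^{-s}_{p',\infty}\bigr)^*\neq \dot B^{s}_{p,1}$, and for $s<0$ the product $1_\Omega\phi$ must first be defined by density; both issues are repaired by interpolating in $s$ (the admissible range of $s$ is open), which is the same trick you already use for $s=0$.

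The genuine gap is the bounded-domain case. The inequality you invoke there — ``the fractional Hardy inequality on $\Omega$ \dots under the sole condition $sp<1$'' — is false for bounded domains: by Dyda's theorem, a bounded Lipschitz domain satisfies the intrinsic inequality $\int_\Omega \mathrm{dist}(x,\partial\Omega)^{-sp}|u(x)|^p\,\dd x\lesssim \iint_{\Omega\times\Omega}|u(x)-u(y)|^p|x-y|^{-d-sp}\,\dd x\,\dd y$ precisely in the \emph{opposite} regime $sp>1$. For $sp<1$ it fails: take $u_n\equiv 1$ on $\{\mathrm{dist}(x,\partial\Omega)>1/n\}$ with a transition layer of width $1/n$; then the right-hand side is $O(n^{sp-1})\to 0$ while the left-hand side tends to $\int_\Omega \mathrm{dist}(x,\partial\Omega)^{-sp}\,\dd x>0$. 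What your argument actually needs is the global-seminorm version $\int_{\RR^d}\mathrm{dist}(x,\partial\Omega)^{-sp}|\phi|^p\,\dd x\lesssim\|\phi\|_{\dot B^s_{p,p}(\RR^d)}^p$, and that is not a quotable standalone fact; it must be proved by exactly the work you compress into ``here the $C^1$-regularity is used'': cover $\partial\Omega$ by finitely many $C^1$ charts and flatten (both the Gagliardo seminorm and the distance weight are bi-Lipschitz invariant), apply the half-space estimate there; in the far field $\mathrm{dist}(x,\partial\Omega)\simeq|x|$ and Herbst's inequality (valid since $sp<d$) applies; and in the intermediate region, where the weight is merely bounded, controlling $\|\phi\|_{L^p}$ by the pure seminorm requires the embedding $\dot B^s_{p,p}\hookrightarrow L^q$ with $1/q=1/p-s/d$. (Alternatively, one can localize the multiplier estimate itself and reduce to the half-space.) So the bounded case is not the ``identical splitting'' you claim; as written, that step fails, although the strategy is repairable along these lines.
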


We have the following regularity propagation estimates for a composite function involving the particle-trajectory map.
\begin{lemma}\label{lem:f.mu}
Let $T\in (0,\infty]$, $(q, r)\in [1,\infty]^2$, and $u\in L^1(0,T;Lip(\RR^d))$ be a divergence-free vector field.
Let $ X_t:\RR^d \rightarrow \RR^d $ be the particle-trajectory map 
defined by \eqref{eq:flow0}
with its inverse $X_t^{-1}$.
\begin{enumerate}[(1)]
\item If $f\in {L^q(0,T;\dot B^\sigma_{p,r}(\RR^d))}$, $\sigma\in (-1,1)$, then $f\circ X_t^{\pm1}\in L^q(0,T;{\dot B^{\sigma}_{p,r}}(\RR^d))$ with
\begin{align}\label{es.f.mu}
  \|f\circ X_t^{\pm1}\|_{L^q_T(\dot B^{\sigma}_{p,r})}\leq C\|f\|_{L^q_T(\dot B^{\sigma}_{p,r})}e^{C\int_0^T\|\nabla u\|_{L^\infty}\dd t}.
\end{align}
\item Assume $ a_0\in \mathcal{M}(\dot B^{\sigma}_{p,r}(\RR^d))$, $\sigma\in (-1,1)$,
and $a(t,x)$ is a smooth solution to the free transport equation $\partial_t a + u\cdot\nabla a=0$ associated with $ a|_{t=0}=a_0$.
Then $a \in L^\infty(0,T; \mathcal{M}(\dot B^{\sigma}_{p,r}))$ with
\begin{align}\label{a.m}
  \|a \|_{L^\infty_T(\mathcal{M}(\dot B^{\sigma}_{p,r}))}\leq C\|a_0\|_{\mathcal{M}(\dot B^{\sigma}_{p,r})}e^{C \int_0^T\|\nabla u\|_{L^\infty}\dd t}.
\end{align}
\end{enumerate}
\end{lemma}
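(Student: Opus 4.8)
The plan is to reduce both assertions to a single fixed-time ``pushforward'' estimate in $\dot B^\sigma_{p,r}$, which I will refer to as $(\ast)$: for each $t\in[0,T]$ and any $h\in \dot B^\sigma_{p,r}(\RR^d)$,
\begin{equation*}
  \|h\circ X_t^{\pm1}\|_{\dot B^\sigma_{p,r}} \leq C\, e^{C\int_0^t \|\nabla u\|_{L^\infty}\,\dd s}\, \|h\|_{\dot B^\sigma_{p,r}}, \qquad \sigma\in(-1,1).
\end{equation*}
Granting $(\ast)$, Part (1) follows immediately: applying it with $h=f(t,\cdot)$ for a.e.\ $t$, bounding $e^{C\int_0^t}\leq e^{C\int_0^T}$, and taking the $L^q_t(0,T)$ norm gives exactly \eqref{es.f.mu}.

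The cleanest route to $(\ast)$ is to recognize each composition as the time-$t$ value of a \emph{pure transport flow}; this is precisely why the admissible range is $\sigma\in(-1,1)$. Since $u$ is divergence free, the flow $X_t$ of \eqref{eq:flow0} is a measure-preserving bi-Lipschitz homeomorphism. For the $X_t^{-1}$ case I would set $g(s,x):=h(X_s^{-1}(x))$ and note that $g$ solves $\partial_s g+u\cdot\nabla g=0$ with $g|_{s=0}=h$; the homogeneous Besov transport estimate (see \cite{BCD11}), which holds with $V(s)=\int_0^s\|\nabla u\|_{L^\infty}\,\dd\tau$ exactly on the range $\sigma\in(-1,1)$, then yields $(\ast)$ at $s=t$. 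For the $X_t$ case I would run the same argument for the time-reversed field $\tilde u(s,x):=-u(t-s,x)$, whose flow $\tilde X_s$ satisfies $\tilde X_t=X_t^{-1}$, so that $h\circ X_t=h\circ\tilde X_t^{-1}$ is the time-$t$ transport of $h$ along $\tilde u$; since $\int_0^t\|\nabla\tilde u\|_{L^\infty}=\int_0^t\|\nabla u\|_{L^\infty}$, the same exponential factor results.

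For Part (2) I would use that $a(t)=a_0\circ X_t^{-1}$ by the method of characteristics, together with the fact that composition with a fixed map is an algebra homomorphism. Given any test function $\phi$ with $\|\phi\|_{\dot B^\sigma_{p,r}}\leq1$, writing $\phi=(\phi\circ X_t)\circ X_t^{-1}$ produces the identity $a(t)\,\phi=\big(a_0\,(\phi\circ X_t)\big)\circ X_t^{-1}$. Applying $(\ast)$ to the outer composition, then the multiplier bound of Definition \ref{def.mu}, and then $(\ast)$ once more to $\phi\circ X_t$, one obtains
\begin{equation*}
  \|a(t)\,\phi\|_{\dot B^\sigma_{p,r}} \leq C\, e^{C\int_0^t \|\nabla u\|_{L^\infty}\,\dd s}\, \|a_0\|_{\mathcal{M}(\dot B^\sigma_{p,r})}\, \|\phi\|_{\dot B^\sigma_{p,r}}.
\end{equation*}
Taking the supremum over all such $\phi$ and then over $t\in[0,T]$ gives \eqref{a.m}.

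The main obstacle is establishing $(\ast)$ for a general third index $r\neq p$. A direct approach through the finite-difference characterization (Lemma \ref{lem:Besov-fd}) combined with the measure-preserving change of variables $z=X_t(x)$ replaces the clean increment $f(\cdot+y)-f(\cdot)$ by $f(\cdot+\psi_y(\cdot))-f(\cdot)$, where the shift $\psi_y(z)$ depends on $z$ and satisfies only $L^{-1}|y|\leq|\psi_y(z)|\leq L|y|$ with $L=\|\nabla X_t^{\pm1}\|_{L^\infty}$. Controlling this forces either a supremum of increments inside the $L^p$ norm, or a quantity with the $L^p$ and $\ell^r$ norms in the reversed order (a Triebel--Lizorkin-type seminorm), neither of which is comparable to $\|f\|_{\dot B^\sigma_{p,r}}$ unless $r=p$. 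Recasting the estimate as a transport problem bypasses this difficulty entirely, and is the step I would emphasize; the divergence-free hypothesis (hence measure preservation and unit Jacobian) and the restriction $\sigma\in(-1,1)$ (the commutator range in which the transport estimate needs only $\|\nabla u\|_{L^\infty}$) are both used here in an essential way.
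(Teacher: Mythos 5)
Your proposal is correct, and your Part (2) — the identity $a(t)\,\phi=\big(a_0\,(\phi\circ X_t)\big)\circ X_t^{-1}$ combined with two applications of the pushforward bound and one multiplier bound — is exactly the paper's argument (the paper delegates it to \cite[Prop. 5.1]{HuangPZh13}). For Part (1), however, you take a genuinely different route. The paper never touches the transport equation: it estimates the composition directly by Littlewood--Paley analysis, writing $f\circ X_t^{\pm1}=\sum_k(\dot\Delta_k f)\circ X_t^{\pm1}$ and invoking \cite[Lemma 2.7]{BCD11} to get
$\|\dot\Delta_j((\dot\Delta_k f)\circ X_t^{\pm1})\|_{L^p}\lesssim c_k\,2^{-k\sigma}\min\{2^{j-k},2^{k-j}\}\,\|f\|_{\dot B^\sigma_{p,r}}\,e^{C\int_0^t\|\nabla u\|_{L^\infty}\,\dd\tau}$
for the measure-preserving bi-Lipschitz maps $X_t^{\pm1}$; the hypothesis $\sigma\in(-1,1)$ enters only to sum the two geometric series in $k$, and the $\ell^r_j$ and $L^q_t$ norms are taken at the very end. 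Your route instead recognizes $h\circ X_t^{-1}$ as the time-$t$ value of the solution of the free transport equation with data $h$, quotes the Besov a priori estimate for transport, and handles $h\circ X_t$ by the time-reversal trick with $\tilde u(s,x)=-u(t-s,x)$. Both arguments rest on the same underlying harmonic analysis (the paper's block-composition lemma versus the commutator estimate behind the transport bound), but the paper's version treats $X_t$ and $X_t^{-1}$ symmetrically with no reversal, and avoids any appeal to transport theory in homogeneous spaces; yours has the merit of using a standard black box and making transparent why $\Div u=0$ and $\sigma\in(-1,1)$ appear. Two small caveats if you keep your route: the transport estimate in \cite{BCD11} is stated for nonhomogeneous spaces (Theorem 3.14 there), so you should either cite a homogeneous version or note that its proof (via the homogeneous commutator estimate) carries over, with the usual care about low frequencies and $\mathcal{S}'_h$; and the admissible range is not ``exactly'' $(-1,1)$ --- for divergence-free fields it is the larger interval $\big(-1-d\min(1/p,1/p'),\,1+d/p\big)$, of which $(-1,1)$ is the part uniform in $p$.
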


\begin{proof}[Proof of Lemma \ref{lem:f.mu}]
If $q=\infty$, $r=1$ and $p\in(2,4)$, both inequalities \eqref{es.f.mu}-\eqref{a.m}
have appeared in \cite[Section 5]{HuangPZh13}, and we here sketch the proof of the slightly general cases.

(1) Taking advantage of \cite[Lemma 2.7]{BCD11} yields
\begin{align*}
  \|\dot\Delta_j( (\dot\Delta_k f)\circ X_t^{\pm1})\|_{L^p}\leq C c_k 2^{-k\sigma}\|f\|_{\dot B^{\sigma}_{p,r}}\min\{2^{j-k}, 2^{k-j}\}e^{C\int_0^t\|\nabla u\|_{L^\infty}\dd \tau},
\end{align*}
where $\{c_k\}_{k\in\ZZ}$ satisfies $\|c_k\|_{\ell^r(\ZZ)}=1$.
Together with the condition $\sigma\in(-1,1)$ we deduce that
\begin{align*}
  \|\dot\Delta_j( f\circ X_t^{\pm1})\|_{L^p}\leq& \Big(\sum_{k<j}+\sum_{k\ge j}\Big)\|\dot\Delta_j((\dot\Delta_k  f)\circ X_t^{\pm1})\|_{L^p}  \\
  \leq & C \Big(\sum_{k<j}2^{k-j}c_k2^{-k\sigma}+\sum_{k\ge j}2^{j-k}c_k2^{-k\sigma}\Big)\|f\|_{\dot B^{\sigma}_{p,r}}e^{C\int_0^T\|\nabla u\|_{L^\infty}\dd t}\\
  \leq & C c_j 2^{-j\sigma}\|f\|_{\dot B^{\sigma}_{p,r}}e^{C\int_0^T\|\nabla u\|_{L^\infty}\dd t}.
\end{align*}
Taking $\ell^r$-norm over $j\in\ZZ$ and then taking $L^q$-norm on $[0,T]$ lead to inequality \eqref{es.f.mu} as desired.

(2) Note that $a(t,x) = a_0\circ X_t^{-1}(x)$. By virtue of the definition of multiplier space $\mathcal{M}(\dot B^\sigma_{p,r})$, the measure-preserving property of $X_t^{\pm1}$ and \eqref{es.f.mu},
the inequality \eqref{a.m} can be easily deduced (e.g. see \cite[Proposition 5.1]{HuangPZh13}).
\end{proof}

\subsection{Auxiliary lemmas}


We list some useful results related to the fractional Laplacian $\Lambda^\alpha$.

\begin{lemma}[\cite{Grafakos14}, Theorem 7.6.1]\label{lem:frc-leib}
Let $1<r<\infty$ and $1<p_1,\,p_2,\,q_1,\,q_2\leq \infty$ satisfy $\frac 1r=\frac 1{p_1}+\frac 1{p_2}=\frac 1{q_1}+\frac 1{q_2}.$
Given $s> 0$, then there exists a constant $C=C(d,s,r,p_1,p_2,q_1,q_2)>0$ such that for every $f,g\in \mathcal{S}(\RR^d)$,
\begin{align}\label{eq:frc-leib}
  \|\Lambda^s(fg)\|_{L^r(\RR^d)}\leq C\big(\|\Lambda^s f\|_{L^{p_1}(\RR^d)}\|g\|_{L^{p_2}(\RR^d)}+\|f\|_{L^{q_1}(\RR^d)}\|\Lambda^s g\|_{L^{q_2}(\RR^d)} \big).
\end{align}
\end{lemma}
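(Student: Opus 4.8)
The plan is to prove \eqref{eq:frc-leib} by Bony's paraproduct decomposition together with the Littlewood-Paley characterization of $L^r$ for $1<r<\infty$. First I would write
\begin{align*}
  fg = T_f g + T_g f + R(f,g),\quad T_f g := \sum_{j\in\ZZ} \dot S_{j-1} f\, \dot\Delta_j g,\quad R(f,g) := \sum_{j\in\ZZ} \dot\Delta_j f\, \widetilde{\dot\Delta}_j g,
\end{align*}
where $\widetilde{\dot\Delta}_j := \dot\Delta_{j-1}+\dot\Delta_j+\dot\Delta_{j+1}$, and treat the two paraproducts and the remainder separately. Throughout I would use the square-function equivalence $\|\Lambda^s h\|_{L^r}\simeq \big\|\big(\sum_j 2^{2js}|\dot\Delta_j h|^2\big)^{1/2}\big\|_{L^r}$ (valid for $1<r<\infty$), the pointwise bound $|\dot S_{j-1} h|\le C\, M h$ for the Hardy-Littlewood maximal function $M$, the Fefferman-Stein vector-valued maximal inequality, and Bernstein's inequality.

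For the high-low paraproduct $T_g f = \sum_j \dot S_{j-1}g\,\dot\Delta_j f$, each summand has Fourier support in an annulus of size $\sim 2^j$, so the dyadic blocks are almost orthogonal and $\Lambda^s$ acts essentially as multiplication by $2^{js}$; I would estimate
\begin{align*}
  \|\Lambda^s T_g f\|_{L^r} \lesssim \Big\|\Big(\sum_j 2^{2js}|\dot S_{j-1}g\,\dot\Delta_j f|^2\Big)^{1/2}\Big\|_{L^r} \lesssim \Big\| M g \cdot \Big(\sum_j |\dot\Delta_j \Lambda^s f|^2\Big)^{1/2}\Big\|_{L^r}.
\end{align*}
Hölder with $\frac1r=\frac1{p_1}+\frac1{p_2}$ then splits this into $\|Mg\|_{L^{p_2}}\lesssim\|g\|_{L^{p_2}}$ (which needs $p_2>1$, the case $p_2=\infty$ being trivial) and the square function in $L^{p_1}$, bounded by $\|\Lambda^s f\|_{L^{p_1}}$ for $1<p_1<\infty$; this yields the first term on the right of \eqref{eq:frc-leib}. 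By the symmetric estimate for $T_f g$, where the derivative falls on the high-frequency factor $g$, I would obtain the second term $\|f\|_{L^{q_1}}\|\Lambda^s g\|_{L^{q_2}}$.

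The remainder $R(f,g)$ is the step I expect to require the most care, because each summand $\dot\Delta_j f\,\widetilde{\dot\Delta}_j g$ is localized in a ball of radius $\sim 2^j$ rather than an annulus, so the square function cannot be applied blockwise. Applying $\dot\Delta_{j'}\Lambda^s$ I would use that only indices $j\ge j'-N_0$ contribute and that $\dot\Delta_{j'}\Lambda^s$ produces a factor $2^{j's}$ together with a maximal bound $|\dot\Delta_{j'}(\cdot)|\lesssim M(\cdot)$, leading to
\begin{align*}
  \Big(\sum_{j'}|\dot\Delta_{j'}\Lambda^s R(f,g)|^2\Big)^{1/2} \lesssim \Big(\sum_{j'}\Big(\sum_{j\ge j'-N_0}2^{(j'-j)s}\,2^{js}\,M(\dot\Delta_j f\,\widetilde{\dot\Delta}_j g)\Big)^2\Big)^{1/2}.
\end{align*}
Here the hypothesis $s>0$ is essential: it makes $2^{(j'-j)s}\mathbf 1_{j'\le j+N_0}$ an $\ell^1$ convolution kernel in $j'-j$, so a discrete Young inequality collapses the inner sum and reduces the right side to $\big(\sum_j (2^{js}M(\dot\Delta_j f\,\widetilde{\dot\Delta}_j g))^2\big)^{1/2}$. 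I would then take the $L^r$ norm, apply the Fefferman-Stein inequality to remove $M$, use $2^{js}|\dot\Delta_j f|\simeq |\dot\Delta_j\Lambda^s f|$ together with $|\widetilde{\dot\Delta}_j g|\lesssim Mg$, and finish by Hölder exactly as for the paraproducts, arriving again at $\|\Lambda^s f\|_{L^{p_1}}\|g\|_{L^{p_2}}$. Reassembling the three contributions gives \eqref{eq:frc-leib}; the remaining care is in the endpoints $p_2,q_2=\infty$ (where the maximal inequality is replaced by the trivial $L^\infty$ bound, and the corresponding $L^\infty$ factor is handled directly rather than through the square function) and in passing from $f,g\in\mathcal S$ to the stated generality by density.
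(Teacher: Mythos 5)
The paper offers no proof of this lemma: it is quoted directly from Grafakos (Theorem 7.6.1, the Kato--Ponce inequality), so your proposal has to be judged as a standalone argument. Its core --- Bony decomposition, the square-function characterization of $L^r$, pointwise maximal bounds, the Fefferman--Stein vector-valued maximal inequality, and the discrete Young inequality exploiting $s>0$ in the remainder --- is the standard Littlewood--Paley proof and is correct in the range $1<p_1,q_2<\infty$, with the easy endpoints $p_2=\infty$ and $q_1=\infty$ handled exactly as you say.

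There is, however, a genuine gap at the endpoints that the statement explicitly permits: $p_1=\infty$ (so $p_2=r$) and $q_2=\infty$ (so $q_1=r$), i.e.\ when the factor carrying the fractional derivative is measured in $L^\infty$. Your parenthetical remark lumps ``$p_2,q_2=\infty$'' together as trivial, but these two cases are not symmetric: $p_2=\infty$ only needs $\|Mg\|_{L^\infty}\lesssim\|g\|_{L^\infty}$, whereas $q_2=\infty$ breaks your scheme. For the paraproduct $T_f g$ with $q_2=\infty$ you must bound $\big\|\big(\sum_j|\dot S_{j-1}f\,\dot\Delta_j'(\Lambda^s g)|^2\big)^{1/2}\big\|_{L^r}$ by $\|f\|_{L^{r}}\|\Lambda^s g\|_{L^\infty}$ (here $\dot\Delta_j'=2^{js}\Lambda^{-s}\dot\Delta_j$). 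Pulling the $L^\infty$ factor out of the square function leaves $\big(\sum_j|\dot S_{j-1}f|^2\big)^{1/2}$, which is not controlled in $L^r$ (the sum diverges, since $\dot S_{j}f\to f$), while keeping it inside would require the square-function characterization of $L^{q_2}$ at $q_2=\infty$, which is false (that square function characterizes $\mathrm{BMO}$, not $L^\infty$). The standard repair is an ingredient you never invoke: recognize $\sum_j \dot S_{j-1}f\,\dot\Delta_j'(\Lambda^s g)$ as a paraproduct with symbol $\Lambda^s g\in L^\infty\subset \mathrm{BMO}$ and apply the Carleson-measure boundedness of paraproducts with $\mathrm{BMO}$ symbol (this is essentially how Grafakos--Oh treat these endpoints). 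The same issue arises in $T_g f$ when $p_1=\infty$; interestingly, your remainder estimate does survive there, since the uniform bound $2^{js}|\dot\Delta_j f|\lesssim\|\Lambda^s f\|_{L^\infty}$ can be pulled out leaving the square function of $g$. For what it is worth, every application of the lemma in this paper --- e.g.\ \eqref{es.N2}, \eqref{es:dA-H}, \eqref{est.divg} --- places the $L^\infty$ norm only on the factor without the derivative, so your argument covers the paper's actual uses; it just does not prove the lemma as stated.
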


\begin{lemma}[\cite{MiaoYZh08}, Lemmas 2.1, 2.2]\label{lem:kernel}
  Let $K(x)$ be the kernel function of the fractional heat semigroup $e^{- \Lambda^{2\alpha}}$, $\alpha\in (0,1)$, that is,
\begin{align}\label{kernel:K}
  K(x)=\mathcal{F}^{-1} (e^{-|\xi|^{2\alpha}}) = \frac{1}{(2\pi)^d}\int_{\RR^d} e^{ix\cdot\xi} e^{- |\xi|^{2\alpha}} \dd \xi .
\end{align}
Then there exists a positive constant $C= C(d,\alpha)$ such that
\begin{align}\label{kerK-es1}
  |K(x)| \leq C (1 + |x|)^{-d -2\alpha},\quad \forall x\in \RR^d;
\end{align}
and for every $\beta>0$, there exists a positive constant $C=C(d,\alpha,\beta)$ such that
\begin{align}\label{kerK-es2}
  |\Lambda^\beta K(x)| + |\Lambda^{\beta-1} \nabla K(x)| \leq C (1+ |x|)^{- d  - \beta},\quad \forall x\in \RR^d.
\end{align}
\end{lemma}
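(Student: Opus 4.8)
The plan is to treat all three kernels by a single scheme: everything reduces to bounding inverse Fourier transforms of the symbols
$$
  m_\gamma(\xi) := |\xi|^\gamma e^{-|\xi|^{2\alpha}}, \qquad \tilde m_\gamma(\xi) := \xi_k\,|\xi|^{\gamma-1} e^{-|\xi|^{2\alpha}},
$$
which cover $K$ (take $\gamma=0$ in $m_\gamma$), $\Lambda^\beta K$ (take $\gamma=\beta$ in $m_\gamma$) and $\Lambda^{\beta-1}\nabla K$ (take $\gamma=\beta$ in $\tilde m_\gamma$). For $|x|\le 1$ the bound is immediate: since $|\xi|^\gamma e^{-|\xi|^{2\alpha}}\in L^1(\RR^d)$ whenever $\gamma>-d$, each kernel is bounded, and a bounded quantity is $\le C(1+|x|)^{-d-\gamma}$ on the unit ball. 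All the genuine work is the decay for $|x|\ge 1$, which I would extract through a Littlewood--Paley decomposition combined with scaling and nonstationary phase.

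For the derivative estimates \eqref{kerK-es2} I would argue directly on the symbol. Writing $\Lambda^\beta K = \sum_{j\in\ZZ} K_{\beta,j}$ with $K_{\beta,j}(x)=c_d\int_{\RR^d} e^{ix\cdot\xi}\varphi(2^{-j}\xi)|\xi|^\beta e^{-|\xi|^{2\alpha}}\dd\xi$ and rescaling $\xi=2^j\eta$ gives $K_{\beta,j}(x)=2^{j(d+\beta)}\Phi_j(2^jx)$, where $\Phi_j=\mathcal{F}^{-1}[\Psi_j]$ and $\Psi_j(\eta)=\varphi(\eta)|\eta|^\beta e^{-2^{2\alpha j}|\eta|^{2\alpha}}$. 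On the annulus $|\eta|\sim1$ every derivative $\partial^\gamma\Psi_j$ is bounded uniformly in $j\le0$ (the factor $2^{2\alpha j}\le1$ is harmless), while for $j>0$ one gains $e^{-c2^{2\alpha j}}$. The standard nonstationary-phase bound $|\Phi_j(y)|\le C_N(1+|y|)^{-N}\sum_{|\gamma|\le N}\|\partial^\gamma\Psi_j\|_{L^1}$ then yields $|K_{\beta,j}(x)|\le C_N 2^{j(d+\beta)}(1+2^j|x|)^{-N}$ for $j\le0$ (with an extra $e^{-c2^{2\alpha j}}$ for $j>0$). Summing over $j$, splitting the range $j\le0$ at $2^j\sim|x|^{-1}$ and choosing $N>d+\beta$, both resulting pieces contribute $\lesssim|x|^{-d-\beta}$ while the $j>0$ tail is negligible. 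The same computation applies verbatim to $\tilde m_\beta$, whose rescaling also produces the prefactor $2^{j(d+\beta)}$, and this gives \eqref{kerK-es2}.

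The subtle point is \eqref{kerK-es1}: the crude scheme applied to $m_0=e^{-|\xi|^{2\alpha}}$ only produces $|K(x)|\lesssim|x|^{-d}$, because near the origin $m_0\to1$ and the low-frequency dyadic blocks no longer decay in $j$. To recover the sharp exponent I would first peel off the smooth part. With $\chi$ the low-frequency cutoff, decompose
$$
  e^{-|\xi|^{2\alpha}} = \chi(\xi) + \chi(\xi)\big(e^{-|\xi|^{2\alpha}}-1\big) + \big(1-\chi(\xi)\big)e^{-|\xi|^{2\alpha}}.
$$
The first term has inverse transform $\mathcal{F}^{-1}\chi\in\mathcal{S}(\RR^d)$, and the third term is a Schwartz function (smooth away from the origin and rapidly decaying together with all its derivatives because of $e^{-|\xi|^{2\alpha}}$), so both contribute $O((1+|x|)^{-N})$. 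The remaining piece $g(\xi):=\chi(\xi)(e^{-|\xi|^{2\alpha}}-1)$ is compactly supported and, crucially, vanishes to order $2\alpha$ at the origin. Setting $G:=\mathcal{F}^{-1}g$ and running the dyadic/scaling argument on $G$, the rescaled symbol $\varphi(\eta)(e^{-2^{2\alpha j}|\eta|^{2\alpha}}-1)$ and all its $\eta$-derivatives are now $O(2^{2\alpha j})$ on $|\eta|\sim1$, so the $j$-th block satisfies $|G_j(x)|\le C_N 2^{j(d+2\alpha)}(1+2^j|x|)^{-N}$ for $j\le0$ (and only finitely many $j>0$ survive since $g$ is supported near the origin). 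Summing exactly as before, now with $d+2\alpha$ in place of $d+\beta$, gives $|G(x)|\lesssim|x|^{-d-2\alpha}$ and hence \eqref{kerK-es1}.

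The \emph{main obstacle} is precisely this gain of the extra $2\alpha$ of decay for $K$ itself: the naive Littlewood--Paley summation loses it, and one must exploit the order-$2\alpha$ vanishing of the nonsmooth part $e^{-|\xi|^{2\alpha}}-1$ of the symbol at the origin (equivalently, the cancellation hidden in reconstructing the constant $1$ from its dyadic blocks). The only remaining points needing care are the uniform-in-$j$ control of the rescaled symbols on the annulus---handled via Fa\`a di Bruno, noting that every derivative of $e^{-2^{2\alpha j}|\eta|^{2\alpha}}$ carries at least one factor $2^{2\alpha j}\le1$ on $|\eta|\sim1$---and the verification that $(1-\chi)e^{-|\xi|^{2\alpha}}$ is genuinely Schwartz despite the mild singularity of $|\xi|^{2\alpha}$, which is automatic since that term is supported away from the origin.
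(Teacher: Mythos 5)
The paper does not prove this lemma at all: it is imported verbatim from the cited reference \cite{MiaoYZh08} (Lemmas 2.1, 2.2), so there is no in-paper argument to compare yours against; I can only judge your proof on its own merits, and it is correct. Your scheme — trivial bound for $|x|\le 1$ from $L^1$ symbols, then dyadic decomposition, rescaling of each block to the annulus, nonstationary-phase decay $(1+2^j|x|)^{-N}$, and summation split at $2^j\sim|x|^{-1}$ — is sound, and you correctly flag the one genuine difficulty: for $K$ itself the naive summation only yields $|x|^{-d}$, since the low-frequency blocks of $e^{-|\xi|^{2\alpha}}$ carry no smallness in $j$. Your fix (peel off $\chi(\xi)$ and $(1-\chi(\xi))e^{-|\xi|^{2\alpha}}$ as Schwartz contributions, and run the dyadic argument on $\chi(\xi)\big(e^{-|\xi|^{2\alpha}}-1\big)$, whose order-$2\alpha$ vanishing at the origin makes every rescaled block $O(2^{2\alpha j})$) is exactly the right cancellation, and the bookkeeping (uniform-in-$j$ derivative bounds for $j\le 0$, the factor $e^{-c2^{2\alpha j}}$ for $j>0$, the requirement $N>d+\beta$ in the summation) all checks out; the same computation indeed covers $\Lambda^\beta K$ and $\Lambda^{\beta-1}\nabla K$ since $|\eta|^\beta$ and $\eta_k|\eta|^{\beta-1}$ are smooth on the annulus and contribute the prefactor $2^{j(d+\beta)}$. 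For context, the proof in the cited source (and the classical arguments for $2\alpha$-stable densities going back to Blumenthal–Getoor) rests on the same mechanism — the sharp exponent $d+2\alpha$ comes from the fact that the non-smooth part of the symbol at the origin is $-|\xi|^{2\alpha}+O(|\xi|^{4\alpha})$, whose Fourier transform away from the origin decays like $|x|^{-d-2\alpha}$ — so your Littlewood--Paley packaging is a legitimate, self-contained alternative rather than a different idea; what it buys is that one never has to manipulate homogeneous distributions or stable-law asymptotics explicitly, at the cost of somewhat heavier dyadic bookkeeping.
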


\begin{lemma}\label{lem:fra-op-exp}
  Let $\alpha\in (0,1)$, then for every $f\in \mathcal{S}(\RR^d)$ and $i\in\{1,\cdots,d\}$, we have
\begin{equation}\label{fra-op-exp}
  \partial_{x_i} \Lambda^{2\alpha-2} f (x) = c_\alpha\, \mathrm{p.v.} \int_{\RR^d} \frac{x_i -y_i}{|x-y|^{ d + 2\alpha}} \big(f(x) - f(y)\big) \dd y,
\end{equation}
with $c_\alpha = \frac{(d+2\alpha-2)\Gamma(\frac{d}{2}-1+\alpha)}{\pi^{d/2} 2^{2-2\alpha} \Gamma(1-\alpha)}$.
\end{lemma}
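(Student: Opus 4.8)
The plan is to recognize $\Lambda^{2\alpha-2}$ as a Riesz potential and then convert the $x$-derivative of the resulting smooth-kernel convolution into the singular integral on the right-hand side, pinning down the constant through the classical normalization of the Riesz kernel. Set $\beta:=2-2\alpha\in(0,2)$, so that $\Lambda^{2\alpha-2}=\Lambda^{-\beta}$ has Fourier multiplier $|\xi|^{-\beta}$ and, for $0<\beta<d$ and $f\in\mathcal{S}(\RR^d)$, admits the representation $\Lambda^{-\beta}f(x)=\frac{1}{c_{d,\beta}}\int_{\RR^d}|x-y|^{\beta-d}f(y)\,\dd y$ with $c_{d,\beta}=\frac{\pi^{d/2}2^{\beta}\Gamma(\beta/2)}{\Gamma((d-\beta)/2)}$ (in the paper's convention $\widehat{\Lambda^{2\alpha}f}=|\xi|^{2\alpha}\widehat f$). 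Since $|x|^{\beta-d}$ is locally integrable, I would first move the derivative onto $f$: this gives the absolutely convergent integral $\partial_{x_i}\Lambda^{-\beta}f(x)=\frac{1}{c_{d,\beta}}\int_{\RR^d}|x-y|^{\beta-d}\,\partial_{y_i}f(y)\,\dd y$, because $\partial_i f\in\mathcal{S}$.

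The heart of the argument is then to undo this step, moving the derivative back onto the kernel while keeping track of the singularity. On the set $A_\epsilon:=\{y:|x-y|>\epsilon\}$ the kernel is smooth, so the divergence theorem yields
\begin{equation*}
\int_{A_\epsilon}\frac{\partial_{y_i}f(y)}{|x-y|^{d-\beta}}\,\dd y
= -(d-\beta)\int_{A_\epsilon}\frac{(x_i-y_i)\,f(y)}{|x-y|^{d+2\alpha}}\,\dd y
+ \epsilon^{-(d-\beta+1)}\int_{\partial B(x,\epsilon)} f(y)\,(x_i-y_i)\,\dd S(y),
\end{equation*}
where I used $\partial_{y_i}|x-y|^{\beta-d}=(d-\beta)(x_i-y_i)|x-y|^{-(d+2\alpha)}$ together with $d-\beta=d-2+2\alpha$. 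The spherical boundary term is the delicate point: writing $f(y)=f(x)+O(\epsilon)$ on $\partial B(x,\epsilon)$ and using $\int_{\partial B(x,\epsilon)}(x_i-y_i)\,\dd S=0$ by symmetry, one sees this term is $O(\epsilon^{2-2\alpha})$, hence vanishes as $\epsilon\to0$ since $\alpha<1$. Letting $\epsilon\to0$ produces $-(d-2+2\alpha)\,\mathrm{p.v.}\!\int \frac{(x_i-y_i)f(y)}{|x-y|^{d+2\alpha}}\,\dd y$.

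Finally I would symmetrize. Because the kernel $(x_i-y_i)|x-y|^{-(d+2\alpha)}$ is odd about $y=x$, its principal value against the constant $f(x)$ vanishes, so $f(y)$ may be replaced by $f(y)-f(x)$; this flips the sign and renders the integrand $O(|x-y|^{2-d-2\alpha})$ near $y=x$, which is integrable for $\alpha<1$. Collecting constants gives $c_\alpha=(d-2+2\alpha)/c_{d,\beta}$, and substituting $\Gamma(\beta/2)=\Gamma(1-\alpha)$ and $\Gamma((d-\beta)/2)=\Gamma(\tfrac d2-1+\alpha)$ recovers exactly $c_\alpha=\frac{(d+2\alpha-2)\Gamma(\tfrac d2-1+\alpha)}{\pi^{d/2}2^{2-2\alpha}\Gamma(1-\alpha)}$.

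The main obstacle, and the only place demanding genuine care, is the second step: for $\alpha\geq\tfrac12$ the differentiated kernel $(x_i-y_i)|x-y|^{-(d+2\alpha)}$ is \emph{not} locally integrable, so one cannot naively differentiate under the integral sign. The $\epsilon$-truncation, the explicit spherical boundary term, and the verification that it is $O(\epsilon^{2-2\alpha})$ are exactly what make the principal value well defined and legitimize the identity, while the oddness cancellation is what guarantees the surviving integral converges. (In the paper's regime $\tfrac12<\alpha<1$ the far-field decay $|y|^{-(d-1+2\alpha)}$ is integrable, so after inserting the difference $f(x)-f(y)$ the integral is in fact absolutely convergent.)
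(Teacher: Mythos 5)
Your proof is correct and follows essentially the same route as the paper's: the Riesz-potential representation of $\Lambda^{2\alpha-2}$ with the standard normalization constant, differentiation under the integral, integration by parts outside an $\epsilon$-ball, and the $O(\epsilon^{2-2\alpha})$ bound showing the spherical boundary term vanishes. The only cosmetic difference is ordering — the paper inserts the constant $f(x)$ \emph{before} integrating by parts (via $\partial_{x_i}f(x-y)=\partial_{y_i}\big[f(x)-f(x-y)\big]$), whereas you integrate by parts against $f(y)$ first and then symmetrize using the oddness of the kernel — and the constants agree in both computations.
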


\begin{proof}[Proof of Lemma \ref{lem:fra-op-exp}]
  Recalling that (e.g. see \cite[Section 5.1]{Stein70})
\begin{align*}
  \Lambda^{2\alpha-2} f(x) =  \bar{c}_\alpha \int_{\RR^d} \frac{1}{|y|^{d+2\alpha-2}} f(x-y) \dd y ,
\end{align*}
with $\bar{c}_\alpha = \frac{\Gamma(\frac{d}{2}-1+\alpha)}{\pi^{d/2} 2^{2-2\alpha} \Gamma(1-\alpha)}$, we get from the integration by parts that
\begin{align*}
  \partial_{x_i} \Lambda^{2\alpha-2}f(x) & = \bar{c}_\alpha \lim_{\epsilon\rightarrow 0}
  \int_{|y|\geq \epsilon} \frac{1}{|y|^{d+2\alpha-2}}  \partial_{x_i}f(x-y) \dd y \\
  & =  \bar{c}_\alpha \lim_{\epsilon\rightarrow 0} \int_{|y|\geq \epsilon} \frac{1}{|y|^{d+2\alpha-2}} \partial_{y_i} \big( f(x) - f(x-y) \big) \dd y \\
  & = c_\alpha \lim_{\epsilon\rightarrow 0} \int_{|y-x|\geq \epsilon} \frac{x_i-y_i}{|x-y|^{d+2\alpha}} \big(f(x) -f(y)\big) \dd y + \lim_{\epsilon\rightarrow 0} R_\epsilon,
\end{align*}
where
$R_\epsilon := \bar{c}_\alpha  \int_{|y|=\epsilon} \frac{1}{|y|^{d+2\alpha-2}} \big(-\frac{y_i}{|y|}\big) \big(f(x)-f(x-y)\big) \dd y$
satisfies that
\begin{align*}
  \lim_{\epsilon\rightarrow 0} |R_\epsilon| \leq \|\nabla f\|_{L^\infty} \lim_{\epsilon\rightarrow 0} \frac{1}{\epsilon^{d+2\alpha-2}} \int_{|y|=\epsilon} |y| \dd y
  \leq \|\nabla f\|_{L^\infty} \lim_{\epsilon\rightarrow 0} \epsilon^{2-2\alpha} =0.
\end{align*}
\end{proof}

In the proof of Proposition \ref{pro.Max-Regularity} we use the following boundedness result about Calder\'on-Zygmund operators for vector-valued singular integrals,
which can be found in \cite[Chapter 7]{LRie02}.
\begin{lemma}[Calder\'on-Zygmund operators]\label{CZ}
Let $1<p, p_1, p_2<\infty$. Let $X,\,X_1,\,X_2$ be three locally compact $\sigma$-compact metric spaces, with regular Borel measures $\mu,\,\mu_1,\,\mu_2$ on those spaces, respectively.
Define $E=L^{p_1}(X_1,\mu_1)$ and $F=L^{p_2}(X_2,\mu_2)$. Let $L(x,y;x_1,x_2)$ be a continuous function defined on $(X\times X- \Delta) \times X_1\times X_2$ (with $\Delta$ the diagonal set of $X\times X$),
then we define $L(x,y)$ as the operator from $E=L^{p_1}(X_1,\mu_1)$ to $ F=L^{p_2}(X_2,\mu_2)$ given by the integral
$$ L(x,y)f(x_2)=\int_{X_1} \mathcal{L}(x,y;x_1,x_2)f(x_1)\mathrm d\mu_1(x_1).$$
Whenever $x\notin \mathrm{supp}\,f$, we define $\mathcal{T}f(x)$ as
\begin{align*}
  \mathcal{T}f(x,x_2) & = \int_X \int_{X_1} \mathcal{L}(x,y;x_1,x_2)f(y,x_1)\mathrm d \mu(y) \mathrm d \mu_1(x_1) \\
  & = \int_X L(x,y) f(y,x_2) \dd \mu(y).
\end{align*}

Suppose that the space $X$ is equipped with a quasi-distance $d$ satisfying the quasi-triangular inequality $d(x,y) \leq C_d \big(d(x,z)+ d(z,y) \big)$ with $C_d>0$ a generic constant;
and there exists  positive numbers $n$ and $C_\mu$ such that $\mu(B(x,r))\leq C_\mu r^n$ for every $x\in X$ and $r>0$.
Assume that $\mathcal{T}$ is  bounded from $L^p(X,\mu;E)$ to  $L^p(X,\mu;F)$ such that
$$\int_{\RR^k}\|\mathcal{T}f(x)\|^p_F\mathrm dx\leq C \int_{\RR^k}\|f(x)\|^p_E\mathrm dx.$$
Assume that $\mathcal{L}(x,y)$ is continuous from $X\times X -\Delta$ to $\mathcal{L}(E,F)$ and satisfies that for some $\epsilon>0$:
\begin{align*}
  \|L(x,y)\|_{op(E\mapsto F)} & \leq C \frac{1}{d(x,y)^n};\\
  d(z,y)\leq \frac12 d(x,y)\Longrightarrow \|L(x,y)-L(x,z)\|_{op(E\mapsto F)}& \leq C\frac{d(z,y)^\epsilon}{d(x,y)^{n+\epsilon}};\\
  d(x,z)\leq \frac12  d(x,y) \Longrightarrow \|L(x,y)-L(z,y)\|_{op(E\mapsto F)} & \leq C\frac{d(x,z)^\epsilon}{d(x,y)^{n+\epsilon}}.
\end{align*}
Then the operator $\mathcal{T}$ is bounded from $L^q(X;E)$ to  $L^q(X;F)$ for any $1<q<\infty$.
\end{lemma}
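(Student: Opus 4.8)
The plan is to follow the classical Calder\'on--Zygmund program, transposed from the scalar setting to the present operator-valued kernel situation. The only genuine changes are that absolute values of scalars are everywhere replaced by the norms $\|\cdot\|_E$, $\|\cdot\|_F$, and that the pointwise kernel bounds are read as bounds on $\|L(x,y)\|_{op(E\mapsto F)}$. Concretely, I would first upgrade the assumed strong $L^p$ bound to a weak-type $(1,1)$ estimate $\mathcal{T}\colon L^1(X,\mu;E)\to L^{1,\infty}(X,\mu;F)$, and then obtain the full range $1<q<\infty$ by interpolation together with a duality argument for the exponents above $p$.

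For the weak $(1,1)$ bound, fix $f\in L^1(X;E)$ and a height $\lambda>0$. Using the quasi-metric $d$ and the upper measure bound $\mu(B(x,r))\le C_\mu r^n$, I would perform a Calder\'on--Zygmund/Whitney decomposition of the super-level set $\{x\in X:\,M(\|f\|_E)(x)>\lambda\}$ of the (homogeneous-space) maximal function into a family of balls $\{B_j\}$ with bounded overlap, centers $c_j$ and radii $r_j$, and split $f=g+b$ with $b=\sum_j b_j$, where $b_j=(f-\langle f\rangle_{B_j})\,1_{B_j}$ has vanishing $E$-valued (Bochner) mean over $B_j$. The good part satisfies $\|g\|_{L^p(E)}^p\lesssim \lambda^{p-1}\|f\|_{L^1(E)}$, so the assumed $L^p$-boundedness of $\mathcal{T}$ and Chebyshev's inequality control $\mu\{\|\mathcal{T}g\|_F>\lambda/2\}$. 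For the bad part, on the complement of the dilated exceptional set $\tilde\Omega=\bigcup_j B_j^{*}$, whose measure is $\lesssim \lambda^{-1}\|f\|_{L^1(E)}$, I would exploit the cancellation of $b_j$ by writing $\mathcal{T}b_j(x)=\int_{B_j}\big(L(x,y)-L(x,c_j)\big)b_j(y)\,\dd\mu(y)$ and invoking the smoothness hypothesis $\|L(x,y)-L(x,c_j)\|_{op(E\mapsto F)}\lesssim d(y,c_j)^\epsilon/d(x,c_j)^{n+\epsilon}$; integrating over $x\in X\setminus\tilde\Omega$ and summing in $j$ yields $\sum_j\int_{X\setminus\tilde\Omega}\|\mathcal{T}b_j(x)\|_F\,\dd\mu(x)\lesssim \sum_j\|b_j\|_{L^1(E)}\lesssim \|f\|_{L^1(E)}$, giving the weak estimate after another application of Chebyshev.

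Once the weak $(1,1)$ bound is in hand, the Marcinkiewicz interpolation theorem in its vector-valued form, valid since $\|\cdot\|_F$ is a genuine norm, applied between it and the strong $L^p$ bound, yields $\mathcal{T}\colon L^q(X;E)\to L^q(X;F)$ for all $1<q\le p$. To reach the range $p<q<\infty$ I would pass to the transpose: the adjoint $\mathcal{T}^{*}$ has operator-valued kernel $(y,x)\mapsto L(x,y)^{*}\colon F'\to E'$ with $F'=L^{p_2'}(X_2,\mu_2)$, $E'=L^{p_1'}(X_1,\mu_1)$, and because the size condition and \emph{both} smoothness conditions are symmetric under interchanging $x$ and $y$, this adjoint kernel satisfies exactly the same hypotheses; moreover reflexivity of $E,F$ (as $1<p_1,p_2<\infty$) together with duality turns the assumed $L^p$ bound into $\mathcal{T}^{*}\colon L^{p'}(X;F')\to L^{p'}(X;E')$. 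Applying the weak $(1,1)$ plus interpolation argument to $\mathcal{T}^{*}$ gives its boundedness on $L^{r}(X;F')$ for $1<r\le p'$, and dualizing back gives $\mathcal{T}\colon L^{r'}(X;E)\to L^{r'}(X;F)$ for $p\le r'<\infty$; combined with the previous range this covers all $1<q<\infty$.

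The main obstacle is the weak $(1,1)$ step on the quasi-metric measure space. Two points require care: carrying out the Calder\'on--Zygmund/Whitney decomposition when only the upper regularity $\mu(B(x,r))\le C_\mu r^n$ is available, which is handled via the Coifman--Weiss theory of spaces of homogeneous type or a dyadic-cube substitute, and where the quasi-triangle constant $C_d$ forces the dilation factors defining $B_j^{*}$ and the associated separation estimates to be chosen accordingly; and verifying that the Bochner mean-zero cancellation of $b_j$ interacts correctly with the operator-norm H\"ormander condition so that the integral $\int_{X\setminus\tilde\Omega}\|\mathcal{T}b_j\|_F\,\dd\mu$ converges with the right constant. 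Everything else is a routine transcription of the scalar Calder\'on--Zygmund theory to the Banach-space-valued framework.
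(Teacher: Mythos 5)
First, a point of comparison: the paper does not prove this lemma at all --- it is quoted as a known result from \cite[Chapter 7]{LRie02}, so there is no in-paper argument to measure you against. Your overall architecture is the classical Calder\'on--Zygmund program: upgrade the assumed strong $L^p$ bound to a weak-type $(1,1)$ estimate via a good/bad decomposition, use Marcinkiewicz interpolation (which indeed transcribes verbatim to Banach-valued functions) to cover $1<q\leq p$, and pass to the adjoint kernel $L(x,y)^{*}\colon F'\to E'$ for $p\leq q<\infty$. The duality step is sound: reflexivity of $E,F$ holds since $1<p_1,p_2<\infty$, the size condition is symmetric, and the two smoothness conditions swap roles under transposition. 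Likewise, your treatment of the bad part is fine where the kernel smoothness enters: the annular estimate $\int_{d(x,c_j)\geq Kr_j}d(y,c_j)^{\epsilon}d(x,c_j)^{-n-\epsilon}\,\dd\mu(x)\leq C$ uses only the growth bound $\mu(B(x,r))\leq C_\mu r^n$, so that H\"ormander-type step is consistent with the hypotheses.

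The genuine gap is in the weak $(1,1)$ step, and it is exactly the point you tried to dispatch by citation. The lemma assumes \emph{only} the upper bound $\mu(B(x,r))\leq C_\mu r^n$; there is no doubling hypothesis and no lower bound on $\mu(B(x,r))$. Your decomposition uses doubling in three places: (i) the bound $\mu\{M(\|f\|_E)>\lambda\}\lesssim \lambda^{-1}\|f\|_{L^1(E)}$ for the maximal function normalized by $\mu(B(x,r))$, whose Vitali-type proof bounds $\mu$ of the union by $\sum_j \mu(B(x_j,Kr_j))$ and needs $\mu(B(x,Kr))\lesssim\mu(B(x,r))$ to conclude; (ii) the estimate $\langle\|f\|_E\rangle_{B_j}\lesssim\lambda$ on Whitney balls, which compares the $\mu$-average over $B_j$ with the average over a larger ball centered at a point of the complement --- again a doubling statement; and consequently (iii) the good-part bound $\|g\|_{L^p(E)}^p\lesssim\lambda^{p-1}\|f\|_{L^1(E)}$. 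Your proposed remedy, ``the Coifman--Weiss theory of spaces of homogeneous type,'' cannot fill this hole, because a space of homogeneous type is \emph{by definition} a quasi-metric space carrying a doubling measure; invoking it silently adds a hypothesis the lemma does not have. Under the stated assumptions one must instead run a non-homogeneous Calder\'on--Zygmund argument in the spirit of Nazarov--Treil--Volberg/Tolsa, where the maximal function is normalized by $r^n$ rather than by $\mu(B(x,r))$ and the good/bad splitting is modified accordingly; this is a genuinely different and more delicate decomposition, and it is what makes the statement true at the stated level of generality. For the application actually made in this paper the issue is invisible, since there $X$ is $\mathbb{R}\times\mathbb{R}^d$ or $\mathbb{R}$ with Lebesgue measure and the parabolic quasi-distance, for which $\mu(B(x,r))\sim r^{d+2\alpha}$ is Ahlfors regular and hence doubling --- so your argument does prove what the paper uses, but not the lemma as stated.
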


The following multiplier theorem can be found in Theorem 3, Section 4.3.2 of \cite{Stein70}.
\begin{lemma}\label{lem:multiplier}
Assume that $m:\RR^d  \setminus \{0\}\longrightarrow \mathbb{C} $ is of $C^k$-class with an integer $k\geq [\frac{d}2 ]+1$ ($[\frac{d}{2}]$ is the integer part of $\frac{d}{2}$),
and it satisfies that
$|\partial_\xi^\beta m(\xi)|\leq C|\xi|^{-|\beta|}$, for every $\xi\neq 0$ and $|\beta|\leq k$.
Then for any $f\in L^p(\RR^d)$ with  $1<p<\infty$, there is a constant $C$ such that
$$
\| m(D) f\|_{L^p(\RR^d)}\leq C\|f\|_{L^p(\RR^d)}.
$$
\end{lemma}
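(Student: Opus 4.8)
The plan is to deduce this from the scalar Calder\'on-Zygmund theory, by showing that $T:=m(D)$ is bounded on $L^2$ and that its convolution kernel satisfies the H\"ormander smoothness condition. These two facts yield the weak-$(1,1)$ bound, and hence $L^p$-boundedness for $1<p\le 2$ by Marcinkiewicz interpolation with the $L^2$ bound; the range $2\le p<\infty$ then follows by duality, since the adjoint $T^*$ has symbol $\overline{m(-\xi)}$, which obeys exactly the same hypotheses. The $L^2$ bound itself is immediate: taking $|\beta|=0$ gives $\|m\|_{L^\infty}\le C$, so by Plancherel $\|m(D)f\|_{L^2}=\|m\widehat{f}\,\|_{L^2}\le C\|f\|_{L^2}$.

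The substance of the argument is the kernel analysis. I would write $K=\mathcal{F}^{-1}m$ (a tempered distribution, smooth away from the origin) and decompose the symbol dyadically as $m=\sum_{j\in\ZZ}m_j$ with $m_j(\xi)=m(\xi)\varphi(2^{-j}\xi)$ supported on $|\xi|\sim 2^j$, setting $K_j=\mathcal{F}^{-1}m_j$. Differentiating the product and using $|\partial_\xi^\gamma m|\lesssim 2^{-j|\gamma|}$ on the support shows $|\partial_\xi^\beta m_j(\xi)|\lesssim 2^{-j|\beta|}$ for $|\beta|\le k$ on a set of measure $\sim 2^{jd}$. The key step is then a pair of Plancherel estimates: for every integer $0\le a\le k$, $\||\cdot|^a K_j\|_{L^2}^2=c\sum_{|\beta|=a}\|\partial_\xi^\beta m_j\|_{L^2}^2\lesssim 2^{j(d-2a)}$. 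Interpolating between $a=0$ and $a=k$ via Cauchy-Schwarz — this is precisely where the threshold $k>d/2$ enters, making $\int_{\RR^d}(1+|2^jx|^2)^{-k}\,\dd x$ converge — yields the uniform bound $\int_{\RR^d}|K_j(x)|\,\dd x\lesssim 1$ together with the tail decay $\int_{|x|\ge R}|K_j(x)|\,\dd x\lesssim (2^jR)^{-\eps}$ for some $\eps>0$; inserting an extra factor $\xi$ (so that $\nabla K_j=\mathcal{F}^{-1}(i\xi m_j)$) gives the companion gradient bound $\int_{\RR^d}|\nabla K_j(x)|\,\dd x\lesssim 2^j$.

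Finally I would assemble the H\"ormander condition $\sup_{y\neq 0}\int_{|x|\ge 2|y|}|K(x-y)-K(x)|\,\dd x<\infty$ by summing over $j$. For the pieces with $2^j|y|\le 1$ I use the mean-value inequality together with the gradient bound, producing a factor $2^j|y|$; for the pieces with $2^j|y|>1$ I use the uniform $L^1$-bound and the tail decay, producing a factor $(2^j|y|)^{-\eps}$. Each regime then sums to a convergent geometric series in $2^j|y|$, and the total is bounded independently of $y$, as required.

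The main obstacle I anticipate is the bookkeeping in this last summation: one must exploit simultaneously the uniform $L^1$-bound on $K_j$, its decay away from the origin at the natural scale $2^{-j}$, and its gradient bound, and then verify that the two dyadic regimes glue together to a finite total uniformly in $y$. The regularity hypothesis $k\ge[\frac d2]+1$ is exactly what guarantees convergence of the weighted $L^1$ estimates through the Cauchy-Schwarz step; with fewer derivatives the kernel bounds, and hence the whole scheme, would break down.
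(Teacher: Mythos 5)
The paper offers no proof of this lemma at all---it is quoted verbatim from Stein \cite{Stein70} (Theorem 3, Section 4.3.2)---and your argument is exactly the classical proof given there: the $L^2$ bound by Plancherel, the dyadic decomposition $m=\sum_j m_j$ with weighted-$L^2$ kernel estimates and the Cauchy--Schwarz step where $k\geq[\frac d2]+1$ enters, verification of the H\"ormander condition by summing the two dyadic regimes, and then Calder\'on--Zygmund theory, interpolation and duality to cover all $1<p<\infty$. Your proof is correct and takes the same route as the cited source; the only (immaterial) slip is that the Hermitian adjoint of $m(D)$ has symbol $\overline{m(\xi)}$ rather than $\overline{m(-\xi)}$, but either symbol satisfies the identical hypotheses, so the duality step goes through unchanged.
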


\section{The $L^q_t(L^p)$-maximal regularity estimate for the generalized Stokes system}\label{sec:LpLq}

We mainly focus on showing the following result in this section.

\begin{proposition}\label{pro.Max-Regularity}
Let $\alpha\in (0,1)$, $1< p,\,q< \infty,$ $u_0\in \dot B^{2\alpha(1-\frac 1q)}_{p,q}(\RR^d),\,f \in L^q(0,T;L^p(\RR^d))$, $R\in \dot W^{1,q}(0,T; L^p(\RR^d))$
with $\Div R\in L^q(0,T; \dot W^{2\alpha-1,p}(\RR^d))$.
Then the generalized Stokes system
\begin{equation}\label{eq.frc-Stokes}
\left\{\begin{array}{ll}
  \partial_t u+\Lambda^{2\alpha} u+\nabla \pi=f,\\
  \Div u = \Div R,\\
  u|_{t=0}(x) = u_0(x),
\end{array}\right.
\end{equation}
has a unique solution  $(u,\nabla \pi)$. Moreover, there exists a generic positive constant $C = C(d,\alpha,p,q)$ such that for any $T\in (0,\infty]$,
\begin{equation}\label{est.Max.LpLp}
\begin{split}
  &\|u\|_{L^\infty(0,T;\dot B^{2\alpha(1-1/q)}_{p,q}(\RR^d))}+\|(\Lambda^{2\alpha}u,\partial_t u, \nabla \pi)\|_{L^q(0,T;L^p(\RR^d))}\\
  \leq & C \Big(\|u_0\|_{ \dot B^{2\alpha(1-1/q)}_{p,q}(\RR^d)} + \|(f, \partial_t R)\|_{L^q(0,T;L^p(\RR^d))}
  + \|\Div R\|_{L^q(0,T; \dot W^{2\alpha-1,p}(\RR^d) )} \Big).
\end{split}
\end{equation}

\end{proposition}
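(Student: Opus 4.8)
The plan is to first decouple the pressure, reduce \eqref{eq.frc-Stokes} to a scalar fractional heat equation, and then prove the maximal regularity estimate \eqref{est.Max.LpLp} by recognizing the Duhamel solution operator as a vector-valued Calder\'on--Zygmund operator covered by Lemma \ref{CZ}. First I would eliminate the pressure: taking the divergence of the momentum equation and using $\Div u=\Div R$ yields the Poisson relation
\begin{equation*}
  \Delta\pi=\Div f-\Div(\partial_t R)-\Lambda^{2\alpha}(\Div R),
\end{equation*}
so that $\nabla\pi=\nabla\Delta^{-1}\Div f-\nabla\Delta^{-1}\Div(\partial_t R)-\nabla\Delta^{-1}\Lambda^{2\alpha}(\Div R)$. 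The first two terms are the projection onto gradient fields applied to $f$ and $\partial_t R$, hence $L^p$-bounded by the multiplier theorem Lemma \ref{lem:multiplier}; the last term is the Riesz transform $\nabla\Lambda^{-1}$ applied to $\Lambda^{2\alpha-1}\Div R$, so it maps $\dot W^{2\alpha-1,p}$ into $L^p$ by the same lemma. Taking $L^q_T$-norms gives
\begin{equation*}
  \|\nabla\pi\|_{L^q_T(L^p)}\lesssim \|(f,\partial_t R)\|_{L^q_T(L^p)}+\|\Div R\|_{L^q_T(\dot W^{2\alpha-1,p})}.
\end{equation*}
Setting $g:=f-\nabla\pi$, the velocity solves $\partial_t u+\Lambda^{2\alpha}u=g$ with $u|_{t=0}=u_0$ and $\|g\|_{L^q_T(L^p)}$ controlled by the right-hand side of \eqref{est.Max.LpLp} (while $\Div u=\Div R$ is recovered from the Poisson identity, using that the constraint holds at $t=0$). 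Since $\partial_t u=g-\Lambda^{2\alpha}u$, it then suffices to bound $\|\Lambda^{2\alpha}u\|_{L^q_T(L^p)}$ and $\|u\|_{L^\infty_T(\dot B^{2\alpha(1-1/q)}_{p,q})}$.

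Writing $u=e^{-t\Lambda^{2\alpha}}u_0+\int_0^t e^{-(t-s)\Lambda^{2\alpha}}g(s)\,\dd s$, the initial-data part is handled by the heat-semigroup characterization of Besov spaces: since $\Lambda^{2\alpha}e^{-t\Lambda^{2\alpha}}u_0=e^{-t\Lambda^{2\alpha}}(\Lambda^{2\alpha}u_0)$, Lemma \ref{frc-Gausskehua} with $s=2\alpha/q$ gives $\|\Lambda^{2\alpha}e^{-t\Lambda^{2\alpha}}u_0\|_{L^q_T(L^p)}\lesssim\|\Lambda^{2\alpha}u_0\|_{\dot B^{-2\alpha/q}_{p,q}}=\|u_0\|_{\dot B^{2\alpha(1-1/q)}_{p,q}}$, and the uniform boundedness of $e^{-t\Lambda^{2\alpha}}$ on $\dot B^{2\alpha(1-1/q)}_{p,q}$ yields the matching $L^\infty_T$ bound. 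The endpoint bound $\|u\|_{L^\infty_T(\dot B^{2\alpha(1-1/q)}_{p,q})}$ for the Duhamel term is the trace estimate of the maximal-regularity class: $\dot B^{2\alpha(1-1/q)}_{p,q}$ is precisely the trace space associated with $\{\partial_t u,\Lambda^{2\alpha}u\in L^q_T(L^p)\}$, and alternatively it can be read off from Lemma \ref{lem.es.Be} with $\rho=\infty$, $\rho_1=q$.

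The heart of the matter is the Duhamel estimate $\big\|\Lambda^{2\alpha}\int_0^t e^{-(t-s)\Lambda^{2\alpha}}g(s)\,\dd s\big\|_{L^q_T(L^p)}\lesssim\|g\|_{L^q_T(L^p)}$ for all $1<q<\infty$, which I would establish through Lemma \ref{CZ}. Take $X=\RR$ with Lebesgue measure (so $n=1$), $E=F=L^p(\RR^d)$, and the operator-valued kernel $\mathcal K(t,s)=\mathbf 1_{\{t>s\}}\,\Lambda^{2\alpha}e^{-(t-s)\Lambda^{2\alpha}}$. The pointwise estimates of Lemma \ref{lem:kernel} (with $\beta=2\alpha$, after the parabolic rescaling $x\mapsto\tau^{-1/(2\alpha)}x$, $\tau=t-s$) give $\|\Lambda^{2\alpha}e^{-\tau\Lambda^{2\alpha}}\|_{L^p\to L^p}\lesssim\tau^{-1}$, i.e. the size bound $\|\mathcal K(t,s)\|_{L^p\to L^p}\lesssim|t-s|^{-1}$; since $\partial_\tau(\Lambda^{2\alpha}e^{-\tau\Lambda^{2\alpha}})=-\Lambda^{4\alpha}e^{-\tau\Lambda^{2\alpha}}$ has operator norm $\lesssim\tau^{-2}$, the mean value theorem supplies the H\"ormander regularity $\|\mathcal K(t,s)-\mathcal K(t',s)\|_{L^p\to L^p}\lesssim|t-t'|\,|t-s|^{-2}$ together with its analogue in $s$, so Lemma \ref{CZ} applies with $\epsilon=1$. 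For the a priori boundedness at one exponent I would first prove the diagonal case on $L^p(\RR^{1+d})=L^p(\RR;L^p)$: on $L^2(\RR^{1+d})$ the space-time symbol $|\xi|^{2\alpha}(i\tau+|\xi|^{2\alpha})^{-1}$ is bounded by $1$, so Plancherel gives $L^2$-boundedness, and regarding $\mathcal K$ as a scalar kernel on $\RR^{1+d}$ with the parabolic quasi-metric $d((t,x),(s,y))=|t-s|^{1/(2\alpha)}+|x-y|$ (homogeneous dimension $d+2\alpha$), the decay and gradient bounds of Lemma \ref{lem:kernel} verify the scalar Calder\'on--Zygmund hypotheses and upgrade $L^2$ to $L^p(\RR^{1+d})$. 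Feeding this diagonal bound in as the base exponent, Lemma \ref{CZ} then yields boundedness on $L^q(\RR;L^p(\RR^d))$ for every $1<q<\infty$, which is precisely the sought Duhamel estimate. I expect this step---simultaneously producing the base-case boundedness and verifying the operator-valued kernel conditions, with the nonlocality of $\Lambda^{2\alpha}$ entering only through the decay of Lemma \ref{lem:kernel}---to be the main technical obstacle. Finally, uniqueness is immediate: applying \eqref{est.Max.LpLp} to the difference of two solutions, for which $u_0=0$, $f=0$ and $R=0$, forces it to vanish.
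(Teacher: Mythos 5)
Your proposal is correct and follows essentially the same route as the paper: the pressure is decoupled via the Leray projection exactly as in the paper, and the key Duhamel bound is the paper's Lemma \ref{lem.A2alpha} verbatim in structure --- Plancherel for the space-time $L^2$ case, a scalar parabolic Calder\'on--Zygmund argument on $\RR^{1+d}$ for the diagonal $L^p(L^p)$ case, then the vector-valued Calder\'on--Zygmund Lemma \ref{CZ} in the time variable to reach $L^q(L^p)$, with the initial-data term handled by Lemma \ref{frc-Gausskehua}. The only divergence is the $L^\infty_T(\dot B^{2\alpha(1-1/q)}_{p,q})$ endpoint, where you invoke the standard trace-space characterization of maximal regularity while the paper re-uses the boundedness of the Duhamel operator together with the semigroup characterization of negative-index Besov norms; your stated fallback via Lemma \ref{lem.es.Be} would additionally need $L^p\hookrightarrow \dot B^0_{p,q}$, which holds only for $q\ge \max(p,2)$, so the trace-space argument should be regarded as the actual justification of that step.
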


\begin{remark}
When $\alpha=1$ one can refer to \cite{DanM13} for the $L^q_t(L^p)$-maximal regularity estimate for the usual Stokes system;
and one can see \cite{GigS91,Iwas89} concerning more general domains including exterior domains.
For $\alpha\in (0,1)$, the $L^p_t(L^p)$ maximal regularity estimate for the fractional Stokes system \eqref{eq.frc-Stokes} with $R=0$
was also investigated by Giga et al \cite{Giga85,GigS91} using the abstract semigroup argument and by Cao et al \cite{CaoChLai19} using the Fourier multiplier method.
\end{remark}

The proof of Proposition \ref{pro.Max-Regularity} mainly relies on the following result, whose proof is placed below in this section.
\begin{lemma}\label{lem.A2alpha}
Set
\begin{equation}\label{Aalpha}
\begin{split}
  \mathcal{A}_{2\alpha} f(x,t) : = \int_0^t  e^{-(t-s)\Lambda^{2\alpha}}\Lambda^{2\alpha}f(\cdot,s)\mathrm d s.
\end{split}
\end{equation}
 Then, for any $T\in(0,\infty]$ and $1< p,\,q< \infty$, the operator $\mathcal{A}_{2\alpha} $ is continuously bounded from $L^q(0,T;L^p(\RR^d))$ to $L^q(0,T;L^p(\RR^d))$, and there exists a constant $C>0$ such that
\begin{align*}
  \|\mathcal{A}_{2\alpha} f\|_{L^q(0,T;L^p(\RR^d))}\leq C\|f\|_{L^q(0,T;L^p(\RR^d))}.
\end{align*}

\end{lemma}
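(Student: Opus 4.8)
The plan is to represent $\mathcal{A}_{2\alpha}$ as a causal singular integral in the time variable with an operator-valued convolution kernel, and then to bootstrap from the diagonal exponent $q=p$ to all $q\in(1,\infty)$ by means of the vector-valued Calder\'on-Zygmund machinery of Lemma \ref{CZ}. First I would unwind the definition \eqref{Aalpha}. Writing $G_\tau$ for the spatial convolution kernel of $\Lambda^{2\alpha}e^{-\tau\Lambda^{2\alpha}}$, whose symbol is $|\xi|^{2\alpha}e^{-\tau|\xi|^{2\alpha}}$, a scaling computation gives $G_\tau(x)=\tau^{-1-\frac{d}{2\alpha}}G(\tau^{-\frac1{2\alpha}}x)$ with $G=\Lambda^{2\alpha}K$ and $K$ the kernel of $e^{-\Lambda^{2\alpha}}$ from Lemma \ref{lem:kernel}; then
\begin{align*}
  \mathcal{A}_{2\alpha}f(x,t)=\int_0^t \big(G_{t-s}*f(\cdot,s)\big)(x)\,\dd s.
\end{align*}
The estimate \eqref{kerK-es2} with $\beta=2\alpha$ yields $|G(x)|\lesssim(1+|x|)^{-d-2\alpha}$, and with $\beta=2\alpha+1$ it yields $|\nabla G(x)|=|\Lambda^{2\alpha}\nabla K(x)|\lesssim(1+|x|)^{-d-2\alpha-1}$, so that both $G$ and $x\cdot\nabla G$ are integrable. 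By scaling these give the two crucial bounds $\|G_\tau\|_{L^1}=\tau^{-1}\|G\|_{L^1}$ and $\|\partial_\tau G_\tau\|_{L^1}\lesssim\tau^{-2}$.

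Next I would establish the base case, i.e.\ boundedness of $\mathcal{A}_{2\alpha}$ on $L^p(\RR;L^p(\RR^d))=L^p(\RR^{d+1})$ after extending $f$ by zero outside $(0,T)$. Since the kernel is supported in $\{t>s\}$, the operator coincides with full-line convolution in time, and on the Fourier side it is the multiplier operator with symbol $m(\tau,\xi)=|\xi|^{2\alpha}/(i\tau+|\xi|^{2\alpha})$ on $\RR^{d+1}$. This symbol is smooth away from the origin, bounded, and invariant under the parabolic scaling $(\tau,\xi)\mapsto(\lambda^{2\alpha}\tau,\lambda\xi)$, so it satisfies the anisotropic Mikhlin condition $|\partial_\tau^j\partial_\xi^\beta m(\tau,\xi)|\lesssim(|\tau|^{1/(2\alpha)}+|\xi|)^{-2\alpha j-|\beta|}$; the corresponding anisotropic version of Lemma \ref{lem:multiplier} then gives the $L^p(\RR^{d+1})$ bound for every $1<p<\infty$ (alternatively this diagonal case may be quoted from Cao et al).

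With the base case in hand I would apply Lemma \ref{CZ} with $X=\RR$ (time, Lebesgue measure, homogeneous dimension $n=1$), $E=F=L^p(\RR^d)$, and operator-valued kernel $L(t,s)=\big(G_{t-s}*\,\cdot\,\big)1_{\{t>s\}}$. By Young's inequality and the two scaling identities above, $\|L(t,s)\|_{op}\leq\|G_{t-s}\|_{L^1}\lesssim|t-s|^{-1}$, giving the size bound with $n=1$; and for $|s'-s|\leq\frac12|t-s|$ the intermediate times stay comparable to $|t-s|$ and avoid the causal jump, so the mean value theorem together with $\|\partial_\tau G_\tau\|_{L^1}\lesssim\tau^{-2}$ yields $\|L(t,s)-L(t,s')\|_{op}\lesssim|s-s'|\,|t-s|^{-2}$, i.e.\ the H\"older smoothness condition with $\epsilon=1$; the first-variable smoothness follows identically. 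Lemma \ref{CZ} then upgrades the base case to boundedness on $L^q(\RR;L^p(\RR^d))$ for all $1<q<\infty$, and causality plus the zero extension makes the bound independent of $T$ and valid for $T=\infty$.

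The main obstacle I anticipate is the diagonal base case rather than the bootstrap: the multiplier $m$ is only anisotropically homogeneous, so the isotropic Lemma \ref{lem:multiplier} does not apply verbatim and one must either invoke its parabolic analogue or argue the $L^p(\RR^{d+1})$ bound by hand. The remaining kernel verifications, though they must be carried out carefully near the diagonal $t=s$ where the causal cutoff acts, are routine once the $L^1$-scaling of $G_\tau$ and $\partial_\tau G_\tau$ supplied by Lemma \ref{lem:kernel} is established.
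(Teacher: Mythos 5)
Your bootstrap step is correct and is essentially the paper's own second step: the paper also applies Lemma \ref{CZ} on the real line with $E=F=L^p(\RR^d)$ and the causal operator-valued kernel $L(t,s)=\Lambda^{2\alpha}e^{-(t-s)\Lambda^{2\alpha}}1_{\{s<t\}}$. The only difference is how the bounds $\|L(t,s)\|_{op}\lesssim |t-s|^{-1}$ and $\|\partial_s L(t,s)\|_{op}\lesssim |t-s|^{-2}$ are verified: you use Young's inequality together with the $L^1$-scaling of $G_\tau$ and $\partial_\tau G_\tau$ (correctly extracted from Lemma \ref{lem:kernel}), whereas the paper applies Stein's multiplier theorem (Lemma \ref{lem:multiplier}) in the space variable for each fixed $t-s$. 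Both verifications work, and your observation that the constraint $|s-s'|\le\frac12|t-s|$ keeps the causal cutoff from being crossed is the right point to check.

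The genuine gap is in your primary argument for the diagonal base case. The pointwise anisotropic Mikhlin bound you assert, $|\partial_\tau^j\partial_\xi^\beta m(\tau,\xi)|\lesssim(|\tau|^{1/(2\alpha)}+|\xi|)^{-2\alpha j-|\beta|}$, is false: the symbol $m=\frac{|\xi|^{2\alpha}}{i\tau+|\xi|^{2\alpha}}$ is not smooth across the axis $\{\xi=0,\ \tau\neq 0\}$, because $|\xi|^{2\alpha}$ with $2\alpha<2$ is not twice differentiable at $\xi=0$. Concretely, in the region $|\xi|^{2\alpha}\ll|\tau|$ one has $|\partial_\xi^\beta m|\sim |\xi|^{2\alpha-|\beta|}/|\tau|$, and the ratio against the required bound $|\tau|^{-|\beta|/(2\alpha)}$ is $\big(|\xi|/|\tau|^{1/(2\alpha)}\big)^{2\alpha-|\beta|}$, which blows up as $\xi\to 0$ whenever $|\beta|>2\alpha$; in particular the condition fails at $|\beta|=2$ for every $\alpha\in(0,1)$ (and already at $|\beta|=1$ when $\alpha<\frac12$). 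Since any Mikhlin-type theorem on $\RR^{d+1}$ requires at least $[\frac{d+1}{2}]+1\ge 2$ derivatives, the "parabolic analogue of Lemma \ref{lem:multiplier}" cannot be invoked in this naive pointwise form. This is precisely why the paper never uses the symbol beyond the trivial bound $|m|\le 1$, which gives $L^2(\RR\times\RR^d)$ by Plancherel; it then treats $\mathcal{A}_{2\alpha}$ as a \emph{scalar} Calder\'on--Zygmund operator on spacetime equipped with the parabolic quasi-distance $\rho((x,t),(y,s))=(|x-y|^{4\alpha}+|t-s|^2)^{1/(4\alpha)}$ and homogeneous dimension $d+2\alpha$, verifying the kernel size and smoothness conditions from the pointwise decay of $\Lambda^{2\alpha}K$ and its derivatives (Lemma \ref{lem:kernel}). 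Those kernel conditions are the integrated form of regularity that survives the symbol's non-smoothness at $\xi=0$, which only manifests as the $|x|^{-d-2\alpha}$ tail of the kernel. Your fallback of quoting the diagonal case $q=p$ from Cao--Chen--Lai would be legitimate and would let the rest of your argument go through, but as written the primary route does not establish the base case.
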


\begin{proof}[Proof of Proposition \ref{pro.Max-Regularity}]
The existence and uniqueness of solution to the system \eqref{eq.frc-Stokes} are standard,
and can be proved as those of the usual inhomogeneous Stokes system.
Next we are devoted to proving the regularity estimate \eqref{est.Max.LpLp}.

Taking the divergence operator to the equation $\eqref{eq.frc-Stokes}_1$ leads to
\begin{align*}
  \Delta \pi = - \partial_t \Div u - \Lambda^{2\alpha} \Div u + \Div f  = - \Div \partial_t R - \Div \Lambda^{2\alpha} R + \Div f,
\end{align*}
thus denoting by $\PP := \nabla \Delta^{-1} \Div $, we see that
\begin{align}\label{nab-pi-exp}
  \nabla \pi =  - \PP \partial_t R -\PP \Lambda^{2\alpha} R + \PP f.
\end{align}
Rewrite the equation $\eqref{eq.frc-Stokes}_1$ as
\begin{align}\label{eq.u.pq}
  \partial_t u + \Lambda^{2\alpha} u = \PP \partial_t R + \PP \Lambda^{2\alpha} R + (\Id -\PP) f = : \overline{f},\qquad u|_{t=0} = u_0,
\end{align}
and then Duhamel's formula yields
\begin{equation}\label{eq.integr}
\begin{split}
  u(x,t)=e^{-t\Lambda^{2\alpha}}u_0(x)+\int_0^t e^{-(t-\tau)\Lambda^{2\alpha}} \overline{f}(x,\tau)\mathrm d\tau,
\end{split}
\end{equation}
where semigroup operator $e^{-t \Lambda^{2\alpha}}$ is given by \eqref{Salpha} below.

Applying operator $\Lambda^{2\alpha}$ to the above formula
and recalling \eqref{Aalpha}, it gives that
\begin{equation}\label{eq.integr.Max}
\begin{split}
  \Lambda^{2\alpha}u(x,t) = e^{- t \Lambda^{2\alpha}} \Lambda^{2\alpha} u_0(x) + \mathcal{A}_{2\alpha} \overline{f}(x,t).
\end{split}
\end{equation}
Then by virtue of Lemmas \ref{frc-Gausskehua} and \ref{lem.A2alpha}, one finds
\begin{align}\label{es:maxLqLq.0}
  \|\Lambda^{2\alpha} u\|_{L^q(0,T; L^p(\RR^d))} & \leq \|e^{-t\Lambda^{2\alpha}} \Lambda^{2\alpha} u_0 \|_{L^q(\RR_+; L^p(\RR^d))}
  + \|\mathcal{A}_{2\alpha} \overline{f} \|_{L^q(0,T; L^p(\RR^d))} \nonumber \\
  & \leq C \|u_0\|_{\dot B^{2\alpha(1-1/q)}_{p,q}} + C \|\overline{f}\|_{L^q(0,T; L^p)} \nonumber \\
  & \leq C \|u_0\|_{\dot B^{2\alpha(1-1/q)}_{p,q}} + C \|(\partial_t R, f)\|_{L^q(0,T; L^p)} +  C \|\Div R\|_{L^q(0,T; \dot W^{2\alpha-1,p})} ,
\end{align}
where in the last line the $L^p$ $(1<p<\infty)$ boundedness property of the singular integral operators is also used.
Thanks to \eqref{nab-pi-exp}, and using the Calder\'on-Zygmund theorem again, we get
\begin{equation}\label{eq.P.Max.1}
\begin{split}
  \|\nabla\pi\|_{L^q(0,T; L^p(\RR^d))} & \leq C \|\big(\PP \partial_t R, \PP \Lambda^{2\alpha} R, \PP f \big)\|_{L^q(0,T; L^p)} \\
  & \leq  C \|(\partial_t R, f)\|_{L^q(0,T; L^p)} +  C \|\Div R\|_{L^q(0,T; \dot W^{2\alpha-1,p})}.
\end{split}
\end{equation}
We use the equation $\eqref{eq.frc-Stokes}_1$ and gather the above estimates to infer that
\begin{equation}\label{est:par-uLqLp}
\begin{split}
  \|\partial_t u\|_{L^q(0,T; L^p(\RR^d))} & \leq C \|(\Lambda^{2\alpha} u, \nabla \pi , f ) \|_{L^q(0,T; L^p(\RR^d))} \\
  & \leq  C \|(\partial_t R, f)\|_{L^q(0,T; L^p)} +  C \|\Div R\|_{L^q(0,T; \dot W^{2\alpha-1,p})} .
\end{split}
\end{equation}
Noticing that
$\|e^{-t\Lambda^{2\alpha}} u_0\|_{\dot B^{2\alpha(1-1/q)}_{p,q}(\RR^d)} \leq C \|u_0\|_{\dot B^{2\alpha(1-1/q)}_{p,q}(\RR^d)}$ (following from \eqref{kerK-es1}) and
\begin{align*}
  \|\mathcal{A}_{2\alpha} \overline{f}(\cdot, t)\|_{\dot B^{-2\alpha/q}_{p,q}(\RR^d)}
  & \leq C \big\| \|e^{- \tau' \Lambda^{2\alpha}} \mathcal{A}_{2\alpha}\overline{f}\|_{L^p(\RR^d)} \big\|_{L^q_{\tau'}(\RR_+)} \\
  & \leq C  \Big\|\int_0^t e^{-(t+\tau'-\tau) \Lambda^{2\alpha}} \Lambda^{2\alpha}\overline{f}(x,\tau) \dd \tau \Big\|_{L^q_{\tau'} (\RR_+;L^p)}  \\
  & \leq C \Big\|\int_0^{t+\tau'} e^{-(t+\tau'-\tau) \Lambda^{2\alpha}} \Lambda^{2\alpha}\overline{f}(x,\tau) 1_{[0,t]}(\tau) \dd \tau
  \Big\|_{L^q_{\tau'}(\RR_+; L^p)} \\
  & \leq C \|\overline{f}(x,\tau) 1_{[0,t]}(\tau)\|_{L^q_\tau(\RR_+; L^p)}  \leq C \|\overline{f}\|_{L^q(0,t; L^p)},
\end{align*}
it yields from \eqref{eq.integr.Max} that
\begin{align}\label{est.Max.LpLp.1}
  \|u\|_{L^\infty(0,T;\dot B^{2\alpha(1-1/q)}_{p,q})} \leq & C \|\Lambda^{2\alpha}u\|_{L^\infty(0,T; \dot B^{-2\alpha/q}_{p,q})}  \nonumber \\
  \leq & C \|e^{- t \Lambda^{2\alpha}} \Lambda^{2\alpha} u_0\|_{L^\infty(0,T; \dot B^{-2\alpha/q}_{p,q})}
  + C \|\mathcal{A}_{2\alpha} \overline{f}\|_{L^\infty(0,T; \dot B^{-2\alpha/q}_{p,q})} \nonumber \\
  \leq & C\|u_0\|_{ \dot B^{2\alpha(1- 1/q)}_{p,q}} + C \|\overline{f}\|_{L^q(0,T;L^p)} \nonumber \\
  \leq & C\|u_0\|_{ \dot B^{2\alpha(1-1/q)}_{p,q}} + C \|(\partial_t R, f)\|_{L^q(0,T; L^p)} +  C \|\Div R\|_{L^q(0,T; \dot W^{2\alpha-1,p})}.
\end{align}

Therefore, collecting estimates \eqref{es:maxLqLq.0}--\eqref{est.Max.LpLp.1} concludes \eqref{est.Max.LpLp}, as desired.

\end{proof}

Then it remains to prove Lemma \ref{lem.A2alpha}.
\begin{proof}[Proof of Lemma \ref{lem.A2alpha}]
The idea is analogous to that of \cite[Theorem 7.3]{LRie02} given by P.G. Lemari$\mathrm{\acute{e}}$-Rieusset.
For every $\varphi\in L^p(\RR^d)$, set
\begin{equation}\label{Salpha}
\begin{split}
  e^{-t\Lambda^{2\alpha}}\varphi(x) = \mathcal{F}^{-1}(e^{-t|\xi|^{2\alpha}})(x)\ast \varphi(x) = K_t(x)*\varphi(x),
\end{split}
\end{equation}
where $K_t(x) = t^{-\frac{d}{2\alpha}}K(\frac x{t^{1/{2\alpha}}})$.
Noting that $\Lambda^{2\alpha} K_t(x) = t^{-\frac{d}{2\alpha}-1} (\Lambda^{2\alpha} K)(\frac{x}{t^{1/2\alpha}})$,
we let $\Omega(x,t) : = t^{-\frac{d}{2\alpha}}(\Lambda^{2\alpha}K)(\frac x{t^{1/{2\alpha}}})$, and then it follows that
$\frac{1}{t} \Omega(x,t) = \mathcal{F}^{-1}(|\xi|^{2\alpha}e^{-t|\xi|^{2\alpha}})$ and
\begin{equation}
\begin{split}
\mathcal{A}_{2\alpha} f(x,t) =\int_0^t\int_{\RR^d}\frac 1{t-s}\Omega(x-y,t-s)f(y,s) \,\mathrm d y\mathrm d s.
\end{split}
\end{equation}

Without loss of generality, we only need to prove the case of $T=\infty$ because the other case can be reduced to this case by zero extension in time $t$.
Moreover, we denote $\widetilde{f}(x,t)$, $\widetilde{\Omega}(x,t)$, $\mathcal{A}_{2\alpha}\widetilde{{f}}(x,t)$ by extending $f(x,t)$, $\Omega(x,t)$, $\mathcal{A}_{2\alpha}{f}(x,t)$ to negative values of $t$ through zero extension, respectively. This is harmless since $\mathcal{A}_{2\alpha} f(x,t)$ depends only from the values of  $f$ on $(0,t)\times \RR^d$.

Now we compute the Fourier transform of $\frac{1}{t} \widetilde{\Omega}(x,t)$ with respect to spacetime variable $(x,t)$:
the Fourier transform in $x$ for every $t>0$ leads to $\int_{\RR^d} e^{-ix\cdot\xi } \frac{1}{t} \widetilde{\Omega}(x,t) \dd x
= |\xi|^{2\alpha} e^{-t |\xi|^{2\alpha}}$, and then the Fourier transform in $t$ gives
\begin{align*}
  m(\xi,\tau) = \int_0^\infty e^{-i t\tau} |\xi|^{2\alpha} e^{-|\xi|^{2\alpha}t} \dd t = \frac {|\xi|^{2\alpha}}{i\tau+|\xi|^{2\alpha}}.
\end{align*}
Since $|m(\xi,\tau)| \leq 1$, we directly get $\mathcal{A}_{2\alpha}$ is bounded on $L^2(\RR\times \RR^d)$.

Next, we consider $\mathcal{A}_{2\alpha} $  as a Calder\'on-Zygmund operator on $\RR^d\times \RR$ endowed
with the Lebesgue measure $\mu$ on $\RR^{d+1}$ and with the quasi-distance $\rho((x,t),(y,s))=(|x-y|^{4\alpha}+|t-s|^2)^{\frac 1{4\alpha}}$.
We also have that for any $(x,t)\in\RR^d\times \RR$ and $r>0$,
\begin{align*}
  \mu(B((x,t),r)) = \int_{\rho((y,s),(x,t))\leq r} \dd y \dd s \leq C_0 r^{d+2\alpha}.
\end{align*}
Denote by
\begin{align*}
  L((x,t),(y,s))= \frac {1_{\{s<t\}}(s)}{t-s} \widetilde{\Omega}(x-y,t-s)
  = \frac {1_{\{s<t\}}(s)}{(t-s)^{\frac{d}{2\alpha}+1}} (\Lambda^{2\alpha}K)\Big(\frac{ x-y}{(t-s)^{1/{2\alpha}}}\Big).
\end{align*}
We claim that $L((x,t),(y,s))$ satisfies the assumptions in Lemma \ref{CZ}. Indeed, by making use of estimate \eqref{kerK-es2},
if $|x-y|^{2\alpha}\leq|t-s|,$ one has
\begin{align*}
  |L((x,t),(y,s))|\leq \frac{\|\Lambda^{2\alpha}K\|_{L^\infty}}{|t-s|^{\frac d{2\alpha}+1}}\leq \frac{C}{|t-s|^{\frac d{2\alpha}+1}};
\end{align*}
while if $|x-y|^{2\alpha}\ge|t-s|,$ one has
\begin{align*}
  |L((x,t),(y,s))|\leq \frac{\| |z|^{d+2\alpha}\Lambda^{2\alpha}K\|_{L^\infty}}{|x-y|^{d+{2\alpha}}}\leq \frac{C}{|x-y|^{d+{2\alpha}}};
\end{align*}
thus, we get
\begin{equation}\label{cndi.CZ.1}
\begin{split}
|L((x,t),(y,s))|\leq \frac{C}{{d((x,t),(y,s))}^{d+2\alpha}}.
\end{split}
\end{equation}
In a similar manner, we can also obtain
\begin{align*}
  \Big|\frac{\partial}{\partial t}L((x,t),(y,s))\Big|
  = \Big|\frac{\partial}{\partial s}L((x,t),(y,s))\Big| \leq \frac{C}{{\rho((x,t),(y,s))}^{d+4\alpha}},
\end{align*}
and
for every $ j=1,2,\cdots,d$,
\begin{align*}
  \Big|\frac{\partial}{\partial x_j}L((x,t),(y,s))\Big| = \Big|\frac{\partial}{\partial y_j}L((x,t),(y,s))\Big|\leq \frac{C}{{\rho((x,t),(y,s))}^{d+2\alpha+1}}.
\end{align*}
Gathering the above estimates with Taylor's formula, we have that for every $j\in\{1,2,\cdots,d\}$ and for any sufficiently small $(h,\theta)$,
\begin{align}\label{cndi.CZ.2-0}
  |L((x+h,t+\theta), (y,s)) & - L((x,t),(y,s))| \leq \frac{C |h|}{\rho((x,t),(y,s))^{d+2\alpha+1}} + \frac{C|\theta|}{\rho((x,t),(y,s))^{d+4\alpha}} \nonumber \\
  & \leq C \frac{ \rho((x,t),(x+h,t+\theta))}{\rho((x,t),(y,s))^{d+2\alpha+1}} + C \frac{\rho((x,t),(x+h,t+\theta))^{2\alpha}}{\rho((x,t),(y,s))^{d+4\alpha}} \nonumber \\
  & \leq  C \frac{\rho((x,t),(x+h,t+\theta))}{\rho((x,t),(y,s))^{d+2\alpha+1}},
\end{align}
and similarly,
\begin{equation}\label{cndi.CZ.2}
\begin{split}
  |L((x,t),(y+h,s+\theta))-L((x,t),(y,s))| \leq C \frac{\rho((y+h,s+\theta),(y,s))}{{\rho((x,t),(y,s))}^{d+2\alpha+1}}.
\end{split}
\end{equation}
Therefore, together with \eqref{cndi.CZ.1}--\eqref{cndi.CZ.2} and the $L^2$-boundedness of $\mathcal{A}_{2\alpha}$,
we can apply Lemma \ref{CZ} to infer that for every $1<p<\infty$,
\begin{equation}\label{Max.LpLp}
  \|\mathcal{A}_{2\alpha} \widetilde{f}\|_{L^p(\RR\times\RR^d)} \leq C \|\widetilde{f}\|_{L^p(\RR\times\RR^d)}.
\end{equation}

Next we regard $\mathcal{A}_{2\alpha} $ as a vector-valued Calder\'on-Zygmund operator on the real line $\RR$:
$$\mathcal{A}_{2\alpha} \widetilde{f}(\cdot,t)=\int_{\RR} L(t,s) \widetilde{f}(\cdot,s)\mathrm d s,$$
where $L(t,s)$ for every $\{(t,s)\in \RR^2: t\neq s\}$ is given by the integral
\begin{align*}
  L(t,s) \widetilde{f}(x) =
  \Lambda^{2\alpha} e^{-(t-s)\Lambda^{2\alpha}}\widetilde{f}(x,s)
  = \int_{\RR^d}\frac {1_{\{s<t\}}(s)}{t-s}\widetilde{\Omega}(x-y,t-s)\tilde{f}(y,s)\mathrm dy.
\end{align*}
In view of \eqref{Max.LpLp}, we have that $\mathcal{A}_{2\alpha}$ is continuously bounded from $L^p(\RR;L^p(\RR^d))$ to $L^p(\RR;L^p(\RR^d))$.
For every $0< s < t $, note that the Fourier multipliers of $L$ and $\partial_t L$ satisfy that for every $0\leq k \leq [\frac{d}2 ]+ 1$ and $\xi \neq 0$,
\begin{align*}
  \sup_{|\beta|=k} \big|\partial_\xi^\beta \big(|\xi|^{2\alpha} e^{-(t-s)|\xi|^{2\alpha}} \big)\big| \leq \frac{C |\xi|^{-k}}{t-s},\quad
  \textrm{and} \quad \sup_{|\beta|=k} \big|\partial_\xi^\beta \big(|\xi|^{4\alpha} e^{-(t-s)|\xi|^{2\alpha}} \big)\big| \leq \frac{C |\xi|^{-k}}{(t-s)^2},
\end{align*}
thus Lemma \ref{lem:multiplier} guarantees that
\begin{align*}
  \|L\|_{op(L^p\mapsto L^p)}\leq\frac C{t-s}, \quad \textrm{and}\quad
  \|\partial_t L\|_{op(L^p\mapsto L^p)}=\|\partial_s L\|_{op(L^p\mapsto L^p)}\leq \frac C{(t-s)^2}.
\end{align*}
Hence the assumptions in Lemma \ref{CZ} are all fulfilled, so that
$\mathcal{A}_{2\alpha} $ is continuously bounded from $L^q(\RR;L^p(\RR^d))$ to $L^q(\RR;L^p(\RR^d))$ for any $1<q,p<\infty$,
which completes the proof of Proposition \ref{lem.A2alpha} for $T=\infty$. 
\end{proof}

\section{A priori estimates}\label{APR}
\indent
We will derive the \textit{a priori} bounds for the vector fields $\bar{u}$, $w$ and $u$ in the successive subsections.

\subsection{A priori estimates for $\bar{u}$ solving 2D fractional Navier-Stokes system.}\label{subsec:L2}

\begin{proposition}\label{pro.u2dL2}
Let $\alpha\in (1/2,1)$, $u_0\in H^1\cap \dot B^\alpha_{p,2}(\RR^2)$ with $p>\frac2 {2\alpha-1}$.
Then the smooth solution $(\bar{u},\nabla\bar\pi)$ of 2D fractional Navier-Stokes equations \eqref{eq.2dNS} satisfies the following:
\begin{align}
  &\|\bar{u}\|_{L^\infty_t(H^1)}^2+\|\Lambda^\alpha \bar{u}\|_{L^2_t(H^1)}^2\leq C \|u_0\|_{H^1}^2, \label{est.u2d-2}\\
  & \|\bar{u}\|_{L^\infty_t(\dot B^\alpha _{p,2})} + \|(\partial_\tau \bar{u},\Lambda^{2\alpha} \bar{u},\nabla \bar{\pi})\|_{L^2_t(L^p)}
  \leq C \|u_0\|_{\dot B^{\alpha }_{p,2}} + C \| u_0\|_{H^1}^{\frac{4\alpha-1}{2\alpha -1}}.\label{est.u2d}
\end{align}
\end{proposition}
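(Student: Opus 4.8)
The plan is to prove the two estimates in Proposition~\ref{pro.u2dL2} by combining an $L^2$-energy method (for the $H^1$-bound) with the $L^q_t(L^p)$-maximal regularity estimate from Proposition~\ref{pro.Max-Regularity} (for the $\dot B^\alpha_{p,2}$-bound). Since $\bar u$ solves the homogeneous system \eqref{eq.2dNS}, we regard it as a generalized Stokes system with $R=0$ and forcing $f=-\bar u\cdot\nabla\bar u$. The key point throughout is that in two dimensions with $\alpha>\tfrac12$ the dissipation $\Lambda^{2\alpha}$ is strong enough to close all the estimates using only conserved or energy-dissipated quantities, so no smallness of $u_0$ is needed.

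\medskip

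\noindent\textbf{Step 1: the $H^1$-bound \eqref{est.u2d-2}.}
First I would test equation \eqref{eq.2dNS} against $\bar u$ to get the classical energy identity $\tfrac12\tfrac{\dd}{\dd t}\|\bar u\|_{L^2}^2+\|\Lambda^\alpha\bar u\|_{L^2}^2=0$, using $\int(\bar u\cdot\nabla\bar u)\cdot\bar u=0$ by incompressibility, which yields $\|\bar u\|_{L^\infty_t(L^2)}^2+2\|\Lambda^\alpha\bar u\|_{L^2_t(L^2)}^2=\|u_0\|_{L^2}^2$. For the gradient level, I would work with the vorticity $\omega=\mathrm{curl}\,\bar u$, which satisfies $\partial_t\omega+\bar u\cdot\nabla\omega+\Lambda^{2\alpha}\omega=0$; testing against $\omega$ gives $\tfrac12\tfrac{\dd}{\dd t}\|\omega\|_{L^2}^2+\|\Lambda^\alpha\omega\|_{L^2}^2=0$, again killing the transport term. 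Since $\|\nabla\bar u\|_{L^2}\simeq\|\omega\|_{L^2}$ and $\|\Lambda^\alpha\nabla\bar u\|_{L^2}\simeq\|\Lambda^\alpha\omega\|_{L^2}$ for divergence-free fields, summing the two identities delivers \eqref{est.u2d-2} with $C$ absolute. This step is genuinely easy in 2D precisely because $\omega$ obeys a maximum-principle/energy-type law; the footnote in the introduction already flags the boundedness of vorticity as the mechanism making the supercritical case tractable.

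\medskip

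\noindent\textbf{Step 2: the $\dot B^\alpha_{p,2}$-bound \eqref{est.u2d}.}
I would apply Proposition~\ref{pro.Max-Regularity} on $\RR^2$ with $q=2$, $R=0$, $f=-\bar u\cdot\nabla\bar u$, and the compatibility $2\alpha(1-\tfrac1q)=\alpha$, which gives
\begin{equation*}
  \|\bar u\|_{L^\infty_t(\dot B^\alpha_{p,2})}+\|(\partial_\tau\bar u,\Lambda^{2\alpha}\bar u,\nabla\bar\pi)\|_{L^2_t(L^p)}
  \leq C\|u_0\|_{\dot B^\alpha_{p,2}}+C\|\bar u\cdot\nabla\bar u\|_{L^2_t(L^p)}.
\end{equation*}
Everything then reduces to controlling the nonlinearity $\|\bar u\cdot\nabla\bar u\|_{L^2_t(L^p)}$ by the $H^1$-data already bounded in Step~1. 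The route I would take is to estimate $\|\bar u\cdot\nabla\bar u\|_{L^p}\lesssim\|\bar u\|_{L^\infty}\|\nabla\bar u\|_{L^p}$ (or a Hölder split $\|\bar u\|_{L^{p_1}}\|\nabla\bar u\|_{L^{p_2}}$), then interpolate both factors between the $L^\infty_t(H^1)$ and $L^2_t(\dot H^{1+\alpha})$ norms controlled by \eqref{est.u2d-2}. In 2D, $H^1\hookrightarrow L^{p_1}$ for all $p_1<\infty$, and $\dot H^{1+\alpha}\hookrightarrow\dot W^{1,p_2}$ for the appropriate $p_2$ when $\alpha>\tfrac12$; balancing the time exponents via a Gagliardo--Nirenberg/interpolation inequality so that the total time-integrability is exactly $L^2_t$ produces a bound of the form $\|u_0\|_{H^1}^\theta$. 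Tracking the exponent should give the stated power $\tfrac{4\alpha-1}{2\alpha-1}$, where the denominator $2\alpha-1$ reflects how much super-critical regularity the dissipation supplies and matches the constraint $p>\tfrac2{2\alpha-1}$.

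\medskip

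\noindent\textbf{Main obstacle.}
The delicate part is Step~2: one must verify that the nonlinear term $\bar u\cdot\nabla\bar u$ actually lies in $L^2_t(L^p)$ with a bound depending \emph{only} on $\|u_0\|_{H^1}$, and that the interpolation exponents close consistently with the restriction $p>\tfrac2{2\alpha-1}$. The precise bookkeeping of the time-exponent and space-exponent interpolation (choosing how to distribute $L^\infty$ and $\dot H^{1+\alpha}$ regularity across the two factors, and confirming the resulting time integrability is no worse than $L^2$) is where the constraint $\alpha>\tfrac12$ and the sharp power $\tfrac{4\alpha-1}{2\alpha-1}$ must be extracted; I expect this to be the step requiring the most care, though it is a routine, if somewhat technical, interpolation once the structure above is fixed.
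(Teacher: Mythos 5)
Your Step 1 is exactly the paper's argument (the $L^2$-energy identity plus the energy estimate for the vorticity equation), and the framework of your Step 2 --- applying Proposition \ref{pro.Max-Regularity} with $q=2$, $R=0$, $f=-\bar u\cdot\nabla\bar u$, and then estimating the nonlinearity --- also matches the paper. However, the way you propose to close the nonlinear estimate contains a genuine gap: you claim $\|\bar u\cdot\nabla\bar u\|_{L^2_t(L^p)}$ can be controlled by interpolating \emph{both} Hölder factors between the energy norms $L^\infty_t(H^1)$ and $L^2_t(\dot H^{1+\alpha})$ from Step 1, i.e. by quantities depending only on $\|u_0\|_{H^1}$. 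This cannot work for all admissible $p$. In two dimensions the energy norms control $\nabla\bar u$ spatially at best in $L^{2/(1-\alpha)}$ (via $\dot H^{\alpha}\hookrightarrow L^{2/(1-\alpha)}$ applied to $\nabla \bar u$), and $\bar u$ itself is at best bounded; hence no Hölder/Gagliardo--Nirenberg bookkeeping of these quantities can place $\bar u\cdot\nabla\bar u$ in $L^p$ once $p>\frac{2}{1-\alpha}$. Concretely, your split $\frac1p=\frac1{p_1}+\frac1{p_2}$ with $\frac1{p_2}\ge\frac{1-\alpha}{2}$ forces $p\le\frac{2}{1-\alpha}$. Since the hypothesis only requires $p>\frac{2}{2\alpha-1}$ with no upper bound, such large $p$ are allowed; in fact for $\alpha\le\frac23$ every admissible $p$ exceeds $\frac{2}{1-\alpha}$, so your scheme fails on the entire admissible range there.

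The missing idea --- and what the paper does --- is self-absorption into the maximal regularity estimate. One keeps the undifferentiated factor in an energy space, $\|\bar u\|_{L^{p(p+\delta_1)/\delta_1}}\lesssim\|\bar u\|_{H^1}$, but interpolates the derivative factor against the maximal-regularity quantity itself: $\|\nabla\bar u\|_{L^{p+\delta_1}}\lesssim\|\nabla\bar u\|_{L^2}^{\theta_1}\|\Lambda^{2\alpha}\bar u\|_{L^p}^{1-\theta_1}$, which is legitimate precisely because $2\alpha-\frac2p>1$. Young's inequality then splits off $\frac12\|\Lambda^{2\alpha}\bar u\|_{L^2_t(L^p)}$, which is absorbed into the left-hand side of the maximal regularity bound, while the remaining factor is handled by $\|\nabla\bar u\|_{L^2}\lesssim\|\Lambda^\alpha\bar u\|_{L^2}^{\alpha}\|\Lambda^{1+\alpha}\bar u\|_{L^2}^{1-\alpha}$ together with Step 1. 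Finally $\delta_1$ is tuned so that $\frac1{\theta_1}=\frac{2\alpha}{2\alpha-1}$, producing the exponent $\frac1{\theta_1}+1=\frac{4\alpha-1}{2\alpha-1}$. So the gap is real but local to Step 2, and it is repaired by exactly this absorption device rather than by a pure energy-interpolation argument.
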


\begin{proof}[Proof of Proposition \ref{pro.u2dL2}]
The  $L^2$-energy estimate gives that for every $t\in (0,\infty]$,
\begin{equation}\label{est.u2d.L2}
\begin{split}
  \|\bar{u}\|_{L^\infty_t(L^2)}^2 + \|\Lambda^\alpha \bar{u}\|_{L^2_t(L^2)}^2 \leq \|u_0\|_{L^2}^2.
\end{split}
\end{equation}
Let $\bar\omega = \mathrm{curl}\, \bar{u} = \partial_{x_1} \bar{u}^2 - \partial_{x_2} \bar{u}^1$ be the vorticity of fluid, then it satisfies
\begin{equation}\label{eq.2dNS.w}
\left\{\begin{array}{ll}
\partial_t \bar\omega+\bar{u}\cdot\nabla \bar\omega+\Lambda^{2\alpha} \bar\omega=0,\\
\Div \bar{u}=0,\\
\bar\omega|_{t=0}=\omega_0 = \mathrm{curl}\, u_0.
\end{array}\right.
\end{equation}
The energy estimate of system \eqref{eq.2dNS.w} also leads to
\begin{equation}\label{est.w2dL2}
\begin{split}
  \|\nabla \bar{u}\|_{L^\infty_t(L^2)}^2 + \|\Lambda^\alpha \nabla\bar{u}\|_{L^2_t(L^2)}^2
  \leq C \big(\|\bar\omega\|_{L^\infty_t(L^2)}^2+\|\Lambda^\alpha \bar\omega\|_{L^2_t(L^2)}^2\big) \leq C \|\nabla u_0\|^2_{L^2},
\end{split}
\end{equation}
which combined with \eqref{est.u2d.L2} yields the desired estimate \eqref{est.u2d-2}.

Next, 
applying Proposition \ref{pro.Max-Regularity} with $q=2$ to system  \eqref{eq.2dNS}, we find that for $\delta_1 >0$ small enough,
\begin{align}\label{est.Max.u2d}
  \|\bar{u}\|_{L^\infty_t(\dot B^\alpha _{p,2})} + & \|(\partial_\tau \bar{u},\Lambda^{2\alpha} \bar{u},\nabla \bar\pi)\|_{L^2_t(L^p)}
  \leq C \|u_0\|_{\dot B^\alpha _{p,2}} + C \|\bar{u}\cdot\nabla \bar{u}\|_{L^2_t(L^p)} \nonumber \\
  \leq  & C \|u_0\|_{\dot B^\alpha _{p,2}} + C \big\|\|\bar{u}\|_{L^{\frac{p(p+\delta_1)}{\delta_1}}} \|\nabla \bar u\|_{L^{p+\delta_1}}\big\|_{L^2_t} \nonumber \\
  \leq & C \|u_0\|_{\dot B^\alpha_{p,2}} + C \big\| \|\bar{u}\|_{H^1} \|\nabla \bar{u}\|_{L^2}^{\theta_1} \|\Lambda^{2\alpha} \bar{u}\|_{L^p}^{1-\theta_1} \big\|_{L^2_t}\nonumber \\
  \leq & C \|u_0\|_{\dot B^\alpha_{p,2}} + C \|\bar{u}\|_{L^\infty_t (H^1)}
  \|\nabla \bar{u}\|_{L^2_t (L^2)}^{\theta_1} \|\Lambda^{2\alpha} \bar{u}\|_{L^2_t (L^p)}^{1-\theta_1}  \nonumber \\
  \leq & C \|u_0\|_{\dot B^\alpha _{p,2}} + C\|u_0\|_{H^1}^{1/\theta_1} \|\Lambda^\alpha \bar{u}\|_{L^2_t (L^2)}^\alpha
  \|\Lambda^{1+\alpha} \bar{u}\|_{L^2_t (L^2)}^{1-\alpha}  +   \frac{1}2  \|\Lambda^{2\alpha} \bar{u}\|_{L^2_t (L^p)} \nonumber \\
  \leq & C \|u_0\|_{\dot B^\alpha _{p,2}} + C\|u_0\|_{H^1}^{1/\theta_1 +1}
  +   \frac{1}2  \|\Lambda^{2\alpha} \bar{u}\|_{L^2_t (L^p)},
\end{align}
where $\theta_1 = \frac{(2\alpha-1)p^2 + (2\alpha-1)\delta_1 p -2\delta_1}{(p+\delta_1)(2\alpha p -2)} \in (0,1)$ (due to $p> \frac2 {2\alpha-1}$).
Noting that $\lim_{\delta_1\rightarrow 0+} \frac{1}{\theta_1} = \frac{2\alpha p -2}{(2\alpha-1)p} $,
we can choose a suitably small $\delta_1>0$ so that $\frac{1}{\theta_1} = \frac{2\alpha}{2\alpha-1}$, and
plug it into \eqref{est.Max.u2d} yields estimate \eqref{est.u2d}, as desired.
\end{proof}

\subsection{A priori estimates for $w$ solving the perturbed system \eqref{eq.INS-2dNS}.}\label{w.L2.lp}

\begin{proposition}\label{prop:w}
Let $\frac 12<\alpha<1,\, p>\frac 2{2\alpha-1}$, $u_0\in H^1\cap\dot B^{\alpha}_{p,2}(\RR^2)$,
and $\rho_0-1\in L^2\cap L^\infty(\RR^2)$. Assume that $\|\rho_0-1\|_{L^\infty}$ is small enough so that \eqref{con.Max.B0} is satisfied.
Then the smooth solution $(a, w,  p)=(\rho-1, u-\bar u, \pi-\bar\pi)$ of system \eqref{eq.INS-2dNS} satisfies the following estimates:
\begin{equation}\label{est.wL2.4}
\begin{split}
  E_2(w) := \|w\|_{L^\infty_t(L^2)} + \|\Lambda^\alpha w\|_{L^2_t(L^2)}
 \leq C \|a_0\|_{L^2\cap L^\infty} e^{C \|u_0\|_{H^1}^2},
\end{split}
\end{equation}
and
\begin{equation}\label{est.Max.w30}
\begin{split}
  E_p(w):= \|w\|_{L^\infty_t(\dot B^\alpha _{p,2})}+\|(\partial_\tau w,\Lambda^{2\alpha} w,\nabla  p)\|_{L^2_t(L^p)}\leq  C\|a_0\|_{L^2\cap L^\infty} e^{C \|u_0\|_{H^1\cap \dot B^\alpha_{p,2}}^2 },
\end{split}
\end{equation}
where $C>0$ depending only on $\alpha,p$. 
\end{proposition}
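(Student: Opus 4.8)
The plan is to establish the two bounds \eqref{est.wL2.4} and \eqref{est.Max.w30} separately, using throughout the already-proved estimates \eqref{est.u2d-2}--\eqref{est.u2d} for $\bar u$ and the fact that the transport equation $\eqref{eq.INS-2dNS}_1$ preserves all Lebesgue norms, so that $\|a(t)\|_{L^2\cap L^\infty}=\|a_0\|_{L^2\cap L^\infty}$ and, once \eqref{con.Max.B0} holds, $\tfrac12\le\rho\le\tfrac32$ (whence $\int\rho|w|^2\simeq\|w\|_{L^2}^2$). The single preparatory observation that makes everything close on the unbounded interval $\RR_+$ is that interpolating the two dissipation bounds $\bar u\in L^2_t(\dot H^\alpha)$ and $\bar u\in L^2_t(\dot H^{1+\alpha})$ contained in \eqref{est.u2d-2} (with $\alpha<1<1+\alpha$) gives $\nabla\bar u\in L^2_t(L^2)$ with $\|\nabla\bar u\|_{L^2_t(L^2)}^2\le C\|u_0\|_{H^1}^2$; likewise $\partial_t\bar u$ and $\bar u\cdot\nabla\bar u$ lie in $L^2_t(L^2)$ with norms bounded by a fixed power of $\|u_0\|_{H^1}$.

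For \eqref{est.wL2.4} I would first recast $\eqref{eq.INS-2dNS}_2$ with \eqref{eq.F}, moving the $-a\partial_t w-a(u\cdot\nabla w)$ contributions of $F$ to the left, into the energy-friendly form
\begin{equation*}
  \rho\,\partial_t w + \rho\,(u\cdot\nabla w) + \Lambda^{2\alpha}w + \nabla p = -a\,\partial_t\bar u - a\,(\bar u\cdot\nabla\bar u) - \rho\,(w\cdot\nabla\bar u).
\end{equation*}
Testing against $w$, using $\Div u=\Div w=0$ and the density equation, produces $\tfrac12\tfrac{d}{dt}\int\rho|w|^2+\|\Lambda^\alpha w\|_{L^2}^2$ on the left. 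The two source terms carrying $a$ I would bound by $\dot H^{-\alpha}$--$\dot H^\alpha$ duality, e.g. $|\int a\,\partial_t\bar u\cdot w|\le\|a\,\partial_t\bar u\|_{\dot H^{-\alpha}}\|\Lambda^\alpha w\|_{L^2}$, then $L^{2/(1+\alpha)}\hookrightarrow\dot H^{-\alpha}$ with $\|a\,\partial_t\bar u\|_{L^{2/(1+\alpha)}}\le\|a\|_{L^{2/\alpha}}\|\partial_t\bar u\|_{L^2}\le\|a_0\|_{L^2\cap L^\infty}\|\partial_t\bar u\|_{L^2}$; by Young's inequality these become $\epsilon\|\Lambda^\alpha w\|_{L^2}^2$ plus an $L^1_t$ forcing of size $\|a_0\|_{L^2\cap L^\infty}^2$ times a power of $\|u_0\|_{H^1}$.

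The only term quadratic in $w$, namely $-\int\rho\,w\cdot\nabla\bar u\cdot w$, is the crux. I would use the balanced Hölder split $\|\rho\|_{L^\infty}\|w\|_{L^2}\|w\|_{L^{2/(1-\alpha)}}\|\nabla\bar u\|_{L^{2/\alpha}}$, then $\|w\|_{L^{2/(1-\alpha)}}\lesssim\|\Lambda^\alpha w\|_{L^2}$ and $\|\nabla\bar u\|_{L^{2/\alpha}}\lesssim\|\nabla\bar u\|_{L^2}^{1-\mu}\|\Lambda^\alpha\nabla\bar u\|_{L^2}^{\mu}$ with $\mu=\tfrac{1-\alpha}{\alpha}\in(0,1)$. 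Young's inequality yields $\tfrac12\|\Lambda^\alpha w\|_{L^2}^2+g(t)\|w\|_{L^2}^2$ with $g(t)=C\|\nabla\bar u\|_{L^2}^{2(1-\mu)}\|\Lambda^\alpha\nabla\bar u\|_{L^2}^{2\mu}$, and Hölder in time together with the preparatory observation gives $\int_0^\infty g\le C\|u_0\|_{H^1}^2$. A Grönwall argument (forcing $\propto\|a_0\|^2$, weight $e^{C\|u_0\|_{H^1}^2}$) then produces \eqref{est.wL2.4}. The main obstacle here, and the reason the naive $L^\infty_t(L^2)$ bound on $\nabla\bar u$ fails, is exactly guaranteeing that the Grönwall coefficient $g$ is integrable over all of $\RR_+$ with integral $\lesssim\|u_0\|_{H^1}^2$; the choice $\mu=\tfrac{1-\alpha}{\alpha}$ (giving $w$ its full $\dot H^\alpha$ derivative) is forced by the need to place both $\nabla\bar u$-factors in $L^2_t$.

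For \eqref{est.Max.w30} I would read $\eqref{eq.INS-2dNS}_2$ as the fractional Stokes system $\partial_t w+\Lambda^{2\alpha}w+\nabla p=F-u\cdot\nabla w$, $\Div w=0$, $w|_{t=0}=0$, and apply Proposition \ref{pro.Max-Regularity} with $q=2$, $R=0$ (note $\dot B^{2\alpha(1-1/q)}_{p,q}=\dot B^\alpha_{p,2}$), obtaining $E_p(w)\le C\|F-u\cdot\nabla w\|_{L^2_t(L^p)}$, where $F-u\cdot\nabla w=-a\partial_t w-\rho(u\cdot\nabla w)-a\partial_t\bar u-a\,\bar u\cdot\nabla\bar u-\rho\,w\cdot\nabla\bar u$. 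The term $\|a\partial_t w\|_{L^2_t(L^p)}\le\|a_0\|_{L^\infty}E_p(w)$ is absorbed into the left via the smallness \eqref{con.Max.B0}; the genuine sources are controlled by \eqref{est.u2d}, \eqref{est.wL2.4}, the interpolation $\|\nabla w\|_{L^p}\lesssim\|w\|_{\dot B^\alpha_{p,2}}^{1-\kappa}\|\Lambda^{2\alpha}w\|_{L^p}^{\kappa}$ with $\kappa=\tfrac{1-\alpha}{\alpha}$, and the embedding $\dot W^{2\alpha,p}\hookrightarrow L^\infty$ (valid precisely because $p>\tfrac{2}{2\alpha-1}$), which puts $\|\bar u\|_{L^\infty}$ and $\|w\|_{L^\infty}$ in $L^{2\alpha/(2\alpha-1)}_t$. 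The main obstacle in this part is the transport term $\rho(u\cdot\nabla w)$: its piece $\bar u\cdot\nabla w$ is only linear in $E_p(w)$ but carries the large, non-absorbable coefficient $\|\bar u\|_{L^{2\alpha/(2\alpha-1)}_t(L^\infty)}$, so I would close not by a single absorption but by splitting $\RR_+$ into finitely many subintervals on which this coefficient is small, iterating the maximal regularity estimate and accumulating an exponential factor in $\|u_0\|_{H^1\cap\dot B^\alpha_{p,2}}$, while the remaining quadratic piece $w\cdot\nabla w\lesssim E_p(w)^2$ is absorbed using the smallness of $E_p(w)$ inherited from \eqref{est.wL2.4} and \eqref{con.Max.B0}.
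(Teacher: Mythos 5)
Your treatment of \eqref{est.wL2.4} is correct and is essentially the paper's own argument: the same energy identity (with $-a\partial_t w - a(u\cdot\nabla w)$ absorbed into $\frac{\dd}{\dd t}\int a|w|^2$), the same handling of the crux term $\int \rho(w\cdot\nabla\bar u)\cdot w$ by putting $w$ in $\dot H^\alpha$ and interpolating $\nabla \bar u$ so that the Gr\"onwall coefficient $\|\Lambda^\alpha\bar u\|_{H^1}^2$ is integrable on $\RR_+$ with integral $\lesssim \|u_0\|_{H^1}^2$. The gap is in \eqref{est.Max.w30}, exactly at the term $\rho\,\bar u\cdot\nabla w$ that you flag as the main obstacle. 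Your plan — partition $\RR_+$ into intervals on which $\|\bar u\|_{L^{2\alpha/(2\alpha-1)}_t(L^\infty)}$ is small and iterate the maximal regularity estimate — cannot close under the stated hypotheses. Each application of Proposition \ref{pro.Max-Regularity} on $[T_k,T_{k+1}]$ starts from $\|w(T_k)\|_{\dot B^\alpha_{p,2}}$, which is only controlled by the $E_p$-bound on the previous interval, so the iteration multiplies the estimate by a fixed constant $>1$ at every step and the final bound carries a factor $C^N$. The number $N$ of intervals is dictated by $\int_{\RR_+}\|\bar u\|_{L^\infty}^{2\alpha/(2\alpha-1)}\dd t$; since $\frac{2\alpha}{2\alpha-1}>2$, this is only controlled by $\|\bar u\|_{L^2_t(L^\infty)}^{2}\,\|\bar u\|_{L^\infty_t(L^\infty)}^{\frac{2-2\alpha}{2\alpha-1}}$, which by \eqref{est.u2d-2}--\eqref{est.u2d} is a power of $1+\|u_0\|_{H^1\cap\dot B^\alpha_{p,2}}$ strictly larger than $2$ for every $\alpha<1$. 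Hence your accumulated factor is $e^{CN}=e^{C\|u_0\|^{c}}$ with $c>2$: this is weaker than the claimed $e^{C\|u_0\|_{H^1\cap \dot B^\alpha_{p,2}}^2}$, and, more fatally, it breaks the absorption of the quadratic term $\rho\, w\cdot\nabla w$, which requires the accumulated inhomogeneous part to stay $O(1)$ while \eqref{con.Max.B0} only provides $\|a_0\|\leq c_0 e^{-c_0^{-1}\|u_0\|^2}$; for large $\|u_0\|$ the product $\|a_0\|e^{C\|u_0\|^{c}}$ blows up, so the scheme does not close.

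The paper closes this term with no time-splitting at all, by a cross-exponent interpolation: $\|\nabla w\|_{L^{p+\delta_2}}\lesssim \|\Lambda^\alpha w\|_{L^2}^{\theta_2}\|\Lambda^{2\alpha}w\|_{L^p}^{1-\theta_2}$, i.e.\ between the $L^2$-based and the $L^p$-based dissipation norms rather than within the $L^p$ scale (see \eqref{es.F3}--\eqref{est.F_3}). After Young's inequality the entire large coefficient (a power of $\|u_0\|_{H^1}$) multiplies $\|\Lambda^\alpha w\|_{L^2_t(L^2)}$, which is already known to be small, of size $\|a_0\|_{L^2\cap L^\infty}e^{C\|u_0\|_{H^1}^2}$, by \eqref{est.wL2.4}; the factor $\|\Lambda^{2\alpha}w\|_{L^2_t(L^p)}$ comes with coefficient $\frac{1}{4C_1}$ and is absorbed into the left-hand side. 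Routing the dangerous coefficient onto the quantity that Part 1 controls is precisely what keeps the exponent equal to $2$ and makes the single global application of maximal regularity suffice; this is the idea your proposal is missing. A secondary remark: the embedding $\dot W^{2\alpha,p}\hookrightarrow L^\infty$ you invoke is false for homogeneous spaces (low frequencies are uncontrolled); the $L^\infty$ bounds on $\bar u$ and $w$ must be obtained by interpolation against $L^2$ or $\dot B^\alpha_{p,2}$, as the paper does via $\|w\|_{L^\infty}\lesssim \|w\|_{L^2}^{\theta_3}\|w\|_{\dot B^\alpha_{p,2}}^{1-\theta_3}$.
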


\begin{proof}[Proof of Proposition \ref{prop:w}]
It follows  easily from the equation $\eqref{eq.INS-2dNS}_1$ that
\begin{align}
  &\|a\|_{L^\infty_t(L^2\cap L^\infty)} = \|\rho-1\|_{L^\infty_t(L^2\cap L^\infty)} \leq \|\rho_0-1\|_{L^2\cap L^\infty}. \label{est.aL2}
\end{align}
Taking the inner product of equation $\eqref{eq.INS-2dNS}_2$ with  $w$ 
and using the condition $\Div u=0$ yield
\begin{equation}\label{est.wL2}
\begin{split}
  \frac 12\frac {\mathrm d}{\mathrm dt}\|w\|_{L^2}^2+\|\Lambda^\alpha  w\|_{L^2}^2
  &=-\frac{1}2 \frac {\mathrm d}{\mathrm dt}\|\sqrt{a}w\|_{L^2}^2
  -\int_{\RR^2} a \partial_t\bar{u} \cdot w\,\mathrm d x\\
  &\quad -\int_{\RR^2}a(\bar{u}\cdot\nabla \bar{u})\cdot w\,\mathrm dx-\int_{\RR^2}\rho ( w\cdot\nabla \bar{u})\cdot w\,\mathrm dx,
\end{split}
\end{equation}
where we also have used that
\begin{equation}\label{est.wL2.d}
\begin{split}
  \int_{\RR^2} \big( -a \partial_t w  - a ( u \cdot\nabla w)\big) \cdot w \dd x = & -\frac 12\int_{\RR^2}a\partial_t|w|^2\,\mathrm d x
  -\frac 12\int_{\RR^2} au\cdot\nabla |w|^2\,\mathrm d x\\
  =&-\frac 12\frac {\mathrm d}{\mathrm dt}\int_{\RR^2}a|w|^2\,\mathrm d x
  + \frac 12\int_{\RR^2}( \partial_ta+\Div(au)) |w|^2\,\mathrm d x\\
  =&-\frac 12\frac {\mathrm d}{\mathrm dt}\int_{\RR^2}a|w|^2\,\mathrm d x.
\end{split}
\end{equation}
By virtue of the following inequality
$\|\bar{u}\|_{L^{\frac2 {\alpha}}} \leq C \|\Lambda^\alpha \bar{u}\|_{L^2}^{2\alpha-1}\|\Lambda^{1+\alpha}\bar{u}\|_{L^2}^{2-2\alpha} \leq C \|\Lambda^\alpha \bar{u}\|_{H^1}$,
and using the Gagliardo-Nirenberg inequality, we obtain that
\begin{equation}\label{est.wL2.1}
\begin{split}
  \Big|\int_{\RR^2}a(\bar{u}\cdot\nabla \bar{u})w\,\mathrm dx\Big|
  \leq & C \|a\|_{L^\infty}\|\bar{u}\|_{L^{\frac 2{1-\alpha}}}\|\nabla \bar{u}\|_{L^{\frac 2\alpha }}\|w\|_{L^2}\\
  \leq & C \|a_0\|_{L^\infty} \|\Lambda^\alpha \bar{u}\|_{L^2} \|\Lambda^\alpha \bar{u}\|_{H^1} \|w\|_{L^2} \\
  \leq & C \|\Lambda^\alpha \bar{u}\|_{L^2}^2 \|w\|_{L^2}^2 + C \|a_0\|_{L^\infty}^2 \|\Lambda^\alpha \bar{u}\|_{H^1}^2 ,
\end{split}
\end{equation}
and
\begin{equation}\label{est.wL2.1-2}
\begin{split}
  \Big|\int_{\RR^2}\rho ( w\cdot\nabla \bar{u})\cdot w\,\mathrm dx\Big|
  \leq & C\|\rho\|_{L^\infty}\|w\|_{L^{\frac 2{1-\alpha}}}\|\nabla \bar{u}\|_{L^{\frac 2\alpha }}\|w\|_{L^2}\\
  \leq & C \|\rho_0\|_{L^\infty}\|\Lambda^\alpha w\|_{L^2} \|\Lambda^\alpha \bar{u}\|_{H^1} \|w\|_{L^2} \\
  \leq & C \|\rho_0\|_{L^\infty}^2 \|\Lambda^\alpha \bar{u}\|_{H^1}^2 \|w\|_{L^2}^2
  + \frac{1}{4}\|\Lambda^\alpha w\|_{L^2}^2 .
\end{split}
\end{equation}
Applying the Leray projection operator $\mathbb{P}:=\Id -\nabla \Delta^{-1} \Div$ to system \eqref{eq.2dNS} gives that
\begin{equation*}
\begin{split}
  \|\partial_t \bar{u}\|_{L^2} & \leq \|\Lambda^{2\alpha} \bar{u}\|_{L^2} + \|\mathbb{P} (\bar{u}\cdot\nabla \bar{u})\|_{L^2}
  \leq \|\Lambda^{2\alpha} \bar{u}\|_{L^2} + \|\bar{u}\cdot\nabla \bar{u}\|_{L^2},
\end{split}
\end{equation*}
which together with H\"older inequality and \eqref{est.wL2.1}--\eqref{est.wL2.1-2} leads to that
\begin{align}\label{est.wL2.2}
  \Big|\int_{\RR^2} a \partial_t \bar{u} \cdot w\,\mathrm dx\Big|
  \leq  &  \|\partial_t\bar{u}\|_{L^2} \|a\, w\|_{L^2 } \nonumber \\
  \leq & \|a\|_{L^{\frac 2\alpha }} \|\Lambda^{2\alpha}\bar{u}\|_{L^2} \|w\|_{L^{\frac 2{1-\alpha}}}
  + \|a\|_{L^{\infty}} \|\bar{u}\cdot\nabla \bar{u}\|_{L^2} \|w\|_{L^2} \nonumber \\
  \leq & C \|a_0\|_{L^2\cap L^\infty} \|\Lambda^\alpha \bar{u}\|_{H^1}  \|\Lambda^\alpha w\|_{L^2}
  + C \|a_0\|_{L^\infty} \|\Lambda^\alpha \bar{u}\|_{L^2} \|\Lambda^\alpha \bar{u}\|_{H^1} \|w\|_{L^2}  \nonumber \\
  \leq & C   \|\Lambda^\alpha \bar{u}\|_{L^2}^2 \|w\|_{L^2}^2 + C \|a_0\|_{L^2\cap L^\infty}^2  \|\Lambda^\alpha \bar{u}\|_{H^1}^2 + \frac{1}{4}\|\Lambda^\alpha w\|_{L^2}^2 .
\end{align}
Plugging \eqref{est.wL2.1} and  \eqref{est.wL2.2} into \eqref{est.wL2}, and integrating in time, we obtain that
\begin{align}\label{eat.wL2.3}
  & \|w(t)\|_{L^2}^2 + \int_0^t\|\Lambda^\alpha w\|_{L^2}^2\mathrm d\tau \nonumber\\
  \leq & - \| \sqrt{a} w(t) \|_{L^2}^2 + C \|a_0\|_{L^\infty}^2\int_0^t\|\Lambda^\alpha \bar{u}\|_{L^2}^2\mathrm d\tau +
  C \|a_0\|_{L^2\cap L^\infty}^2 \int_0^t\|\Lambda^\alpha \bar{u}\|_{H^1}^2 \mathrm d\tau \nonumber \\
  &  + C(\|\rho_0\|_{L^\infty}^2+1) \int_0^t \|\Lambda^\alpha \bar{u}\|_{H^1}^2 \|w\|_{L^2}^2\mathrm d \tau \nonumber \\
  \leq & C \|a_0\|_{L^2\cap L^\infty}^2 \|u_0\|_{H^1}^2 +  C(\|\rho_0\|_{L^\infty}^2+1) \int_0^t \|\Lambda^\alpha \bar{u}\|_{H^1}^2 \|w\|_{L^2}^2\mathrm d \tau.
\end{align}
Utilizing Gronwall's inequality together with $\eqref{est.u2d-2}$ ensures \eqref{est.wL2.4} as desired.

Next let us estimate the  $L^p$-type norm of $w$.
Applying Proposition \ref{pro.Max-Regularity} to equation $\eqref{eq.INS-2dNS}_2$ with $w|_{t=0}=0$ yields that
\begin{equation}\label{est.Max.w0}
\begin{split}
  E_p(w) \leq  C_1 \sum_{i=1}^6\|F_i\|_{L^2_t(L^p)},
\end{split}
\end{equation}
where
\begin{align*}
   &F_1 := -a\, \partial_t w, \quad F_2 := -a\, \partial_t \bar{u},\quad F_3 := - \rho ( \bar{u} \cdot\nabla w), \\
   &F_4 := - \rho ( w \cdot\nabla w),\quad F_5 := - a(\bar{u}\cdot\nabla \bar{u}),\quad F_6 := - \rho ( w\cdot\nabla \bar{u}).
\end{align*}
Let $\|a_0\|_{L^\infty} \leq \frac 1{2C_1}$, then immediately,
\begin{equation}\label{est.Max.w}
  \|F_1\|_{L^2_t(L^p)} \leq \|a\|_{L^\infty} \|\partial_\tau w\|_{L^2_t (L^p)} \leq \frac{1}{2C_1}  \|\partial_\tau w\|_{L^2_t (L^p)} .
\end{equation}
In view of \eqref{est.u2d}, we obtain
\begin{equation}\label{est.F_1F2}
\begin{split}
\|F_2\|_{L^2_t(L^p)}=\|a \partial_\tau \bar{u}\|_{L^2_t(L^p)} \leq \|a_0\|_{L^\infty} \| \partial_\tau\bar{u}\|_{L^2_t(L^p)} \leq C (1 + \|u_0\|_{H^1\cap \dot B^\alpha_{p,2}}^{\frac{4\alpha-1}{2\alpha-1}}).
\end{split}
\end{equation}
Making use of the H\"older inequality and interpolation inequality, we get that for some small $\delta_2>0$ chosen later,
\begin{align}\label{es.F3}
  \|F_3\|_{L^2_t(L^p)} \leq & \|\rho\|_{L^\infty_t(L^\infty)} \|\bar{u}\|_{L^\infty_t(L^{\frac{p(p+\delta_2)}{\delta_2}})} \|\nabla w\|_{L^2_t(L^{p+\delta_2})} \nonumber  \\
  \leq  & C \|\rho_0\|_{L^\infty} \|\bar{u}\|_{L^\infty_t (H^1)}
  \Big(\int_0^t\|\Lambda^\alpha  w\|_{L^2}^{2\theta_2}\|\Lambda^{2\alpha} w\|_{L^p}^{2(1-\theta_2)}\mathrm d\tau\Big)^{\frac 12} \nonumber \\
  \leq & C \|\rho_0\|_{L^\infty} \|u_0\|_{H^1} \|\Lambda^\alpha  w\|_{L^2_t(L^2)}^{\theta_2}
  \|\Lambda^{2\alpha} w\|_{L^2_t(L^p)}^{1-\theta_2} \nonumber \\
  \leq & C \|\rho_0\|_{L^\infty}^{1/\theta_2} \|u_0\|_{H^1}^{1/\theta_2} \|\Lambda^\alpha w\|_{L^2_t L^2}
  + \frac{1}{4 C_1} \|\Lambda^{2\alpha} w\|_{L^2_t(L^p)},
\end{align}
where $\theta_2=\frac{2\alpha-1 -(\frac2 {p}-\frac2 {p+\delta_2})}{\alpha+1-2/p}\in (0,1)$, thus from $\lim\limits_{\delta_2\rightarrow 0+ } \frac{1}{\theta_2} = \frac{\alpha+1-2/p}{2\alpha-1}$, we can choose $\delta_2>0$ so that $\frac{1}{\theta_2} = \frac{\alpha+1}{2\alpha-1}$ and
\begin{align}\label{est.F_3}
  \|F_3\|_{L^2_t(L^p)} &  \leq  C \|\rho_0\|_{L^\infty}^{\frac{\alpha+1}{2\alpha-1}} \|u_0\|_{H^1}^{\frac{\alpha+1}{2\alpha-1}}
  \|\Lambda^\alpha w\|_{L^2_t L^2}
  + \frac{1}{4 C_1} \|\Lambda^{2\alpha} w\|_{L^2_t(L^p)} \nonumber \\
  & \leq C \|\rho_0\|_{L^\infty}^{\frac{\alpha+1}{2\alpha-1}} \|u_0\|_{H^1}^{\frac{\alpha+1}{2\alpha-1}}
  \|a_0\|_{L^2\cap L^\infty} e^{C \|u_0\|_{H^1}^2}
  + \frac{1}{4 C_1} \|\Lambda^{2\alpha} w\|_{L^2_t(L^p)} \nonumber \\
  & \leq C \|a_0\|_{L^2\cap L^\infty} e^{C \|u_0\|_{H^1}^2}
  + \frac{1}{4 C_1} \|\Lambda^{2\alpha} w\|_{L^2_t(L^p)},
\end{align}
where in the above we have used \eqref{est.wL2.4} and the fact $\|\rho_0\|\leq 1+ \|a_0\|_{L^\infty} \leq 2$.
Similarly, by using the interpolation inequality, Young inequality and estimate \eqref{est.wL2.4} again, it follows that
\begin{align}\label{est.F_4}
  \|F_4\|_{L^2_t(L^p)}
  \leq &\|\rho\|_{L^\infty_t(L^\infty)}\|w\|_{L^\infty_t(L^\infty)}\|\nabla w\|_{L^2_t(L^p)} \nonumber \\
  \leq  & C \|\rho_0\|_{L^\infty} \|w\|_{L^\infty_t(L^2)}^{\theta_3} \|w\|_{L^\infty_t(\dot B^\alpha _{p,2})}^{1-\theta_3}
  \|\Lambda^\alpha  w\|_{L^2_t(L^2)}^{\theta_4} \|\Lambda^{2\alpha} w\|_{L^2_t(L^p)}^{1-\theta_4} \nonumber \\
  \leq & C \|\rho_0\|_{L^\infty} \big(\|w\|_{L^\infty_t(L^2)} + \|\Lambda^\alpha  w\|_{L^2_t(L^2)} \big)^{\theta_3 + \theta_4}
  \big( E_p(w) \big)^{2- (\theta_3+\theta_4)} \nonumber \\
  \leq & C \|\rho_0\|_{L^\infty}^{\frac2 {\theta_3+\theta_4}} \big(\|w\|_{L^\infty_t(L^2)} + \|\Lambda^\alpha  w\|_{L^2_t(L^2)} \big)^2
  + \frac{1}{2C_1} \big(E_p(w)\big)^2 \nonumber \\
  \leq & C \|a_0\|_{L^2\cap L^\infty}^2 e^{C \|u_0\|_{H^1}^2} +  \frac{1}{2C_1}\big( E_p(w)\big)^2 ,
\end{align}
where $\theta_3 = \frac{\alpha-2/p}{1+\alpha -2/p}\in (0,1)$, $\theta_4 = \frac{2\alpha-1}{1+\alpha-2/p} \in (0,1)$.
By arguing as \eqref{est.Max.u2d}, we have
\begin{align}\label{est.F_5}
  \|F_5\|_{L^2_t(L^p)} 
  \leq & \|a\|_{L^\infty_t(L^\infty)} \|\bar{u}\cdot \nabla \bar{u}\|_{L^2_t(L^p)} \nonumber \\
  \leq & C \|a\|_{L^\infty_t(L^\infty)} \|\bar{u}\|_{L^\infty_t (H^1)} \|\nabla \bar{u}\|_{L^2_t (L^2)}^{\frac{2\alpha-1}{2\alpha}}
  \|\Lambda^{2\alpha} \bar{u}\|_{L^2_t (L^2)}^{\frac{1}{2\alpha}} \nonumber \\
  \leq  & C \|a_0\|_{L^\infty} \|u_0\|_{H^1} \big( \|u_0\|_{H^1\cap \dot B^\alpha_{p,2}} + \|u_0\|_{H^1}^{\frac{4\alpha-1}{2\alpha-1}} \big).
\end{align}
The term $F_6$ can be estimated in the same manner as $F_3$ and $F_4$:
\begin{align}\label{est.F_6}
  \|F_6\|_{L^2_t(L^p)} 
  \leq &\|\rho_0\|_{L^\infty}\|w\|_{L^\infty_t(L^\infty)}\|\nabla \bar{u}\|_{L^2_t(L^p)} \nonumber \\
  \leq & C \|\rho_0\|_{L^\infty} \|w\|_{L^\infty_t(L^2)}^{\theta_3} \|w\|_{L^\infty_t(\dot B^\alpha _{p,2})}^{1-\theta_3}
  \big(\|\nabla \bar{u}\|_{L^2_t (L^2)} + \|\Lambda^{2\alpha} \bar{u}\|_{L^2_t(L^p)}\big) \nonumber \\
  \leq & C \|\rho_0\|_{L^\infty}^{\frac{1}{\theta_3}} \big(\|u_0\|_{H^1\cap \dot B^\alpha_{p,2}}
  + \|u_0\|_{H^1}^{\frac{4\alpha-1}{2\alpha-1}} \big)^{\frac{1+\alpha -2/p}{\alpha-2/p}} \|w\|_{L^\infty_t(L^2)} + \frac{1}{4C_1} \|w\|_{L^\infty_t(\dot B^\alpha _{p,2})} \nonumber\\
  \leq & C \|a_0\|_{L^2\cap L^\infty} e^{C \|u_0\|_{H^1\cap \dot B^\alpha_{p,2}}^2 } + \frac{1}{4C_1} \|w\|_{L^\infty_t(\dot B^\alpha _{p,2})}.
\end{align}
Collecting the above estimates on $F_i$, $i=1,...,6$, we deduce that
\begin{equation}\label{est.Max.w'}
\begin{split}
  E_p(w) \leq   C_2 \|a_0\|_{L^2\cap L^\infty} e^{C_2 \|u_0\|_{H^1\cap \dot B^\alpha_{p,2}}^2 }
  + \big( E_p(w) \big)^2 .
\end{split}
\end{equation}
By letting $c_0$ in \eqref{con.Max.B0} small enough so that $c_0\leq (4C_2)^{-1}$,
we conclude the estimate \eqref{est.Max.w30} by an elementary computation.
\end{proof}

\subsection{A priori estimates for $u$ solving the 2D fractional INS system \eqref{eq.frc-INNS}.}\label{es.u}

\indent\\ \indent
Based on the above estimates for $\bar{u}$ and $w$, we directly have the following bounds for $u$.
\begin{proposition}\label{pro.propri}
Let $\frac 12<\alpha<1,\, p>\frac 2{2\alpha-1}$ and $u_0\in H^1\cap\dot B^{\alpha}_{p,2}(\RR^2)$.
Let $\rho_0-1\in L^2\cap L^\infty(\RR^2)$ be satisfying the condition \eqref{con.Max.B0} with $c_0=c_0(\alpha,p)>0$
a sufficiently small constant.
Then there exists a constant
$C= C(\alpha,p)>0$ such that
\begin{equation}\label{est.priori}
  \|u\|_{L^\infty(\RR_+; L^2 \cap \dot B^\alpha _{p,2})}
  + \|(\Lambda^{2\alpha}u,\partial_t u, \nabla \pi)\|_{L^2(\RR_+; L^p)} + \|u\|_{L^2(\RR_+;\dot H^\alpha)}
  \leq C \big(1 + \|u_0\|_{H^1\cap\dot B^\alpha_{p,2}}^{\frac{4\alpha-1}{2\alpha-1}}\big),
\end{equation}
and for any $T>0$,
\begin{equation}\label{est.priori-2}
  \|\nabla u\|_{L^1_T(L^\infty)} \leq C \big(1 + \|u_0\|_{H^1\cap\dot B^\alpha_{p,2}}^{\frac{4\alpha-1}{2\alpha-1}}\big) \sqrt{T}.
\end{equation}
\end{proposition}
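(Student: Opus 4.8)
The plan is to leverage the decomposition $u=\bar u+w$, $\pi=\bar\pi+p$ and simply superpose the bounds already established for the homogeneous part $\bar u$ in Proposition \ref{pro.u2dL2} and for the perturbation $w$ in Proposition \ref{prop:w}. With that superposition, \eqref{est.priori} is essentially bookkeeping; the only genuinely new input is the treatment of the Lipschitz-type norm \eqref{est.priori-2}.

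First I would convert the density smallness into boundedness of the $w$-quantities. Inserting the hypothesis \eqref{con.Max.B0}, which reads $\|a_0\|_{L^2\cap L^\infty}=\|\rho_0-1\|_{L^2\cap L^\infty}\le c_0\exp\{-c_0^{-1}\|u_0\|_{H^1\cap\dot B^\alpha_{p,2}}^2\}$, into \eqref{est.wL2.4} and \eqref{est.Max.w30} gives $E_2(w)+E_p(w)\le Cc_0\exp\{(C-c_0^{-1})\|u_0\|_{H^1\cap\dot B^\alpha_{p,2}}^2\}$. Choosing the generic constant $c_0=c_0(\alpha,p)$ small enough that $c_0^{-1}\ge C$ renders the exponent nonpositive, so that $E_2(w)+E_p(w)\le C$ with $C$ independent of the data. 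This is the only place where the smallness of $\rho_0-1$ enters the present proposition.

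Next I would assemble \eqref{est.priori} by the triangle inequality, norm by norm: $\|u\|_{L^\infty_t(L^2\cap\dot B^\alpha_{p,2})}\le\|\bar u\|_{L^\infty_t(L^2\cap\dot B^\alpha_{p,2})}+\|w\|_{L^\infty_t(L^2\cap\dot B^\alpha_{p,2})}$, and likewise $\|(\Lambda^{2\alpha}u,\partial_t u,\nabla\pi)\|_{L^2_t(L^p)}\le\|(\Lambda^{2\alpha}\bar u,\partial_t\bar u,\nabla\bar\pi)\|_{L^2_t(L^p)}+\|(\Lambda^{2\alpha}w,\partial_t w,\nabla p)\|_{L^2_t(L^p)}$ and $\|u\|_{L^2_t(\dot H^\alpha)}\le\|\bar u\|_{L^2_t(\dot H^\alpha)}+\|w\|_{L^2_t(\dot H^\alpha)}$. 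The $\bar u$-contributions are controlled by \eqref{est.u2d-2} and \eqref{est.u2d}, i.e. by $C\|u_0\|_{H^1\cap\dot B^\alpha_{p,2}}+C\|u_0\|_{H^1}^{(4\alpha-1)/(2\alpha-1)}$, while the $w$-contributions are the $O(1)$ bounds $E_2(w),E_p(w)\le C$ just obtained. Since $\frac{4\alpha-1}{2\alpha-1}>1$, the power term dominates the linear one for large data and the additive constant $1$ absorbs everything for small data, which yields \eqref{est.priori}.

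The hard part is to produce exactly the factor $\sqrt T$ in \eqref{est.priori-2}. I would interpolate $\nabla u$ between the two quantities that are square-integrable in time, namely $\Lambda^\alpha u\in L^2_t(L^2)$ and $\Lambda^{2\alpha}u\in L^2_t(L^p)$: a scaling count on $\RR^2$ fixes the Gagliardo--Nirenberg inequality $\|\nabla u\|_{L^\infty}\le C\|\Lambda^\alpha u\|_{L^2}^{1-\eta}\|\Lambda^{2\alpha}u\|_{L^p}^{\eta}$ with $\eta=\frac{2-\alpha}{\alpha+1-2/p}$, and the condition $p>\frac{2}{2\alpha-1}$ is precisely what guarantees $\eta\in(0,1)$. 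Integrating in time and applying H\"older, the two $L^2_t$ factors contribute exponents summing to $\frac{1-\eta}{2}+\frac{\eta}{2}=\frac12$, leaving a deficit of $\frac12$ that is filled by $(\int_0^T 1\,\dd\tau)^{1/2}=\sqrt T$, so that $\|\nabla u\|_{L^1_T(L^\infty)}\le\sqrt T\,\|\Lambda^\alpha u\|_{L^2_T(L^2)}^{1-\eta}\|\Lambda^{2\alpha}u\|_{L^2_T(L^p)}^{\eta}$; then \eqref{est.priori} bounds both factors and gives \eqref{est.priori-2}. The subtlety worth flagging is that the naive interpolation of $\nabla u$ between $\dot B^\alpha_{p,2}$ and $\dot W^{2\alpha,p}$ (both $L^p$-based, with $L^\infty_t$ on the low-order factor) would produce a strictly larger power $T^{1-\theta/2}$ of $T$; interpolating instead between two quantities that both live in $L^2_t$ is what makes the clean half-power of $T$ appear.
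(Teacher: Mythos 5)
Your proposal is correct and takes essentially the same route as the paper: \eqref{est.priori} is obtained by superposing the bounds of Propositions \ref{pro.u2dL2} and \ref{prop:w} for $\bar u$ and $w$, with the smallness condition \eqref{con.Max.B0} (i.e. $c_0$ small) absorbing the exponential factor so that the $w$-contribution is $O(1)$. For \eqref{est.priori-2}, your explicit Gagliardo--Nirenberg inequality $\|\nabla u\|_{L^\infty}\le C\|\Lambda^\alpha u\|_{L^2}^{1-\eta}\|\Lambda^{2\alpha}u\|_{L^p}^{\eta}$ with $\eta=\frac{2-\alpha}{\alpha+1-2/p}$, followed by H\"older in time, is precisely the quantitative form of the paper's one-line argument $\|\nabla u\|_{L^1_T(L^\infty)}\le C\sqrt{T}\,\|u\|_{L^2_T(\dot H^\alpha\cap\dot W^{2\alpha,p})}$ (Sobolev embedding of the intersection into $\dot W^{1,\infty}$ plus Cauchy--Schwarz in time).
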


\begin{proof}[Proof of Proposition \ref{pro.propri}]
The proof of \eqref{est.priori} is obvious. As for the proof of \eqref{est.priori-2}, we use the Sobolev embedding to deduce
\begin{align}\label{est.nabla-u}
  \|\nabla u\|_{L^1_T(L^\infty)} \leq C \sqrt{T} \|u\|_{L^2_T(\dot H^{\alpha} \cap \dot W^{2\alpha,p})}
  \leq C \big(1 + \|u_0\|_{H^1\cap\dot B^\alpha_{p,2}}^{\frac{4\alpha-1}{2\alpha-1}}\big) \sqrt{T} .
\end{align}
\end{proof}

Under the additional stronger assumptions on $(\rho_0,u_0)$, we can show some more refined \textit{a priori} estimates, which are of use in the uniqueness part.
\begin{proposition}\label{prop:uni.M.bet}
Let $s\in (0,1)$, $u_0\in H^1\cap \dot B^{\alpha+s}_{p,2}(\RR^2)$, $\rho_0-1\in L^2\cap L^\infty\cap {\mathcal{M}(\dot B^s_{p,2})}\cap {\mathcal{M}(\dot B^{\frac 2p+1-2\alpha}_{p,1})}(\RR^2))$.
Then there exists a constant $c_*>0$ depending only on $\alpha,p,s$ so that, if the condition \eqref{eq:rho-cd} with this $c_*$
is satisfied,
we have
\begin{align}\label{est.nabla-u'}
\|\nabla u\|_{L^1(\RR_+;L^\infty)} \leq C  \Big(1 + \| u_0\|_{H^1\cap \dot B^\alpha_{p,2}}^{\frac{8\alpha-2}{2\alpha -1}} \Big) ,
\end{align}
and
\begin{equation}\label{est.M.bet}
\begin{split}
  & \quad \|u\|_{L^\infty(\RR_+;\dot B^{\alpha+s} _{p,2})} + \|(\Lambda^{2\alpha}u, \partial_t u, \nabla\pi)\|_{L^2(\RR_+;\dot B^{s}_{p,2})}
  + \| u\|_{L^2(\RR_+;\dot B^{2\alpha}_{p,1})} \\
  & \leq C
  \Big( 1+ \|u_0\|_{\dot B^{\alpha+s}_{p,2}} + \|u_0\|_{H^1\cap \dot B^\alpha_{p,2}}^{\frac{8(4\alpha-1)}{(2\alpha-1)^2}} \Big).
\end{split}
\end{equation}
\end{proposition}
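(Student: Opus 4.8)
The plan is to run a continuity (bootstrap) argument on $\RR_+$ that couples the propagation of the density fluctuation $a=\rho-1$ in the multiplier spaces with the gain of Besov regularity for the velocity. I would split $u=\bar u+w$ and treat both pieces through the Besov maximal regularity estimate of Lemma \ref{lem.es.Be} (used with $\rho\in\{\infty,2\}$ and $\rho_1=2$), which for data index $\alpha+s$ and source in $\widetilde L^2_t(\dot B^s_{p,2})$ returns control of $\widetilde L^\infty_t(\dot B^{\alpha+s}_{p,2})\cap\widetilde L^2_t(\dot B^{2\alpha+s}_{p,2})$. For $\bar u$, since $u_0\in\dot B^{\alpha+s}_{p,2}$, the source is $-\mathbb P(\bar u\cdot\nabla\bar u)$, estimated by the product rule of Lemma \ref{Besovpro} together with the bounds of Proposition \ref{pro.u2dL2}. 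For $w$ (with $w|_{t=0}=0$) the forcing is $F$ from \eqref{eq.F}, and essentially all of the difficulty concentrates in bounding $\|F\|_{\widetilde L^2_t(\dot B^s_{p,2})}$.

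The key device for $F$ is the multiplier-space structure of $a$. For the stiff terms $-a\,\partial_t w$ and $-a(u\cdot\nabla w)$ I would write $\|a g\|_{\dot B^s_{p,2}}\le\|a\|_{\mathcal M(\dot B^s_{p,2})}\|g\|_{\dot B^s_{p,2}}$ via Definition \ref{def.mu}, so that they are controlled by $\|a\|_{L^\infty_t(\mathcal M(\dot B^s_{p,2}))}$ times the target norm; choosing $\|a\|_{\mathcal M}$ small lets them be absorbed into the left-hand side. The remaining terms $-a\,\partial_t\bar u$, $-a(\bar u\cdot\nabla\bar u)$ and $-\rho(w\cdot\nabla\bar u)$ with $\rho=1+a$ I would split into the constant-density part, handled by Lemma \ref{Besovpro} and the bounds of Propositions \ref{pro.u2dL2}, \ref{prop:w}, \ref{pro.propri}, and the $a$-part, again handled by the multiplier norm. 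To propagate $\|a\|_{L^\infty_t(\mathcal M(\dot B^s_{p,2}))}$ and $\|a\|_{L^\infty_t(\mathcal M(\dot B^{\frac2p+1-2\alpha}_{p,1}))}$ I would invoke Lemma \ref{lem:f.mu}(2), which gives $\|a\|_{L^\infty_t(\mathcal M)}\le C\|a_0\|_{\mathcal M}e^{C\int_0^t\|\nabla u\|_{L^\infty}}$; the exponential is controlled precisely by \eqref{est.nabla-u'}, and matching it against the factor $\exp\{-c_*^{-1}(1+\|u_0\|_{H^1\cap\dot B^\alpha_{p,2}}^{\frac{8\alpha-2}{2\alpha-1}})\}$ in the smallness hypothesis \eqref{eq:rho-cd} keeps $\|a\|_{\mathcal M}$ below the absorption threshold throughout the argument. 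The $\dot B^{2\alpha}_{p,1}$-endpoint in \eqref{est.M.bet} I would then recover by real interpolation between the $\widetilde L^2_t(\dot B^{2\alpha+s}_{p,2})$ bound just obtained and a lower-regularity bound, with the negative-index space $\mathcal M(\dot B^{\frac2p+1-2\alpha}_{p,1})$ serving exactly to close the products arising there.

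The main obstacle is the global-in-time Lipschitz bound \eqref{est.nabla-u'}. From the embedding $\dot B^{2\alpha}_{p,1}(\RR^2)\hookrightarrow\dot W^{1,\infty}$ (valid since $2\alpha-1>\frac2p$) one gets $\|\nabla u\|_{L^\infty}\lesssim\|u\|_{\dot B^{2\alpha}_{p,1}}$, but turning the square-integrable-in-time quantity $\|u\|_{L^2(\RR_+;\dot B^{2\alpha}_{p,1})}$ into a finite $L^1(\RR_+;L^\infty)$ integral cannot be achieved by a naive H\"older inequality, since on the unbounded time interval a single $L^2_t$ factor raised to a power below $1$ is not integrable. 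I expect the decisive point to be a frequency-split interpolation that expresses $\|\nabla u(t)\|_{L^\infty}$ through one factor controlled in $L^\infty_t$ (via $\|u\|_{L^\infty_t(\dot B^{\alpha+s}_{p,2})}$ and $\|u\|_{L^\infty_t(L^2)}$) and one in $L^2_t$ (via $\|\Lambda^{2\alpha}u\|_{L^2_t(L^p)}$ and $\|\Lambda^\alpha u\|_{L^2_t(L^2)}$ from \eqref{est.priori}), combined with the temporal decay coming from the dissipation, so that a Cauchy--Schwarz step produces the exponent $\frac{8\alpha-2}{2\alpha-1}=2\cdot\frac{4\alpha-1}{2\alpha-1}$, exactly twice the exponent of the basic estimates. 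Since \eqref{est.nabla-u'} feeds the multiplier propagation that is in turn needed to derive \eqref{est.M.bet}, I would embed both estimates in a single continuity argument and verify that all constants close under the smallness of $\|a_0\|_{\mathcal M}$, rather than proving the two bounds separately.
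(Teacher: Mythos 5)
Your plan for \eqref{est.M.bet} is essentially the paper's: Besov maximal regularity from Lemma \ref{lem.es.Be} at level $s$, the $a$-terms bounded by $\|a\|_{L^\infty_t(\mathcal{M}(\dot B^s_{p,2}))}$, this multiplier norm propagated by Lemma \ref{lem:f.mu} using \eqref{est.nabla-u'}, and absorption under the smallness \eqref{eq:rho-cd}; the $\dot B^{2\alpha}_{p,1}$-endpoint indeed follows from interpolation (the paper uses $\dot B^{2\alpha+s}_{p,2}\cap\dot H^\alpha\hookrightarrow\dot B^{2\alpha}_{p,1}$ together with \eqref{est.priori} --- no multiplier space is needed there). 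The genuine gap is in \eqref{est.nabla-u'}, which you correctly single out as the main obstacle but for which your proposed mechanism cannot work. Any estimate of the form $\|\nabla u(t)\|_{L^\infty}\lesssim A(t)^{1-\theta}B(t)^{\theta}$ with $A\in L^\infty_t(\RR_+)$, $B\in L^2_t(\RR_+)$ and $\theta\in(0,1]$ places $\|\nabla u\|_{L^\infty}$ in $L^{2/\theta}_t(\RR_+)$ at best, never in $L^1_t(\RR_+)$; and the ``temporal decay coming from the dissipation'' that you invoke to bridge this is established nowhere --- the a priori bounds \eqref{est.priori} give global boundedness, not decay rates, and proving quantitative decay for this system would be a separate substantial task. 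Consequently the continuity argument you describe cannot close, and since \eqref{est.M.bet} relies on \eqref{est.nabla-u'} through the multiplier propagation, both estimates remain unproved in your scheme.

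What actually produces the global $L^1_t(L^\infty)$ bound in the paper is a different device: $L^1$-in-time maximal regularity at the low regularity index $\tfrac2p+1-2\alpha$. One applies the Leray projector to the $w$-equation and Lemma \ref{lem.es.Be} with $\rho_1=1$, so the source only needs to lie in $L^1_t(\dot B^{\frac2p+1-2\alpha}_{p,1})$ while the output controls $\|w\|_{L^1_t(\dot B^{\frac2p+1}_{p,1})}$, hence $\|\nabla w\|_{L^1_t(L^\infty)}$ by the embedding $\dot B^{2/p}_{p,1}\hookrightarrow L^\infty$. The decisive point is that the quadratic terms obey
\[
\|u\cdot\nabla w\|_{L^1_t(\dot B^{\frac2p+1-2\alpha}_{p,1})}\;\lesssim\;
\|u\|_{L^2_t(\dot H^\alpha\cap\dot W^{2\alpha,p})}\,\|w\|_{L^2_t(\dot H^\alpha\cap\dot W^{2\alpha,p})},
\]
i.e.\ the $L^1_t$ norm of the source is a \emph{product of two globally finite $L^2_t$ quantities}; this single Cauchy--Schwarz step in time is where the squared exponent $\frac{8\alpha-2}{2\alpha-1}=2\cdot\frac{4\alpha-1}{2\alpha-1}$ comes from, with no decay whatsoever. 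The $a$-dependent part of $F$ is bounded by $\|a\|_{L^\infty_t(\mathcal{M}(\dot B^{\frac 2p+1-2\alpha}_{p,1}))}$ --- this is the true role of the second multiplier space in the hypotheses, not the interpolation role you assign to it --- and this norm is propagated self-consistently inside a bootstrap on the quantity $\|w\|_{\widetilde L^\infty_t(\dot B^{2/p+1-2\alpha}_{p,1})}+\|w\|_{L^1_t(\dot B^{2/p+1}_{p,1})}\le 1$ (cf.\ \eqref{T'.max}), closed by the smallness \eqref{eq:rho-cd}. Without this $L^1$-framework step your argument has no route from the $L^2$-in-time a priori estimates to a finite $\int_0^\infty\|\nabla u\|_{L^\infty}\,\dd t$.
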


\begin{proof}[Proof of Proposition \ref{prop:uni.M.bet}]

We note that the proof of \eqref{est.M.bet} below uses the sufficient smallness of $ \|\rho-1\|_{L^\infty_T(\mathcal{M}(\dot B^{s}_{p,2}))}$,
which according to \eqref{a.m} it needs to get the uniform estimate of $\|u\|_{L^1(\RR_+; \dot W^{1,\infty})}$ (estimate \eqref{est.priori-2} in Proposition
\ref{pro.propri} is insufficient for $T=\infty$).
Thus the whole proof of Proposition \ref{prop:uni.M.bet} is divided into two parts.

(1) First we prove \eqref{est.nabla-u'}.
Applying the Leray operator $\mathbb{P} :=\Id- \nabla \Delta^{-1}\Div$ to the equation $\eqref{eq.INS-2dNS}_2$ yields
\begin{align}\label{eq.w2}
  \partial_t w + \Lambda^{2\alpha} w  = \mathbb{P} F - \mathbb{P}(u\cdot\nabla w),\quad w|_{t=0} = w_0,
\end{align}
with
\begin{align*}
  F = -a \partial_t w -a \partial_t \bar{u}  - a ( u \cdot\nabla w) - a(\bar{u}\cdot\nabla \bar{u}) - \rho ( w\cdot\nabla \bar{u}),
\end{align*}
and then it follows from Lemma \ref{lem.es.Be} that
\begin{align}\label{est1.w.Lip}
  \|w\|_{\widetilde{L}^\infty_t (\dot B^{\frac 2p+1-2\alpha}_{p,1})} + \|w\|_{L^1_t (\dot B^{\frac 2p+1}_{p,1})}
  \leq & C\Big(\|\mathbb{P} F\|_{L^1_t (\dot B^{\frac 2p+1-2\alpha}_{p,1})} +
  \|\mathbb{P}(u\cdot\nabla w)\|_{L^1_t (\dot B^{\frac 2p+1-2\alpha}_{p,1})}\Big)\nonumber\\
  \leq & C\Big(  \| F\|_{L^1_t (\dot B^{\frac 2p+1-2\alpha}_{p,1})}
  + \|(u\cdot\nabla w)\|_{L^1_t (\dot B^{\frac 2p+1-2\alpha}_{p,1})} \Big) .
\end{align}
Noting that $\nabla p = \nabla \Delta^{-1}\Div (F-u\cdot\nabla w)$, we use the equation \eqref{eq.w2} to get
\begin{align}\label{est2.w.Lip}
  \|(\partial_\tau w,\nabla p)\|_{L^1_t (\dot B^{\frac 2p+1-2\alpha}_{p,1})}
  \leq C \Big(  \|F\|_{L^1_t (\dot B^{\frac 2p+1-2\alpha}_{p,1})} +
  \|u\cdot\nabla w\|_{L^1_t (\dot B^{\frac 2p+1-2\alpha}_{p,1})}\Big) .
\end{align}
By virtue of Definition \ref{def.mu} concerning the multiplier space, we find
\begin{align}\label{est3.w.Lip}
  \|F\|_{L^1_t (\dot B^{\frac 2p+1-2\alpha}_{p,1})}
  \leq & C \|a\|_{L^\infty_t \big(\mathcal{M}(\dot B^{\frac 2p+1-2\alpha}_{p,1})\big)}  \big\|(\partial_\tau w, \partial_\tau \bar{u},\bar u\cdot\nabla \bar u, u\cdot\nabla w, w\cdot\nabla \bar u )\big\|_{L^1_t (\dot B^{\frac 2p+1-2\alpha}_{p,1})} \nonumber\\
  & + C \|w\cdot\nabla \bar u  \|_{L^1_t (\dot B^{\frac 2p+1-2\alpha}_{p,1})} .
\end{align}
Utilizing the divergence-free condition of $u$ and interpolation inequality gives that
\begin{align*}
 & \|u\cdot\nabla w\|_{L^1_t (\dot B^{\frac 2p+1-2\alpha}_{p,1})}
 \leq \|u\otimes w\|_{L^1_t (\dot B^{\frac 2p+2-2\alpha}_{p,1})}\nonumber\\
 \leq & C\int^t_0\| u  \|_{L^\infty}\|w \|_{\dot B^{\frac 2p+2-2\alpha}_{p,1}}+\|u \|_{\dot B^{\frac 2p+2-2\alpha}_{p,1}}\| w  \|_{L^\infty} \dd \tau\nonumber\\
  \leq & C \big( \| u  \|_{L^2_t(L^\infty)} + \|u \|_{L^2_t (\dot B^{2/p+2-2\alpha}_{p,1})} \big)
  \cdot \big( \| w  \|_{L^2_t (L^\infty)} + \|w \|_{L^2_t (\dot B^{2/p+2-2\alpha}_{p,1})} \big).
\end{align*}
The interpolation inequality and Sobolev/Besov embedding ensure that
\begin{align*}
  \|u\|_{L^\infty}\leq C  \|u\|_{L^{\frac2{1-\alpha}}}^{\frac {2\alpha- 2/p}{1+\alpha-2/p}}
  \|u\|_{\dot W^{2\alpha,p}}^{\frac {1-\alpha}{1+\alpha-2/p}}
  \leq C\big( \|u\|_{\dot H^\alpha}+ \|u\|_{\dot W^{2\alpha,p}} \big),
\end{align*}
\begin{align*}
  \|u\|_{\dot B^{\frac 2p+2-2\alpha}_{p,1}} \leq
  C \|u\|_{\dot B^{2/p+\alpha-1}_{p,\infty}}^{\frac{2(2\alpha-1)- 2/p}{1+\alpha-2/p}}
  \|u\|_{\dot B^{ 2\alpha}_{p,\infty}}^{\frac{3(1-\alpha)}{1+\alpha-2/p}}
  \leq C \big( \|u\|_{\dot H^\alpha}+ \|u\|_{\dot W^{2\alpha,p}}\big),
\end{align*}
thus we arrive at
\begin{align}\label{est4.w.Lip'}
 \|u\cdot\nabla w\|_{L^1_t (\dot B^{\frac 2p+1-2\alpha}_{p,1})}
    \leq & C \|u\|_{L^2_t(\dot H^\alpha\cap \dot W^{2\alpha,p})}
     \|w\|_{L^2_t(\dot H^\alpha \cap \dot W^{2\alpha,p})}.
\end{align}
Making the above procedure analogously for $w\cdot\nabla\bar u$ and $\bar u\cdot\nabla \bar u$ yields
\begin{align}
  \|w\cdot\nabla \bar u\|_{L^1_t (\dot B^{\frac 2p+1-2\alpha}_{p,1})}
  \leq & C\|\bar u\|_{L^2_t(\dot H^\alpha\cap \dot W^{2\alpha,p})} \|w\|_{L^2_t(\dot H^\alpha\cap \dot W^{2\alpha,p})} ,\label{est5.w.Lip}\\
  \|\bar u\cdot\nabla \bar u\|_{L^1_t (\dot B^{\frac 2p+1-2\alpha}_{p,1})}
  \leq & C \|\bar u\|^2_{L^2_t(\dot H^\alpha\cap \dot W^{2\alpha,p})} .\label{est6.w.Lip}
\end{align}
For the estimate of $\bar{u}$ solving equation \eqref{eq.2dNS},
noting that by the interpolation and Besov embedding,
\begin{align*}
  \| u_0\|_{\dot B^{\frac 2p+1-2\alpha}_{p,1}}\leq C \| u_0\|_{\dot B^{2/p-1}_{p,\infty}}^{\frac {3\alpha-1- 2/p}{1+\alpha-2/p}}
  \| u_0\|_{\dot B^{\alpha}_{p,\infty}}^{\frac{2-2\alpha}{1+\alpha-2/p}}\leq C \|u_0\|_{L^2\cap \dot B^\alpha_{p,2}},
\end{align*}
along the similar lines as deducing \eqref{est1.w.Lip}-\eqref{est2.w.Lip}, we obtain
\begin{align}\label{est1.bu.Lip}
  &\|\bar u\|_{\widetilde{L}^\infty_t (\dot B^{\frac 2p+1-2\alpha}_{p,1})}+ \|\bar u\|_{L^1_t (\dot B^{\frac 2p+1}_{p,1})}
  + \|(\partial_\tau \bar u, \nabla\bar\pi)\|_{L^1_t (\dot B^{\frac 2p+1-2\alpha}_{p,1})}\nonumber\\
  \leq & C\big(  \| u_0\|_{\dot B^{\frac 2p+1-2\alpha}_{p,1}}+\|\bar u\cdot\nabla\bar u\|_{L^1_t (\dot B^{\frac 2p+1-2\alpha}_{p,1})}\big) \nonumber\\
  \leq & C \big(\| u_0\|_{L^2\cap\dot B^{\alpha}_{p,2}}+\|\bar u\|^2_{L^2_t(\dot H^\alpha\cap \dot W^{2\alpha,p})} \big),
\end{align}
Gathering estimates \eqref{est1.w.Lip}--\eqref{est1.bu.Lip} together leads to that
\begin{align}\label{est.w.Lip''}
  &\|w\|_{\widetilde{L}^\infty_t (\dot B^{\frac 2p+1-2\alpha}_{p,1})}+ \|w\|_{L^1_t (\dot B^{\frac 2p+1}_{p,1})}
  +\|(\partial_\tau w, \nabla p)\|_{L^1_t (\dot B^{\frac 2p+1-2\alpha}_{p,1})}\nonumber\\
  \leq & C\|a\|_{L^\infty_t \big(\mathcal{M}(\dot B^{\frac 2p+1-2\alpha}_{p,1})\big)}
  \Big(\|\partial_\tau w \|_{L^1_t (\dot B^{\frac 2p+1-2\alpha}_{p,1})} + \| u_0\|_{L^2\cap\dot B^{\alpha}_{p,2}}
  + \|(\bar u,w)\|^2_{L^2_t(\dot H^\alpha\cap \dot W^{2\alpha,p})} \Big) \nonumber\\
  &+C \|(\bar u, w)\|_{L^2_t(\dot H^\alpha\cap \dot W^{2\alpha,p})}
  \|w\|_{L^2_t(\dot H^\alpha\cap \dot W^{2\alpha,p})} .
\end{align}
Observe that owing to Lemma \ref{lem:f.mu} and inequality \eqref{est1.bu.Lip},
\begin{align}\label{est.a.M1}
  & \|a\|_{L^\infty_t \big(\mathcal{M}(\dot B^{\frac 2p+1-2\alpha}_{p,1})\big)}
  \leq C \|a_0\|_{\mathcal{M}(\dot B^{\frac 2p+1-2\alpha}_{p,1})} e^{ C \int_0^t\|\nabla u\|_{L^\infty}\dd\tau}\nonumber\\
  \leq & C \|a_0\|_{\mathcal{M}(\dot B^{\frac 2p+1-2\alpha}_{p,1})}e^{C \|w\|_{L^1_t (\dot B^{2/p+1}_{p,1})}+ C\|\bar u\|_{L^1_t (\dot B^{2/p+1}_{p,1})} }\nonumber\\
  \leq & C \|a_0\|_{\mathcal{M}(\dot B^{\frac 2p+1-2\alpha}_{p,1})} e^{C \|w\|_{L^1_t (\dot B^{2/p+1}_{p,1})}} \exp\Big\{C\big(\| u_0\|_{L^2\cap\dot B^{\alpha}_{p,2}}+\|\bar u\|^2_{L^2_t( \dot H^\alpha\cap \dot W^{2\alpha,p})}\big)\Big\} .
\end{align}
Recalling that Propositions \ref{pro.u2dL2} and \ref{prop:w} guarantee that
\begin{align*}
  \|\bar{u}\|_{L^2_t (\dot H^\alpha\cap \dot W^{2\alpha,p})} \leq C \big( 1+ \|u_0\|_{H^1 \cap \dot B^\alpha_{p,2}}^{\frac{4\alpha-1}{2\alpha -1}}\big), \quad
  \|w\|_{L^2_t (\dot H^\alpha \cap \dot W^{2\alpha,p})} \leq C\|a_0\|_{L^2\cap L^\infty} e^{C \|u_0\|_{H^1\cap \dot B^\alpha_{p,2}}^2 },
\end{align*}
we insert the above estimates and \eqref{est.a.M1} into \eqref{est.w.Lip''} to get
\begin{align}\label{est'.w.Lip}
  &\|w\|_{\widetilde{L}^\infty_t (\dot B^{\frac 2p+1-2\alpha}_{p,1})}+ \|w\|_{L^1_t (\dot B^{\frac 2p+1}_{p,1})}+\|(\partial_\tau w, \nabla p)\|_{L^1_t (\dot B^{\frac 2p+1-2\alpha}_{p,1})}\nonumber\\
  \leq & \overline{C} \|a_0\|_{\mathcal{M}(\dot B^{\frac 2p+1-2\alpha}_{p,1})}e^{\overline{C}\big(1+\| u_0\|^2_{H^1\cap \dot B^{\alpha}_{p,2}} + \| u_0\|_{H^1}^{\frac{8\alpha-2}{2\alpha -1}}\big)} \cdot e^{\overline{C}\|w\|_{L^1_t (\dot B^{2/p+1}_{p,1})}}
  \cdot\Big(\|\partial_\tau w \|_{L^1_t (\dot B^{\frac 2p+1-2\alpha}_{p,1})}+1\Big) \nonumber \\
  & + \overline{C} \|a_0\|_{L^2\cap L^\infty}  e^{\overline{C} \|u_0\|_{H^1\cap \dot B^\alpha_{p,2}}^2 },
\end{align}
where $\overline{C}>0$ is a constant depending only on $\alpha,p$.

According to Proposition \ref{prop:w} and the continuous embedding $\dot W^{2\alpha,p}\cap \dot H^\alpha \hookrightarrow \dot B^{2/p+1}_{p,1}$ and $\dot B^0_{2,\infty}\cap \dot B^\alpha_{p,\infty}\hookrightarrow \dot B^{2/p+1-2\alpha}_{p,1}$,
we know that $w\in \widetilde{L}^\infty_T(\dot B^{2/p+1-2\alpha}_{p,1}) \cap L^1_T (\dot B^{2/p+1}_{p,1})$ for any $T>0$.
Since $w\in \widetilde{L}^\infty_T(\dot B^{2/p+1-2\alpha}_{p,1})$, by a high-low frequency decomposition argument,
one easily deduce that $w\in C([0,T]; \dot B^{2/p+1-2\alpha}_{p,1})$.
Let $T^*>0$ be the maximal existence time such that $w\in C([0,T^*);\dot B^{2/p+1-2\alpha}_{p,1})\cap L^1 ([0,T^*);\dot B^{2/p+1}_{p,1})$.
Denote by $T'$ as
\begin{align}\label{T'.max}
  T'\,{\stackrel{\mathrm{def}}{=}}\sup \Big\{t<T^*: \|w\|_{\widetilde{L}^\infty_t (\dot B^{\frac{2}{p}+1-2\alpha}_{p,1})} + \|w\|_{L^1_t (\dot B^{\frac{2}{p}+1}_{p,1})}\leq 1\Big\}.
\end{align}
We claim $T'=\infty$.  Indeed, by setting $\|a_0\|$ small enough so that
\begin{align}\label{small.con1}
  \overline{C} e^{\overline{C}} \|a_0\|_{\mathcal{M}(\dot B^{\frac 2p+1-2\alpha}_{p,1})} \exp\Big\{\overline{C}(1+\| u_0\|^2_{H^1\cap \dot B^{\alpha}_{p,2}}+ \| u_0\|_{H^1}^{\frac{8\alpha-2}{2\alpha -1}}) \Big\} \leq \frac{1}{4},
\end{align}
and
\begin{align}\label{small.con2}
  \overline{C} \|a_0\|_{L^2\cap L^\infty} e^{\overline{C} \|u_0\|_{H^1\cap \dot B^\alpha_{p,2}}^2 }\leq \frac{1}{4} ,
\end{align}
we infer from \eqref{est'.w.Lip} that
\begin{align}\label{est''.w.Lip}
  \|w\|_{\widetilde{L}^\infty_{T'} (\dot B^{\frac 2p+1-2\alpha}_{p,1})} + \|w\|_{L^1_{T'} (\dot B^{\frac 2p+1}_{p,1})}
  + \frac{1}{2} \|\partial_t w \|_{L^1_{T'} (\dot B^{\frac 2p+1-2\alpha}_{p,1})} + \|\nabla p\|_{L^1_{T'} (\dot B^{\frac 2p+1-2\alpha}_{p,1})}\leq \frac{1}{2}.
\end{align}
If $T'<\infty$, then it can be proceeded beyond and this contradicts with the maximality of $T'$ defined by \eqref{T'.max}, hence we conclude that $T'=T^*=\infty$.

Hence, by combining the estimates \eqref{est1.bu.Lip} and \eqref{est''.w.Lip}, we get the desired estimate \eqref{est.nabla-u'}:
\begin{align*}
  \|\nabla u\|_{L^1(\RR_+; L^\infty)} \leq C \|( \bar{u}, w)\|_{L^1(\RR_+; \dot B^{\frac{2}{p}+1}_{p,1})}
  \leq C \Big(1 + \|u_0\|_{H^1 \cap \dot B^\alpha_{p,2}}^{\frac{8\alpha-2}{2\alpha -1}} \Big).
\end{align*}

(2) Next we show \eqref{est.M.bet}.
Applying fractional Laplacian operator $\Lambda^s$ to equation $\eqref{eq.frc-INNS}_2$ yields that
\begin{align*}
 \partial_t\Lambda^s u+\Lambda^{2\alpha+s} u+\nabla\Lambda^s \pi= -\Lambda^s\big(a\,\partial_t u\big) - \Lambda^s\big(\rho u\cdot\nabla u) =: F_s,
\end{align*}
with $a=\rho-1$ and $\Div u=0$.
By using the Leray operator $\PP := \Id - \nabla \Delta^{-1}\Div$, we get
\begin{align*}
  \partial_t (\Lambda^s u) + \Lambda^{2\alpha} (\Lambda^s u) = \PP F_s, \quad (\Lambda^s u)|_{t=0}= \Lambda^s u_0 .
\end{align*}
Similarly as deriving \eqref{est1.w.Lip}-\eqref{est2.w.Lip}, we obtain that for every $T\in (0,\infty]$,
\begin{align}\label{est.bet}
  \|\Lambda^s u\|_{\widetilde{L}^\infty_T(\dot B^\alpha_{p,2})} + \|\Lambda^s u\|_{L^2_T(\dot B^{2\alpha}_{p,2})} + \|(\nabla \pi, \partial_t u)\|_{L^2_T (\dot B^s_{p,2})}
  & \leq  C \big( \|\Lambda^s u_0\|_{ \dot B^\alpha_{p,2}} + \|\PP F_s\|_{L^2_T(\dot B^0_{p,2})}\big) \nonumber \\
  & \leq  C \big( \|u_0\|_{ \dot B^{\alpha+s}_{p,2}} + \|F_s\|_{L^2_T(\dot B^0_{p,2})}\big).
\end{align}
By using Definition \ref{def.mu}, we infer that
\begin{align}\label{est.bet1}
  \| F_s\|_{L^2_T(\dot B^0_{p,2})} & \leq  \| a \,\partial_t u\|_{L^2_T(\dot B^s_{p,2})}
  + \|(1+a) (u\cdot\nabla u)\|_{L^2_T (\dot B^s_{p,2})} \nonumber \\
  & \leq C \|a\|_{L^\infty_T( \mathcal{M}(\dot B^{s}_{p,2}))} \|\partial_t u\|_{L^2_T(\dot B^{s}_{p,2})}
  + C (1+\|a\|_{L^\infty_T( \mathcal{M}(\dot B^{s}_{p,2}))}) \|u\cdot\nabla u\|_{L^2_T(\dot B^{s}_{p,2})}.
\end{align}
Utilizing the interpolation inequality and \eqref{est.priori} leads to
\begin{align}\label{est.bet2}
  \|u\cdot\nabla u\|_{L^2_T(\dot B^{s}_{p,2})} 
  \leq & C \|u\|_{L^\infty_T(L^\infty)}\|u\|_{ L^2_T(\dot B^{1+s}_{p,2})}\nonumber\\
  \leq & C \|u\|_{L^\infty_T(L^2\cap \dot B^{\alpha}_{p,2})}\|u\|_{L^2_T(\dot H^{\alpha})}^{\theta_5}\|u\|_{L^2_T(\dot B^{2\alpha+s}_{p.2})}^{1-\theta_5}\nonumber\\
  \leq &  C_\varepsilon \|u\|_{L^\infty_T(L^2\cap \dot B^{\alpha}_{p,2})}^{1/\theta_5}\|u\|_{L^2_T(\dot H^{\alpha})}+ \varepsilon \|u\|_{L^2_T(\dot B^{2\alpha+s}_{p.2})} \nonumber \\
  \leq & C_\varepsilon \Big( 1 + \|u_0\|_{H^1\cap \dot B^\alpha_{p,2}}^{\frac{8(4\alpha-1)}{(2\alpha-1)^2}} \Big) + \varepsilon \|u\|_{L^2_T(\dot B^{2\alpha+s}_{p.2})},
\end{align}
where $\theta_5=\frac {2\alpha-1}{\alpha+1+s- 2/p}$ and $\varepsilon>0$ is a constant chosen later.
By virtue of \eqref{a.m} and \eqref{est.nabla-u'}, we get
\begin{align}\label{est.bet3}
  \|a\|_{L^\infty_T( \mathcal{M}(\dot B^{s}_{p,2}))} \leq \|a_0\|_{ \mathcal{M}(\dot B^{s}_{p,2})}e^{C\|\nabla u\|_{L^1_T(L^\infty)}}
  \leq C \|a_0\|_{ \mathcal{M}(\dot B^{s}_{p,2})} \exp\Big\{C  \big(1 + \|u_0\|_{H^1\cap \dot B^\alpha_{p,2}}^{\frac{8\alpha-2}{2\alpha-1}}\big)  \Big\}.
\end{align}
Collecting estimates \eqref{est.bet}--\eqref{est.bet3}  and using  the interpolation inequality $\dot B^{2\alpha+s}_{p,2}\cap \dot H^\alpha\hookrightarrow \dot B^{2\alpha}_{p,1}$ together with \eqref{est.priori},
we see that for every $T\in (0,\infty]$,
\begin{align}\label{est.u-imp-reg}
  & \quad \|u\|_{\widetilde{L}^\infty_T (\dot B^{\alpha+s}_{p,2})} + \|u\|_{L^2_T(\dot B^{2\alpha+s}_{p,2})} + \|(\nabla \pi, \partial_t u)\|_{L^2_T (\dot B^s_{p,2})} + \|u\|_{L^2_T(\dot B^{2\alpha}_{p,1})} \nonumber \\
  & \leq C \|u_0\|_{\dot B^{\alpha+s}_{p,2}} +  C_\varepsilon \Big( 1 + \|u_0\|_{H^1\cap \dot B^\alpha_{p,2}}^{\frac{8(4\alpha-1)}{(2\alpha-1)^2}} \Big) \nonumber \\
  & \quad + C \|a_0\|_{\mathcal{M}(\dot B^s_{p,2})} \exp\Big\{C  \big(1 + \|u_0\|_{H^1\cap\dot B^\alpha_{p,2}}^{\frac{8\alpha-2}{2\alpha-1}}\big) \Big\} \|\partial_t u\|_{L^2_T(\dot B^s_{p,2})} \nonumber \\
  & \quad +  C \varepsilon \Big(1 + \|a_0\|_{\mathcal{M}(\dot B^s_{p,2})} \exp\Big\{C  \big(1 + \|u_0\|_{ H^1\cap\dot B^\alpha_{p,2}}^{\frac{8\alpha-2}{2\alpha-1}}\big)  \Big\} \Big ) \|u\|_{L^2_T (\dot B^{2\alpha+s}_{p,2})} ,
\end{align}
so that by letting $c_*>0$ in \eqref{eq:rho-cd} small enough and then $\varepsilon>0$ small enough, we conclude the desired estimate \eqref{est.M.bet}.
\end{proof}

\section{Proof of Theorem \ref{Main.thm}: the existence part}\label{existence}
In Subsection \ref{subsec:exi1}, as a first step we show the global well-posedness of strong solution to system \eqref{eq.frc-INNS} with additional regularity assumption $\nabla\rho_0\in L^{\frac{2}{\alpha}}(\RR^2)$,
and then by the compactness argument we prove the global existence of solution to system \eqref{eq.frc-INNS} with rough density in Subsection \ref{subsec:exi2}.

\subsection{Global well-posedness result for 2D fractional INS system with regular density.}\label{subsec:exi1}
Our main result in this subsection is the following proposition.
\begin{proposition}\label{Main.pro}
Let $\frac{1}{2}<\alpha<1,\, p>\frac{2}{2\alpha-1}$ and $u_0\in H^1\cap\dot B^{\alpha}_{p,2}(\RR^2)$, $\rho_0-1\in L^2\cap L^\infty(\RR^2)$
be with the smallness condition \eqref{con.Max.B0}.
In addition, assume $\nabla \rho_0 \in L^{\frac{2}{\alpha}}(\RR^2)$.
Then there exists  a unique global-in-time solution $(\rho, u,\nabla \pi)$ to the 2D fractional INS system \eqref{eq.frc-INNS}
which fulfills estimates \eqref{est.Max.w3-0}-\eqref{est.Max.w3}. Besides, it holds that for any $T>0$,
\begin{align}\label{est.nab.a}
  \|\nabla \rho\|_{L^\infty_T(L^{\frac{2}{\alpha}})}
  \leq \|\nabla \rho_0\|_{L^{\frac{2}{\alpha}}} \exp\Big\{ C\big(1+ \|u_0\|_{H^1\cap \dot B^\alpha_{p,2}}^{\frac{4\alpha -1}{2\alpha-1}}\big) \sqrt{T} \Big\}.
\end{align}
\end{proposition}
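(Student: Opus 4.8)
The plan is to build the solution by an iteration scheme and to upgrade the uniform \textit{a priori} bounds of Section \ref{APR} to a genuine solution by compactness, propagating the extra regularity $\nabla\rho_0\in L^{2/\alpha}$ along the flow. I would start from $(\rho^0,u^0)\equiv(\rho_0,u_0)$ and, given $(\rho^n,u^n)$, define $\rho^{n+1}$ as the solution of the linear transport equation $\partial_t\rho^{n+1}+u^n\cdot\nabla\rho^{n+1}=0$, constructed by the method of characteristics (so $\rho^{n+1}=\rho_0\circ(X^n_t)^{-1}$, with $X^n_t$ the flow of the divergence-free field $u^n$), and $u^{n+1}$ as the divergence-free solution of the linear momentum equation $\rho^{n+1}(\partial_t u^{n+1}+u^n\cdot\nabla u^{n+1})+\Lambda^{2\alpha}u^{n+1}+\nabla\pi^{n+1}=0$. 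Moving the variable-coefficient term to the right-hand side as $-a^{n+1}\partial_t u^{n+1}$ (with $a^{n+1}=\rho^{n+1}-1$) and absorbing it via the smallness of $\|a^{n+1}\|_{L^\infty}$, this step is solvable through the maximal-regularity estimate of Proposition \ref{pro.Max-Regularity}. Since the characteristics preserve every Lebesgue norm of $a^{n+1}$, the bound \eqref{est.Max.w3-0} holds at each step; writing $u^{n+1}=\bar u+w^{n+1}$ with $\bar u$ the solution of \eqref{eq.2dNS} and reproducing verbatim the estimates of Propositions \ref{prop:w} and \ref{pro.propri} under the smallness hypothesis \eqref{con.Max.B0}, I obtain bounds matching \eqref{est.Max.w3} and \eqref{est.priori-2} that are \textit{uniform in $n$ and global in time}.

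For the new estimate \eqref{est.nab.a}, I would differentiate the transport equation $\partial_t\rho+u\cdot\nabla\rho=0$ to get $\partial_t\nabla\rho+u\cdot\nabla(\nabla\rho)=-\nabla u\cdot\nabla\rho$, test against $|\nabla\rho|^{\frac2\alpha-2}\nabla\rho$, and use $\Div u=0$ to annihilate the transport term; this gives $\frac{\dd}{\dd t}\|\nabla\rho\|_{L^{2/\alpha}}\leq\|\nabla u\|_{L^\infty}\|\nabla\rho\|_{L^{2/\alpha}}$, whence Gronwall's inequality yields $\|\nabla\rho(t)\|_{L^{2/\alpha}}\leq\|\nabla\rho_0\|_{L^{2/\alpha}}\exp\big(\|\nabla u\|_{L^1_t(L^\infty)}\big)$, and inserting the Lipschitz bound \eqref{est.priori-2} produces exactly \eqref{est.nab.a}. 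Carried out at the level of the iterates, the same computation keeps $\nabla\rho^{n}$ uniformly bounded in $L^\infty_T(L^{2/\alpha})$ on every finite interval; together with the Morrey embedding $W^{1,2/\alpha}(\RR^2)\hookrightarrow C^{0,1-\alpha}(\RR^2)$ this makes the approximate densities equicontinuous in space and, through their transport equations, in time.

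With these uniform bounds I would pass to the limit. The momentum equation provides uniform control of $\partial_t u^{n}$ in $L^2_T(L^p)$, so the Aubin--Lions--Simon lemma gives strong convergence of $u^{n}$ in $L^2_T(L^p_{\mathrm{loc}})$, while the equicontinuity of $\rho^{n}$ gives its convergence; these suffice to pass to the limit in every (at most quadratic) nonlinear term and to identify $(\rho,u,\nabla\pi)$ as a solution, the estimates \eqref{est.Max.w3-0}--\eqref{est.Max.w3} and \eqref{est.nab.a} surviving by weak lower semicontinuity. Uniqueness in this regular class is then direct and does not require the Lagrangian machinery of the full theorem: for two solutions I set $\delta\rho=\rho_1-\rho_2$, $\delta u=u_1-u_2$, and run an $L^2$ energy estimate on $\|\sqrt{\rho_1}\,\delta u\|_{L^2}^2$, controlling $\|\delta\rho\|_{L^2}$ through its transport equation $\partial_t\delta\rho+u_1\cdot\nabla\delta\rho=-\delta u\cdot\nabla\rho_2$ by $\int_0^t\|\Lambda^\alpha\delta u\|_{L^2}\|\nabla\rho_2\|_{L^{2/\alpha}}\,\dd\tau$ (using $\dot H^\alpha\hookrightarrow L^{2/(1-\alpha)}$ in $\RR^2$). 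It is precisely the regularity $\nabla\rho_2\in L^{2/\alpha}$ that makes this density-difference estimate close; the remaining coupling term $\int\delta\rho\,(\partial_t u_2+u_2\cdot\nabla u_2)\cdot\delta u$ is absorbed into the dissipation $\|\Lambda^\alpha\delta u\|_{L^2}^2$ by H\"older and interpolation (using the $L^2\cap L^\infty$ bound on $\delta\rho$), and Gronwall then forces $\delta\rho=\delta u=0$.

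I expect the principal difficulty to be organizational rather than conceptual, since the heavy analytic lifting is done in Section \ref{APR}. The delicate points are: verifying that the smallness constant $c_0$ fixed there closes the iteration \emph{uniformly in $n$}, so that the quadratic inequality $E_p(w^{n+1})\leq C_2\|a_0\|_{L^2\cap L^\infty}e^{\cdots}+(E_p(w^{n+1}))^2$ can be absorbed at every step and yields a bound independent of $n$; and handling the non-compact spatial domain $\RR^2$ in the compactness step by a standard localization before extracting subsequences. The genuinely new analytic input beyond Section \ref{APR} is the gradient propagation \eqref{est.nab.a}, which is the short Gronwall computation above and guarantees that the approximating velocities remain Lipschitz so that the flows $X^n_t$ are well defined.
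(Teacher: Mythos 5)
Your construction, uniform-in-$n$ bounds, gradient-propagation computation, and Eulerian $L^2$ uniqueness argument all track the paper's proof closely: the scheme is essentially the paper's \eqref{apr.INS-2dNS}, the estimate \eqref{est.nab.a} is obtained exactly as in \eqref{naban}, and your uniqueness paragraph (controlling $\|\delta\rho\|_{L^2}$ through the transport equation by $\sqrt{t}\,\|\nabla\rho_2\|_{L^{2/\alpha}}\|\delta u\|_{L^2_t(\dot H^\alpha)}$ and closing with the dissipation) is precisely the paper's Step 4. The problem is the step in between.

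Passing to the limit by compactness does not work for a Picard-type iteration in which consecutive iterates are coupled. Aubin--Lions gives you convergence of a \emph{subsequence} $u^{n_k}\to u$, but the equations satisfied by $(\rho^{n_k},u^{n_k})$ involve the transport field $u^{n_k-1}$, and $n_k-1$ is in general not an index of the subsequence; you have no information that $u^{n_k-1}$ converges, let alone to the same limit $u$. Even if you extract so that the pairs $(u^{n_k},u^{n_k+1})\to(u,v)$ jointly converge, the limit identity you obtain is $\rho(\partial_t v+u\cdot\nabla v)+\Lambda^{2\alpha}v+\nabla\pi=0$ with possibly $v\neq u$ --- a linear system with mismatched transport field, not a solution of \eqref{eq.frc-INNS}. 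Compactness is the right tool when each approximation solves a self-consistent \emph{nonlinear} problem (this is how the paper handles the rough-density case in Section \ref{subsec:exi2}, mollifying $\rho_0$ and invoking Proposition \ref{Main.pro} for each $\epsilon$), but not for a linearized iteration. What is needed --- and what the paper does in its Step 3 --- is convergence of the \emph{whole} sequence, proved by showing $\{(a^n,w^n)\}$ is Cauchy in the $L^2$-energy space on a small interval $[0,t_*]$: the density-difference bound \eqref{est.an.del1}, $\|\delta a^{n+1}\|_{L^\infty_t(L^2)}\lesssim \sqrt{t}\,e^{C\sqrt{t}}\|\nabla a_0\|_{L^{2/\alpha}}\|\delta w^n\|_{L^2_t(\dot H^\alpha)}$, feeds into the momentum energy estimate to give the contraction \eqref{est.In}, $I_{n+1}(t_*)\leq\tfrac14 I_n(t_*)+\tfrac18 I_{n-1}(t_*)$. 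Note that this is exactly where the hypothesis $\nabla\rho_0\in L^{2/\alpha}$ enters the \emph{existence} proof, not merely the uniqueness proof as your outline suggests; the computation you already wrote for uniqueness, applied to differences of consecutive iterates, is the missing ingredient. A secondary consequence: since the contraction is only local in time, you also need the paper's Step 5 (a bootstrap/continuation argument using the global a priori bounds of Propositions \ref{prop:w} and \ref{pro.propri}) to conclude $T_*=\infty$; your claim that global-in-time uniform bounds by themselves yield a global solution presupposes the flawed global compactness step.
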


\begin{proof}[Proof of Proposition \ref{Main.pro}]

Since $u_0\in H^1\cap \dot B^\alpha_{p,2}(\RR^2)$ with $\alpha\in (\frac{1}{2},1)$ and $p>\frac{2}{2\alpha-1}$,
according to Proposition \ref{pro.u2dL2} and the standard compactness theory, there exists a unique global-in-time strong solution
$\bar{u}\in C(\RR_+;H^1\cap \dot B^\alpha_{p,2})\cap L^2(\RR_+; H^{1+\alpha})$ to the 2D fractional Navier-Stokes system \eqref{eq.2dNS} which satisfies estimates \eqref{est.u2d-2}-\eqref{est.u2d}.
Thus it suffices to treat the global existence issue of the perturbed system \eqref{eq.INS-2dNS}.
We divide the proof into the following several steps.

\textit{Step 1: construction of approximate solutions.}

We consider
$(w^{n+1}, a^{n+1} ) \;(n\in\NN)$ as the solutions to the following approximate perturbed system
\begin{equation}\label{apr.INS-2dNS}
\left\{\begin{array}{ll}
  \partial_t a^{n+1}+u^n\cdot\nabla a^{n+1}=0,\\
  \partial_t w^{n+1}+u^n\cdot\nabla w^{n+1}+\Lambda^{2\alpha} w^{n+1} +\nabla p^{n+1} = \sum\limits_{i=1}^5 F^n_i,\\
  \Div w^{n+1}=0,\\
  (a^{n+1}, w^{n+1})|_{t=0}=(a_0,0),
\end{array}\right.
\end{equation}
where 
$u^n=w^n + \bar{u}$ and
\begin{equation}\label{eq.Fn}
\begin{split}
  &F^n_1=-a^n \partial_t w^{n+1},\quad  F^n_2=- a^n (\partial_t\bar{u}),\quad F_3^n = -a^n (u^{n-1}\cdot\nabla w^{n+1}),\\ 
  &F_4^n=-a^n\big( \bar{u}\cdot\nabla \bar{u}\big),\quad F_5^n=-(1+a^n) \big( w^{n+1}\cdot \nabla  \bar{u} \big).
\end{split}
\end{equation}
We also set $u^{-1}(t,x)\equiv 0,\,  w^0(t,x)\equiv 0,\,  a^0(t,x)\equiv a_0(x),\,  u^0(t,x)\equiv u_0(x)$.
It can admit a unique solution of system \eqref{apr.INS-2dNS} by Proposition \ref{pro.Max-Regularity}
and the Banach fixed point theorem.
We treat the nonlinearity $u^n\cdot \nabla w^{n+1}$ as a perturbation and find a solution via a contraction map for small time intervals.
Solvability of the transport equation follows directly from the method of characteristics.
The extension to the global-in-time solutions can use the following uniform estimates. We omit the details of this part.

\textit{Step 2: uniform-in-$n$ estimates for approximate solutions.}

We shall derive the  uniform-in-$n$ estimates for approximate solutions by the induction method.

First, $(w^1, a^1 )$ satisfy that
\begin{equation}\label{apr.n1}
\left\{\begin{array}{ll}
  \partial_t a^{1}+u_0\cdot\nabla a^{1}=0,\\
  \partial_t w^{1}+u_0\cdot\nabla w^{1}+\Lambda^{2\alpha} w^{1} +\nabla p^{1} = \sum\limits_{i=1}^5 F^0_i,\\
  \Div w^{1}=0,\\
  (a^{1}, w^{1})|_{t=0}=(a_0,0),
\end{array}\right.
\end{equation}
where 
$u^1=w^1 + \bar{u}$ and
\begin{equation}\label{eq.F0i}
\begin{split}
  &F^0_1=-a_0 \partial_t w^{1},\quad  F^0_2=- a_0 (\partial_t\bar{u}),\quad F_3^0 =0,\\ 
  &F_4^0=-a_0\big( \bar{u}\cdot\nabla \bar{u}\big),\quad F_5^0=-(1+a_0) \big( w^{1}\cdot \nabla  \bar{u} \big) .
\end{split}
\end{equation}
We have that $ \|a^1(t,\cdot)\|_{L^2\cap L^\infty} \leq \|a_0\|_{L^2\cap L^\infty}$ for any $t>0$.
Along the same line as deriving \eqref{eat.wL2.3}, we find
\begin{align}\label{eaw1.L2}
  & \|w^1(t)\|_{L^2}^2 + \int_0^t\|\Lambda^\alpha w^1\|_{L^2}^2\mathrm d\tau \nonumber\\
  \leq & - \| \sqrt{a} w^1(t) \|_{L^2}^2 + C \|a_0\|_{L^\infty}^2\int_0^t\|\Lambda^\alpha \bar{u}\|_{L^2}^2\mathrm d\tau +
  C \|a_0\|_{L^2\cap L^\infty}^2 \int_0^t\|\Lambda^\alpha \bar{u}\|_{H^1}^2 \mathrm d\tau \nonumber \\
  &  + C(\|\rho_0\|_{L^\infty}^2+1) \int_0^t \|\Lambda^\alpha \bar{u}\|_{H^1}^2 \|w^1\|_{L^2}^2\mathrm d \tau \nonumber \\
  \leq & C \|a_0\|_{L^2\cap L^\infty}^2 \|u_0\|_{H^1}^2 +  C(\|\rho_0\|_{L^\infty}^2+1) \int_0^t \|\Lambda^\alpha \bar{u}\|_{H^1}^2 \|w^1\|_{L^2}^2\mathrm d \tau ,
\end{align}
which combined with the Gronwall inequality yields that
\begin{equation*}\label{esw1.L2'}
\begin{split}
  \|w^1\|_{L^\infty_t(L^2)} + \|\Lambda^\alpha w^1\|_{L^2_t(L^2)}
 \leq C \|a_0\|_{L^2\cap L^\infty} e^{C \|u_0\|_{H^1}^2} .
\end{split}
\end{equation*}

Applying Proposition \ref{pro.Max-Regularity} to equation $\eqref{apr.n1}_2$ gives
\begin{equation*}\label{esw1.M}
\begin{split}
  \|w^1\|_{L^\infty_t(\dot B^\alpha _{p,2})}+\|(\partial_\tau w^1,\Lambda^{2\alpha} w^1,\nabla  p^1)\|_{L^2_t(L^p)} \leq  C \sum_{i=1}^5\|F^0_i\|_{L^2_t(L^p)} + C \|u_0\cdot\nabla w^1\|_{L^2_t (L^p)},
\end{split}
\end{equation*}
where $F^0_i, (i=1,2,3,4,5)$ is given by \eqref{eq.F0i}.
By arguing as \eqref{es.F3} and \eqref{est.F_3}, we get
\begin{align}\label{es.F06}
  \|u_0 \cdot\nabla w^1\|_{L^2_t(L^p)} & \leq   \|u_0\|_{L^{\frac{p(p+\delta_2)}{\delta_2}}} \|\nabla w^1\|_{L^2_t(L^{p+\delta_2})}  \nonumber \\
  & \leq C \|u_0\|_{H^1}^{\frac{\alpha+1}{2\alpha-1}}
  \|a_0\|_{L^2\cap L^\infty} e^{C \|u_0\|_{H^1}^2}
  + \frac{1}{4C } \|\Lambda^{2\alpha} w^1\|_{L^2_t(L^p)} \nonumber \\
  & \leq C \|a_0\|_{L^2\cap L^\infty} e^{C \|u_0\|_{H^1}^2}
  + \frac{1}{4C} \|\Lambda^{2\alpha} w^1\|_{L^2_t(L^p)} .
\end{align}
For $F^0_i$, by making the similar procedure as deducing \eqref{est.F_5}, \eqref{est.F_6}, we see that
\begin{align}
  \sum_{i=1}^5 \|F_i^0\|_{L^2_t (L^p)} & \leq \|a_0\|_{L^\infty} \big(\|\partial_\tau w^1\|_{L^2_t (L^p)} +  \|\partial_\tau \bar{u}\|_{L^2_t (L^p)} + \|\bar{u}\cdot \nabla \bar{u}\|_{L^2_t(L^p)} \big) \nonumber \\
  &\quad + \|\rho_0\|_{L^\infty} \|w^1\|_{L^\infty_t(L^\infty)} \|\nabla \bar{u}\|_{L^2_t(L^p)} \nonumber \\
  & \leq C \|a_0\|_{L^2\cap L^\infty} e^{C \|u_0\|_{H^1\cap \dot B^\alpha_{p,2}}^2} + \frac{1}{4C} \|w^1\|_{L^\infty_t (\dot B^\alpha_{p,2})} +  \|a_0\|_{L^\infty} \|\partial_\tau w^1\|_{L^2_t (L^p)} .
\end{align}
By letting $\|a_0\|_{L^\infty} \leq \frac{1}{4C}$ and using the assumption \eqref{con.Max.B0}, we gather the above estimates to obtain
\begin{equation}\label{esw1.M'}
\begin{split}
  \|w^1\|_{L^\infty_t(\dot B^\alpha _{p,2})}+\|(\partial_\tau w^1,\Lambda^{2\alpha} w^1,\nabla  p^1)\|_{L^2_t(L^p)}  \leq   C \|a_0\|_{L^2\cap L^\infty} e^{C \|u_0\|_{H^1\cap \dot B^\alpha_{p,2}}^2} \leq 1.
\end{split}
\end{equation}

Now under the smallness condition \eqref{con.Max.B0}, we suppose that, for any $n\in \mathbb{Z}_+$ and $k\leq n$, there hold that for any $t>0$,
\begin{equation}\label{eswk.L2}
\begin{split}
  E_2(w^k) := \|w^k\|_{L^\infty_t(L^2)} + \|\Lambda^\alpha w^k\|_{L^2_t(L^2)}
 \leq C \|a_0\|_{L^2\cap L^\infty} e^{C \|u_0\|_{H^1}^2},
\end{split}
\end{equation}
and
\begin{equation}\label{eswk.M}
\begin{split}
  E_p(w^k):=\|w^k\|_{L^\infty_t(\dot B^\alpha _{p,2})}+\|(\partial_\tau w^k,\Lambda^{2\alpha} w^k,\nabla  p^k)\|_{L^2_t(L^p)}
  \leq \widetilde{C} \|a_0\|_{L^2\cap L^\infty} e^{\widetilde{C} \|u_0\|_{H^1\cap \dot B^\alpha_{p,2}}^2 },
\end{split}
\end{equation}
with $C,\widetilde{C}>0$ depending only on $\alpha,p$, then we shall show the same uniform estimates for $w^{n+1}$.

Since $u^n = w^n + \bar{u}$, by using the induction assumption and estimating as \eqref{est.nabla-u},
we have $u^n\in L^1(0,t;\dot W^{1,\infty})$ for any $t>0$, thus it follows from the flow property of transport equation that
$ \|a^{n+1}(t,\cdot)\|_{L^2\cap L^\infty} \leq \|a_0\|_{L^2\cap L^\infty}$ for any $t>0$. For the equation $\eqref{apr.INS-2dNS}_2$, noting that (similarly as \eqref{est.wL2.d})
\begin{equation*}\label{eq.wL2}
\begin{split}
 \int_0^t \int_{\RR^2} \big( -a^n \partial_{\tau} w^{n+1}  - a^n ( u^{n-1} \cdot\nabla w^{n+1})\big) \cdot w^{n+1} \dd x\dd\tau = -\frac 12\int_{\RR^2}a^n|w^{n+1}|^2\,\dd x
\end{split}
\end{equation*}
and along similar lines as driving \eqref{eat.wL2.3} or \eqref{eaw1.L2}, we infer that
\begin{align}\label{eawn.L2}
  & \|w^{n+1}(t)\|_{L^2}^2 + \int_0^t\|\Lambda^\alpha w^{n+1}\|_{L^2}^2\mathrm d\tau \nonumber\\
  \leq & C \|a_0\|_{L^2\cap L^\infty}^2 \|u_0\|_{H^1}^2 +  C(\|\rho_0\|_{L^\infty}^2+1) \int_0^t \|\Lambda^\alpha \bar{u}\|_{H^1}^2 \|w^{n+1}\|_{L^2}^2\mathrm d \tau ,
\end{align}
from which and the Gronwall inequality we concludes that \eqref{eswk.L2} holds for $k=n+1$.

Applying Proposition \ref{pro.Max-Regularity} to equation $\eqref{apr.INS-2dNS}_2$ leads to that
\begin{equation*}\label{eswn.M}
\begin{split}
  E_p(w^{n+1}) & = \|w^{n+1}\|_{L^\infty_t(\dot B^\alpha _{p,2})} + \|(\partial_\tau w^{n+1},\Lambda^{2\alpha} w^{n+1},\nabla  p^{n+1})\|_{L^2_t(L^p)} \\
  & \leq  C \sum_{i=1}^5\|F^n_i\|_{L^2_t(L^p)}  + C \| u^n\cdot\nabla w^{n+1}\|_{L^2_t(L^p)},
\end{split}
\end{equation*}
where  $F^n_i, i=1,...,5$ are defined by \eqref{eq.Fn}.
By using the induction assumptions \eqref{eswk.L2}-\eqref{eswk.M} and estimating as  \eqref{es.F3}--\eqref{est.F_4},
 the terms $F^{n,2}_3$ and $F^{n,2}_6$ can be by
\begin{align}
   & \|F^n_3 \|_{L^2_t (L^p)} + \| u^n\cdot\nabla w^{n+1} \|_{L^2_t(L^p)} \nonumber \\
  \leq &(1+\|a^n\|_{L^\infty_t(L^\infty)}) \big( \|(w^{n-1},w^{n})\|_{L^\infty_t(L^\infty)}\|\nabla w^{n+1}\|_{L^2_t(L^p)} +  \|\bar{u}\|_{L^\infty_t(L^{\frac{p(p+\delta_2)}{\delta_2}})} \|\nabla w^{n+1}\|_{L^2_t(L^{p+\delta_2})}\big) \nonumber \\
  \leq  & C  \|(w^{n-1},w^{n})\|_{L^\infty_t(L^2)}^{\theta_3} \|(w^{n-1},w^{n})\|_{L^\infty_t(\dot B^\alpha _{p,2})}^{1-\theta_3}
  \|\Lambda^\alpha  w^{n+1}\|_{L^2_t(L^2)}^{\theta_4} \|\Lambda^{2\alpha} w^{n+1}\|_{L^2_t(L^p)}^{1-\theta_4} \nonumber \\
  & + C \|\bar{u}\|_{L^\infty_t (H^1)} \|\Lambda^\alpha  w^{n+1}\|_{L^2_t(L^2)}^{\theta_2}  \|\Lambda^{2\alpha} w^{n+1}\|_{L^2_t(L^p)}^{1-\theta_2}  \nonumber \\
  \leq & C \|(w^{n-1},w^{n})\|_{L^\infty_t(L^2)}^{\frac{\alpha-2/p}{2\alpha-1}} \|(w^{n-1},w^{n})\|_{L^\infty_t(\dot B^\alpha _{p,2})}^{\frac{1}{2\alpha-1}}
  \|\Lambda^\alpha  w^{n+1}\|_{L^2_t(L^2)} + \frac{1}{4C} \|\Lambda^{2\alpha} w^{n+1}\|_{L^2_t(L^p)} \nonumber \\
  & + C \|u_0\|_{H^1}^{\frac{\alpha+1}{2\alpha-1}} \|\Lambda^\alpha w^{n+1}\|_{L^2_t(L^2)}  \nonumber  \\
  \leq  &C \widetilde{C}^{1+\frac{1}{\theta_4}} \|a_0\|_{L^2\cap L^\infty}^{1+\frac{1}{\theta_4}}  e^{(1+\frac{1}{\theta_4}) \widetilde{C} \|u_0\|_{H^1\cap \dot B^\alpha_{p,2}}^2 }
  + C \|a_0\|_{L^2\cap L^\infty} e^{C \|u_0\|_{H^1}^2} + \frac{1}{4C} \|\Lambda^{2\alpha} w^{n+1}\|_{L^2_t(L^p)} , \nonumber
\end{align}
where $\theta_2=\frac{2\alpha-1}{\alpha+1}$, $\theta_3 = \frac{\alpha-2/p}{1+\alpha -2/p}$ and $\theta_4 = \frac{2\alpha-1}{1+\alpha-2/p}$.
For the remaining terms $F^n_i (i=1,2,4,5)$, similarly as deriving \eqref{est.F_5} and \eqref{est.F_6}, we get that
\begin{align*}
  \|(F^n_1,F^n_2,F^n_4,F^n_5)\|_{L^2_t (L^p)} & \leq \|a^n\|_{L^\infty} \big(\|\partial_\tau w^{n+1}\|_{L^2_t (L^p)} +  \|\partial_\tau \bar{u}\|_{L^2_t (L^p)} + \|\bar{u}\cdot \nabla \bar{u}\|_{L^2_t(L^p)}  \big) \\
  & \quad + (1+ \|a^n\|_{L^\infty_t (L^\infty)})   \|w^{n+1}\|_{L^\infty_t(L^\infty)} \|\nabla  \bar{u}\|_{L^2_t(L^p)} \\
  & \leq C \|a_0\|_{L^2\cap L^\infty} e^{C \|u_0\|_{H^1}^2} +  \frac{1}{4C} \big(\| w^{n+1}\|_{L^\infty_t(\dot B^\alpha_{p,2})} + \|\partial_\tau w^{n+1}\|_{L^2_t (L^p)} \big).
\end{align*}
Collecting the above estimates yields that
\begin{align*}
  E_p(w^{n+1}) \leq C \widetilde{C}^{1+\frac{1}{\theta_4}} \|a_0\|_{L^2\cap L^\infty}^{1+\frac{1}{\theta_4}}  e^{(1+\frac{1}{\theta_4}) \widetilde{C} \|u_0\|_{H^1\cap \dot B^\alpha_{p,2}}^2 }
  + C \|a_0\|_{L^2\cap L^\infty} e^{C \|u_0\|_{H^1}^2},
\end{align*}
then by letting constant $c_0$ in \eqref{con.Max.B0} small enough, it guarantees \eqref{eswk.M} for $k=n+1$.

Hence, the induction method ensures that the uniform estimates of \eqref{eswk.L2} and \eqref{eswk.M} indeed hold for any $k\in\NN$ under the smallness condition  \eqref{con.Max.B0}.

In addition, taking the gradient operator $\nabla$ on both sides of equation $\eqref{apr.INS-2dNS}_1$ leads to
\begin{equation}\label{eq.naban}
\begin{split}
  \partial_t (\nabla a^{n+1}) + u^n \cdot\nabla (\nabla a^{n+1}) + (\nabla u^n)\cdot \nabla a^{n+1} = 0,\quad \nabla a^{n+1}|_{t=0} = \nabla a_0,
\end{split}
\end{equation}
and by making the scalar product of equation \eqref{eq.naban} with $|\nabla a^{n+1}|^{\frac{2}{\alpha}-1} \nabla a^{n+1}$ we obtain
\begin{align*}
   \frac{\dd}{\dd t} \|\nabla a^{n+1}\|_{L^{\frac{2}{\alpha}}} \leq \|\nabla u^n\|_{L^\infty} \|\nabla a^{n+1}\|_{L^{\frac{2}{\alpha}}}  ,
\end{align*}
which combined with the Gronwall inequality implies that for every $n\in\NN$,
\begin{equation}\label{naban}
\begin{split}
  \|\nabla a^{n+1}\|_{L^\infty_t(L^{\frac 2\alpha})} \leq
  \|\nabla a_0\|_{L^{\frac 2\alpha }} \exp\Big\{ \int_0^t \|\nabla u^n \|_{L^\infty} \mathrm d\tau\Big\}
  \leq \|\nabla a_0\|_{L^{\frac 2\alpha }} e^{C(u_0)\sqrt{t}} .
\end{split}
\end{equation}

\textit{Step 3: convergence of approximate solutions.}

We show that $\{(a^n , w^n)\}_{n\in\NN}$ is a Cauchy sequence in the $L^2$-energy space on a small interval $[0,t_*]$ with $t_* >0.$
Denote by
\begin{align*}
  \delta a^n := a^n-a^{n-1},\quad \delta w^n := w^n-w^{n-1},\quad \delta p^n := p^n-p^{n-1},\quad n\in\mathbb{N},
\end{align*}
with the convention
$a^{-1}=w^{-1}=p^{-1}=u^{-1}=0$. Then $(\delta a^{n+1}, \delta w^{n+1})$ fulfills
\begin{equation}\label{apr.del}
\left\{\begin{array}{ll}
  \partial_t \delta a^{n+1}+u^n\cdot\nabla \delta a^{n+1}=-\delta w^n\cdot\nabla  a^n,\\
  \partial_t \delta w^{n+1}+u^n\cdot\nabla \delta w^{n+1}+\Lambda^{2\alpha}\delta w^{n+1} +\nabla \delta  p^{n+1} =\sum\limits_{i=1}^9 \delta F^n_i,\\
  \Div  \delta w^{n+1}=0,\\
  (\delta a^{n+1},\delta w^{n+1})|_{t=0}=(0, 0),
\end{array}\right.
\end{equation}
where $u^n=w^n+\bar{u}$ and
\begin{equation*}
\begin{split}
  &\delta  F^n_1 := -\delta a^n (\partial_t w^{n+1}),\quad \delta F^n_2: = -\delta  a^n (\partial_t \bar{u}),\quad \delta F_3^n :=-a^{n-1} (\partial_t \delta w^{n+1}),\\
  &\delta  F_4^n := -\delta a^n(u^{n-1}\cdot\nabla w^{n+1}),\quad \delta  F_5^n := - a^{n-1}( \delta w^{n-1}\cdot\nabla w^{n+1}),\\
  &\delta  F_6^n := - a^{n-1}(  u^{n-2}\cdot\nabla \delta w^{n+1}),\quad \delta  F_7^n := -\delta w^n\cdot\nabla w^{n+1},\\
  &\delta  F_8^n := - \delta a^n( u^{n+1}\cdot\nabla \bar{u}),\quad \delta  F_9^n :=-(1+a^{n-1})\delta w^{n+1}\cdot\nabla\bar{ u}.
\end{split}
\end{equation*}
First, we investigate the $L^2$-estimate of $\delta a^{n+1}$. Multiplying both sides of equation $\eqref{apr.del}_1$ by $\delta a^{n+1}$
and integrating on the spatial variable yield that
\begin{align*}
  \frac{1}{2}\frac{\mathrm d}{\mathrm d t}\|\delta a^{n+1}(t)\|_{L^2}^2 \leq & \int_{\RR^2}|\delta w^n\cdot\nabla a^n(x,t)||\delta a^{n+1}(x,t)|\mathrm dx\nonumber\\
  \leq &\|\delta w^n(t)\|_{L^{\frac 2{1-\alpha}}} \|\nabla a^n(t)\|_{L^{\frac 2\alpha }} \|\delta a^{n+1}(t)\|_{L^2}\nonumber\\
  \leq & C \|\delta w^n(t)\|_{\dot H^\alpha} \|\nabla a^n(t)\|_{L^{\frac 2\alpha }} \|\delta a^{n+1}(t)\|_{L^2},
\end{align*}
which implies
\begin{align}\label{est.an.del}
  \frac{\mathrm d}{\mathrm d t}\|\delta a^{n+1}(t)\|_{L^2} \leq C \|\delta w^n(t)\|_{\dot H^\alpha} \|\nabla a^n(t)\|_{L^{\frac 2\alpha }} .
\end{align}
By integrating on the time interval $[0,t]$ and using \eqref{naban}, we obtain
\begin{equation}\label{est.an.del1}
\begin{split}
  \|\delta a^{n+1}\|_{L^\infty_t(L^2)} \leq C \|\nabla a^n\|_{L^2_t(L^{\frac{2}{\alpha}})} \|\delta w^n\|_{L^2_t(\dot H^\alpha)}
  \leq C \sqrt{t} e^{C \sqrt{t}} \|\nabla a_0\|_{L^{\frac{2}{\alpha} }}
  \|\delta w^n\|_{L^2_t(\dot H^\alpha)}.
\end{split}
\end{equation}

Now we intend to bound the $L^2$-norm of $\delta w^{n+1}$. Taking the inner product of equation $\eqref{apr.del}_2$ with $\delta w^{n+1}$ gives that
\begin{equation}\label{est.wn.del}
\begin{split}
  \frac 12\frac{\mathrm d}{\mathrm d t}\|\delta w^{n+1}(t)\|_{L^2}^2 + \|\delta w^{n+1}(t)\|_{\dot H^\alpha}^2
  = \sum_{i=1}^9 \int_{\RR^2}\delta F^n_i(x,t)\cdot\delta w^{n+1}(x,t)\mathrm d x.
\end{split}
\end{equation}
Utilizing H\"older inequality,  interpolation inequality and Young inequality, we get
\begin{align}\label{del.F1n}
  \Big|\int_{\RR^2}\delta F^n_1\cdot\delta w^{n+1}\mathrm dx\Big|
  \leq & \|\delta a^n\|_{L^2} \| \partial_t w^{n+1}\|_{L^p} \|\delta w^{n+1}\|_{L^{\frac {2p}{p-2}}}\nonumber\\
  \leq & C  \|\delta a^n\|_{L^2}\| \partial_t w^{n+1}\|_{L^p} \|\delta w^{n+1}\|_{L^2}^{\frac{\alpha p-2}{\alpha p}} \| \delta w^{n+1}\|_{\dot H^\alpha}^{\frac 2{\alpha p}}\nonumber\\
  \leq & C\|\delta a^n\|_{L^2}^{\frac{\alpha p}{\alpha p-1}} \|\partial_t w^{n+1}\|_{L^p}^{\frac{\alpha p}{\alpha p-1}}
  \|\delta w^{n+1}\|_{L^2}^{\frac{\alpha p-2}{\alpha p-1}} + \frac{1}{8}\|\delta w^{n+1}\|_{\dot H^\alpha}^2 \nonumber \\
  \leq &C \|\partial_t w^{n+1}\|_{L^p} \|\delta a^n\|_{L^2}^2 + \|\partial_t w^{n+1}\|_{L^p} \|\delta w^{n+1}\|_{L^2}^2 + \frac{1}{8} \|\delta w^{n+1}\|_{\dot H^\alpha}^2,
\end{align}
and similarly,
\begin{align}\label{del.F2n}
  \Big|\int_{\RR^2}\delta F^n_2\cdot\delta w^{n+1}\mathrm dx\Big| & \leq \|\delta a^n\|_{L^2} \|\partial_t \bar{u}\|_{L^p} \|\delta w^{n+1}\|_{L^{\frac{2p}{p-2}}} \nonumber \\
  & \leq C \| \partial_t\bar{u}\|_{L^p} \|\delta a^n\|_{L^2}^2 + \|\partial_t\bar{u}\|_{L^p} \|\delta w^{n+1}\|_{L^2}^2 + \frac{1}{8}\|\Lambda^\alpha \delta w^{n+1}\|_{L^2}^2.
\end{align}
Making use of the relation $\partial_ta^{n-1}+\Div(a^{n-1}u^{n-2})=0$, and along the same lines as deducing \eqref{est.wL2.d}, we see that
\begin{equation}\label{del.F3nFn6}
\begin{split}
  \int_{\RR^2}\big(\delta F^n_3 +\delta F^n_6\big)\cdot\delta w^{n+1}\mathrm dx
  =-\frac 12\frac {\mathrm d}{\mathrm dt}\int_{\RR^2}a^{n-1}|\delta w^{n+1}|^2\,\mathrm d x.
\end{split}
\end{equation}
For the remaining terms, one can easily find that
\begin{align}
  \Big|\int_{\RR^2}\delta F^n_4\cdot\delta w^{n+1}\mathrm dx\Big| 
  & \leq \|u^{n-1}\|_{L^\infty} \|\nabla w^{n-1}\|_{L^\infty} \big( \|\delta a^n\|_{L^2}^2  + \|\delta w^{n+1}\|_{L^2}^2 \big), \label{del.F4n} \\
  \Big|\int_{\RR^2}\delta F^n_5\cdot\delta w^{n+1}\mathrm dx\Big| &\leq \|a^{n-1}\|_{L^\infty} \|\nabla w^{n+1}\|_{L^\infty} \big(\|\delta w^{n-1}\|_{L^2}^2 +  \|\delta w^{n+1}\|_{L^2}^2 \big), \label{del.F5n} \\
  \Big|\int_{\RR^2}\delta F^n_7\cdot\delta w^{n+1}\mathrm dx\Big| & \leq \|\nabla w^{n+1}\|_{L^\infty} \big( \|\delta w^n\|_{L^2}^2 + \|\delta w^{n+1}\|_{L^2}^2 \big),\label{del.F7n} \\
  \Big|\int_{\RR^2}\delta F^n_8\cdot\delta w^{n+1}\mathrm dx\Big| & \leq \|u^{n+1}\|_{L^\infty} \|\nabla \bar{u}\|_{L^\infty} \big( \|\delta a^n\|_{L^2}^2 + \|\delta w^{n+1}\|_{L^2}^2 \big),\label{del.F8n}\\
  \Big|\int_{\RR^2}\delta F^n_9\cdot\delta w^{n+1} \mathrm dx\Big| & \leq (1+\|a^{n-1}\|_{L^\infty}) \|\nabla \bar{u}\|_{L^\infty} \|\delta w^{n+1}\|_{L^2}^2. \label{del.F9n}
\end{align}
Plugging estimates $\eqref{del.F1n}$--$\eqref{del.F9n}$ into \eqref{est.wn.del}, integrating over $[0,t]$ and adding up $\eqref{est.an.del1}$, we obtain
\begin{align}\label{est.wn.del'}
  &\frac 12\|\delta w^{n+1}\|_{L^\infty_t(L^2)}^2 + \frac{1}{2} \|\delta w^{n+1}\|_{L^2_t(\dot H^\alpha)}^2 +  \|\delta a^{n+1}\|_{L^\infty_t(L^2)}^2 \nonumber \\
  \leq & \frac{1}{2}\|a_0\|_{L^\infty} \|\delta w^{n+1}\|_{L^\infty_t(L^2)}^2  + C \sqrt{t} \| (\partial_\tau w^{n+1},\partial_\tau\bar{u})\|_{L^2_t(L^p)}
  \| (\delta a^n, \delta w^{n+1}) \|_{L^\infty_t(L^2)}^2 \nonumber \\
  & + C \sqrt{t} \big(1 + \|(u^{n-1},u^{n+1})\|_{L^\infty_t(L^\infty)} \big) \| (\nabla w^{n+1},\nabla\bar {u})\|_{L^2_t(L^\infty)}
  \|(\delta a^n,\delta w^{n+1})\|_{L^\infty_t(L^2)}^2  \nonumber \\
  & + C \sqrt{t} \|\nabla w^{n+1}\|_{L^2_t(L^\infty)}  \|(\delta w^n, \delta w^{n-1})\|_{L^\infty_t(L^2)}^2 + C t e^{C \sqrt{t}} \|\nabla a_0\|_{L^{\frac{2}{\alpha} }}^2 \|\delta w^n\|_{L^2_t(\dot H^\alpha)}^2,
\end{align}
where we have used the uniform bound that $\|a^n\|_{L^\infty} \leq \|a_0\|_{L^\infty_t(L^\infty)}\leq \frac{1}{2}$.
Denote by
\begin{align*}
  I_n(t) \stackrel{\mathrm{def}}{=}\|\delta a^n \|_{L^\infty_t(L^2)}^2 + \|\delta w^n\|_{L^\infty_t(L^2)}^2 + \|\Lambda^\alpha \delta w^n\|_{L^2_t(L^2)}^2.
\end{align*}
Since $\|(\partial_t w^n,\partial_t \bar{u},\nabla w^n\|_{L^2_T(L^p)}$, $\|\nabla\bar {u})\|_{L^2_T(L^p)}$ and $\|u^n\|_{L^\infty_T(L^\infty)}$ are uniformly-in-$n$ bounded from the above step,
by letting $\|a_0\|_{L^\infty}\leq \frac{1}{2}$ and $t$ small enough,
there exists a generic constant $t_*>0$ depending only on $\alpha,p$ and the initial data $(\rho_0,u_0)$ such that
\begin{equation}\label{est.In}
\begin{split}
  I_{n+1}(t_*)\leq \frac{1}{4} I_n(t_*) + \frac{1}{8} I_{n-1}(t_*).
\end{split}
\end{equation}
By iteration it follows from \eqref{est.In} that
\begin{equation}\label{est.In'}
\begin{split}
  I_{n+1}(t_*)+\frac 14I_{n}(t_*)
  \leq \frac 12\big(I_{n}(t_*)+\frac 14I_{n-1}(t_*)\big)
  \leq \cdot\cdot\cdot\leq \frac 1{2^n}\big(I_{1}(t_*)+\frac 14I_{0}(t_*)\big)\leq \frac{C}{2^n}.
\end{split}
\end{equation}

From \eqref{est.In'}, we know that $\{(a^n, w^n)\}_{n\in\NN}$ is a Cauchy sequence in  $L^\infty(0, t_*; L^2(\RR^2))$.
Hence, there exists a limit function $(a, w)$ such that
\begin{align*}
  (a_n,w_n)\rightarrow (a,w)\,\, \text{in}\,\, L^\infty(0, t_*; L^2(\RR^2))\times \big(L^\infty(0, t_*; L^2(\RR^2))\cap L^2(0, t_*; \dot H^\alpha (\RR^2))\big).
\end{align*}
By virtue of the uniform estimates \eqref{eswk.L2}, \eqref{eswk.M} and the interpolation, one can deduce that $\{ w^n\}_{n\in\NN}$ strongly converges to $w$
in the space $L^\infty(0, t_*; L^2\cap \dot B^{s_1}_{p.2}(\RR^2))$ with any $s_1<\alpha$ and $\{ a^n\}_{n\in\NN}$ converges strongly to $a$ in the space
$L^\infty(0, t_*; W^{s_2,\frac 2\alpha }(\RR^2))$ with any $s_2<1$. Owing to Fatou's lemma, we also get $a\in L^\infty(0, t_*;  W^{1,\frac 2\alpha }(\RR^2))$ and $w\in L^\infty(0,t_*; \dot B^\alpha _{p,2}(\RR^2))$.
Therefore, we can pass the limit $n\rightarrow\infty$ in the system \eqref{eq.INS-2dNS} to deduce that $(a, w)$ satisfies the perturbed
system \eqref{eq.INS-2dNS} in the distributional sense.

\textit{Step 4: uniqueness.}

Let $(a_i, w_i), i=1,2$ be two solutions of the perturbed system \eqref{eq.INS-2dNS} associated with the same initial data $(a_0,0)$ and the smallness condition \eqref{con.Max.B0}, which satisfy that for any $T\in(0,\infty]$ and $ i=1,2$,
\begin{align*}
  a_i\in L^\infty_T(L^2\cap L^\infty\cap \dot W^{1,\frac 2{\alpha}}),\quad w_i\in L^\infty_T(L^2\cap \dot B^\alpha_{p,2})\times L^2_T(\dot H^\alpha\cap \dot W^{2\alpha,p}), \quad \partial_t w_i \in L^2_T(L^p).
\end{align*}
Denote by
$\delta a := a_1-a_2$, $\delta w := w_1-w_2$, $\delta p := p_1-p_2$,
then $(\delta a, \delta w)$ fulfills
\begin{equation}\label{eq.d}
\left\{\begin{array}{ll}
  \partial_t \delta a+u_1\cdot\nabla \delta a=-\delta w\cdot\nabla  a_2 , \\
  \partial_t \delta w+u_1\cdot\nabla \delta w+\Lambda^{2\alpha}\delta w +\nabla \delta  p =\sum\limits_{i=1}^6 \delta F_i,\\
  \Div  \delta w=0,\\
  (\delta a,\delta w)|_{t=0}=(0, 0),
\end{array}\right.
\end{equation}
where  $u_i=w_i+\bar{u}, i=1,2$ and
\begin{equation*}
\begin{split}
  &\delta  F_1 := -\delta a (\partial_t w_1),\quad \delta F_2: = -\delta  a(\partial_t \bar{u}+\bar u\cdot\nabla\bar u),\quad \delta F_3 :=-a_2 (\partial_t \delta w),\\
  &\delta  F_4 := -\delta a(w_1\cdot\nabla \bar u),\quad \delta  F_5 := -(1+ a_2)( \delta w\cdot\nabla \bar u),\quad
  \delta  F_6 := - a_2 (u_2\cdot\nabla \delta w), \\
  & \delta F_7 := - a_2\delta w\cdot\nabla w_2, \quad \delta F_8 := -\delta a(u_1\cdot \nabla w_1).
\end{split}
\end{equation*}
By performing the $L^2$-energy estimates and arguing as obtaining \eqref{est.an.del1} and \eqref{est.wn.del'}, we infer that
\begin{equation*}
\begin{split}
  \|\delta a\|_{L^\infty_t(L^2)}  \leq C \sqrt{t} \|\nabla a_2\|_{L^\infty_T (L^{\frac{2}{\alpha}} )}
  \|\delta w\|_{L^2_t(\dot H^\alpha)}
\end{split}
\end{equation*}
and
\begin{align}
  &\frac 12\|\delta w\|_{L^\infty_t(L^2)}^2 + \|\delta w\|_{L^2_t(\dot H^\alpha)}^2 \nonumber\\
  \leq &  \frac{1}{2} \|a_0\|_{L^\infty} \|\delta w\|_{L^\infty_t(L^2)}^2 + \frac{1}{2} \|\delta w\|_{L^2_t (\dot H^\alpha)}^2 \nonumber \\
  & + C \sqrt{t} \Big(\| (\partial_t w_2,\partial_t\bar{u})\|_{L^2_T(L^p)}+ \big( 1+ \| (w_2,\bar u)\|_{L^\infty_T(L^\infty)}\big) \| (\nabla\bar u,\nabla w_2)\|_{L^2_T( L^\infty)}\Big)
  \|(\delta w,\delta a)\|_{L^\infty_t(L^2)}^2. \nonumber
\end{align}
Using the uniform bounds for $\bar u, w_i, a_i$, we deduce that there exists a small time $T_1>0$ so that
\begin{align}\label{es.dw'}
 \|\delta w\|_{L^\infty_{T_1}(L^2)}^2 + \|\delta w\|_{L^2_{T_1}(\dot H^\alpha)}^2 + \|\delta a\|_{L^\infty_{T_1}(L^2)}^2 \leq 0,
\end{align}
which implies $(\delta w, \delta a)\equiv 0$ on $\RR^2\times [0,T_1]$. Repeating the above procedure on the time intervals $[T_1,2T_1], [2T_1,3T_1],... $ concludes $(\delta w, \delta a)\equiv 0$
on $\RR^2\times [0,T]$ and it completes the uniqueness of the constructed solution $(\rho,u)$.

\textit{Step 5: the maximal existence time $T_*=\infty$.}

Let $T_*>0$ be the maximal existence time of strong solution $(a, w)$ solving the perturbed system \eqref{eq.INS-2dNS} and fulfilling that $a\in L^\infty([0,T_*); L^\infty\cap L^2\cap \dot W^{1,\frac 2{\alpha}})$ and $w\in L^\infty([0,T_*);  L^2\cap \dot B^{\alpha}_{p,2})\times L^2([0,T_*);\dot H^\alpha\cap \dot W^{2\alpha,p})$, $\partial_t w\in L^2([0,T_*); L^p)$.
Under the smallness condition \eqref{con.Max.B0}, we will use a bootstrapping argument to show $T_*=\infty$.

Suppose that $T_*<\infty$. Since under the smallness condition \ref{con.Max.B0} $(a,w)$ is now regular enough on $(0,T_*)$
to ensure the \textit{a priori} estimates in Propositions \ref{prop:w} and \ref{pro.propri},
we infer that
\begin{equation}\label{es.wT}
\begin{split}
  \sup_{t\in[0,T_*)}\|w(t)\|_{L^2\cap\dot B^\alpha_{p,2}} + \|w\|_{L^2([0,T_*); \dot H^\alpha \cap \dot W^{2\alpha,p})}
  \leq C\|a_0\|_{L^2\cap L^\infty} e^{C \|u_0\|_{H^1\cap \dot B^\alpha_{p,2}}^2 } \leq 1,
\end{split}
\end{equation}
and
\begin{align}\label{es.aT}
  \sup_{t\in[0,T_*)}\|a(t)\|_{L^2\cap L^\infty}\leq \|a_0\|_{L^2\cap L^\infty},\quad
  \|\nabla a\|_{L^\infty([0,T_*);L^{\frac{2}{\alpha}})}
  \leq \|\nabla a_0\|_{L^{\frac{2}{\alpha}}}e^{ C(1+ \|u_0\|_{H^1\cap \dot B^\alpha_{p,2}}^{\frac{4\alpha-1}{2\alpha-1}}) \sqrt{T_*} },
\end{align}
with $C =C(\alpha,p)>0$.
Hence we can repeat the above procedure in Steps 1 - 3 starting from some time $t_0<T_*$ that can be close to $T_*$ arbitrarily,
and due to that the proceeding time $t_*$ is depending only on $\alpha, p, T_*,$\,$ \sup_{t\in[0,T_*)}\|w(t)\|_{L^2\cap {\dot B^\alpha _{p,2}}},$
$ \sup_{t\in[0,T_*)}\|a(t)\|_{L^2\cap L^\infty\cap \dot W^{1,2/\alpha}}$, which by \eqref{es.wT}-\eqref{es.aT} in turn implies that $t_*$ is depending only on $\alpha, p, T_*$
and the norms of $( a_0, u_0)$, we conclude that the maximal time $T_*$ can be exceeded. This is a contradiction,
thus we get $T_*=\infty$ and Proposition \ref{Main.pro} is completed.
\end{proof}

\subsection{Global existence of solution for 2D fractional INS system with rough density.}\label{subsec:exi2}
Owing to the low regularity of $a_0=\rho_0-1$ (now we do not assume $\nabla a_0\in L^{\frac 2\alpha}$ any more),
we can not prove the convergence of the approximate sequences in the $L^2$-topology as in the previous subsection.
Thus we shall use compactness arguments instead.
For completeness, we outline the proof as follows.

For every $\epsilon>0$, let $\chi_\epsilon(\cdot)= \epsilon^{-2} \chi(\frac{\cdot}\epsilon)$ and $\chi\in C^\infty_c(\RR^2)$ be the standard mollifier.
Let $\rho^\epsilon_0=\chi_\epsilon*\rho_0$, then it satisfies $\nabla \rho^\epsilon_0\in L^{\frac 2\alpha }(\RR^2)$.
According to Proposition \ref{Main.pro}, the perturbed system \eqref{eq.frc-INNS} with initial data $(\rho^\epsilon_0, u_0)$ admits a unique global-in-time strong solution $(\rho^\epsilon,\,u^\epsilon)$
satisfying the uniform-in-$\epsilon$ bounds \eqref{est.Max.w3-0}-\eqref{est.Max.w3}. 
Thus we are allowed to pick a subsequence $\epsilon_k$
($\epsilon_k\rightarrow 0$ as $k\rightarrow \infty$) such that
\begin{equation}\label{lim.a}
\begin{split}
  \rho^{\epsilon_k}-1\rightharpoonup^*\rho-1\quad\textrm{in}\,\,\, L^\infty(\RR_+; L^2\cap L^\infty(\RR^2)),\quad\quad\quad\quad
\end{split}
\end{equation}
and
\begin{equation}\label{lim.u}
\begin{split}
  &u^{\epsilon_k} - \bar{u}\rightharpoonup^* u-\bar{u}\quad \textrm{in}\,\,\,L^\infty(\RR_+; L^2\cap \dot B^\alpha _{p.2}(\RR^2))\cap L^2(\RR_+;\dot W^{2\alpha,p}(\RR^2)),\\
  &\partial_t u^{\epsilon_k}- \partial_t \bar{u}\rightharpoonup \partial_t u- \partial_t \bar{u}\quad\textrm{in}\,\,\, L^2(\RR_+;L^p(\RR^2)).
\end{split}
\end{equation}
In addition, utilizing the diagonal argument together with the Rellich type theorems applied for
the compact (space-time) subsets of $\RR^2\times\RR_+ $, we conclude
\begin{equation}\label{limit.u}
\begin{split}
  u^{\epsilon_k} \rightarrow u\quad\textrm{a.e.\,\,pointwisely\,\,in}\,\,\; \RR_+ \times\RR^2.
\end{split}
\end{equation}
In view of
\begin{align*}
  \rho^{\epsilon_k} \partial_t u^{\epsilon_k} + \rho^{\epsilon_k} (u^{\epsilon_k} \cdot\nabla u^{\epsilon_k}) =
  \partial_t(\rho^{\epsilon_k} u^{\epsilon_k})+\Div(\rho^{\epsilon_k} u^{\epsilon_k} \otimes u^{\epsilon_k}),
\end{align*}
the above convergence is sufficient to pass to the limit in \eqref{eq.frc-INNS} in the distributional sense and hence
$(\rho, u)$ is indeed a distributional solution to the system \eqref{eq.frc-INNS}. By Fatou's lemma, the solution $(\rho,u)$ is also regular enough and satisfies estimates \eqref{est.Max.w3-0}-\eqref{est.Max.w3}.
Therefore, the existence part of Theorem \ref{Main.thm} is proved.

\section{Proof of Theorem \ref{Main.thm}: the uniqueness part}\label{sec:UNI}

This section is devoted to proving the uniqueness of constructed solutions in Theorem \ref{Main.thm}.

Because of the hyperbolic nature of the coupled system \eqref{eq.frc-INNS} and the low-regularity of density,
the Eulerian framework used in the uniqueness proof of Proposition \ref{Main.pro} seems not effective,
and we shall employ the Lagrangian approach as in \cite{DanchMucha12,DanM13,PaicuZhZh13} to tackle with the uniqueness issue.
Inspired by \cite{DanM13}, we intend to show the uniqueness by establishing the $L^\infty_T(\dot H^\alpha)\cap L^2_T(\dot H^{2\alpha})$-estimates of $\delta v$,
which is the difference of two velocity fields in the Lagrangian coordinates
(it seems almost impossible to prove the uniqueness in the usual $L^\infty_T(L^2)\cap L^2_T(\dot H^\alpha)$ framework due to that one can not control the term $\nabla \delta v$ on the right-hand side).
We write the system of $\delta v$ as the twisted fractional Stokes system \eqref{eq.delta.v} and we derive the crucial $L^2_T(L^2)$ maximal regularity estimate \eqref{est.sta} on a small time interval.
Meanwhile, some right-hand terms in \eqref{est.sta} arising from the nonlocal dissipation seem hard to be controlled using the (natural) quantity $\|v_i\|_{L^2_T(\dot W^{2\alpha,p})}$,
instead we have to adopt $\|v_i\|_{L^2_T(\dot B^{2\alpha}_{p,1})}$ as the bound, which in turn need the stronger regularity $u_i \in L^2_T(\dot B^{2\alpha}_{p,1})$
obtained in Proposition \ref{prop:uni.M.bet}.

In order to derive the 2D fractional INS system \eqref{eq.frc-INNS} in the Lagrangian coordinates,
we firstly introduce some basic knowledge related to it.
The particle trajectory $X_t(\cdot)$ associated with the velocity $u$ is defined by the ordinary differential equation
\begin{equation}\label{eq.flow1}
  \frac{\dd X_t(y)}{\dd t} = u (t, X_t(y)),\quad X_t(y)|_{t=0}= y,
\end{equation}
that is,
\begin{equation}\label{eq.flow}
  X_t(y)=y+\int_0^t\,u(\tau,X_\tau(y))\,\mathrm d\tau,\quad y\in \RR^2,
\end{equation}
which maps the Lagrangian coordinate $y$ to the Eulerian coordinate $x = X_t(y)$.
According to \eqref{est.nabla-u'}, we know that $u\in L^1(\RR_+; W^{1,\infty}(\RR^2))$, and equation \eqref{eq.flow1}
admits a unique solution $X_t(\cdot) : \RR^2 \rightarrow \RR^2$ on $[0,\infty)$ which is a measure-preserving bi-Lipschitzian homeomorphism satisfying
$ X_t^{\pm 1}\in L^\infty(\RR_+; W^{1,\infty}(\RR^2))$. Note that $X^{-1}_t(\cdot):\RR^2\rightarrow \RR^2$ solves that
\begin{align}\label{eq:X-invers}
  X_t^{-1}(x) = x - \int_0^t u(t-\tau, X_\tau^{-1}(x)) \dd \tau.
\end{align}
By letting $0 < T_1 <1$ be small enough, we can have
\begin{equation}\label{Lip.inverse}
\begin{split}
  \int_0^{T_1} \|\nabla_x u\|_{L^{\infty}}\mathrm d t \leq \frac{1}{2} .
\end{split}
\end{equation}

Denote by
\begin{align}\label{eq.L.vari}
  \eta(t,y) := \rho (t,X_t(y)),\quad  v(t,y) := u(t,X_t(y)),\quad \Pi(t,y) := \pi(t,X_t(y)) .
\end{align}
In terms of the above notations, system \eqref{eq.frc-INNS} can be expressed as follows
\begin{equation}\label{eq.Lagr.v}
\left\{\begin{array}{ll}
  \partial_t \eta=0,\\
  \eta\partial_t v+\Lambda^{2\alpha}_v v+\nabla_v \Pi=0,\\
  \Div_v v=0,\\
  (\eta,v)|_{t=0}(y) = (\rho_{0}(y),u_{0}(y)),
\end{array}\right.
\end{equation}
where $\Lambda^{2\alpha}_v v (t,y):= \Lambda^{2\alpha} u(t,x) = (\Lambda^{2\alpha}{u})(t,X_t(y))$, $\nabla_v \Pi(t,y):= \nabla_x \pi(t,x) = (\nabla_x \pi)(t, X_t(y))$, $\Div_v v(t,y) := \Div_x u(t,x) = (\Div_x u)(t, X_t(y))  $.
We set
\begin{align}\label{eqA}
  A (t,y):=(\nabla_y X_t )^{-1}(y) = \nabla_x X_t^{-1}(x),
\end{align}
and $A^t$ the transpose matrix of $A $, then by the chain rule, some elementary calculation gives that (e.g. see \cite[Eq. (35)]{DanM13})
\begin{equation}\label{est.notation-Lagr}
\begin{split}
  \nabla_v \Pi = A^t \nabla_y \Pi,\qquad \Div_v v= \Div_y(A  v) = A^t : \nabla v.\quad 
\end{split}
\end{equation}

Since $X_t(\cdot)$ is a measure-preserving mapping, according to Lemma \ref{lem:fra-op-exp}, we find that
\begin{align}\label{Lam-2alp-lag}
 ( \Lambda^{2\alpha} u )(t, X_t(y)) & =- \frac{\nabla\cdot}{\Lambda^{2-2\alpha}} \nabla u(t,X_t(y)) \nonumber  \\
  & = - c_\alpha \, \int_{\RR^2} \frac{(X_t(y)-z) \cdot (\nabla u(t, X_t(y)) - \nabla u(t,z)) )}{|X_t(y) - z|^{2+ 2\alpha}} \dd z \nonumber \\
  & = - c_\alpha \,  \mathrm{p.v.}\int_{\RR^2} \frac{(X_t(y)- X_t(\tilde{z})) \cdot (\nabla u(t, X_t(y)) - \nabla u(t,X_t(\tilde{z})) }{|X_t(y) - X_t(\tilde{z})|^{2+ 2\alpha}} \dd \tilde{z} \nonumber \\
  & = - c_\alpha \, \mathrm{p.v.}\int_{\RR^2} \frac{(X_t(y)- X_t({z}))  \cdot \big(A^t(t,y) \nabla v(t,y) - A^t(t,z) \nabla v(t,z)\big) }{|X_t(y) - X_t(z)|^{2+ 2\alpha}} \dd z \nonumber \\
  & =: \Lambda^{2\alpha}_v v(t,y) ,
\end{align}
where  
$c_\alpha = \frac{\alpha 4^\alpha \Gamma(\alpha)}{2 \pi \Gamma(1-\alpha)}$ and $\Gamma(s)$ is the Gamma function.

Now let $(\rho_i,u_i, \pi_i), i=1,2$ be two solutions of system \eqref{eq.frc-INNS} with the same initial data $(\rho_0, u_0)$.
Denote by
\begin{align}\label{eq:eta-v}
  \eta_i(t,y) := \rho_i (t,X_{i,t}(y)),\quad  v_i(t,y) := u_i(t,X_{i,t}(y)),\quad \Pi_i(t,y) := \pi_i(t,X_{i,t}(y)),
\end{align}
where $X_{i,t}(y)$ is the particle trajectory generated by velocity $u_i$:
\begin{align}\label{eq:Xit}
  X_{i,t}(y) = y + \int_0^t\,u_i(\tau,X_{i,\tau}(y))\,\mathrm d\tau = y + \int_0^t v_i(\tau,y)\dd \tau,\quad i=1,2.
\end{align}
Thanks to Propositions \ref{pro.propri} and \ref{prop:uni.M.bet}, we have the following estimates for solutions in Lagrangian coordinates.
\begin{proposition}\label{pro.propri.Lag}
 Let $\frac 12<\alpha<1,\, p>\frac 2{2\alpha-1}$, $u_0\in H^1\cap\dot B^{\alpha+s}_{p,2}(\RR^2)$, $s\in (0,1)$, and $\rho_0 -1 \in L^2\cap L^\infty \cap {{\mathcal{M}(\dot B^s_{p,2})}\cap  {\mathcal{M}(\dot B^{\frac 2p+1-2\alpha}_{p,1})}} $
satisfying conditions \eqref{con.Max.B0} and \eqref{eq:rho-cd}. Then for $i=1,2$ we have
\begin{equation}\label{est.0L}
  \|v_i\|_{L^\infty(\RR_+;L^2 \cap \dot B^\alpha _{p,2})} + \|(\partial_t v_i, \nabla \Pi_i)\|_{L^2(\RR_+;L^p)} + \|\Lambda^\alpha v_i\|_{L^2(\RR_+;L^2)} \leq C
\end{equation}
and there exists a constant $T_1\in (0,1]$ small enough such that
\begin{equation}\label{est.uv-lip}
  \int_0^{T_1} \|\nabla u_i(t)\|_{L^\infty} \dd t \leq \frac{1}{2}, \quad \textrm{and}\quad \int_0^{T_1} \|\nabla v_i(t)\|_{L^\infty} \dd t \leq  \frac{1}{2},
\end{equation}
and \begin{equation}\label{est.0L-2}
  \|v_i\|_{L^2(0,T_1;\dot B^{2\alpha}_{p,1})} \leq C.
\end{equation}
\end{proposition}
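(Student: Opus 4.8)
The plan is to transport every bound from the Eulerian velocities $u_i$ to their Lagrangian counterparts $v_i = u_i\circ X_{i,t}$, exploiting that each $X_{i,t}$ is a measure-preserving bi-Lipschitzian homeomorphism. The starting point is that, by \eqref{est.nabla-u'} in Proposition \ref{prop:uni.M.bet}, one has $\|\nabla u_i\|_{L^1(\RR_+;L^\infty)}\leq C$, so the flow gradient obeys $\|\nabla_y X_{i,t}^{\pm1}\|_{L^\infty(\RR_+;L^\infty)}\leq e^{\|\nabla u_i\|_{L^1(\RR_+;L^\infty)}}\leq C$ and all exponential factors in Lemma \ref{lem:f.mu} stay uniformly bounded. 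For the Lebesgue and low-order Besov norms in \eqref{est.0L} I would note that measure-preservation gives $\|v_i(t)\|_{L^p}=\|u_i(t)\|_{L^p}$, while for the $\dot B^\alpha_{p,2}$- and $\dot H^\alpha=\dot B^\alpha_{2,2}$-norms (both of regularity index $\alpha\in(-1,1)$) I would invoke Lemma \ref{lem:f.mu}(1) together with \eqref{est.priori} to obtain $\|v_i\|_{L^\infty(\RR_+;L^2\cap\dot B^\alpha_{p,2})}+\|\Lambda^\alpha v_i\|_{L^2(\RR_+;L^2)}\leq C$.

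The two remaining norms in \eqref{est.0L} follow from the Lagrangian system \eqref{eq.Lagr.v}. Since $\eta_i(t,y)=\rho_0(y)\geq 1-c_0>0$ by $\eqref{eq.frc-INNS}_1$ and \eqref{est.Max.w3-0}, the momentum equation gives $\partial_t v_i=-\eta_i^{-1}\big((\Lambda^{2\alpha}u_i)\circ X_{i,t}+(\nabla\pi_i)\circ X_{i,t}\big)$, whence $\|\partial_t v_i\|_{L^2(\RR_+;L^p)}\leq C\|(\Lambda^{2\alpha}u_i,\nabla\pi_i)\|_{L^2(\RR_+;L^p)}\leq C$ by measure-preservation and \eqref{est.priori}. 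For the pressure I would use $\nabla_{v_i}\Pi_i=A_i^t\nabla_y\Pi_i$ from \eqref{est.notation-Lagr}, i.e. $\nabla_y\Pi_i=(\nabla_y X_{i,t})^t\,(\nabla\pi_i)\circ X_{i,t}$, so $\|\nabla\Pi_i\|_{L^2(\RR_+;L^p)}\leq\|\nabla_y X_{i,t}\|_{L^\infty}\|\nabla\pi_i\|_{L^2(\RR_+;L^p)}\leq C$. Estimate \eqref{est.uv-lip} is then routine: by \eqref{est.priori-2} (or simply by absolute continuity of $t\mapsto\|\nabla u_i(t)\|_{L^\infty}\in L^1$) one selects $T_1\in(0,1]$ with $\int_0^{T_1}\|\nabla u_i\|_{L^\infty}\dd t$ as small as wished, and since $\nabla_y v_i=\big((\nabla u_i)\circ X_{i,t}\big)\nabla_y X_{i,t}$ yields $\|\nabla v_i\|_{L^\infty}\leq\|\nabla_y X_{i,t}\|_{L^\infty}\|\nabla u_i\|_{L^\infty}$, shrinking $T_1$ further controls $\int_0^{T_1}\|\nabla v_i\|_{L^\infty}\dd t$ as well.

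The main obstacle is \eqref{est.0L-2}, because the regularity index $2\alpha\in(1,2)$ falls outside the range $(-1,1)$ in which the composition estimate of Lemma \ref{lem:f.mu} holds. To bypass this I would differentiate once and argue at the level of $\nabla v_i$ in $\dot B^{2\alpha-1}_{p,1}$, where now $2\alpha-1\in(0,1)$. Writing $\nabla_y v_i=\big((\nabla u_i)\circ X_{i,t}\big)\,\nabla_y X_{i,t}$ and applying the product estimate of Lemma \ref{Besovpro}, I would bound
\begin{align*}
  \|\nabla v_i\|_{\dot B^{2\alpha-1}_{p,1}}\leq C\Big(\|(\nabla u_i)\circ X_{i,t}\|_{\dot B^{2\alpha-1}_{p,1}}\|\nabla_y X_{i,t}\|_{L^\infty}+\|(\nabla u_i)\circ X_{i,t}\|_{L^\infty}\|\nabla_y X_{i,t}\|_{\dot B^{2\alpha-1}_{p,1}}\Big).
\end{align*}
The first composition factor is controlled by Lemma \ref{lem:f.mu}(1) (index $2\alpha-1\in(-1,1)$) via $\|\nabla u_i\|_{\dot B^{2\alpha-1}_{p,1}}\lesssim\|u_i\|_{\dot B^{2\alpha}_{p,1}}$, while for the flow map I would use $\nabla_y X_{i,t}=\Id+\int_0^t\nabla_y v_i(\tau)\,\dd\tau$, so that $\|\nabla_y X_{i,t}\|_{\dot B^{2\alpha-1}_{p,1}}\leq\int_0^t\|\nabla v_i(\tau)\|_{\dot B^{2\alpha-1}_{p,1}}\dd\tau$. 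Taking the $L^2(0,T_1)$-norm in time, using $\|\nabla u_i\|_{L^2(\RR_+;L^\infty)}\leq C$ (from the embedding $\dot H^\alpha\cap\dot W^{2\alpha,p}\hookrightarrow\dot W^{1,\infty}$ in $\RR^2$ under $p>\tfrac{2}{2\alpha-1}$ together with \eqref{est.priori}) and the Cauchy--Schwarz estimate $\int_0^t\|\nabla v_i\|_{\dot B^{2\alpha-1}_{p,1}}\dd\tau\leq\sqrt{T_1}\,\|\nabla v_i\|_{L^2(0,T_1;\dot B^{2\alpha-1}_{p,1})}$, I expect to reach a self-improving inequality of the form $\|v_i\|_{L^2(0,T_1;\dot B^{2\alpha}_{p,1})}\leq C\|u_i\|_{L^2(\RR_+;\dot B^{2\alpha}_{p,1})}+C\sqrt{T_1}\,\|v_i\|_{L^2(0,T_1;\dot B^{2\alpha}_{p,1})}$. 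Choosing $T_1$ small so that $C\sqrt{T_1}\leq\tfrac12$ absorbs the last term, and \eqref{est.M.bet} supplies $\|u_i\|_{L^2(\RR_+;\dot B^{2\alpha}_{p,1})}\leq C$, giving \eqref{est.0L-2}. The delicate point is that the absorption presupposes finiteness of the left-hand side on $[0,T_1]$; I would secure this by running the argument on the smooth approximate solutions from Section \ref{existence} and passing to the limit, or equivalently through a continuity-in-$T_1$ bootstrap.
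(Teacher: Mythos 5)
Your proposal is correct and takes essentially the same approach as the paper: Lemma \ref{lem:f.mu} together with measure preservation and the flow bound $\|\nabla X_{i,t}^{\pm 1}\|_{L^\infty}\leq e^{\|\nabla u_i\|_{L^1(\RR_+;L^\infty)}}$ handles the norms in \eqref{est.0L}, smallness of $T_1$ gives \eqref{est.uv-lip}, and for \eqref{est.0L-2} both you and the paper write $\nabla v_i=\big((\nabla u_i)\circ X_{i,t}\big)\nabla X_{i,t}$, apply the product estimate of Lemma \ref{Besovpro} and Lemma \ref{lem:f.mu}, bound $\|\nabla X_{i,t}-\Id\|_{\dot B^{2\alpha-1}_{p,1}}\lesssim\sqrt{T_1}\,\|\nabla v_i\|_{L^2_{T_1}(\dot B^{2\alpha-1}_{p,1})}$, and absorb for small $T_1$ using the key input $\|u_i\|_{L^2(\RR_+;\dot B^{2\alpha}_{p,1})}\leq C$ from \eqref{est.M.bet}. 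The only minor deviations are harmless: you bound $\partial_t v_i$ through the Lagrangian momentum equation and the lower bound $\rho_0\geq 1-c_0$ instead of the chain rule $\partial_t v_i=(\partial_t u_i+u_i\cdot\nabla u_i)\circ X_{i,t}$ used in the paper, and your explicit remark on securing a priori finiteness of the left-hand side before the absorption step addresses a point the paper leaves implicit.
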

\begin{proof}[Proof of Proposition \ref{pro.propri.Lag}]
From \eqref{est.nabla-u'} we have $\|\nabla u_i\|_{L^1(\RR_+; L^\infty)} \leq C$, thus the estimates of $\|v_i\|_{L^\infty(\RR_+;\dot B^\alpha _{p,2})}$ and $\|\Lambda^\alpha v_i\|_{L^2(\RR_+;L^2)}$ in \eqref{est.0L}
follow immediately from Lemma \ref{lem:f.mu}.
By virtue of \eqref{eq:Xit}, we get
\begin{equation*}\label{eq.flow.na}
  \|\nabla X_{i,t}\|_{L^\infty(\RR_+; L^\infty)} \leq e^{\|\nabla u_i\|_{L^1(\RR_+; L^\infty)}} \leq C,
\end{equation*}
then Proposition \ref{pro.propri} and the direct calculation lead to
\begin{align}
  & \|\partial_t v_i\|_{L^2(\RR_+; L^p)} \leq \|\partial_t u_i\|_{L^2(\RR_+; L^p)} + \|\nabla u_i\|_{L^2(\RR_+; L^p)} \|u_{i}\|_{L^\infty (\RR_+;L^\infty)} \leq C, \nonumber\\
  & \|\nabla \Pi_i\|_{L^2(\RR_+; L^p)} \leq \|\nabla \pi_i\|_{L^2(\RR_+; L^p)} \|\nabla X_{i,t}\|_{L^\infty(\RR_+; L^\infty)} \leq C, \nonumber \\
  & \|\nabla v_i\|_{L^1 (\RR_+; L^\infty)} \leq \|\nabla u_i\|_{L^1(\RR_+; L^\infty)} \|\nabla X_{i,t}\|_{L^\infty(\RR_+;L^\infty)} \leq C. \label{est.v-lip}
\end{align}
By letting $T_1>0$ small enough, estimate \eqref{est.uv-lip} stems from \eqref{est.nabla-u'} and $\eqref{est.v-lip}$.

Next we prove estimate \eqref{est.0L-2} from \eqref{est.M.bet}.
Noticing that
\begin{align*}
  \nabla v_i(t,y) = (\nabla X_{i,t}^t(y)) \nabla u(t,X_{i,t}(y)) = (\nabla X_{i,t}^t(y)-\mathrm {Id}) \nabla u_i(t,X_{i,t}(y)) + \nabla u_i(t,X_{i,t}(y)),
\end{align*}
and $\nabla X_{i,t}(y)= \mathrm {Id} + \int_0^t \nabla v_i(\tau,y) \dd \tau$,
we find that by virtue of \eqref{est.prod2} and Propositions \ref{pro.propri},
\begin{align*}
\|\nabla v_i\|_{L^2_{T_1}(\dot B^{2\alpha-1}_{p,1})} & \leq \|(\nabla X_{i,t}^t -\mathrm {Id})\nabla u_i\circ X_{i,t}\|_{L^2_{T_1}(\dot B^{2\alpha-1}_{p,1})} + \|\nabla u_i\circ X_{i,t}\|_{L^2_{T_1}(\dot B^{2\alpha-1}_{p,1})} \\
  &\leq C \|\nabla X_{i,t}-\mathrm{Id}\|_{L^\infty_{T_1}(\dot B^{2\alpha-1}_{p,1})} \|\nabla u_i\circ X_{i,t}\|_{L^2_{T_1}(L^\infty)} \\
  & \quad + C\big(\|\nabla X_{i,t}- \mathrm {Id}\|_{L^\infty_{T_1}(L^\infty)} +1\big)\|\nabla u_i \circ X_{i,t}\|_{L^2_{T_1}(\dot B^{2\alpha-1}_{p,1})} \\
  &\leq C \sqrt{T_1} \|\nabla u_i\|_{L^2_{T_1}(L^\infty)} \|\nabla v_i\|_{L^2_{T_1}(\dot B^{2\alpha-1}_{p,1})} \\
  &\quad + C\big( \|\nabla v_i \|_{L^1_{T_1}(L^\infty)}+1\big) \|\nabla u_i\circ X_{i,t}\|_{L^2_{T_1}(\dot B^{2\alpha-1}_{p,1})}.
\end{align*}
Letting $T_1>0$ small enough, and utilizing Lemma \ref{lem:f.mu} and \eqref{est.M.bet}, we obtain
\begin{align*}
  \|\nabla v_i\|_{L^2_{T_1}(\dot B^{2\alpha-1}_{p,1})}
  \leq C \|\nabla u_i\circ X_{i,t}\|_{L^2_{T_1}(\dot B^{2\alpha-1}_{p,1})} \leq C\|\nabla u_i\|_{L^2_{T_1}(\dot B^{2\alpha-1}_{p,1})}  \leq C,
\end{align*}
which immediately leads to \eqref{est.0L-2}.
\end{proof}

In terms of notations \eqref{eq:eta-v}, the system \eqref{eq.Lagr.v} is satisfied with $(\eta_i, v_i,\Pi_i)$ in place of $(\eta,v,\Pi)$ and with the same initial data $(\rho_0,u_0)$.
The equation $\eqref{eq.Lagr.v}_1$ gives that
\begin{align}
  \eta_i(t,y) \equiv \rho_0(y),\quad \textrm{for  }i=1,2.
\end{align}
Set
\begin{equation}\label{Lag.chav12}
\begin{split}
  &\delta v := v_1 - v_2,\quad  \delta \Pi := \Pi_1 - \Pi_2,\quad \delta A := A_1-A_2,
\end{split}
\end{equation}
with $A_i(t,y) = (\nabla_y X_{i,t})^{-1}(y)$. Then, we arrive at
\begin{equation}\label{eq.delta.v}
\left\{\begin{array}{ll}
  \partial_t \delta v + \Lambda^{2\alpha}_{v_1} \delta v + \nabla \delta\Pi = (1-\rho_0)\partial_t \delta v + \delta f_1 + \delta f_2 , \\
  \Div \delta v = \Div \delta g,\\
  \delta v|_{t=0} = 0,
\end{array}\right.
\end{equation}
where
\begin{align}\label{Lam-v1-v}
  \Lambda_{v_1}^{2\alpha} \delta v(t,y) : = c_\alpha\,\mathrm{p.v.}\int_{\RR^2} \frac{(X_{1,t}(y)- X_{1,t}(z))  \cdot (A_1^t(t,y) \nabla \delta v(t,y) - A_1^t(t,z) \nabla \delta v(t,z)) }{|X_{1,t}(y)-X_{1,t}(z)|^{2+2\alpha}} \dd z,
\end{align}
and
\begin{align}
  \delta f_1 & := (\nabla-\nabla_{v_1})\Pi_1-(\nabla-\nabla_{v_2})\Pi_2 = (\Id-A^t_1)\nabla \delta\Pi - (\delta A^t)\,\nabla \Pi_2, \label{eq:f1}\\
  \delta f_2 & := c_\alpha\, \mathrm{p.v.} \int_{\RR^2}\bigg( \frac{(X_{1,t}(y)-X_{1,t}(z))\cdot \big(A_1^t(t,y) \nabla v_2(t,y) - A_1^t(t,z)\nabla v_2(t,z)\big)}{|X_{1,t}(y)-X_{1,t}(z)|^{2+2\alpha}} \label{eq:f2}\\
  \quad & \qquad \qquad\qquad - \frac{(X_{2,t}(y)-X_{2,t}(z))\cdot \big(A_2^t(t,y) \nabla v_2(t,y) - A_2^t (t,z) \nabla v_2(t,z)\big) }{|X_{2,t}(y)-X_{2,t}(z)|^{2+2\alpha}}   \bigg) \dd z,\nonumber \\
  \delta g & := (\Id-A_1)v_1 - (\Id-A_2)v_2  = (\Id - A_1)\delta v - \delta A\, v_2.\label{eq:g}
\end{align}

Concerning the twisted fractional Stokes system \eqref{eq.delta.v}, we have the following $L^2_{T_1}(L^2)$ maximal regularity result on a short time interval.
\begin{proposition}\label{pro-uni-sta}
Let $(\delta v, \delta \Pi)$ be the solution to the system \eqref{eq.delta.v}, then there exists a sufficiently small constant $T_1>0$ depending on $\alpha, p,s,\|u_0\|_{H^1\cap\dot B^{\alpha+s}_{p,2}}$
such that
\begin{align}\label{est.sta}
  \delta E(T_1) & := \|\delta v \|_{L^{\infty}_{T_1}(\dot H^\alpha)} + \|(\partial_t \delta v ,\Lambda^{2\alpha} \delta v , \nabla \delta \Pi)\|_{L^2_{T_1} (L^2)}\nonumber\\
  &\;\leq C \|\Div \delta g \|_{L^2_{T_1}(\dot H^{2\alpha-1})} + C \|(\delta f_1,\delta f_2,\partial_t \delta g)\|_{L^2_{T_1}(L^2)},
\end{align}
where $C$ depends only on $\alpha$ and the upper bounds in Propositions \ref{prop:uni.M.bet} and \ref{pro.propri.Lag}.
\end{proposition}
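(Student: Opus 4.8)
The plan is to recast \eqref{eq.delta.v} as a perturbation of the \emph{untwisted} fractional Stokes system and then invoke Proposition \ref{pro.Max-Regularity} with $d=2$ and $p=q=2$. Concretely, I would move the twisting defect and the density factor into the forcing, rewriting the momentum equation as
\[
  \partial_t\delta v + \Lambda^{2\alpha}\delta v + \nabla\delta\Pi = \big(\Lambda^{2\alpha}-\Lambda^{2\alpha}_{v_1}\big)\delta v + (1-\rho_0)\partial_t\delta v + \delta f_1 + \delta f_2, \qquad \Div\delta v = \Div\delta g,
\]
with $\delta v|_{t=0}=0$. Since $p=q=2$ forces $\dot B^{2\alpha(1-1/q)}_{p,q}=\dot H^\alpha$ and $\dot W^{2\alpha-1,p}=\dot H^{2\alpha-1}$, applying Proposition \ref{pro.Max-Regularity} with $R=\delta g$ and $u_0=0$ gives
\[
  \delta E(T_1)\leq C\big\|(\Lambda^{2\alpha}-\Lambda^{2\alpha}_{v_1})\delta v\big\|_{L^2_{T_1}(L^2)} + C\big\|(1-\rho_0)\partial_t\delta v\big\|_{L^2_{T_1}(L^2)} + C\|(\delta f_1,\delta f_2,\partial_t\delta g)\|_{L^2_{T_1}(L^2)} + C\|\Div\delta g\|_{L^2_{T_1}(\dot H^{2\alpha-1})}.
\]
Here $\delta f_1,\delta f_2,\delta g$ are treated as given inputs (their regularity being supplied by Propositions \ref{prop:uni.M.bet} and \ref{pro.propri.Lag}), so the entire task reduces to absorbing the first two terms into the left-hand side $\delta E(T_1)$.

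The density term is immediate: since $1-\rho_0=-a_0$ with $\|a_0\|_{L^\infty}$ small by \eqref{con.Max.B0}, one bounds $\|(1-\rho_0)\partial_t\delta v\|_{L^2_{T_1}(L^2)}\leq\|a_0\|_{L^\infty}\|\partial_t\delta v\|_{L^2_{T_1}(L^2)}\leq\|a_0\|_{L^\infty}\,\delta E(T_1)$, which is absorbable once $C\|a_0\|_{L^\infty}\leq\frac14$.

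The genuine difficulty, and the main obstacle of the proof, is the commutator $(\Lambda^{2\alpha}-\Lambda^{2\alpha}_{v_1})\delta v$. Using the principal-value representations of both operators (Lemma \ref{lem:fra-op-exp} and \eqref{Lam-v1-v}), I would telescope the difference of the two singular integrals into three families of terms according to where the Lagrangian map enters: the numerator vector $X_{1,t}(y)-X_{1,t}(z)$ versus $y-z$; the coefficient $A_1^t\nabla\delta v$ versus $\nabla\delta v$; and the kernel denominator $|X_{1,t}(y)-X_{1,t}(z)|^{2+2\alpha}$ versus $|y-z|^{2+2\alpha}$. Each correction factors through one of the \emph{small} quantities $X_{1,t}(y)-X_{1,t}(z)-(y-z)=\int_0^t\big(v_1(\tau,y)-v_1(\tau,z)\big)\dd\tau$ or $A_1-\Id$, both controlled in Lipschitz norm by $\int_0^{T_1}\|\nabla v_1\|_{L^\infty}\dd\tau=O(\sqrt{T_1})$ via \eqref{est.uv-lip} and \eqref{est.0L-2}. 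To convert these pointwise structures into $L^2_{T_1}(L^2)$ bounds I would use the finite-difference characterization of homogeneous norms (Lemma \ref{lem:Besov-fd}): the factor $\nabla\delta v(y)-\nabla\delta v(z)$ is read as a finite difference controlled by $\|\delta v\|_{\dot H^{2\alpha}}$, while the requisite Hölder/Lipschitz regularity of the $v_1$-dependent coefficients is furnished by $\|\nabla v_1\|_{L^2_{T_1}(\dot B^{2\alpha-1}_{p,1})}\lesssim\|v_1\|_{L^2_{T_1}(\dot B^{2\alpha}_{p,1})}\leq C$ together with $\dot B^{2\alpha-1}_{p,1}\hookrightarrow\dot C^{2\alpha-1-2/p}$. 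This is precisely the step where the stronger bound $v_1\in L^2_{T_1}(\dot B^{2\alpha}_{p,1})$ — rather than merely $\dot W^{2\alpha,p}$ — is indispensable, exactly as anticipated in Remark \ref{rem:uni-reg}. The outcome is an estimate of the shape $\|(\Lambda^{2\alpha}-\Lambda^{2\alpha}_{v_1})\delta v\|_{L^2_{T_1}(L^2)}\leq C\psi(T_1)\,\delta E(T_1)$ with $\psi(T_1)\to0$ as $T_1\to0$.

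Finally, choosing $T_1$ so small that $C\psi(T_1)\leq\frac14$ and keeping $\|a_0\|_{L^\infty}$ small, the two perturbation terms are absorbed into $\delta E(T_1)$, which yields the claimed bound \eqref{est.sta}; the resulting $T_1$ depends only on $\alpha,p,s$ and $\|u_0\|_{H^1\cap\dot B^{\alpha+s}_{p,2}}$ through the a priori bounds of Propositions \ref{prop:uni.M.bet} and \ref{pro.propri.Lag}.
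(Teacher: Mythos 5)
Your plan founders on the commutator term, and in fact the paper explicitly considers and rejects this very route in Remark \ref{rem:uni}. Splitting $\Lambda^{2\alpha}_{v_1}\delta v=\Lambda^{2\alpha}\delta v-(\Lambda^{2\alpha}-\Lambda^{2\alpha}_{v_1})\delta v$ and applying Proposition \ref{pro.Max-Regularity} with $p=q=2$ is fine as far as it goes, and your treatment of the density term and of $\delta f_1,\delta f_2,\partial_t\delta g$ is harmless. The fatal piece of $(\Lambda^{2\alpha}-\Lambda^{2\alpha}_{v_1})\delta v$ is the one where the twisting perturbs only the kernel while the finite difference still falls on the \emph{unknown} $\nabla\delta v$, namely
\begin{equation*}
  \delta h_1 = c_\alpha\,\mathrm{p.v.}\int_{\RR^2}\Big(\frac{X_{1,t}(y)-X_{1,t}(z)}{|X_{1,t}(y)-X_{1,t}(z)|^{2+2\alpha}}-\frac{y-z}{|y-z|^{2+2\alpha}}\Big)\cdot\big(\nabla\delta v(t,y)-\nabla\delta v(t,z)\big)\,\dd z .
\end{equation*}
The kernel difference is indeed $O(\sqrt{T_1})\,|y-z|^{-(1+2\alpha)}$ by \eqref{est.uv-lip}, but taking the $L^2_y$-norm, using Minkowski's inequality and the finite-difference characterization of Lemma \ref{lem:Besov-fd} (necessarily with third index $r=1$, because the $\dd z$-integration runs over all scales), one only gets
\begin{equation*}
  \|\delta h_1(t)\|_{L^2}\;\lesssim\;\sqrt{T_1}\int_{\RR^2}\frac{\|\nabla\delta v(t,\cdot+z)-\nabla\delta v(t,\cdot)\|_{L^2}}{|z|^{2\alpha-1}}\,\frac{\dd z}{|z|^2}\;\approx\;\sqrt{T_1}\,\|\nabla\delta v(t)\|_{\dot B^{2\alpha-1}_{2,1}} .
\end{equation*}
Your assertion that this finite difference is ``controlled by $\|\delta v\|_{\dot H^{2\alpha}}$'' is precisely the false step: $\dot B^{2\alpha}_{2,1}$ is strictly stronger than $\dot H^{2\alpha}=\dot B^{2\alpha}_{2,2}$ (the $\dd z$-integral forces $\ell^1$- rather than $\ell^2$-summability over dyadic blocks, and no reverse embedding exists), so the right-hand side is not $\psi(T_1)\,\delta E(T_1)$ and cannot be absorbed. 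Your appeal to the stronger bound $v_1\in L^2_{T_1}(\dot B^{2\alpha}_{p,1})$ from Proposition \ref{prop:uni.M.bet} rescues only the terms where the finite difference lands on the \emph{given} coefficients ($A_1-\Id$, $\nabla v_2$, the flow); it is useless for $\delta h_1$, since $\delta v$ is the unknown and $\delta E(T_1)$ contains no $\dot B^{2\alpha}_{2,1}$-type quantity.

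This is why the paper abandons the perturbative reduction altogether and works directly in the twisted geometry: it tests the equation with $\Lambda^{2\alpha}_{v_1}\delta v$ itself, exploits the measure-preserving change of variables to obtain the exact identity $\|\Lambda^{2\alpha}_{v_1}\delta v\|_{L^2}=\|\Lambda^{2\alpha}\big(\delta v\circ X_{1,t}^{-1}\big)\|_{L^2}$ (see \eqref{eq.diss-exp}), and then recovers coercivity and the pressure/time-derivative bounds through the bi-Lipschitz estimates \eqref{can.delta.sip} and composition estimates, so that smallness of $T_1$ never has to absorb a commutator of $\delta h_1$-type. If you want to complete a proof along honest lines, you must either reproduce that direct argument or find a genuinely new way to control $\delta h_1$ without the $\dot B^{2\alpha}_{2,1}$-norm of $\delta v$; as written, your proposal has a genuine gap at its central step.
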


\begin{remark}\label{rem:uni}
Following the ideas of \cite{DanM13,PaicuZhZh13}, it is more convenient to write the system $\eqref{eq.delta.v}$ as
\begin{equation}\label{eq.dv.r}
\left\{\begin{array}{ll}
  \partial_t \delta v + \Lambda^{2\alpha} \delta v + \nabla \delta\Pi = \delta h_1  +\,\textrm{the\,\,other\,\,terms}, \\
  \Div \delta v = \Div \delta g,\quad  \delta v|_{t=0} = 0,
\end{array}\right.
\end{equation}
where
\begin{equation*}\label{re.H}
\begin{split}
  \delta h_1 : = c_\alpha\,\mathrm{p.v.}\int_{\RR^2}
  \Big(\frac{X_{1,t}(y) -X_{1,t}(z)}{|X_{1,t}(y)- X_{1,t}(z)|^{2+2\alpha}} - \frac{y-z}{|y-z|^{2+2\alpha}}\Big)\cdot (\nabla\delta v(t,y)- \nabla\delta v(t,z)\big)\mathrm dz.
\end{split}
\end{equation*}
According to Proposition \ref{pro.Max-Regularity}, one can easily build the estimate \eqref{est.sta} analogously, and it suffices to bound the $L^2_{T_1}(L^2)$-norm of $\delta h_1$ and the other terms.
However, by letting $T_1>0$ small enough and using Lemma \ref{lem:Besov-fd}, one can have the following estimate
\begin{equation*}
\begin{split}
  \|\delta h_1\|_{L^2_{T_1}(L^2)} & \leq \varepsilon \Big\|\int_{\RR^2}
  \frac{|\nabla\delta v(t,y)-\nabla\delta v(t,z)|}{|y-z|^{1+2\alpha}}\mathrm dz\Big\|_{L^2_{T_1}(L^2_y)} \\
  & \leq \varepsilon \int_{\RR^2}  \frac{\|\nabla \delta v(t,y) - \nabla \delta v(t,y+z)\|_{L^2_y}}{|z|^{1+2\alpha}} \dd z \leq \varepsilon \| \nabla \delta v\|_{L^2_{T_1}(\dot B^{2\alpha-1}_{2,1})}
\end{split}
\end{equation*}
with $\varepsilon>0$ sufficiently small, but it seems difficult to control $\|\delta h_1\|_{L^2_{T_1}(L^2)}$ with the needing upper bound $\varepsilon \|\nabla \delta v\|_{L^2_{T_1}(\dot H^{2\alpha})}$.
Hence, we instead treat the system \eqref{eq.delta.v} directly to derive the key estimate \eqref{est.sta}, and the proof is more complicated than that in the 2D INS system \eqref{eq.INNS}.
\end{remark}

\begin{proof}[Proof of Proposition \ref{pro-uni-sta}]
Taking the inner product of $\eqref{eq.delta.v}_1$ with $\Lambda^{2\alpha}_{v_1}\delta v(t,y)$, we find
\begin{align}\label{eq.sta.L}
  &\quad \int_{\RR^2} \partial_t \delta v(t,y)\, \Lambda^{2\alpha}_{v_1}\delta v(t,y) \dd y + \int_{\RR^2} |\Lambda^{2\alpha}_{v_1} \delta v(t,y)|^2 \dd y\nonumber\\
  & = \int_{\RR^2} \big(- \nabla\delta \Pi + (1-\rho_0) \partial_t \delta v +  \delta f_1 + \delta f_2 \big)\, \Lambda^{2\alpha}_{v_1}\delta v(t,y) \dd y\nonumber\\
  & \leq \varepsilon \|\Lambda_{v_1}^{2\alpha}\delta v\|_{L^2}^2 + \frac{3}{4\varepsilon} \Big(\|\nabla\delta \Pi\|_{L^2}^2 + \|a_0\|_{L^\infty}^2 \|\partial_t \delta v\|_{L^2}^2 + \|(\delta f_1 ,\delta f_2)\|_{L^2}^2 \Big),
\end{align}
where $\varepsilon>0$ is a small constant chosen later.
Denoting by $\PP := \nabla\Delta^{-1}\Div$, we see that
\begin{align*}
  \nabla \delta \Pi = -\PP \partial_t \delta v  - \PP \Lambda^{2\alpha}_{v_1}\delta v + \PP (-a_0 \,\partial_t \delta v) + \PP (\delta f_1) + \PP (\delta f_2),
\end{align*}
which leads to that
\begin{align}\label{est0.Pi}
  \|\nabla\delta \Pi\|_{L^2}^2 \leq & \big(\|\PP\partial_t \delta v \|_{L^2} + \| \PP\Lambda^{2\alpha}_{v_1}\delta v \|_{L^2}
  + \| \PP(a_0 \partial_t \delta v)\|_{L^2} + \|\PP(\delta f_1,\delta f_2)\|_{L^2}\big) ^2 \nonumber\\
  \leq & 4 \Big(\|\partial_t \delta g\|_{L^2}^2 + \|\PP(\Lambda^{2\alpha}_{v_1}\delta v)\|_{L^2}^2
  + \|a_0\|_{L^\infty}^2 \|\partial_t \delta v\|_{L^2}^2 + \|(\delta f_1,\delta f_2)\|_{L^2}^2\Big).
\end{align}
Utilizing the equation $\eqref{eq.delta.v}_1$ and \eqref{est0.Pi} gives
\begin{align*}
  &\, \|\partial_t\delta v\|_{L^2}^2 + \frac{1}{4}\|\nabla\delta \Pi\|_{L^2}^2 \nonumber \\
  \leq & \|\Lambda^{2\alpha}_{v_1}\delta v\|_{L^2}^2 + \|a_0\|_{L^\infty}^2\|\partial_t \delta v\|_{L^2}^2 + \frac{5}{4}\|\nabla\delta \Pi\|_{L^2}^2 + \|(\delta f_1,\delta f_2)\|_{L^2}^2\nonumber\\
  \leq & \|\Lambda^{2\alpha}_{v_1}\delta v\|_{L^2}^2 + 5 \|\partial_t \delta g\|_{L^2}^2 + 5\|\PP(\Lambda^{2\alpha}_{v_1}\delta v)\|_{L^2}^2 + 6\|a_0\|_{L^\infty}^2\|\partial_t \delta v\|_{L^2}^2 + 6\|(\delta f_1,\delta f_2)\|_{L^2}^2 ,
\end{align*}
then by assuming $\|a_0\|_{L^\infty}\leq \frac{1}{4}$ without loss of generality, we infer that
\begin{align}\label{est0.ptv}
  \|\partial_t\delta v\|_{L^2}^2 + \frac{\|\nabla\delta \Pi\|_{L^2}^2 }{2}
  \leq 2 \|\Lambda^{2\alpha}_{v_1}\delta v\|_{L^2}^2 + 10\|\partial_t \delta g\|_{L^2}^2
  + 10 \|\PP(\Lambda^{2\alpha}_{v_1}\delta v)\|_{L^2}^2 + 12 \|(\delta f_1,\delta f_2)\|_{L^2}^2 .
\end{align}
Letting $\varepsilon_1>0$ be a small constant chosen later, we insert \eqref{est0.Pi} into \eqref{eq.sta.L} and then combine it with $\eqref{est0.ptv}\times \varepsilon_1$ to obtain
\begin{align*}
  &\int_{\RR^2} \partial_t \delta v\, \Lambda^{2\alpha}_{v_1}\delta v(t,y) \dd y+ \big(1-(\varepsilon+ 2 \varepsilon_1)\big) \|\Lambda^{2\alpha}_{v_1}\delta v\|_{L^2}^2
  + \Big(\varepsilon_1 - \frac{15\|a_0\|_{L^\infty}^2}{4\varepsilon}\Big) \|\partial_t\delta v\|_{L^2}^2 + \frac{\varepsilon_1}{2} \|\delta \Pi\|_{L^2}^2\nonumber\\
  &\leq \Big(\frac{3}{\varepsilon} + 10\varepsilon_1 \Big) \big( \|\partial_t \delta g\|_{L^2}^2
  +\|\PP(\Lambda^{2\alpha}_{v_1}\delta v)\|_{L^2}^2\big) + \Big(\frac{15}{4\varepsilon} + 12\varepsilon_1 \Big)\|(\delta f_1,\delta f_2)\|_{L^2}^2 ,
\end{align*}
so that by setting $\varepsilon = \varepsilon_1 = \frac{1}{4}$, and assuming $\|a_0\|_{L^\infty}\leq \frac{1}{9}$ without loss of generality, it leads to
\begin{align}\label{est.sta.L-2}
  &\quad \int_{\RR^2} \partial_t \delta v(t,y)\, \Lambda^{2\alpha}_{v_1}\delta v(t,y) \dd y+ \frac{1}{4} \|\Lambda^{2\alpha}_{v_1}\delta v(t)\|_{L^2}^2
  + \frac{1}{16} \|\partial_t\delta v(t)\|_{L^2}^2 + \frac{1}{8}\|\nabla\delta \Pi(t)\|_{L^2}^2\nonumber\\
  &\leq 15  \|\partial_t \delta g(t)\|_{L^2}^2  + 15 \|\PP(\Lambda^{2\alpha}_{v_1}\delta v)(t)\|_{L^2}^2 + 18 \|(\delta f_1,\delta f_2)(t)\|_{L^2}^2 .
\end{align}
Integrating on the time variable leads to that for every $t\in [0,T_1]$,
\begin{align}\label{est.sta.L-3}
  & \quad \int_0^t \int_{\RR^2} \partial_\tau \delta v \, \Lambda^{2\alpha}_{v_1}\delta v(\tau,y) \dd y \dd \tau
  + \frac{1}{4} \|\Lambda^{2\alpha}_{v_1}\delta v\|_{L^2_t(L^2)}^2
  + \frac{1}{16} \|\partial_\tau\delta v\|_{L^2_t(L^2)}^2 + \frac{1}{8} \|\nabla\delta \Pi\|_{L^2_t(L^2)}^2 \nonumber\\
  &\leq 15  \|\partial_t \delta g\|_{L^2_t (L^2)}^2  + 15 \|\PP(\Lambda^{2\alpha}_{v_1}\delta v)\|_{L^2_t (L^2)}^2 + 18 \|(\delta f_1,\delta f_2)\|_{L^2_t(L^2)}^2 .
\end{align}

 Next,  observing that by equality \eqref{Lam-2alp-lag}, formula \eqref{eqA}  and the change of variables
\begin{align}\label{eq.diss-exp}
  & \quad \|\Lambda_{v_1}^{2\alpha}\delta v(t,y)\|_{L^2_y}^2 \nonumber \\
  & = c_\alpha^2 \int_{\RR^2} \Big|\int_{\RR^2} \frac{(X_{1,t}(y)- X_{1,t}(z))  \cdot (A_1^t(t,y) \nabla \delta v(t,y) - A_1^t(t,z) \nabla \delta v(t,z)) }{|X_{1,t}(y)-X_{1,t}(z)|^{2+2\alpha}} \dd z \Big|^2 \dd y \nonumber \\
  & = c_\alpha^2 \int_{\RR^2} \Big| \int_{\RR^2} \frac{(x - {z} ) \cdot \big(\nabla \big(\delta v(t, X_{1,t}^{-1}(x)) \big)
  - \nabla \big(\delta v(t, X_{1,t}^{-1}(z)) \big)\big)}{|x-{z}|^{2+2\alpha}} \dd {z} \Big|^2 \dd x \nonumber \\
  & = \|\Lambda^{2\alpha-2}\nabla \cdot \nabla\big( \delta v(t,X_{1,t}^{-1}(x))\big)\|_{L^2_x}^2 = \|\Lambda^{2\alpha}\big( \delta v(t,X_{1,t}^{-1}(x))\big)\|_{L^2_x}^2 ,
\end{align}
we get
\begin{align*}
  \|\Lambda_{v_1}^{2\alpha}\delta v(t,y)\|_{L^2_y}^2  & \geq  \|\Lambda^{2\alpha-1} \nabla_x \big(\delta v(t, X_{1,t}^{-1}(x))\big)\|_{L^2_x}^2 \\
  & = \|\Lambda^{2\alpha-1} \big(\nabla X_{1,t}^{-1}(x)\,(\nabla \delta v)(t, X_{1,t}^{-1}(x)) \big)\|_{L^2_x}^2  \\
  & \geq \frac{1}{2} \|\nabla \delta v(t, X_{1,t}^{-1}(x)) \|_{\dot H^{2\alpha-1}_x}^2 - \|(\Id - \nabla X_{1,t}^{-1})\,\nabla \delta v(t, X_{1,t}^{-1}(x))\|_{\dot H^{2\alpha-1}_x}^2 \\
  & = : N_1 + N_2 ,
\end{align*}
where in the third line we have used the simple inequality $(a-b)^2 \geq \frac{1}{2} a^2 - b^2$ for $a,b>0$.
Notice that (owing to Lemma \ref{lem:Besov-fd})
\begin{align}\label{eq.N1}
  N_1 & \geq  \frac{1}{2C_1}\int_{\RR^2} \int_{\RR^2} \frac{|\nabla \delta v(t,X_{1,t}^{-1}(x)) -\nabla \delta v(t,X_{1,t}^{-1}(z))|^2}{|x-z|^{4\alpha}} \dd x \dd z \nonumber \\
  & = \frac{1}{2C_1}\int_{\RR^2} \int_{\RR^2} \frac{|\nabla \delta v(t,y) -\nabla \delta v(t,z)|^2}{|X_{1,t}(y)- X_{1,t}(z)|^{4\alpha}} \dd y \dd z .
\end{align}
Making use of \eqref{eq.flow} and the mean value theorem yields that for $i=1,2,$
\begin{align*}\label{delta.varepsilon}
  |X_{i,t}(y)-X_{i,t}(z)|\leq &|y-z|+\int_0^t\big|u_i(\tau,X_{i,\tau}(y))-u_i(\tau,X_{i,\tau}(z))\big|\mathrm d\tau \nonumber \\
  \leq &|y-z|+\int_0^t\|\nabla u_i\|_{L^\infty}\big|X_{i,\tau}(y) - X_{i,\tau}(z)\big|\mathrm d\tau ,
\end{align*}
and
\begin{align*}
  |y-z|\leq &\big|X_{i,t}(y)-X_{i,t}(z)\big|+\int_0^t\big|v_i(\tau,y)-v_i(\tau,z)\big|\mathrm d\tau\nonumber \\
  \leq &\big|X_{i,t}(y)-X_{i,t}(z)\big|+\int_0^t\|\nabla v_i\|_{L^\infty}|y-z|\mathrm d\tau ,\quad
\end{align*}
thus Gronwall's inequality guarantees that
\begin{align*}
  |y-z|e^{-\int_0^t||\nabla v_i\|_{L^\infty}\mathrm d\tau} \leq |X_{i,t}(y)-X_{i,t}(z)| \leq |y-z|e^{\int_0^t||\nabla u_i\|_{L^\infty}\mathrm d\tau} .
\end{align*}
Hence, by taking  $T_1>0$ small enough, we have that for any $t\leq T_1$,
\begin{equation}\label{can.delta.sip}
  \frac{3}{4}\leq \frac{|y-z|} {|X_{i,t}(y)-X_{i,t}(z)|}\leq \frac{4}{3},\quad\,\forall y\neq z\in \RR^2 ,
\end{equation}
and equivalently,
\begin{equation}\label{can.delta.sip2}
  \frac{3}{4}\leq \frac{|X_{i,t}^{-1}(y)- X_{i,t}^{-1}(z)|} {| y - z|}\leq \frac{4}{3},\quad\,\forall y\neq z\in \RR^2 .
\end{equation}
Thus it follows from \eqref{eq.N1} and \eqref{can.delta.sip} that
\begin{align*}
  N_1 & \geq \frac{1}{2(3/4)^{4\alpha} C_1}
  \int_{\RR^2} \int_{\RR^2} \frac{|\nabla \delta v(t,y) -\nabla \delta v(t,z)|^2}{|y- z|^{4\alpha}} \dd y \dd z \\
  & \geq \frac{1}{8C_1} \|\Lambda^{2\alpha-1}\nabla \delta v\|_{L^2}^2 \geq \frac{1}{8 C_1} \|\Lambda^{2\alpha} \delta v\|_{L^2}^2 ,
\end{align*}
and then for every $t\in [0,T_1]$,
\begin{align}\label{es.N1-2}
  \|N_1\|_{L^1([0,t])}\geq \frac{1}{8 C_1} \|\Lambda^{2\alpha} \delta v\|_{L^2_t (L^2)}^2 .
\end{align}
Concerning $N_2$, by applying Lemmas \ref{lem:frc-leib} and \ref{lem:f.mu}, and using \eqref{eqA}, \eqref{est.uv-lip} we find
\begin{align}\label{es.N2}
  & \|N_2\|_{L^1([0,T_1])} \nonumber \\
  \leq & C \|\nabla \delta v(t, X_{1,t}^{-1}(x))\|^2_{L^2_{T_1}(\dot H^{2\alpha -1})}
  \|\Id -A_1(t,X_{1,t}^{-1}(x)) \|_{L^\infty_{T_1}(L^\infty)}^2 \nonumber \\
  &\, + C \|\nabla \delta v (t, X_{1,t}^{-1}(x))\|_{L^2_{T_1}(L^{\frac{2p}{p-2}})}^2 \|\Id - A_1(t,X_{1,t}^{-1}(x)) \|_{L^\infty_{T_1}(\dot W^{2\alpha-1,p})}^2\nonumber \\
  \leq & C \|\nabla \delta v\|^2_{L^2_{T_1}(\dot H^{2\alpha -1})} \|\Id -A_1 \|^2_{L^\infty_{T_1}(L^\infty)}
  + C \|\nabla \delta v\|^2_{L^2_{T_1}(L^{\frac{2p}{p-2}})} \|\Id - A_1\|^2_{L^\infty_{T_1}(\dot B^{2\alpha-1}_{p,1})} ,
\end{align}
where in the last line we have used the embedding $\dot B^{2\alpha-1}_{p,1}\hookrightarrow \dot W^{2\alpha-1,p}$.
Recalling that under the condition \eqref{est.uv-lip},
\begin{align}\label{eq.A_i}
    A_i(t,y) = (\nabla X_{i,t}(y))^{-1}=(\Id+B_i(t,y))^{-1}=\Id + \sum_{k=1}^\infty (- B_i(t,y))^k,\quad i=1,2,
\end{align}
with $B_i(t,y):= \nabla X_{i,t}(y) -\Id = \int_0^t\nabla v_i(\tau,y)\,\mathrm d\tau $,
it is easy to see that
\begin{align}\label{es.Id-A.infty}
  \big\|\Id-A_i\big\|_{L^\infty_{T_1}(L^\infty)} \leq \sum_{k=1}^\infty \big\|B_i\big\|_{L^\infty_{T_1}( L^\infty)}^k
  \leq \sum_{k=1}^\infty \| \nabla v_i\|^k_{L^1_{T_1}(L^\infty)}
  \leq 2 \|\nabla v_i\|_{L^1_{T_1}(L^\infty)} \leq 2 C T_1^{\frac{1}{2}}.
\end{align}
While for the term $\|\Id-A_i\|_{L^\infty_T(\dot B^{2\alpha-1}_{p,1})}$,
due to $2\alpha-1- \frac{2}{p}>0$, the nonhomogeneous space $B^{2\alpha-1}_{p,1}(\RR^2)\hookrightarrow L^\infty(\RR^2)$ is a Banach algebra,
thus together with Proposition \ref{pro.propri.Lag} and letting $T_1>0$ small enough, it yields that for $i=1,2,$
\begin{align}\label{es.Id-A.wp}
  \big\|\Id-A_i\big\|_{L^\infty_{T_1}(\dot B^{2\alpha-1}_{p,1})}
  & \leq \sum_{k=1}^\infty\|B_i(t,y)\|_{L^\infty_{T_1}( B^{2\alpha-1}_{p,1})}^k \nonumber\\
  &\leq \sum_{k=1}^\infty \Big(C T_1^{\frac{1}{2}}\|\nabla v_i\|_{L^2_{T_1}(B^{2\alpha-1}_{p,1})}\Big)^k \nonumber\\
  &\leq \sum_{k=1}^\infty  \Big(C T_1^{\frac{1}{2}} \| v_i\|_{L^2_{T_1}( \dot B^{2\alpha}_{p,1} \cap \dot H^\alpha)} \Big)^k\nonumber\\
  &\leq C \sqrt{T_1} \| v_i\|_{L^2_{T_1}( \dot B^{2\alpha}_{p,1} \cap \dot H^\alpha)} \leq C \sqrt{T_1}.
\end{align}
Using the Sobolev embedding $\dot H^\alpha \cap \dot H^{2\alpha}(\RR^2) \hookrightarrow \dot W^{1,\frac{2p}{p-2}}(\RR^2)$ (due to $p>\frac 2{2\alpha-1}$),
 we also get
\begin{align}\label{es.intp}
  \big\|\nabla \delta v\big\|_{L^2_{T_1}(L^{\frac{2p}{p-2}})} \leq
  C  \|\delta v\|_{L^2_{T_1} (\dot H^\alpha)}^{\frac{(2\alpha-1)p-2}{\alpha p}} \|\delta v\|_{L^2_{T_1} (\dot H^{2\alpha})}^{\frac{(1-\alpha)p+2}{\alpha p}}
  \leq C \|\delta v\|_{L^2_{T_1}(\dot H^\alpha\cap \dot H^{2\alpha})} .
\end{align}
Collecting estimates \eqref{es.N2} and \eqref{es.Id-A.infty}--\eqref{es.intp} yields
\begin{align}\label{es.N2-2}
  \|N_2\|_{L^1([0,T_1])} \leq C T_1 \|\delta v\|^2_{L^2_{T_1}(\dot H^{2\alpha})} + C T_1 \|\delta v\|^2_{L^2_{T_1}(\dot H^\alpha)}.
\end{align}

We then consider the first term on the left-hand side of \eqref{est.sta.L-2}. In light of \eqref{Lam-v1-v} and \eqref{eq.diss-exp}, we see that
\begin{align*}
  &\quad  \int_{\RR^2} \partial_t \delta v(t,y)\, \Lambda^{2\alpha}_{v_1} \delta v(t,y) \dd y \\
  & = c_\alpha \int_{\RR^2} \int_{\RR^2} \partial_t \delta v(t,y)\,\frac{(X_{1,t}(y)- X_{1,t}(z))  \cdot (A_1^t(t,y) \nabla \delta v(t,y) - A_1^t(t,z) \nabla \delta v(t,z)) }{|X_{1,t}(y)-X_{1,t}(z)|^{2+2\alpha}} \dd z \dd y \\
  & = \int_{\RR^2} \partial_t \delta v\big(t, X_{1,t}^{-1}(x)) \, \Lambda^{2\alpha}_x\big(\delta v(t, X_{1,t}^{-1}(x))\big) \dd x  \\
  & = \int_{\RR^2} \partial_t \big(\delta v(t, X_{1,t}^{-1}(x)) \big) \, \Lambda^{2\alpha}_x\big(\delta v(t, X_{1,t}^{-1}(x))\big) \dd x\\
  &\quad - \int_{\RR^2} \partial_t X_{1,t}^{-1}(x)\,\cdot\nabla \delta v(t, X_{1,t}^{-1}(x)) \,
  \Lambda^{2\alpha}_x \big(\delta v(t, X_{1,t}^{-1}(x)) \big) \dd x \\
  & = : \Psi_1 + \Psi_2 .
\end{align*}
For $\Psi_1$, noticing that
\begin{align*}
  \Psi_1 & = \frac{1}{2} \frac{\dd }{\dd t} \| \big(\delta v(t, X_{1,t}^{-1}(x)) \big) \|_{\dot H^\alpha_x}^2  \\
  & = \frac{1}{2} \frac{\dd }{\dd t} \int_{\RR^2} \int_{\RR^2} \frac{|\delta v(t, X_{1,t}^{-1}(x)) - \delta v(t, X_{1,t}^{-1}(z))|^2}{|x-z|^{2+2\alpha}} \dd x \dd z \\
  & = \frac{1}{2} \frac{\dd }{\dd t} \int_{\RR^2} \int_{\RR^2} \frac{|\delta v(t, y) - \delta v(t, \tilde{z})|^2}{|X_{1,t}(y)-X_{1,t}(\tilde{z})|^{2+2\alpha}} \dd y \dd \tilde{z} ,
\end{align*}
thus by letting $T_1>0$ small enough so that \eqref{can.delta.sip} holds,
we integrate in the time variable and then use \eqref{can.delta.sip} to have that for every $t\in [0,T_1]$,
\begin{align}\label{es.Psi}
  \int_0^t \Psi_1 \dd \tau \geq \frac{1}{2\cdot (4/3)^{2+2\alpha}} \|\delta v(t,\cdot)\|_{\dot H^\alpha}^2
  \geq \frac{1}{8} \|\delta v(t,\cdot)\|_{\dot H^\alpha}^2.
\end{align}
Noting that (from \eqref{eq:X-invers})
\begin{align*}
  \partial_t X_{i,t}^{-1}(x) = u_0(X_{i,t}^{-1}(x)) + \int_0^t \partial_t u_i(t-\tau, X_{i,\tau}^{-1}(x)) \dd \tau ,
\end{align*}
and using the equality \eqref{eq.diss-exp} and interpolation inequality \eqref{es.intp}, the term $\Psi_2$ can be estimated as
\begin{align}\label{es.Psi2}
  \int_0^t |\Psi_2| \dd \tau  & \leq   \int_0^t \|\partial_\tau X_{1,\tau}^{-1}(x)\|_{L^p_x}
  \| \nabla \delta v(\tau, X_{1,\tau}^{-1}(x))\|_{L^{\frac{2p}{p-2}}_x}
  \|\delta v(\tau, X_{1,\tau}^{-1}(x))\|_{\dot H^{2\alpha}_x} \dd \tau \nonumber\\
  &\leq C \|\partial_\tau X_{1,\tau}^{-1}\|_{L^\infty_t (L^p)} \|\nabla \delta v\|_{L^2_t (L^{\frac{2p}{p-2}})} \|\delta v(\tau, X_{1,\tau}^{-1}(x))\|_{L^2_t(\dot H^{2\alpha})} \nonumber\\
  & \leq C \big(\|u_0\|_{L^p}^2 + \|\partial_\tau u_1\|_{L^1_t(L^p)}^2\big) \| \delta v\|_{L^2_t(\dot H^\alpha)}^{\frac{2(2\alpha-1)p-4}{\alpha p}}
  \|\delta v\|_{L^2_t(\dot H^{2\alpha})}^{\frac{2(1-\alpha)p +4}{\alpha p}} + \frac{1}{16}\|\Lambda^{2\alpha}_{v_1} \delta v(\tau,y) \|_{L^2_t(L^2)}^2 \nonumber\\
  &\leq C \big(\|u_0\|_{L^p}^2 + t \|\partial_\tau u_1\|_{L^2_t (L^p)}^2 \big)^{\frac{\alpha p}{(2\alpha-1)p-2}} \|\delta v\|_{L^2_t(\dot H^\alpha)}^2
  + \frac{\|\delta v\|_{L^2_t (\dot H^{2\alpha})}^2}{128 C_1} + \frac{1}{16}\|\Lambda^{2\alpha}_{v_1}\delta v\|_{L^2_t(L^2)}^2 \nonumber \\
  & \leq C t \|\delta v\|_{L^\infty_t(\dot H^\alpha)}^2 + \frac{1}{128 C_1}\|\delta v\|_{L^2_t(\dot H^{2\alpha})}^2 + \frac{1}{16}\|\Lambda^{2\alpha}_{v_1}\delta v\|_{L^2_t(L^2)}^2  .
\end{align}
Integrating on the time interval $[0,t]$ yields that for every $t\in [0,T_1]$,
\begin{align}\label{eq:term1}
  \int_0^t \int_{\RR^2} \partial_\tau \delta v\, \Lambda^{2\alpha}_{v_1} \delta v \dd y \dd \tau \geq \frac{\|\delta v(t)\|_{\dot H^\alpha}^2 }{8}
  - C t \|\delta v\|_{L^\infty_t(\dot H^\alpha)}^2 - \frac{\|\delta v\|_{L^2_t(\dot H^{2\alpha})}^2}{128 C_1} - \frac{\|\Lambda^{2\alpha}_{v_1}\delta v\|_{L^2_t(L^2)}^2}{16}.
\end{align}

Now we consider $\|\PP(\Lambda_{v_1}^{2\alpha} \delta v)\|_{L^2}$ (recalling that $\PP := \nabla \Delta^{-1} \Div $). By arguing as \eqref{eq.diss-exp},
and using \eqref{Lam-v1-v} and the change of variables,
we find
\begin{align*}
  & \quad \|\PP (\Lambda_{v_1}^{2\alpha} \delta v)\|_{L^2}^2 \leq \|\Div(\Lambda_{v_1}^{2\alpha}\delta v)\|_{\dot H^{-1}}^2 \\
  & \lesssim \int_{\RR^2} \Big| \int_{\RR^2} \frac{\Div(\Lambda_{v_1}^{2\alpha} \delta v)(y)}{|x-y|} \dd y\Big|^2 \dd x \\
  & \lesssim \int_{\RR^2} \Big| \int_{\RR^2} \frac{1}{|x-y|} \Div_y\Big( \int_{\RR^2} \frac{(X_{1,t}(y)- X_{1,t}(z))
  \cdot (A_1^t(t,y) \nabla \delta v(t,y) - A_1^t(t,z) \nabla \delta v(t,z)) }{|X_{1,t}(y)-X_{1,t}(z)|^{2+2\alpha}} \dd z\Big)\dd y \Big|^2 \dd x \\
  & \lesssim \int_{\RR^2} \Big| \int_{\RR^2} \frac{1}{|X_{1,t}^{-1}(\tilde{x})- X_{1,t}^{-1}(\tilde{y})|} \Div_{\tilde{y}}\Big(\big(\nabla X_{1,t}^{-1}(\tilde{y})\big)^{-1}\cdot \\
  & \qquad\qquad \int_{\RR^2} \frac{( \tilde{y}- \tilde{z})
  \cdot \big( \nabla [\delta v(t,X_{1,t}^{-1}(\tilde{y}))] - \nabla [\delta v(t,X_{1,t}^{-1}(\tilde{z}))]\big) }{|\tilde{y}- \tilde{z}|^{2+2\alpha}} \dd \tilde{z}\Big)\dd \tilde{y} \Big|^2 \dd \tilde{x} \\
  & \lesssim \int_{\RR^2} \Big| \int_{\RR^2} \frac{1}{|X_{1,t}^{-1}(\tilde{x})- X_{1,t}^{-1}(\tilde{y})|} \Div_{\tilde{y}}\Big(\big(\nabla X_{1,t}^{-1}(\tilde{y})\big)^{-1}
  \Lambda^{2\alpha} \big(\delta v(t,X_{1,t}^{-1}(\tilde{y})) \big)\Big)\dd \tilde{y} \Big|^2 \dd \tilde{x}  \\
  & \lesssim \int_{\RR^2} \Big| \int_{\RR^2} \frac{1}{|X_{1,t}^{-1}(\tilde{x})- X_{1,t}^{-1}(\tilde{y})|} \Div_{\tilde{y}}\Big(
  \Lambda^{2\alpha} \big(\delta v(t,X_{1,t}^{-1}(\tilde{y})) \big)\Big)\dd \tilde{y} \Big|^2 \dd \tilde{x} \\
  & \quad + \int_{\RR^2} \Big| \int_{\RR^2} \frac{1}{|X_{1,t}^{-1}(\tilde{x})- X_{1,t}^{-1}(\tilde{y})|} \Div_{\tilde{y}}\Big(\big(\Id-\big(\nabla X_{1,t}^{-1}(\tilde{y})\big)^{-1}\big)
  \Lambda^{2\alpha} \big(\delta v(t,X_{1,t}^{-1}(\tilde{y})) \big)\Big)\dd \tilde{y} \Big|^2 \dd \tilde{x} \\
  & = : \Upsilon_1 + \Upsilon_2.
\end{align*}
For $\Upsilon_1$, by using the relation $\eqref{eq.delta.v}_2$ and the change of variables again,
it follows that
\begin{align*}
  \Upsilon_1 & \lesssim \int_{\RR^2} \Big| \int_{\RR^2} \frac{1}{|X_{1,t}^{-1}(\tilde{x})- X_{1,t}^{-1}(\tilde{y})|}
  \Big( \Lambda^{2\alpha} \Div \big(\delta v(t,X_{1,t}^{-1}(\cdot)) \big)(\tilde{y})\Big)\dd \tilde{y} \Big|^2 \dd \tilde{x} \\
  & \lesssim  \int_{\RR^2} \Big| \int_{\RR^2} \frac{1}{|X_{1,t}^{-1}(\tilde{x}) - X_{1,t}^{-1}(\tilde{y})|}
  \Big( \Lambda^{2\alpha} \big(\nabla X_{1,t}^{-1}(\cdot) : \nabla \delta v(t,X_{1,t}^{-1}(\cdot)) \big)(\tilde{y})\Big)\dd \tilde{y} \Big|^2 \dd \tilde{x} \\
  & \lesssim  \int_{\RR^2} \Big| \int_{\RR^2} \frac{1}{|X_{1,t}^{-1}(\tilde{x}) - X_{1,t}^{-1}(\tilde{y})|}
  \Big( \Lambda^{2\alpha} \big(\Div \delta v(t,X_{1,t}^{-1}(\cdot)) \big)(\tilde{y})\Big)\dd \tilde{y} \Big|^2 \dd \tilde{x} \\
  & \quad + \int_{\RR^2} \Big| \int_{\RR^2} \frac{1}{|X_{1,t}^{-1}(\tilde{x}) - X_{1,t}^{-1}(\tilde{y})|}
  \Big( \Lambda^{2\alpha} \big( (\Id -\nabla X_{1,t}^{-1}(\cdot)) : \nabla \delta v(t,X_{1,t}^{-1}(\cdot)) \big)(\tilde{y})\Big)\dd \tilde{y} \Big|^2 \dd \tilde{x} \\
  & \lesssim  \int_{\RR^2} \Big| \int_{\RR^2} \frac{1}{|X_{1,t}^{-1}(\tilde{x})- X_{1,t}^{-1}(\tilde{y})|}
  \Div_{\tilde{y}} \Big( \nabla\Lambda^{2\alpha-2} \big( \Div\delta g(t,X_{1,t}^{-1}(\cdot)) \big)(\tilde{y})\Big)\dd \tilde{y} \Big|^2 \dd \tilde{x} \\
  & \quad + \int_{\RR^2} \Big| \int_{\RR^2} \frac{1}{|X_{1,t}^{-1}(\tilde{x}) - X_{1,t}^{-1}(\tilde{y})|}
  \Div_{\tilde{y}} \Big( \nabla\Lambda^{2\alpha-2} \big( (\Id -\nabla X_{1,t}^{-1}(\cdot)) : \nabla \delta v(t,X_{1,t}^{-1}(\cdot)) \big)(\tilde{y})\Big)\dd \tilde{y} \Big|^2 \dd \tilde{x} \\
  & \lesssim \int_{\RR^2} \Big| \int_{\RR^2} \frac{1}{|x- y|}
  \,\Div_y \Big( A_1(t,y) \big(\big[\, \nabla \Lambda^{2\alpha-2} \big( \Div\delta g(t,X_{1,t}^{-1}) \big)\big]\circ X_{1,t}(y)\big)\Big) \dd y \Big|^2 \dd x \\
  & \quad + \int_{\RR^2} \Big| \int_{\RR^2} \frac{1}{|x - y|} \Div_y \Big(A_1(t,y)\, \big(\big[\nabla\Lambda^{2\alpha-2} \big( (\Id -\nabla X_{1,t}^{-1}) : \nabla \delta v(t,X_{1,t}^{-1}) \big)\big]\circ X_{1,t}(y)\big)
  \Big)\dd y \Big|^2 \dd \tilde{x} \\
  & \lesssim \Big\|\Lambda^{-1}\Div \Big(A_1(t,\cdot) \,\big[\nabla \Lambda^{2\alpha-2}\big(\Div \delta g(t,X_{1,t}^{-1}) \big)\big]\circ X_{1,t}(\cdot) \Big) \Big\|_{L^2}^2 \\
  & \quad + \Big \| \Lambda^{-1}\Div \Big(A_1(t,\cdot) \,\big[\nabla\Lambda^{2\alpha-2} \big( (\Id -\nabla X_{1,t}^{-1}) : \nabla \delta v(t,X_{1,t}^{-1}) \big)\big] \circ X_{1,t}(\cdot) \Big)\Big\|_{L^2}^2 \\
  & \leq C  \|A_1(t,y)\|_{L^\infty_y}^2 \|\nabla \Lambda^{2\alpha-2} \big(\Div\delta g(t,X_{1,t}^{-1}(\cdot)) \big)(x)\|_{L^2_x}^2 \\
  & \quad + C \|A_1(t,y)\|_{L^\infty_y}^2 \|\nabla \Lambda^{2\alpha-2} \big((\Id -\nabla X_{1,t}^{-1}(\cdot)): \nabla \delta v(t, X_{1,t}^{-1}(\cdot)) \big)(x)\|_{L^2_x}^2.
\end{align*}
In a similar way as estimating \eqref{es.N2}, by applying Lemma \ref{lem:f.mu}, Proposition \ref{pro.propri.Lag} and letting $T_1>0$ small enough, we infer that
\begin{align}\label{Ups1}
  \|\Upsilon_1\|_{L^1_{T_1}}
  & \lesssim \|\Div\delta g(t,X_{1,t}^{-1}(\cdot)) \|_{L^2_{T_1} (\dot  H^{2\alpha-1})}^2
  + \|\big(\Id - A_1(t, X_{1,t}^{-1}(x))\big): \nabla \delta v(t, X_{1,t}^{-1}(x))\|_{L^2_{T_1}(\dot H^{2\alpha-1})}^2 \nonumber\\
  & \leq C \|\Div \delta g \|_{L^2_{T_1}(\dot H^{2\alpha-1})}^2 + C \|\Id - A_1\|_{L^\infty_{T_1}(L^\infty)}^2 \|\nabla \delta v\|_{L^2_{T_1}(\dot H^{2\alpha-1})}^2 \nonumber\\
  & \quad + C \|\Id -A_1 \|_{L^\infty_{T_1}(\dot B^{2\alpha-1}_{p,1})}^2 \|\nabla \delta v\|_{L^2_{T_1}(L^{\frac{2p}{p-2}})}^2 \nonumber \\
  & \leq C \|\Div \delta g\|_{L^2_{T_1}(\dot H^{2\alpha-1})}^2 + C T_1 \|\delta v\|^2_{L^2_{T_1}(\dot H^{2\alpha})} + C T_1 \|\delta v\|^2_{L^2_{T_1}(\dot H^\alpha)} .
 \end{align}

For $\Upsilon_2$, observing that $(\nabla X_{1,t}^{-1}(\tilde{y}))^{-1} = \nabla X_{1,t}(y) =(\nabla X_{1,t})\circ X_{1,t}^{-1}(\tilde{y}) $,
and by letting $T_1>0$ small enough (so that \eqref{can.delta.sip2} and the last inequality in \eqref{Ups2} hold true), we obtain that for every $t\in [0,T_1]$,
\begin{align}\label{Ups2}
  \|\Upsilon_2\|_{L^1_t } & \lesssim  \Big\| \int_{\RR^2} \frac{1}{|\tilde{x}-\tilde{y}|} \Div_{\tilde{y}}
  \Big( \big(\Id -  (\nabla X_{1,\tau})\circ X_{1,\tau}^{-1}(\tilde{y}) \big)\,\big[\Lambda^{2\alpha}(\delta v(\tau, X_{1,\tau}^{-1}))\big]\circ X_{1,\tau}(\tilde{y}) \Big) \dd \tilde{y} \Big\|_{L^2_t(L^2_{\tilde{x}})}^2 \nonumber\\
  & \lesssim \Big\| \Lambda^{-1} \Div \Big(  \big(\Id - (\nabla X_{1,\tau})\circ X_{1,\tau}^{-1}(\cdot) \big)\, \big[\Lambda^{2\alpha}(\delta v(\tau, X_{1,\tau}^{-1})) \big]\circ X_{1,\tau}(\cdot)\Big) \Big\|_{L^2_t (L^2_{\tilde{x}})}^2 \nonumber\\
  & \lesssim \|\Id - \nabla X_{1,\tau}\|_{L^\infty_t(L^\infty)}^2
  \big\| \Lambda^{2\alpha} \big(\delta v(\tau, X_{1,\tau}^{-1}(\cdot))\big)\big\|_{L^2_t(L^2)}^2 \nonumber\\
  & \leq C  \|\nabla v_1\|_{L^1_t (L^\infty)}^2 \|\Lambda_{v_1}^{2\alpha} \delta v(\tau,x)\|_{L^2_t (L^2_x)}^2 \nonumber \\
  & \leq C  T_1 \|\Lambda_{v_1}^{2\alpha} \delta v(\tau,x)\|_{L^2_t (L^2)}^2 \leq \frac{1}{16} \|\Lambda_{v_1}^{2\alpha} \delta v(\tau,x)\|_{L^2_t (L^2)}^2,
\end{align}
where in the last line we used \eqref{eq.diss-exp}.

Gathering \eqref{est.sta.L-3} and \eqref{eq.diss-exp}, \eqref{es.N1-2}, \eqref{es.N2-2}, \eqref{eq:term1}--\eqref{Ups2}, we conclude that for every $t\in [0,T_1]$,
\begin{align}
  & \quad \frac{\|\delta v(t)\|_{\dot H^\alpha}^2 }{8}
  + \frac{1}{128 C_1} \|\Lambda^{2\alpha} \delta v\|_{L^2_t (L^2)}^2  + \frac{1}{16} \|\partial_\tau\delta v\|_{L^2_t(L^2)}^2
  + \frac{1}{8} \|\nabla \delta \Pi\|_{L^2_t(L^2)}^2 \nonumber \\
  & \leq C  T_1 \big(\|\delta v\|_{L^\infty_{T_1}(\dot H^\alpha)}^2 +  \|\delta v\|^2_{L^2_{T_1}(\dot H^{2\alpha})}\big)
  + C \|\Div \delta g\|_{L^2_{T_1}(\dot H^{2\alpha-1})}^2 + C \|(\delta f_1,\delta f_2,\partial_t g)\|_{L^2_{T_1}(L^2)}^2.
\end{align}
Hence by taking the supremum over $[0,T_1]$ and then by letting $T_1>0$ small enough, we can conclude the desired estimate \eqref{est.sta}.
\end{proof}

Now in order to get the uniqueness result in Theorem \ref{Main.thm}, we are in a position to check the right-hand terms of estimate \eqref{est.sta}.
As regards to $\|\delta f_1\|_{L^2_{T_1}(L^2)}$, with $\delta f_1=- (\Id-A^t_1)\nabla \delta\Pi + \delta A^t\nabla \Pi_2$,
recalling \eqref{eq.A_i}--\eqref{es.Id-A.infty} and noting that
\begin{align}\label{eq.del-Ai}
    \delta A(t,y) = \sum_{k=1}^\infty \Big((-B_1(t,y))^k - (-B_2(t,y))^k\Big)
    = \int_0^t\nabla \delta v(\tau,y)\,\dd\tau \Big(\sum_{k=1}^\infty\sum_{j=0}^{k-1}(-1)^kB_1^jB_2^{k-1-j}\Big),
\end{align}
with $B_i(t,y)= \int_0^t \nabla v_i(\tau,y)\dd \tau$, by letting $T_1>0$ small enough it can be controlled as follows
\begin{align}\label{est.f11}
  \|\delta f_1 \|_{L^2_{T_1}(L^2)} & \leq \|\Id-A_1\|_{L^\infty_{T_1}(L^\infty)} \|\nabla \delta\Pi\|_{L^2_{T_1}(L^2)}
  + \|\delta A\|_{L^\infty_{T_1}(L^{\frac {2p}{p-2}})} \|\nabla \Pi_2\|_{L^2_{T_1}(L^p)} \nonumber \\
  & \leq C T_1^{\frac{1}{2}} \|\nabla \delta\Pi\|_{L^2_{T_1}(L^2)}
  + C T_1^{\frac{1}{2}} \|\nabla \delta v\|_{L^2_{T_1}(L^{\frac {2p}{p-2}})} \nonumber \\
  & \leq C T_1^{\frac{1}{2}} \|\nabla \delta\Pi\|_{L^2_{T_1}(L^2)} + C T_1^{\frac{1}{2}} \|\delta v\|_{L^2_{T_1}(\dot H^{2\alpha})}
  + C T_1 \|\delta v\|_{L^\infty_{T_1}(\dot H^\alpha)},
\end{align}
where in the last line we also used \eqref{es.intp}.

Next, let us treat the term $\delta f_2$ given by \eqref{eq:f2}.
Observe that
\begin{align*}
  \delta f_2 & = c_\alpha\,  \int_{\RR^2} \frac{\big(X_{1,t}(y) -X_{1,t}(z)\big)\cdot \big(A^t_1(t,y) -A^t_2(t,y) - (A^t_1(t,z) - A^t_2(t,z)) \big) \nabla v_2(t,y)}{|X_{1,t}(y)-X_{1,t}(z)|^{2+2\alpha}} \dd z \\
  & \, + c_\alpha\,  \int_{\RR^2} \frac{\big(X_{1,t}(y) -X_{1,t}(z)\big)\cdot \big(A^t_1(t,z) - A^t_2(t,z)\big) \big(\nabla v_2(t,y)- \nabla v_2(t,z)\big)}{|X_{1,t}(y)-X_{1,t}(z)|^{2+2\alpha}} \dd z \\
  & \,+ c_\alpha\, \int_{\RR^2}\Big [\frac{X_{1,t}(y) -X_{1,t}(z)}{|X_{1,t}(y)-X_{1,t}(z)|^{2+2\alpha}} - \frac{X_{2,t}(y) - X_{2,t}(z)}{|X_{2,t}(y)-X_{2,t}(z)|^{2+\alpha}}\Big]\cdot
  \big(A^t_2(t,y) - A^t_2(t,z)\big)\nabla v_2(t,y) \dd z \\
  & \,  + c_\alpha\, \int_{\RR^2}\Big[\frac{X_{1,t}(y) -X_{1,t}(z)}{|X_{1,t}(y)-X_{1,t}(z)|^{2+2\alpha}} - \frac{X_{2,t}(y) - X_{2,t}(z)}{|X_{2,t}(y)-X_{2,t}(z)|^{2+\alpha}}\Big]\cdot
  A^t_2(t,z)(\nabla v_2(t,y) - \nabla v_2(t,z)) \dd z \\
  & :=  \delta f_2^1 + \delta f_2^2 + \delta f_2^3 + \delta f_2^4 .
\end{align*}
For $\delta f_2^1$, by changing of variables and using Lemma \ref{lem:f.mu}, we get
\begin{align*}
  \|\delta f_2^1\|_{L^2_y}^2 &= c^2_\alpha \int_{\RR^2} \Big| \int_{\RR^2} \frac{\big(X_{1,t}(y) -X_{1,t}(z)\big)\cdot
  \big(\delta A(t,y) - \delta A(t,z)) \big) }{|X_{1,t}(y)-X_{1,t}(z)|^{2+2\alpha}} \dd z \,\nabla v_2(t,y)\Big|^2 \dd y \\
  & = c^2_\alpha \int_{\RR^2} \Big| \int_{\RR^2} \frac{\big(x -\tilde{z}\big)\cdot
  \big( \delta A(t,X_{1,t}^{-1}(x)) - \delta A (t,X_{1,t}^{-1}(\tilde{z})) \big) }{|x -\tilde{z}|^{2+2\alpha}} \dd \tilde{z} \,\nabla v_2(t,X_{1,t}^{-1}(x))\Big|^2 \dd x  \\
  & = C \int_{\RR^2} \Big| \big[\Lambda^{2\alpha-2} \nabla \cdot
  (\delta A (t,X_{1,t}^{-1}(x))) \big] \,\nabla v_2(t,X_{1,t}^{-1}(x))\Big|^2 \dd x \\
  & \leq C \| \delta A(t,X_{1,t}^{-1}(x))\|_{\dot H^{2\alpha -1}}^2 \|\nabla v_2(t, X_{1,t}^{-1}(x))\|_{L^\infty}^2 \\
  & \leq C \|\delta A(t,y)\|_{\dot H^{2\alpha-1}_y}^2 \|\nabla v_2(t)\|_{L^\infty}^2.
\end{align*}
In view of \eqref{eq.del-Ai} and Lemma \ref{lem:frc-leib}, using estimates \eqref{es.Id-A.infty}--\eqref{es.intp}, we have
\begin{align}\label{es:dA-H}
  \big\|\delta A\big\|_{L^\infty_{T_1}(\dot H^{2\alpha-1})} & \leq C  \big\|\nabla \delta v\big\|_{L^1_{T_1} (\dot H^{2\alpha-1})}
  \sum_{k= 1}^\infty \sum_{j=0}^{k-1}\big\|B_1^j B_2^{k-1-j} \big\|_{L^\infty_{T_1} (L^{\infty})} \nonumber \\
  & \quad  + C \big\|\nabla \delta v\big\|_{L^1_{T_1} (L^{\frac{2p}{p-2}})} \sum_{k=1}^\infty \sum_{j=0}^{k-1} \big\|B_1^j B_2^{k-1-j} \big\|_{L^\infty_{T_1} (W^{2\alpha-1,p})} \nonumber \\
  & \leq C T^{\frac{1}{2}} \|\delta v\|_{L^2_{T_1}(\dot H^{2\alpha})} \sum_{k=1}^\infty k \Big( T_1^{\frac{1}{2}} \|(\nabla v_1,\nabla v_2)\|_{L^2_{T_1}(L^\infty)}\Big)^k \nonumber \\
  & \quad + C T_1^{\frac{1}{2}} \|\nabla \delta\|_{L^2_{T_1}(L^{\frac{2p}{p-2}})} \sum_{k=1}^\infty k \Big(C T_1^{\frac{1}{2}} \|(v_1,v_2)\|_{L^2_{T_1}(\dot B^{2\alpha}_{p,1}\cap \dot H^\alpha)} \Big)^k \nonumber \\
  & \leq C T^{\frac{1}{2}} \|\delta v\|_{L^2_{T_1}(\dot H^{2\alpha})} + T_1 \|\delta v\|_{L^\infty_{T_1}(\dot H^\alpha)},
\end{align}
where $T_1>0$ is small enough so that $C T^{\frac{1}{2}} \|(v_1,v_2)\|_{L^2_{T_1}(\dot B^{2\alpha}_{p,1}\cap \dot H^\alpha)}\leq \frac{1}{2}$.
Thus combining the above two estimates yields that
\begin{align}\label{es.df21}
  \|\delta f_2^1\|_{L^2_{T_1} (L^2)}  \leq  \|\delta A\|_{L^\infty_{T_1}(\dot H^{2\alpha-1})} \|\nabla v_2\|_{L^2_{T_1} (L^\infty)}
  \leq C T_1^{\frac{1}{2}} \|\delta v\|_{L^2_{T_1}(\dot H^{2\alpha})} + T_1 \|\delta v\|_{L^\infty_{T_1}(\dot H^\alpha)} .
\end{align}
Let us check the estimation of term $\delta f^4_2$. By denoting
\begin{equation*}
  h(t,y,z,\theta) := \theta \big(X_{1,t}(y)-X_{1,t}(z)\big)+(1-\theta)\big(X_{2,t}(y)-X_{2,t}(z)\big),
\end{equation*}
and using  Newton-Leibniz's formula, we find
\begin{align}\label{eq:X-form}
  &\frac{X_{1,t}(y) - X_{1,t}(z)}{|X_{1,t}(y) - X_{1,t}(z)|^{2+2\alpha}} - \frac{X_{2,t}(y) - X_{2,t}(z)}{|X_{2,t}(y)-X_{2,t}(z)|^{2+2\alpha}} \nonumber \\
  =&\int_0^1\frac {\mathrm d}{\mathrm d\theta} \frac{h(t,x,y,\theta)}{|h(t,x,y,\theta)|^{2+2\alpha}}\mathrm d\theta \nonumber\\
  =&-(1+2\alpha)\int_0^1\frac{1}{|h(x,y,t,\theta)|^{2 + 2\alpha} } \mathrm d\theta \cdot
  \Big( X_{1,t}(y) - X_{2,t}(y)  - \big(X_{1,t}(z)-X_{2,t}(z)\big)\Big) \nonumber  \\
  = &-(1+2\alpha)\int_0^1\frac{1}{|h(x,y,t,\theta)|^{2 + 2\alpha} } \mathrm d\theta \cdot
  \int_0^t\Big(\delta v(\tau,y)-\delta v(\tau,z)\Big) \mathrm d\tau .
\end{align}
This gives that
\begin{align}\label{eq:df24-exp}
  \delta f_2^4 = & - c_\alpha(1+2\alpha) \int_0^1 \int_0^t\int_{\RR^2}\frac {\big(\delta v(\tau,y)-\delta v(\tau,z)\big)
  \cdot A^t_2(t,z)(\nabla v_2(t,y) - \nabla v_2(t,z)) }{|h(t,x,y,\theta)|^{2+2\alpha}} \mathrm d z\mathrm d\tau \mathrm d\theta \nonumber \\
  = & - c_\alpha(1+2\alpha) \int_0^1 \int_0^1 \int_0^t\int_{\RR^2}\frac {(y-z)\cdot
  \nabla \delta v(\tau,\tilde\theta y+ (1-\tilde\theta)z)}{|h(t,x,y,\theta)|^{2+2\alpha}}\cdot \nonumber  \\
  & \qquad\qquad\qquad\qquad \qquad \qquad
  \cdot A_2(t,z)(\nabla v_2(t,y) - \nabla v_2(t,z)) \mathrm d z\mathrm d\tau \mathrm d\theta \dd \tilde{\theta} .
\end{align}
Noticing that
\begin{align*}
  |h(t,y,z,\theta)-(y-z)| & \leq \big(\theta \|\nabla v_1\|_{L^1_t(L^\infty)} + (1-\theta) \|\nabla v_2\|_{L^1_t (L^\infty)}\big) |y-z| \\
  & \leq T^{\frac{1}{2}}\|(\nabla v_1,\nabla v_2)\|_{L^2_t(L^\infty)} |y-z| ,
\end{align*}
by letting $T_1>0$ small enough, we get $h(t,y,z,\theta) \approx |y-z|$ for every $y\neq z$ and $t\leq T_1$.
Then taking advantage of Minkowski's inequality, Lemma \ref{lem:Besov-fd} and estimates \eqref{est.0L-2}, \eqref{es.intp}, we infer that
\begin{align}\label{est.f24}
  & \|\delta f_2^4\|_{L^2_{T_1}(L^2)}  \nonumber \\
  \lesssim & \Big\|\int_0^1\int_0^t \int_{\RR^2} \frac{|\nabla \delta v(\tau,\tilde{\theta} y +(1-\tilde{\theta})z)|
  |\nabla v_2(t,y) -\nabla v_2(t,z)|}{|y-z|^{1+2\alpha}} |A_2(t,z)| \dd z \dd \tau \dd \tilde{\theta}\Big\|_{L^2_{T_1}(L^2_y)} \nonumber \\
  \lesssim  & \Big\|\int_0^1\int_0^t \int_{\RR^2} \frac{|\nabla \delta v(\tau, \tilde{\theta} y +(1-\tilde{\theta})z)|
  |\nabla v_2(t,y) -\nabla v_2(t,y+z)|}{|z|^{1+2\alpha}}  \dd z \dd \tau \dd \tilde{\theta}\Big\|_{L^2_{T_1}(L^2_y)} \|A_2\|_{L^\infty_{T_1}(L^\infty)} \nonumber \\
  \lesssim &  \Big\|\int_0^1\int_0^t \int_{\RR^2} \frac{\| \nabla \delta v(\tau, \tilde{\theta} y + (1-\tilde{\theta})z)\,|\nabla v_2 (t,y) -\nabla v_2(t,y+z)| \|_{L^2_y}}{|z|^{1+2\alpha}} \dd z \dd \tau \dd \tilde{\theta}\Big\|_{L^2_{T_1}}  \nonumber \\
  \lesssim & \,T_1^{\frac{1}{2}} \Big\|\int_{\RR^2}\frac{\|\nabla v_2(t,y)-\nabla v_2(t, y+z)\|_{L^p_y}}{|z|^{1+2\alpha}} \dd z \Big\|_{L^2_{T_1}} \|\nabla \delta v\|_{L^2_{T_1}(L^{\frac {2p}{p-2}})} \nonumber \\
  \leq & C T_1^{\frac{1}{2}} \|\nabla v_2\|_{L^2_{T_1}( \dot B^{2\alpha-1}_{p,1})}
  \|\nabla \delta v\|_{L^2_{T_1}(L^{\frac {2p}{p-2}})} \nonumber\\
  \leq & C T_1^{\frac{1}{2}} \|\delta v\|_{L^2_{T_1}(\dot H^{2\alpha})} + C T_1 \|\delta v\|_{L^\infty_{T_1}(\dot H^\alpha)} .
\end{align}
The terms $\delta f_2^2$ and $\delta f_2^3$ can be estimated along the similar way as deducing \eqref{est.f24}:
due to \eqref{can.delta.sip} and \eqref{est.f11}, we have the following bounds
\begin{align}\label{es.df22}
  \|\delta f_2^2\|_{L^2_{T_1}(L^2)} & \leq C
  \Big\|\int_{\RR^2} \frac{|\delta A^t(t,y+z)|\, |\nabla v_2(t,y)-\nabla v_2(t,y+z)| }{|z|^{1+2\alpha}}
  \dd z \Big\|_{L^2_{T_1}(L^2_y)} \nonumber \\
  & \leq C \Big\|\int_{\RR^2} \frac{\|\nabla v_2(t,y)-\nabla v_2(t, y+z)\|_{L^p_y}}{|z|^{1+2\alpha}} \dd z \Big\|_{L^2_{T_1}}
  \|\delta A\|_{L^\infty_{T_1}(L^{\frac{2p}{p-2}})} \nonumber\\
  & \leq C T_1 \|\nabla v_2\|_{L^2_{T_1}( \dot B^{2\alpha-1}_{p,1})} \| \nabla \delta v\|_{L^2_{T_1}(L^{\frac{2p}{p-2}})} \nonumber \\
  & \leq C T_1^{\frac{1}{2}}  \| \delta v\|_{L^2_{T_1}( \dot H^{2\alpha})} + C T_1 \|\delta v\|_{L^\infty_{T_1}(\dot H^\alpha)} ,
\end{align}
and 
\begin{align*}
  & \quad \|\delta f_2^3\|_{L^2_{T_1} (L^2)} \nonumber \\
  &\leq  C \Big\|\int_0^1\int_0^t \int_{\RR^2}
  \frac{|\nabla \delta(\tau,\tilde{\theta} y + (1-\tilde{\theta})z)|\, |A_2(t,y)-A_2(t,y+z)|}{|z|^{1+2\alpha}}
  |\nabla v_2(t,y)|\dd z \dd \tau \dd \tilde{\theta} \Big\|_{L^2_{T_1}(L^2)} \nonumber \\
  & \leq CT_1^{\frac{1}{2}} \|\nabla \delta v\|_{L^2_{T_1}(L^{\frac{2p}{p-2}} )}
  \Big\|\int_{\RR^2} \frac{\|(\Id -A_2(t,y)) - (\Id - A_2(t,y+z))\|_{L^p_y}}{|z|^{1+2\alpha}} \dd z \Big\|_{L^\infty_{T_1}}
  \|\nabla v_2\|_{L^2_{T_1}(L^\infty )} \nonumber  \\
  & \leq CT_1^{\frac{1}{2}} \|\nabla \delta v\|_{L^2_{T_1}(L^{\frac{2p}{p-2}} )}
  \|\Id - A_2\|_{L^\infty_{T_1}(\dot B^{2\alpha-1}_{p,1})} \|\nabla v_2\|_{L^2_{T_1}(L^\infty )} \nonumber  \\
  & \leq C\big( T_1^{\frac{1}{2}} \|\delta v \|_{L^2_{T_1}(\dot H^{2\alpha})} + T_1 \|\delta v\|_{L^\infty_{T_1}(\dot H^\alpha)}\big)
  \|\Id -A_2\|_{L^\infty_{T_1}(\dot B^{2\alpha-1}_{p,1})}.
\end{align*}
By means of \eqref{est.prod2} and the embedding $B^{2\alpha-1}_{p,1}(\RR^2)\hookrightarrow L^\infty(\RR^2)$, and similarly as estimating \eqref{es.Id-A.wp},
we see that for $i=1,2$,
\begin{align*}
  \|\Id-A_i\|_{L^\infty_T(\dot B^{2\alpha-1}_{p,1})} & \leq \sum_{k=1}^\infty \Big(C T_1^{\frac{1}{2}}\|\nabla v_i\|_{L^2_{T_1}(B^{2\alpha-1}_{p,1})}\Big)^k
  \leq \sum_{k=1}^\infty \Big(C T_1^{\frac{1}{2}}\| v_i\|_{L^2_{T_1}(\dot B^{2\alpha}_{p,1}\cap \dot H^\alpha)}\Big)^k \leq C T^{\frac{1}{2}} ,
\end{align*}
so this implies
\begin{align}\label{es.df23-2}
  \|\delta f_2^3\|_{L^2_{T_1}(L^2)} \leq   C T_1^{\frac{1}{2}} \|\delta v \|_{L^2_{T_1}(\dot H^{2\alpha})} + C T_1 \|\delta v\|_{L^\infty_{T_1}(\dot H^\alpha)} .
\end{align}
Collecting the above estimates on $f_2^1$\,-\,$f_2^4$ yields that for $T_1>0$ small enough,
\begin{align}\label{es.df2}
  \|\delta f_2\|_{L^2_{T_1}(L^2)} \leq C T_1^{\frac{1}{2}} \|\delta v \|_{L^2_{T_1}(\dot H^{2\alpha})} + C T_1 \|\delta v\|_{L^\infty_{T_1}(\dot H^\alpha)} .
\end{align}

Next we consider the estimation related to $\delta g = (\Id - A_1)\delta v - (\delta A)\, v_2$ given by \eqref{eq:g}.
The algebraic relation \eqref{est.notation-Lagr} implies that
\begin{equation*}\label{eq.divg}
  \Div\delta g = (\Id-A_1^t):\nabla\delta v-(\delta A^t):\nabla v_2,
\end{equation*}
thus by using Lemma \ref{lem:frc-leib}, along with \eqref{es.Id-A.infty}--\eqref{es.intp} and \eqref{est.f11}--\eqref{es:dA-H}, we deduce that
\begin{align}\label{est.divg}
  \|\Div\delta g\|_{L^2_{T_1}(\dot H^{2\alpha-1})} & \leq \|(\Id-A_1^t):\nabla \delta v\|_{L^2_{T_1}(\dot H^{2\alpha-1})}
  + \|\delta A^t : \nabla v_2\|_{L^2_{T_1}(\dot H^{2\alpha -1})} \nonumber\\
  & \leq C \|\Id-A_1\|_{L^\infty_{T_1}(\dot W^{2\alpha-1,p})} \|\nabla \delta v\|_{L^2_{T_1}(L^{\frac {2p}{p-2}})}
  + C \|\Id-A_1\|_{L^\infty_{T_1}(L^\infty)} \|\delta v\|_{L^2_{T_1}(\dot H^{2\alpha})} \nonumber\\
  & \quad + C \|\delta A\|_{L^\infty_{T_1}(\dot H^{2\alpha-1})} \|\nabla v_2\|_{L^2_{T_1}(L^\infty)}
  + C \| \delta A\|_{L^\infty_{T_1}(L^{\frac {2p}{p-2}})} \| \nabla v_2\|_{L^2_{T_1}(\dot W^{2\alpha-1,p})} \nonumber \\
  & \leq C T_1^{1/2} \| \delta v\|_{L^2_{T_1}(\dot H^{2\alpha})} + T_1 \| \delta v\|_{L^\infty_{T_1}(\dot H^\alpha)} .
\end{align}

We split $\partial_t\delta g$ into the following four terms:
\begin{align*}
  \partial_t\delta g&=\partial_t[(\Id-A_1)\delta v- (\delta A) \,v_2]\\
  &= -(\partial_t A_1)\,\delta v + (\Id-A_1)\,\partial_t\delta v- (\partial_t\delta A)\, v_2- (\delta A)\,\partial_tv_2.
\end{align*}
Noting that for $i=1,2$ (from \eqref{eq.A_i})
\begin{align}\label{eq:par-t-A}
  \partial_t A_i(t,y) =\sum_{k=1}^\infty (-1)^k k \big(B_i(t,y)\big)^{k-1}\,\nabla v_i(t,y) ,
  \quad B_i(t,y)=\int_0^t \nabla v_i(\tau,y)\dd \tau ,
\end{align}
and
\begin{align}\label{eq:dv-L2}
  \|\delta v(t,y)\|_{L^2_y} =  \|\delta v(t,y) -\delta v(0,y)\|_{L^2_y} \leq \int_0^t \|\partial_\tau\delta v(\tau,y)\|_{L^2_y} \dd \tau
  \leq t^{\frac{1}{2}} \|\partial_\tau \delta v\|_{L^2_t (L^2)},
\end{align}
by letting $T_1>0$ small enough we find
\begin{align}\label{est.g1_t}
  \|\partial_t A_1\,\delta v \|_{L^2_{T_1}(L^2)} & \leq \|\partial_t A_1\|_{L^2_{T_1}(L^\infty)}
  \|\delta v\|_{L^\infty_{T_1}(L^2)} \nonumber \\
  & \leq C T_1^{1/2} \|\nabla v_1\|_{L^2_{T_1}(L^\infty)} \|\partial_t \delta v\|_{L^2_{T_1}(L^2)}
  \leq C T_1^{1/2} \|\partial_t \delta v\|_{L^2_{T_1}(L^2)}.
\end{align}
Thanks to \eqref{es.Id-A.infty} and \eqref{eq.del-Ai}, we immediately get
\begin{equation}\label{est.g2_t}
  \|(\Id -A_1)\, \partial_t \delta v\|_{L^2_{T_1}(L^2)} \leq \|\Id-A_1\|_{L^\infty_{T_1}(L^\infty)} \|\partial_t\delta v\|_{L^2_{T_1}(L^2)}
  \leq C T_1^{1/2} \|\partial_t\delta v\|_{L^2_{T_1}(L^2)} ,
\end{equation}
and
\begin{align}\label{est.g4_t1}
  \|(\delta A)\,\partial_t v_2\|_{L^2_{T_1}(L^2)} & \leq \|\delta A\|_{L^\infty_{T_1}(L^{\frac{2p}{p-2}})} \|\partial_t v_2\|_{L^2_{T_1}(L^p)} \nonumber \\
  & \leq C T_1^{1/2}\|\nabla \delta v\|_{L^2_T(L^{\frac {2p}{p-2}})} \leq C T_1^{1/2} \|\delta v\|_{L^2_{T_1}(\dot H^{2\alpha})}
  + C T_1 \|\delta v\|_{L^\infty_{T_1}(\dot H^\alpha)}.
\end{align}
In view of the following formula (from \eqref{eq:par-t-A})
\begin{align}
  \partial_t \delta A(t,y) = & - \nabla \delta v(t,y)\, + \sum_{k=2}^\infty (-1)^k k B_2^{k-1} \,\nabla \delta v(t,y) \nonumber \\
  & +\sum_{k=2}^\infty\sum_{ j=0}^{k-2} (-1)^k k B_1^{j-1}B_2^{k-1-j}\,\Big(\int_0^t\nabla \delta v(\tau,y)\,\dd\tau\Big)\, \nabla v_1(t,y).\nonumber
\end{align}
and using the Gagliardo-Nirenberg inequality
$\|\nabla \delta v\|_{L^2}\leq C \| \delta v\|^{2-\frac 1\alpha }_{\dot H^\alpha}\|\delta v\|_{\dot H^{2\alpha}}^{\frac 1\alpha -1}$,
we infer that
\begin{align}\label{est.g3_t2}
  \|(\partial_t \delta A)\, v_2\|_{L^2_{T_1}(L^2)}
  & \leq \|\partial_t \delta A\|_{L^2_{T_1}(L^2)} \|v_2\|_{L^\infty_{T_1}(L^\infty)} \nonumber \\
  & \leq C \|v_2\|_{L^\infty_{T_1}(L^\infty)} \big(\|\nabla \delta v\|_{L^2_{T_1}(L^2)}
  + \|\nabla \delta v\|_{L^1_{T_1}(L^2)} \|\nabla v_1\|_{L^2_{T_1}(L^\infty)}\big) \nonumber \\
  & \leq C T_1^{1-\frac{1}{2\alpha}} \big(\|\delta v\|_{L^\infty_{T_1}(\dot H^\alpha)}
  + \|\delta v\|_{L^2_{T_1}(\dot H^{2\alpha})}\big).
\end{align}
Noticing that $T_1\leq 1$ and $\alpha\in (\frac{1}{2},1)$, we collect estimates \eqref{est.g1_t}--\eqref{est.g3_t2} to obtain
\begin{align}\label{es.dg}
  \|\partial_t \delta g\|_{L^2_{T_1}(L^2)} \leq C T_1^{\frac{1}{2}} \|\partial_t\delta v\|_{L^2_{T_1}(L^2)}
  + C  T_1^{1-\frac{1}{2\alpha}} \big( \|\delta v\|_{L^2_{T_1}(\dot H^{2\alpha})}
  + \|\delta v\|_{L^\infty_{T_1}(\dot H^\alpha)} \big).
\end{align}

Therefore, plugging inequalities \eqref{est.f11}, \eqref{es.df2}, \eqref{est.divg} and \eqref{es.dg} into \eqref{est.sta},  
we find that for $T_1\in(0,1]$ small enough,
\begin{align}\label{est.dE'}
  \delta E(T_1) & \leq C T_1^{\frac{1}{2}} \|(\partial_t\delta v, \nabla \delta\Pi)\|_{L^2_{T_1}(L^2)}
  + C T_1^{1-\frac{1}{2\alpha}} \big( \|\delta v\|_{L^2_{T_1}(\dot H^{2\alpha})}
  + \|\delta v\|_{L^\infty_{T_1}(\dot H^\alpha)} \big) \nonumber \\
  & \leq C  T_1^{1-\frac{1}{2\alpha}} \delta E(T_1).
\end{align}
By letting $T_1>0$ be a even smaller constant (if necessarily) so that $C T_1^{1-\frac{1}{2\alpha}} \leq \frac{1}{2}$,
we conclude that $\delta E(t)\equiv 0$ on $[0,T_1]$.
The Sobolev inequality $\dot H^\alpha(\RR^2) \hookrightarrow L^{\frac{2}{1-\alpha}}(\RR^2)$ or estimate \eqref{eq:dv-L2}
further implies that $\delta v\equiv 0$ for a.e. $\RR^2\times [0,T_1]$. By using \eqref{eq:Xit} and coming back to the Eulerian coordinates,
we also get $X_{1,t}(y)\equiv X_{2,t}(y)$ and $u_1(t,x)\equiv u_2(t,x)$ on a.e. $\RR^2\times [0,T_1]$.

Repeating the above procedure and arguing as the corresponding part in \cite{MXZ19},
we can further prove $u_1 = u_2$ on $\RR^2\times [T_1,2T_1]$, $\RR^2\times [2T_1,3T_1]$, $\cdots$,
where $T_1>0$ is a small constant depending only on $\alpha,p,s$ and the norms of $(u_i,\pi_i)$ in Propositions \ref{pro.propri} and \ref{prop:uni.M.bet}.
Hence the uniqueness part of Theorem \ref{Main.thm} is proved.

\vskip0.2cm

\textbf{Acknowledgments.}
YL is partly supported by the NNSF of China (No. 12071043).
QM is partly supported by the NNSF of China (No. 12001041).
LX is partly supported by the NNSF of China (No. 11771043) and National Key Research and Development Program of China (No. 2020YFA0712900).

\bibliographystyle{plain}

\begin{thebibliography}{100}


%



\bibitem{Apple09}
D.~Applebaum,
{\em L\'{e}vy processes and stochastic calculus}, volume 116 of
{\em Cambridge Studies in Advanced Mathematics}.
Cambridge University Press, Cambridge, second edition, (2009).

\bibitem{BCD11}
H.~Bahouri, J.-Y. Chemin and R.~Danchin,
{\em Fourier Analysis and Nonlinear Partial Differential Equations},
Grundlehren der Mathematischen Wissenschaften.
Springer, Heidelberg, (2011).


\bibitem{BVV80} H. Beir\~ao da Veiga and A. Valli, Existence of $C^\infty$ solutions of the Euler equations for nonhomogeneous fluids.
\emph{Communications in Partial Differential Equations}, 5 (1980), no. 2, 95--107.



\bibitem{CV10} L. Caffarelli and  V. Vasseur, Drift diffusion equations with fractional diffusion and the quasi-geostrophic equations.
  \emph{Annals of Math.}, 171 (2010), no. 3, 1903--1930.


\bibitem{CaoChLai19}
X.~Cao, Q.~Chen and B.~Lai,
Properties of the linear non-local {S}tokes operator and its
  application.
{\em Nonlinearity}, 32 (2019), no. 7, 2633--2666.

%



\bibitem{CTT20} L. Chen, C. Tan and L. Tong,
On the global classical solution to compressible Euler system with singular velocity alignment.
ArXiv:2007.08356v1 [math.AP].


\bibitem{ChenLi20}
Q.~Chen and Y.~Li.
Global regularity of density patch for the 3{D} inhomogeneous
  {N}avier-{S}tokes equations.
{\em Z. Angew. Math. Phys.}, 71 (2020), no. 2, Art. 40.

\bibitem{CK03} H. J. Choe and H. Kim, Strong solutions of the Navier-Stokes equations for nonhomogeneous incompressible fluids.
\emph{Comm. Partial Differential Equations}, 28 (2003), no. 5-6, 1183--1201.

\bibitem{CDLM20} M. Colombo, C. De Lellis and A. Massaccesi, The generalized Caffarelli-Kohn-Nirenberg theorem for the hyperdissipative Navier-Stokes system.
\emph{Comm. Pure Appl. Math.}, 73 (2020), no. 3, 609--663.


\bibitem{Con15} P. Constantin, Lagrangian-Eulerian methods for uniqueness in hydrodynamic systems.
\emph{Adv. Math.}, 278 (2015), 67--102.

\bibitem{CJ19} P. Constantin and L. Joonhyun, Note on Lagrangian-Eulerian methods for uniqueness in hydrodynamic systems.
\emph{Adv. Math.}, 345 (2019), 27--52.



\bibitem{CDR20} P. Constantin, T. D. Drivas and R. Shvydkoy,
Entropy Hierarchies for equations of compressible fluids and self-organized dynamics.
\emph{SIAM J. Math. Anal.}, 52 (2020), no. 3, 3073--3092.


\bibitem{Danch03}
R.~Danchin,
Density-dependent incompressible viscous fluids in critical spaces.
{\em Proc. Roy. Soc. Edinburgh Sect. A}, 133 (2003), no. 6, 1311--1334.


\bibitem{Dan10}
R.~Danchin,
On the well-posedness of the incompressible density-dependent {E}uler
  equations in the {$L^p$} framework.
{\em J. Differential Equations}, 248 (2010), no. 8, 2130--2170.


\bibitem{DanFra11}
R.~Danchin and F.~Fanelli,
The well-posedness issue for the density-dependent {E}uler equations
  in endpoint {B}esov spaces.
{\em J. Math. Pures Appl.}, 96 (2011), no. 3, 253--278.

\bibitem{DanchMucha09}
R.~Danchin and P. Mucha,
A critical functional framework for the inhomogeneous {N}avier-{S}tokes equations in the half-space.
{\em J. Funct. Anal.}, 256 (2009), no. 3, 881--927.


\bibitem{DanchMucha12}
R.~Danchin and P. Mucha,
A {L}agrangian approach for the incompressible {N}avier-{S}tokes  equations with variable density.
{\em Comm. Pure Appl. Math.}, 65 (2012), no. 10, 1458--1480.

\bibitem{DanM13}
R.~Danchin and P. Mucha,
Incompressible flows with piecewise constant density.
{\em Arch. Ration. Mech. Anal.}, 207 (2013), no. 3, 991--1023.


\bibitem{DanchMuch19}
R.~Danchin and P. Mucha,
The incompressible Navier-Stokes equations in vacuum.
{\em Comm. Pure Appl. Math.}, 72 (2019), no. 7, 1351--1385.



\bibitem{DanchZhxin17}
R.~Danchin and X.~Zhang,
On the persistence of {H}\"{o}lder regular patches of density for the inhomogeneous {N}avier-{S}tokes equations.
{\em J. \'{E}c. polytech. Math.}, 4 (2017), 781--811.



\bibitem{FangZi13}
D.~Fang and R.~Zi,
On the well-posedness of inhomogeneous hyperdissipative {N}avier-{S}tokes equations.
{\em Discrete Contin. Dyn. Syst.}, 33 (2013), no. 8, 3517--3541.

%




\bibitem{Gancedo18}
F.~Gancedo and E.~Garc\'{i}a-Ju\'{a}rez,
Global regularity of 2{D} density patches for inhomogeneous {N}avier-{S}tokes.
{\em Arch. Ration. Mech. Anal.}, 229 (2018), no. 1, 339--360.

\bibitem{Giga85} Y. Giga, Domains of fractional powers of the Stokes operator in Lr spaces.
\emph{Arch. Rational Mech. Anal.}, 89 (1985), no. 3, 251--265.
%

\bibitem{GigS91} Y. Giga and H. Sohr, Abstract $L_p$ estimates for the Cauchy problem with applications to the Navier-Stokes equations in exterior domains.
\emph{J. Funct. Anal.}, 102 (1991), no. 1, 72--94.

%


\bibitem{Grafakos14}
L.~Grafakos,
{\em Modern {F}ourier analysis.} Third edition.
Springer, New York, New York, (2014).


\bibitem{HuangPaicuZhang13}
J.~Huang, M.~Paicu, and P.~Zhang,
Global well-posedness of incompressible inhomogeneous fluid systems with bounded density or non-{L}ipschitz velocity.
{\em Arch. Ration. Mech. Anal.}, 209 (2013), no. 2, 631--682.

\bibitem{HuangPZh13}
J.~Huang, M.~Paicu, and P.~Zhang.
Global solutions to 2-{D} inhomogeneous {N}avier-{S}tokes system with general velocity.
{\em J. Math. Pures Appl.}, 100 (2013), no. 6, 806--831.

\bibitem{Iwas89} H. Iwashita, $L_q$-$L_r$ estimates for solutions of the nonstationary Stokes equations in an exterior domain and the Navier-Stokes initial value problems in $L_q$ spaces.
\emph{Math. Ann.}, 285 (1989), 265--288.

\bibitem{KMT15} T. Karper, A. Mellet and K. Trivisa, Hydrodynamic limit of the kinetic Cucker-Smale flocking model.
\emph{Math. Mod. Meth. Appl. Sci.}, 25 (2015), 131--163.

\bibitem{Lady75}
O.~Lady\v{z}enskaja and V.~Solonnikov,
The unique solvability of an initial-boundary value problem for viscous incompressible inhomogeneous fluids.
{\em Zap. Nau\v{c}n. Sem. Leningrad. Otdel. Mat. Inst. Steklov. (LOMI)}, 52 (1975), 218--219, 52--109.

\bibitem{LMZ19} B. Lai, C. Miao and X. Zheng, Forward self-similar solutions of the fractional Navier-Stokes equations.
\emph{Adv. Math.}, 352 (2019), 981--1043.

\bibitem{Laskin00}
N.~Laskin,
Fractional quantum mechanics and {L}\'{e}vy path integrals.
{\em Phys. Lett. A}, 268 (2000), no. 4-6, 298--305.


\bibitem{LRie02}
P.~G. Lemari\'{e}-Rieusset,
{\em Recent developments in the {N}avier-{S}tokes problem}, volume
  431 of {\em Chapman $\&$ Hall/CRC Research Notes in Mathematics}.
Chapman $\&$ Hall/CRC, Boca Raton, FL, (2002).


\bibitem{LiaoLiu12}
X.~Liao and Y.~Liu,
Global regularity of three-dimensional density patches for inhomogeneous incompressible viscous flow.
{\em Sci China Mat}, 62 (2019), 1749--1764.

\bibitem{LiaoZhang18}
X.~Liao and P.~Zhang,
Global regularity of 2{D} density patches for viscous inhomogeneous incompressible flow with general density: low regularity case.
{\em Comm. Pure Appl. Math.}, 72 (2019), no. 4, 835--884.

\bibitem{LiaoZhang19}
X.~Liao and P.~Zhang,
Global regularity of 2-{D} density patches for viscous inhomogeneous incompressible flow with general density: high regularity case.
{\em Anal. Theory Appl.}, 35 (2019), no. 2, 163--191.

\bibitem{PLions96}
P.~Lions,
\emph{Mathematical topics in fluid mechanics. Vol. 1. Incompressible models}.
Oxford Lecture Series in Mathematics and its Applications, 3. Oxford Science Publications. The Clarendon Press, Oxford University Press, New York, (1996).


\bibitem{MGSIG} J. M. Mercado, E. P. Guido, A. J. S\'anchez-Sesma, M. {\'I}{\~n}iguez and A. Gonz\'alez,
Analysis of the Blasius formula and the Navier-Stokes fractional equation. In: J. Klapp, et al. (eds.) Fluid Dynamics in Physics,
Engineering and Environmental Applications, pp. 475--480. Springer, Berlin (2013).

\bibitem{MetKla00}
R.~Metzler and J.~Klafter,
The random walk's guide to anomalous diffusion: a fractional dynamics approach.
{\em Phys. Rep.}, 339 (2000), no. 1, 1--77.

\bibitem{MiaoYZh08}
C.~Miao, B.~Yuan, and B.~Zhang,
Well-posedness of the Cauchy problem for the fractional power dissipative equations.
{\em Nonlinear Anal.}, 68 (2008), no.3, 461--484.

\bibitem{Mucha01}
P.~Mucha,
Stability of nontrivial solutions of the {N}avier-{S}tokes system on the three dimensional torus.
{\em J. Differential Equations}, 172 (2001), no. 2, 359--375.

\bibitem{Mucha08}
P.~Mucha,
Stability of 2{D} incompressible flows in {$\mathbb{R}^3$}.
{\em J. Differential Equations}, 245 (2008), no. 9, 2355--2367.

\bibitem{MXZ19}
P.~Mucha, L.~Xue, and X.~Zheng,
Between homogeneous and inhomogeneous {N}avier-{S}tokes systems: the issue of stability.
{\em J. Differential Equations}, 267 (2019), no. 1, 307--363.


%

\bibitem{PaicuZhZh13}
M.~Paicu, P.~Zhang, and Z.~Zhang,
Global unique solvability of inhomogeneous {N}avier-{S}tokes equations with bounded density.
{\em Comm. Partial Differential Equations}, 38 (2013), no. 7, 1208--1234.






\bibitem{Simon90}
J.~Simon,
Nonhomogeneous viscous incompressible fluids: existence of velocity, density, and pressure.
{\em SIAM J. Math. Anal.}, 21 (1990), no. 5, 1093--1117.

\bibitem{SZF95} M. F. Shlesinger, G. M. Zaslavsky, U. Frisch, (eds.), L\'evy filghts and related topics in physics.
Lect. Notes in Physics, Vol. 450, Berlin: Springer-Verlag, (1995).

\bibitem{Stein70}
E.~Stein,
{\em Singular integrals and differentiability properties of functions}.
Princeton Mathematical Series, No. 30. Princeton University Press, Princeton, N.J., (1970).

\bibitem{WangYe18}
 D.~Wang and  Z.~Ye,
Global existence and exponential decay of strong solutions for the inhomogeneous incompressible Navier-Stokes equations with vacuum.
ArXiv:1806.04464v1 [math.AP], 2018

\bibitem{Wo01}
W.~Woyczy\'{n}ski,
L\'{e}vy processes in the physical sciences.
In {\em L\'{e}vy processes}, pages 241--266. Birkh\"{a}user Boston, Boston, MA, (2001).

\bibitem{WuYuan08}
G.~Wu and J.~Yuan,
Well-posedness of the {C}auchy problem for the fractional power dissipative equation in critical {B}esov spaces.
{\em J. Math. Anal. Appl.}, 340(2008), no. 2, 1326--1335.

\bibitem{Wu05} J. Wu, Lower bounds for an integral involving fractional Laplacians and the generalized Navier-Stokes equations in Besov spaces.
\emph{Commun. Math. Phys.}, 263 (2005), 803--831.


\bibitem{Zhang12} X. Zhang, Stochastic Lagrangian particle approach to fractal Navier-Stokes equations.
\emph{Comm. Math. Phys.}, 311 (2012), 133--155.
\end{thebibliography}

\end{document}